\DeclareMathAlphabet{\mathscrbf}{OMS}{mdugm}{b}{n}
\tikzset{
  no line/.style={draw=none,
    commutative diagrams/every label/.append style={/tikz/auto=false}},
  from/.style args={#1 to #2}{to path={(#1)--(#2)\tikztonodes}}}
\title[Moduli of oriented formal groups and complex bordism]{
Moduli stack of oriented formal groups\\ and periodic complex bordism
}
\author[Rok Gregoric]{Rok Gregoric}
\thanks{University of Texas at Austin}
\address{Department of Mathematics, University of Texas at Austin, Austin, TX 78712, USA}
\email{gregoric@math.utexas.edu}
\subjclass[2020]{14A30, 55N22, 55P43}
\date{\today}
\newtheorem{theorem}{Theorem}[subsection]
\newtheorem{theoremm}{Theorem}
\newtheorem{motto}[theoremm]{Motto}
\newtheorem{corollary}[theorem]{Corollary}
\newtheorem{corollarry}[theoremm]{Corollary}
\newtheorem{lemma}[theorem]{Lemma}
\newtheorem{prop}[theorem]{Proposition}
\newtheorem{claim}[theorem]{Claim}
\theoremstyle{definition}
\newtheorem{definition}[theorem]{Definition}
\newtheorem{definnition}[theoremm]{Definition}
\newtheorem{variant}[theorem]{Variant}
\newtheorem{cons}[theorem]{Construction}
\newtheorem{conns}[theoremm]{Construction}
\newtheorem{ex}[theorem]{Examples}
\newtheorem{exammple}[theoremm]{Example}
\newtheorem{remark}[theorem]{Remark}
\newtheorem{remmark}[theoremm]{Remark}
\newcommand*{\Set}{{\mathcal S\text{et}}}
\newcommand*{\Cat}{\mathcal C\mathrm{at}_\infty}
\newcommand*{\CAlg}{{\operatorname{CAlg}}}
\newcommand*{\mC}{\mathcal C}
\newcommand*{\mD}{\mathcal D}
\newcommand*{\mO}{\mathcal O}
\newcommand*{\mI}{\mathcal I}
\newcommand*{\mX}{\mathcal X}
\newcommand*{\mS}{\mathcal S}
\newcommand*{\sL}{\mathscr L}
\newcommand*{\sO}{\mathcal O}
\newcommand*{\sF}{\mathscr F}
\newcommand*{\E}{\mathbb E_\infty}
\newcommand*{\heart}{\heartsuit}
\newcommand*{\sheafhom}{\mathscr{H}\kern -.5pt om}
\DeclareMathOperator{\Novak}{\mathscr{N}\text{\kern -3pt {\calligra\large ovak}}\,\,}
\DeclareMathOperator{\fHom}{\mathscr{H}\text{\kern -3pt {\calligra\large om}}\,}
\DeclareMathOperator{\Hom}{\operatorname{Hom}}
\DeclareMathOperator{\bDelta}{\boldsymbol\Delta}
\DeclareMathOperator{\Sp}{\operatorname{Sp}}
\DeclareMathOperator{\Fun}{\operatorname{Fun}}
\DeclareMathOperator{\Spec}{\operatorname{Spec}}
\DeclareMathOperator{\Spf}{\operatorname{Spf}}
\DeclareMathOperator{\Shv}{\mathcal S\mathrm{hv}}
\DeclareMathOperator{\colim}{\operatorname{colim}}
\DeclareMathOperator{\Map}{\operatorname{Map}}
\DeclareMathOperator{\QCoh}{\operatorname{QCoh}}
\DeclareMathOperator{\IndCoh}{\operatorname{IndCoh}}
\DeclareMathOperator{\Vect}{\operatorname{Vect}}
\DeclareMathOperator{\Tot}{\operatorname{Tot}}
\DeclareMathOperator{\Sym}{\operatorname{Sym}}
\DeclareMathOperator{\MP}{\mathrm{MP}}
\DeclareMathOperator{\MU}{\mathrm{MU}}
\DeclareMathOperator{\M}{\mathcal M^\mathrm{or}_\mathrm{FG}}
\DeclareMathOperator{\Mo}{\mathcal M^\heart_\mathrm{FG}}
\DeclareMathOperator{\G}{\mathbf G}
\DeclareMathOperator{\Z}{\mathbf Z}
\DeclareMathOperator{\CMon}{\operatorname{CMon}}
\DeclareMathOperator{\Mod}{\operatorname{Mod}}
\DeclareMathOperator{\cMod}{\operatorname{cMod}}
\renewcommand{\i}{\infty}
\renewcommand{\Pr}{\mathcal P\mathrm r}
\newcommand{\PrL}{\mathcal P\mathrm r^{\mathrm L}}
\newcommand{\w}{\widehat}
\renewcommand{\i}{\infty}
\DeclareFontFamily{U}{matha}{\hyphenchar\font45}
\DeclareFontShape{U}{matha}{m}{n}{
      <5> <6> <7> <8> <9> <10> gen * matha
      <10.95> matha10 <12> <14.4> <17.28> <20.74> <24.88> matha12
      }{}
\DeclareSymbolFont{matha}{U}{matha}{m}{n}
\DeclareMathSymbol{\varsubset}{3}{matha}{"80}
\renewcommand{\i}{\infty}
\renewcommand{\o}{\otimes}
\begin{document}

\begin{abstract}
We introduce and study the non-connective spectral stack $\M$, the moduli stack of oriented formal groups.
We realize some results of chromatic homotopy theory in terms of the geometry of this stack. For instance, we show that
 its descent spectral sequence recovers the Adams-Novikov spectral sequence. For two $\E$-forms of periodic complex bordism $\MP$, the Thom spectrum and Snaith construction model, we describe the universal property of the cover $\Spec(\MP)\to\M$. 
We show that Quillen's celebrated theorem on complex bordism is equivalent to the assertion that the underlying ordinary stack of $\M$ is the classical stack of ordinary formal groups $\mathcal M^\heart_\mathrm{FG}$.
In order to carry out all of the above, we develop foundations of a functor of points approach to non-connective spectral algebraic geometry.
\end{abstract}

\maketitle

\section*{Introduction}

The goal of this paper is to show that the connection between the homotopy category of spectra and formal groups, which is at the heart of chromatic homotopy theory, may be manifested in terms of spectral algebraic geometry.
By \textit{formal group}, we mean a  smooth $1$-dimensional commutative formal group, as is traditional in homotopy theory. These are classified by an algebro-geometric moduli stack, the \textit{moduli stack of formal groups}, which we shall denote $\mathcal M^\heart_\mathrm{FG}$.

Here and in the rest of the paper, we will use the heart symbol $\heart$ in the  subscript to emphasize the classical nature of algebro-geometric objects considered,  in contrast to spectral algebro-geometric objects which will be our main focus. This notation is motivated by the fact that ordinary abelian groups sit inside spectra as the heart of the usual  $t$-structure.
For instance, $\mathcal M^\heart_\mathrm{FG}$ is our notation for the classical moduli stack of formal groups. The superscript $\heart$ is part of the notation here, reminding of us that we are considering it as an object of ordinary algebraic geometry.

The relationship between the moduli stack $\mathcal M^\heart_\mathrm{FG}$ and the stable homotopy category is well known, and has been one of the main organizing principles in homotopy theory for the last 50 years. Let us quickly review a variant of the usual approach to it, with the only non-standard aspect being our focus on $2$-periodic ring spectra (see Remark \ref{periodica}), in the following sense:

\begin{definnition}[{\cite[Definition 4.1.5]{Elliptic 2}}]
A commutative ring spectrum $A$ is said to be \textit{weakly $2$-periodic} if the ring structure induces an isomorphism $\pi_{*+2}(A)\simeq \pi_*(A)\o_{\pi_0(A)}\pi_2(A)$.
\end{definnition}

 The starting point is the observation that any complex orientation on a weakly $2$-periodic periodic commutative ring spectrum $A$ gives rise through $A^0(\mathbf{CP}^\i)$ to a formal group law  over $\pi_0(A)$. The universal weakly $2$-periodic complex orientable commutative ring is the \textit{periodic complex bordism spectrum} $\MP:=\bigoplus_{i\in\mathbf Z}\Sigma^{2i}(\mathrm{MU})$. The following celebrated result of Quillen may be seen as the origin of chromatic homotopy theory:

\begin{theoremm}[Quillen, {\cite{Quillen}}]
There is a canonical isomorphism of graded commutative rings $\pi_0(\mathrm{MP})\simeq L$, between the homotopy ring of the periodic complex bordism spectrum $\mathrm{MP}$, and the \textit{Lazard ring} $L$, classifying formal group laws. Under this isomorphism, the formal group law $\mathrm{MP}^0(\mathbf{CP}^\infty)$ corresponds to the universal formal group law.
\end{theoremm}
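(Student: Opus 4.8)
The plan is to realize the asserted isomorphism as the classifying map of the formal group law that $\MP$ carries, and then to check that it is an isomorphism. By the observation recalled above, the canonical complex orientation of $\MP$ turns $\MP^0(\mathbf{CP}^\infty)\simeq\pi_0(\MP)[[t]]$ into a formal group law $F$ over $\pi_0(\MP)$, and the defining universal property of the Lazard ring $L$ supplies a homomorphism of graded rings $\phi\colon L\to\pi_0(\MP)$ classifying $F$. With $\phi$ built this way, the second assertion of the theorem --- that $\phi$ identifies the universal formal group law with $F$ --- is a tautology, so the entire content is that $\phi$ is an isomorphism. I would establish this in three steps: that $\phi$ becomes an isomorphism after inverting all primes, that $\phi$ is injective, and that $\phi$ is surjective.

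Rationally, the Thom isomorphism together with the rational Hurewicz theorem identifies $\pi_0(\MP)\otimes\mathbf Q\simeq H_*(\MU;\mathbf Q)\simeq H_*(\mathrm{BU};\mathbf Q)$, a polynomial $\mathbf Q$-algebra with one generator in each positive even degree, and Mishchenko's formula writes the logarithm of $F$ as $\ell(t)=\sum_{n\ge 0}\tfrac{[\mathbf{CP}^n]}{n+1}t^{n+1}$, the $[\mathbf{CP}^n]$ forming a set of polynomial generators. Since Lazard's theorem over a $\mathbf Q$-algebra presents $L\otimes\mathbf Q$ as the polynomial algebra on the coefficients of the universal logarithm, $\phi\otimes\mathbf Q$ carries a set of polynomial generators to one, hence is an isomorphism. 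Injectivity of $\phi$ then follows at once from Lazard's theorem that $L\cong\mathbf Z[a_1,a_2,\dots]$ is a polynomial ring, so torsion-free, and therefore embeds in $L\otimes\mathbf Q$.

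Surjectivity is the real content, and I would argue it on indecomposables: as $\pi_0(\MP)\cong\MU_*$ is connected graded, it is enough to see that the image of $\phi$ spans the indecomposable quotient $Q\MU_{2n}\cong\mathbf Z$ for every $n\ge1$. The coefficients $a_{ij}$ of $F$ lie in the image of $\phi$ by construction, so it suffices that these span $Q\MU_{2n}$, which I would check after passing to homology. The Hurewicz map $\MU_*\hookrightarrow H_*(\MU;\mathbf Z)\cong\mathbf Z[b_1,b_2,\dots]$ is injective onto this polynomial ring (Thom, Milnor), and computing the formal group law it transports $F$ to --- namely $\exp(\ell_H x+\ell_H y)$ with $\ell_H(t)\equiv t+\sum_{i\ge1}b_it^{i+1}$ modulo decomposables --- shows that $a_{i,\,n+1-i}$ maps, modulo decomposables, to a unit multiple of $\binom{n+1}{i}b_n$. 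On the other hand, Milnor's determination of $\MU_*$ identifies the image of $Q\MU_{2n}$ in $QH_{2n}(\MU;\mathbf Z)=\mathbf Z b_n$ with $d_n\mathbf Z b_n$, where $d_n=\gcd_{1\le i\le n}\binom{n+1}{i}$ equals $p$ if $n+1$ is a power of a prime $p$ and $1$ otherwise. Since the integers $\binom{n+1}{i}$ have exactly this greatest common divisor, the classes $a_{i,\,n+1-i}$ span $Q\MU_{2n}$; hence $\phi$ is surjective on indecomposables, and therefore surjective.

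The crux is precisely this last step. The rational isomorphism and integral injectivity together do not force surjectivity --- a degree-preserving injection between polynomial rings on generators in matching degrees can perfectly well fail to be onto, as multiplication by $2$ on $\mathbf Z[t]$ shows --- and what saves the argument is the arithmetic of the binomial coefficients controlling the indecomposables of $L$ and of $\pi_0(\MP)$. An alternative to the homological computation is Quillen's original geometric argument: one produces a generating set of $\MU_*$ from bordism classes of zero loci of sections of line bundles on products of projective spaces, computes such a class by pushing forward the Euler class of the line bundle --- which is $F$ applied iteratively to hyperplane classes, hence a polynomial in the coefficients of $F$ --- and thereby places these generators in the image of $\phi$.
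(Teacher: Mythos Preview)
The paper does not give its own proof of this theorem: it is stated in the Introduction as a classical result, cited to Quillen's original paper, and used as foundational input throughout. Indeed, the paper explicitly remarks (in Subsection~3.6) that ``the standard proof of Quillen's Theorem \ldots\ relies on combining two computations: Lazard's Theorem on $L$ and Milnor's Theorem on $\pi_0(\MP)=\pi_*(\MU)$, which identify both with polynomial rings in countably many variables,'' and poses finding an independent algebro-geometric proof as an open problem, reducing it to an unproved Claim at the end of that subsection. So there is no proof in the paper to compare against.

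Your sketch is precisely the standard proof the paper alludes to, and it is correct in outline. One small point is worth making explicit: in the surjectivity step you conclude that the $a_{i,\,n+1-i}$ span $Q\MU_{2n}$ from the fact that their Hurewicz images span the image of $Q\MU_{2n}$ in $QH_{2n}(\MU;\mathbf Z)$. This tacitly uses that the Hurewicz map is \emph{injective on indecomposables}, which does not follow formally from injectivity as a ring map (an injection of polynomial rings can kill indecomposables, e.g.\ $\mathbf Z[x]\hookrightarrow\mathbf Z[a,b]$ via $x\mapsto ab$). It is true here, but it is part of the content of Milnor's computation---the generator of $Q\MU_{2n}$ maps to $d_n b_n\ne 0$---which you are already invoking, so no harm done once this is stated.
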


Formal group laws give rise to formal groups, and since the Lazard ring $L$ classifies formal groups, this produces a map
$\Spec(L)\to \mathcal M^\heart_\mathrm{FG}$ to the moduli stack of formal groups. The fact that any formal group can be locally presented by a formal group law, implies that $\Spec(L)\to\Mo$ is a faithfully flat cover. Quillen's Theorem may be reformulated in terms of this cover, as asserting that periodic complex bordism exhibits a groupoid presentation for the moduli stack of formal groups as
$$
\Mo\simeq \varinjlim\big(\Spec(\pi_0(\mathrm{MP}\o\mathrm{MP})\rightrightarrows \Spec(\pi_0(\mathrm{MP}))\big).
$$
One crucial consequence of this is that the second page of the \textit{Adams-Novikov spectral sequence}
$$
E^{s,t}_2 = \mathrm{Ext}^{s,t}_{\pi_0(\MP\o\MP)} (\pi_*(\MP), \pi_*(\MP))\Rightarrow \pi_{t-s}(S)
$$
may be determined purely in terms of algebro-geometric formal group data as
\begin{equation}\label{second page}
E^{s, 2t}_2 \simeq \mathrm H^s(\mathcal M^\heart_\mathrm{FG}; \omega_{\mathcal M^\heart_\mathrm{FG}}^{\otimes t}).
\end{equation}
Here $\omega_{\mathcal M^\heart_\mathrm{FG}}$ is the quasi-coherent sheaf on $\Mo$, collecting the modules of invariant Kähler differentials on formal groups.

The Adams-Novikov spectral sequence is one of our most powerful computational tools for studying the stable homotopy groups of spheres. Following the insights of Jack Morava, much of the structure of the stable stem uncovered this way has since been explained, using the isomorphism \eqref{second page}, in terms of the structure of the moduli stack $\mathcal M_\mathrm{FG}^\heartsuit$. More precisely, this was traditionally  viewed in terms of formal groups laws instead of the stack of formal groups $\mathcal M_\mathrm{FG}^\heartsuit$. The  stacky perspective on chromatic homotopy theory, which may be seen as a natural outgrowth of  Morava's orbit picture \cite[Chapter 4]{Orange Book},  was first explicitly emphasized and disseminated by Mike Hopkins in \cite{COCTALOS}. For a more thorough account of chromatic homotopy theory from this perspective, see \cite{Lurie Chromatic}.

\begin{remmark}\label{periodica}
Most of the literature on chromatic homotopy theory, e.g.~\cite{Green Book} and \cite{Orange Book}, work with complex oriented ring spectra, without the weakly  2-periodicity assumption we imposed above. In particular, both Quillen's Theorem and the Adams-Novikov spectral sequence are usually stated in terms of the complex bordism spectrum $\mathrm{MU}$, instead of its periodic variant $\mathrm{MP}$. However, for instance by \cite[Example 7.4]{Goerss on ANSS}, the Adams-Novikov spectral sequence is the same whether formed in terms of $\mathrm{MU}$ or in terms of $\mathrm{MP}$.
\end{remmark}

In this paper, we show how the above-discussed relationship between the moduli stack of formal groups $\mathcal M^\heart_\mathrm{FG}$ and spectra, may be encoded in terms of spectral algebraic geometry. That is a variant of usual algebraic geometry, in which the basic role of ordinary commutative rings as affines is instead replaced by $\E$-ring spectra (called just $\E$-rings for short). These homotopy-coherent analogues of commutative rings, first introduced in \cite{MQR}, have since been shown to accommodate analogues of much of usual commutative algebra, e.g.~\cite{EKMM}. For an account from an $\i$-categorical perspective, see for instance \cite{HA}. Spectral algebraic geometry was first introduced in \cite[Chapter 2.4]{HAG2}, its foundations laid out extensively  in \cite{SAG}, and it has since found spectacular applications in \cite{Elliptic 2} by providing  elegant approaches to topological modular forms and Lubin-Tate spectra.

Unfortunately, the theory developed in \cite{SAG} works under a connectivity assumption, or alternatively without it, but then restricting itself to  Deligne-Mumford stacks. Those are roughly the stacks which admit an \'etale cover by schemes, and provide a sufficient context for many applications. This is not the case when dealing with the moduli stack of formal groups, or spectral enhancements thereof, since it only admits a flat cover by schemes - see Remark \ref{Why fpqc}. It similarly does not suffice for us to restrict to the connective setting, since our main spectral stacks of interest $\M$, defined below, will be inherently non-connective. For this reason,  we spend Section \ref{Chapter 1} of this paper developing an appropriate setting of non-connective spectral algebraic geometry for our desired applications.

We follow a functor of points approach, considering a \textit{non-connective spectral stack} $X$ to be a certain kind of functor $X:\CAlg\to\mS$, going from the $\i$-category of (not-necessarily-connective) $\E$-rings $\CAlg$, and into the  $\i$-category of spaces $\mS$.
To every non-connective spectral stack $X$, we associate an \textit{underlying ordinary stack} $X^\heart : \CAlg^\heart\to\mS$, a (co)presheaf on the category of ordinary commutative rings $\CAlg^\heart$, as described in the next construction.

\begin{conns}\label{build underlying}
Write the non-connective spectral stack $X$  as a colimit 
$
X\simeq \varinjlim_i \Spec (A_i)
$
in the $\i$-category of non-connective spectral schemes of some diagram
of affine non-connective spectral schemes $\Spec(A_i)$, i.e.\ functors representable by $\E$-rings $A_i$. Then its underlying ordinary stack is defined to be the colimit of the corresponding diagram of ordinary affine schemes
$$
X^\heart := \varinjlim_i \Spec(\pi_0(A_i)),
$$
with the colimit computed in the $\i$-category of ordinary stacks.
\end{conns}

In the connective case discussed in \cite{SAG}, the underlying spectral stack $X^\heart$ of a spectral stack $X$ coincides with the functor restriction $X\vert_{\CAlg^\heart}$. That  no longer holds in the non-connective world, see Remark \ref{Heart is fickle}, making the notion more subtle.

\begin{remmark}
Throughout this paper, we adopt the terminology \textit{ordinary stack} to refer to certain kinds of functors $\CAlg^\heart \to \mS$  (essentially the higher stacks of \cite{Simpson} or \cite{Toen}) from the category of ordinary commutative rings $\CAlg^\heart$. These are objects of ordinary, as opposed to spectral, algebraic geometry, in the sense that they have nothing to do with $\E$-rings. On the other hand, they do take values in the $\i$-category of spaces $\mS$, while (the functors of points of) algebraic stacks in the algebro-geometric literature are mostly required to being valued in the $2$-category of groupoids $\mathcal G\mathrm{rpd}$. In light of the standard equivalence $\tau_{\le 1}(\mS)\simeq \mathcal G\mathrm{rpd}$ between $1$-truncated spaces and groupoids, our notion is a pure generalization - so a groupoid-valued functor like $\Mo$ is indeed a special case.
\end{remmark}

Similarly to the above construction of the underlying ordinary stack, any non-connective spectral stack $X$ admits an $\i$-category of quasi-coherent sheaves $\QCoh(X)$ by the limit formula
$$
\QCoh\big(\varinjlim_i\Spec(A_i)\big)\simeq \varprojlim_i\Mod_A,
$$
where $\Mod_A$ denotes the $\i$-category of $A$-module spectra. To be precise, this $\QCoh(X)$ is not analogous to the usual abelian category of quasi-coherent sheaves on an ordinary stack, but rather to the derived category of quasi-coherent sheaves. This kind of abuse of language permeates the transition from ordinary to derived or spectral algebraic geometry.
Under the assumption that the non-connective spectral stack $X$ is \textit{geometric} - roughly:  it admits a flat cover by affines - a quasi-coherent sheaf $\sF$ on $X$ in this sense induces homotopy groups $\pi_t(\sF)$, which are quasi-coherent sheaves in the usual sense on the underlying ordinary stack $X^\heart$. They are related to the spectrum $\Gamma(X; \sF)$ of global sections of $\sF$ by the \textit{descent spectral sequence} 
\begin{equation}\label{DSS for geometric}
E^{s,t}_2 = \mathrm H^s(X^\heart; \pi_t(\sF))\Rightarrow \pi_{t-s}(\Gamma(X; \sF)).
\end{equation}

This concludes the outline of non-connective spectral algebraic geometry, the details of which we develop Section \ref{Chapter 1}. We now move on to outlining our applications of it in Section \ref{Section 2} to the connection between formal groups and spectra.

Let us start with some motivation.
In his work on topological modular forms, sketched in \cite{survey} and fleshed out in \cite{Elliptic 2}, one of Lurie's central ideas was to define a moduli stack $\mathcal M^\mathrm{or}_\mathrm{Ell}$ purely inside the realm of spectral algebraic geometry, parameterizing certain spectral analogues of elliptic curves. The Goerss-Hopkins-Miller Theorem on the $\E$-structure on $\mathrm{TMF}$  then reduces to a comparison between this spectral stack $\mathcal M^\mathrm{or}_\mathrm{Ell}$, defined by a universal property, and the ordinary stack of elliptic curves $\mathcal M^\heart_\mathrm{Ell}$ in the usual sense.
Lurie also proved in \cite{Elliptic 2} that an analogue of this works to define Lubin-Tate spectra, also known as Morava E-theory, or completed Johnson-Wilson theory,  together with their $\E$-rings structures.

We show that a similar approach works when elliptic curves are replaced with formal groups. There does indeed exist a well-behaved theory of formal groups in spectral algebraic geometry, but in order to obtain the correct moduli stack, that is not sufficient. Following Lurie, we must impose the following somewhat exotic additional assumption, with no  direct classical analogue.

\begin{definnition}[{\cite[Definition 4.3.9]{Elliptic 2}},  Definition \ref{second def of or}]\label{Def of or}
An \textit{orientation} on a formal group $\w{\G}$ over an $\E$-ring $A$ is an element of $\pi_2(\w{\G}(\tau_{\ge 0}(A))$, which induces an equivalence $\omega_{\w{\G}}\simeq \Sigma^{-2}(A)$ of $A$-module spectra.
\end{definnition}

The notion of an orientation of a formal group, which might more accurately be called a $2$-shifted orientation, may seem somewhat arbitrary, but see Remark \ref{orientation motivation} for some motivation.
In terms of this notion, define our central  object of interest $\mathcal M^\mathrm{or}_\mathrm{FG}$ to be the \textit{moduli stack of oriented formal groups}. That is to say, it is  the functor $\M:\CAlg\to\mS$, sending any $\E$-ring to the space of oriented formal groups over it. This space turns out to be 
particularly simple: it is contractible if and only if the $\E$-ring $A$ is complex periodic (both weakly $2$-periodic periodic and complex orientable), and empty otherwise. This is the content of \cite[Proposition 4.3.23]{Elliptic 2}, summarized here as Proposition \ref{Paying proper credit}, and is some sense the key technical observation underlying this paper.

\begin{remmark}
Though the notion of an oriented formal group is due to Lurie, their moduli stack $\M$ does not explicitly appear in his work. Related stacks, such as the moduli of oriented elliptic curves and oriented deformation spaces, are considered and studied in depth in \cite{Elliptic 2}. We summarize their relationship to $\M$ in Example \ref{What came before}. Therefore, although our analysis of it is based firmly on the Lurie's foundational work, the consideration of the stack $\M$ is a novelty of the present work.
\end{remmark}

\begin{motto}
The algebraic geometry of the non-connective spectral stack $\M$ manifests chromatic homotopy theory.
\end{motto}

The following is our main result, making precise in what way the non-connective spectral stack $\M$ captures the relationship between formal groups and spectra.

\begin{theoremm}[Theorem \ref{M is geometric},  Corollary \ref{Underlying ordinary stack is MFG}, Proposition \ref{functions are S}, Proposition \ref{ANSS}, and Theorem \ref{IndCoh is Sp} ]\label{Maintheoremintrod}
The following statements hold for the moduli stack of oriented formal groups $\M,$ as defined above:
\begin{enumerate}[label = (\roman*)]
\item It is a geometric non-connective spectral stack. \label{mtun}
\item Its underlying ordinary stack is canonical equivalent to\label{mtdeux}
the classical moduli stack of formal groups $\mathcal M^\heart_\mathrm{FG}$.
 \item There is a canonical equivalence of $\E$-rings\label{mttrois}
$
\sO(\M)\simeq S
$
between the $\E$-ring of global functions on $\M$ and the sphere spectrum.
\item The descent spectral sequence \label{mtquatre}
$$
E^{s,t}_2 = \mathrm H^s(\mathcal M_\mathrm{FG}^\heart; \pi_t(\sO_{\M}))\Rightarrow \pi_{t-s}(\sO(\M)).
$$
recovers the Adams-Novikov spectral sequence.
\item Under the definition of ind-coherent sheaves as in Definition \ref{Def of Coh}, there is a canonical \label{mtcinq} equivalence of symmetric monoidal $\i$-categories
\begin{eqnarray*}
\IndCoh(\M) &\simeq& \Sp\\
\sF &\mapsto &\Gamma(\M; \sF),
\end{eqnarray*}
given in terms of global sections of ind-coherent sheaves on $\M$.
\end{enumerate}
\end{theoremm}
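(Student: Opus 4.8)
The plan is to reduce to the faithfully flat cover $p\colon\Spec(\MP)\to\M$ supplied by the preceding subsections and to exploit the identification $\sO(\M)\simeq S$ of part \ref{mttrois}. Since $\M$ is geometric by part \ref{mtun} and $\sO(\M)\simeq S$, the \v{C}ech nerve of $p$ is the cosimplicial affine non-connective spectral scheme $[n]\mapsto\Spec(\MP^{\otimes(n+1)})$, with the tensor powers formed over $S$, so that $\M\simeq\varinjlim_{[n]}\Spec(\MP^{\otimes(n+1)})$. As perfect --- and more generally almost perfect --- modules satisfy fpqc descent, this gives a presentation
$$
\Coh(\M)\;\simeq\;\Tot\big([n]\mapsto\Coh(\Spec(\MP^{\otimes(n+1)}))\big);
$$
I would route through $\Coh$ rather than $\IndCoh$ at this stage because $\IndCoh$ does not satisfy flat descent, and only pass to $\IndCoh(\M)=\Ind(\Coh(\M))$ --- the definition recalled in Definition \ref{Def of Coh} --- at the very end. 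With this in hand, the goal becomes to prove that pullback along the structure morphism $\M\to\Spec(S)$ restricts to an equivalence $\operatorname{Perf}(S)\xrightarrow{\sim}\Coh(\M)$, where $\operatorname{Perf}(S)$ is the $\i$-category of finite spectra.

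First I would establish full faithfulness of this functor. For finite spectra $V$ and $W$ the spectrum $\Map_S(V,W)$ is again finite, hence dualizable, so combining the descent presentation above with base change and the dualizability of $V$ we obtain
$$
\Map_{\Coh(\M)}\big(V\otimes\sO_\M,\,W\otimes\sO_\M\big)\;\simeq\;\Tot\big([n]\mapsto\Map_S(V,W)\otimes\MP^{\otimes(n+1)}\big)\;\simeq\;\Map_S(V,W)\otimes\Tot\big(\MP^{\otimes(\bullet+1)}\big).
$$
The last totalization is the $\MP$-nilpotent completion of the sphere, and $S\to\Tot(\MP^{\otimes(\bullet+1)})$ is an equivalence since the sphere spectrum is $\MU$-nilpotent complete --- a classical fact, and precisely the statement underlying the convergence of the descent spectral sequence in part \ref{mtquatre}. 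Hence the mapping spectrum in question is $\Map_S(V,W)$, as needed; equivalently, this records that $\End_{\QCoh(\M)}(\sO_\M)\simeq\Gamma(\M;\sO_\M)\simeq S$ by part \ref{mttrois}.

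What remains is essential surjectivity, and this is where the real work lies --- it is the claim that
\begin{equation}\label{eq:keyclaim}
\Coh(\M)\;=\;\Thick_{\QCoh(\M)}(\sO_\M),
\end{equation}
and the one place where the orientation is used in an essential way. Over the classical moduli stack $\Mo$ the analogous assertion is simply false: there $\operatorname{Perf}(\Mo)$ requires the entire family of pairwise non-isomorphic line bundles $\omega_{\Mo}^{\otimes t}$, $t\in\Z$, as generators. On $\M$, however, an orientation of the universal formal group is, by Definition \ref{Def of or}, exactly the datum of an equivalence $\omega_\M\simeq\Sigma^{-2}\sO_\M$, so that $\omega_\M^{\otimes t}\simeq\Sigma^{-2t}\sO_\M$ and the whole family of twists degenerates to mere shifts of the structure sheaf. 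To turn this into a proof of \eqref{eq:keyclaim} I would work with the descent presentation $\Coh(\M)\simeq\Tot(\Coh(\Spec(\MP^{\otimes(\bullet+1)})))$, use that $\MP$ is complex periodic with $\pi_0(\MP)\simeq L$ (Quillen's theorem, equivalently part \ref{mtdeux}), and show first that every coherent sheaf on $\M$ lies in the thick subcategory generated by $\operatorname{Pic}(\M)$-twists of $\sO_\M$ --- a statement about perfect modules over the even periodic $\E$-rings $\MP^{\otimes(n+1)}$ together with their descent data --- and then invoke the orientation to rewrite these twists as suspensions of $\sO_\M$. I expect this to be the main obstacle of the argument.

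Granting \eqref{eq:keyclaim}, the conclusion is formal. Since $\operatorname{Perf}(S)\xrightarrow{\sim}\Coh(\M)$ by pullback, applying $\Ind(-)$ yields
$$
\IndCoh(\M)\;=\;\Ind(\Coh(\M))\;\simeq\;\Ind(\operatorname{Perf}(S))\;=\;\Mod_S\;=\;\Sp,
$$
which is the Morita-theoretic (Schwede--Shipley) statement that $\IndCoh(\M)\simeq\Mod_{\End(\sO_\M)}$. Under this equivalence the inverse of pullback on $\Coh(\M)$ is global sections: for $V\in\operatorname{Perf}(S)$ the projection formula and part \ref{mttrois} give $\Gamma(\M;V\otimes\sO_\M)\simeq V\otimes\Gamma(\M;\sO_\M)\simeq V$, so the resulting equivalence $\IndCoh(\M)\simeq\Sp$ is the $\Ind$-extension of $\Gamma(\M;-)|_{\Coh(\M)}$, i.e.\ exactly the (renormalized) global sections functor on $\IndCoh(\M)$. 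Symmetric monoidality is then inherited: pullback along $\M\to\Spec(S)$ is symmetric monoidal, hence so is its inverse $\operatorname{Perf}(S)\simeq\Coh(\M)$, and $\Ind(-)$ of a symmetric monoidal equivalence is again one of symmetric monoidal $\i$-categories. In summary, everything reduces to \eqref{eq:keyclaim}: the orientation is precisely what promotes the structure sheaf $\sO_\M$ to a single generator of the $\i$-category of coherent sheaves on $\M$, which over the unoriented stack $\Mo$ it fails to be.
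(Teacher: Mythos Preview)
Your proposal addresses only part \ref{mtcinq}, taking \ref{mtun}--\ref{mtquatre} as established; the paper's sketch in the introduction, by contrast, outlines all five parts and points to their detailed proofs elsewhere. But the more serious issue is a misreading of Definition \ref{Def of Coh}. You write ``$\IndCoh(\M)=\Ind(\Coh(\M))$ --- the definition recalled in Definition \ref{Def of Coh}'' and then treat $\Coh(\M)$ as an independently-defined category of (almost) perfect complexes satisfying fpqc descent. That is not what the paper does: Definition \ref{Def of Coh} \emph{defines} $\IndCoh(X)$ to be the ind-completion of $\Thick_{\QCoh(X)}(\sO_X)$, the thick subcategory of $\QCoh(X)$ generated by the structure sheaf alone. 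With this definition your ``key claim'' \eqref{eq:keyclaim} is a tautology, and the entire ``main obstacle'' --- showing that every coherent sheaf lies in the thick subcategory generated by Picard twists and then collapsing those twists via the orientation --- simply evaporates.

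The paper's actual proof of \ref{mtcinq} (Theorem \ref{IndCoh is Sp}) is accordingly much shorter than yours. One shows that the unit of the adjunction $\sO_{\M}\otimes - : \Sp \rightleftarrows \QCoh(\M) : \Gamma(\M;-)$ is an equivalence on finite spectra: both functors are exact, so it suffices to check $M=S$, which is precisely \ref{mttrois}. Hence $M\mapsto\sO_{\M}\otimes M$ is fully faithful on $\Sp^{\mathrm{fin}}$, its essential image is by construction $\Thick_{\QCoh(\M)}(\sO_{\M})$, and ind-completing gives $\Sp\simeq\IndCoh(\M)$. Your full-faithfulness argument via descent and $\MP$-nilpotent completion is correct and amounts to the same thing, just organized differently; but the essential-surjectivity half you flag as the hard part is, under the paper's definition, automatic. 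The paper is explicit (in the remark following Definition \ref{Def of Coh}) that this definition of $\IndCoh$ is ``quite fanciful'' and chosen precisely to make the statement accessible --- so your instinct that a more standard $\Coh$ would make \ref{mtcinq} substantially harder is well-founded, but it is not the theorem being claimed.
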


\begin{proof}[Sketch of proof]  Choosing an $\E$-ring structure on the periodic complex bordism spectrum $\MP$ gives rise to a morphism $\Spec(\MP)\to\M$. We first show that this is a flat cover, in particular verifying \ref{mtun}.  The \v{C}ech nerve of this cover provides a simplicial presentation
$$
\xymatrix{
\M \,\, \simeq \,\, \varinjlim\Big( \cdots  \ar@<-1.5ex>[r]\ar@<-.5ex>[r] \ar@<.5ex>[r] \ar@<1.5ex>[r]& \Spec(\MP\o\MP\o\MP)\ar@<-1ex>[r]\ar[r] \ar@<1ex>[r] &\Spec(\MP\o\MP)\ar@<-.5ex>[r]\ar@<.5ex>[r] & \Spec(\MP)\Big ).
}
$$
In light of this presentation, Quillen's Theorem implies the underlying stack statement \ref{mtdeux}. Deducing  \ref{mttrois} boils down to well-known facts about nilpotent completion: namely that the sphere spectrum $S$ equals its own nilpotent completion with respect to the complex bordism spectrum, as first proved by Bousfield \cite[Theorem 6.5]{Bousfield}.
The assertion \ref{mtcinq} is a simple conseuqnce of \ref{mttrois}, in light of the definition of ind-coherent sheaves we are using. It remains only to verify \ref{mtquatre}, i.e.\ to identify the descent spectral sequence and the Adams-Novikov spectral sequences.  Again through the simplicial presentation, the descent spectral sequence in question is identified with the spectral sequence associated to a cosimplicial spectrum. That cosimplicial spectrum  is $\MP^{\otimes \bullet+1}$, often called the cobar resolution or the Amitsur complex. This implies  \ref{mtquatre}, since the standard construction Admas-Novikov spectral sequence is as the spectral sequence of that same cosimplicial spectrum, see \cite[Proposition 2.14]{MNN}.
\end{proof}

Theorem \ref{Maintheoremintrod} may be viewed as one
solution to the realization problem posed 
 in \cite{Goerss Realizing}. It asks roughly whether it is possible to enhance the classical stack of formal groups $\Mo$ to a spectral stack, whose descent spectral sequence would recover the Adams-Novikov spectral sequence. The setting for non-connective spectral algebraic geometry in \textit{loc.~cit.} is however slightly different from the ones we use in this paper. In that context, one is looking for an appropriate sheaf of $\E$-rings on the underlying ordinary stack, which then serves as the spectral structure sheaf.  Instead of such a vaguely \textit{ringed space} approach to non-connective spectral stacks, we tackle them exclusively from a \textit{functor of points} perspective. As such, the relationship with their underlying ordinary stacks, while still sufficiently workable, is somewhat less transparent.

On the other hand,
the fact that the Adams-Novikov spectral sequence may be interpreted in this way in terms of the simplicially-given non-connective spectral stack given above, is not new. For instance, it is mentioned explicitly in \cite[Remark 9.3.1.9]{SAG}. The novelty of our approach is, from this perspective, instead in describing the non-connective spectral stack $\M$ directly as a moduli stack, rather than  defining it in an \textit{ad hoc} way in terms of the complex bordism spectrum.

In the last section of the paper, Section \ref{Section 3}, we turn to the connection between the moduli stack of oriented formal groups $\M$ and periodic complex bordism $\MP$. When neglecting the $\E$-ring structure, the underlying spectrum of  $\MP$ admits the following description:

\begin{corollarry}[Remark \ref{MP as a homology theory}]
The extraordinary 
homology theory of periodic complex bordism $\MP_n :\mS\to\mathrm{Ab}$ may be written for any space $X$ as
$$
\MP_n(X)
\simeq
\Gamma\big(\mathcal M^\heart_\mathrm{FG}; \pi_n(\sO_{\M}[X] )\otimes_{\sO_{\mathcal M^\heart_\mathrm{FG}}}
\sO_{\mathcal M^{\heart,\mathrm{coord}}_\mathrm{FG}}\big),
$$
where $\mathcal M^{\heart, \mathrm{coord}}_\mathrm{FG}$ is the moduli stack of coordinatized formal groups, e.g.\ formal group laws.
\end{corollarry}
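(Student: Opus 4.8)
The plan is to combine the groupoid presentation $\M \simeq \varinjlim \Spec(\MP^{\otimes \bullet+1})$ from Theorem \ref{Maintheoremintrod} with the analogous presentation of the coordinatized stack $\mathcal M^{\heart,\mathrm{coord}}_\mathrm{FG}$, and with the fact that $\MP$-homology is the homotopy of $\MP \o X_+$. First I would unwind the right-hand side: the structure sheaf $\sO_\M$ has, by Theorem \ref{Maintheoremintrod}\ref{mttrois} and the descent presentation, global sections computed by the cosimplicial spectrum $\MP^{\otimes\bullet+1}$, and tensoring with a space $X$ (i.e.\ forming $\sO_\M[X]$) replaces this by $\MP^{\otimes\bullet+1}\o X_+$; taking $\pi_n$ and global sections over $\Mo$ is exactly the descent spectral sequence, which in this case (the $\MP$-Amitsur complex is a resolution because $\MP_*\MP$ is flat over $\MP_*$, i.e.\ $(\MP,\MP_*\MP)$ is a flat Hopf algebroid) degenerates to give $\pi_n(\MP\o X_+) = \MP_n(X)$ on the nose, once one has passed from $\M$ down to $\Mo$. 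The subtlety is that $\pi_n(\sO_\M[X])$ is a quasi-coherent sheaf on $\Mo$, not a mere $\MP_*$-module, so one must account for the comodule structure; this is precisely what the extra tensor factor $\sO_{\mathcal M^{\heart,\mathrm{coord}}_\mathrm{FG}}$ over $\sO_{\Mo}$ does.

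The key geometric input is that the map $q\colon \mathcal M^{\heart,\mathrm{coord}}_\mathrm{FG} \to \Mo$ — the stack of formal groups equipped with a coordinate, which is a torsor under the (pro-unipotent) group of coordinate changes and is represented, after choosing the universal coordinate, by $\Spec(\pi_0\MP) = \Spec(L)$ with its complete flag of higher structure — is an affine faithfully flat cover, and that $q_*\sO_{\mathcal M^{\heart,\mathrm{coord}}_\mathrm{FG}}$ is the quasi-coherent sheaf on $\Mo$ whose global sections against any quasi-coherent $\sF$ recover the underlying $\MP_*$-module of the corresponding $\MP_*\MP$-comodule. Concretely, for a quasi-coherent sheaf $\sF$ on $\Mo$ corresponding to a comodule $M$, one has $\Gamma(\Mo; \sF \o_{\sO_{\Mo}} q_*\sO_{\mathcal M^{\heart,\mathrm{coord}}_\mathrm{FG}}) \simeq \Gamma(\mathcal M^{\heart,\mathrm{coord}}_\mathrm{FG}; q^*\sF)$ by the projection formula, and the right-hand side is $q^*\sF$ evaluated on the affine cover, namely $M$ itself (the cover being affine kills higher cohomology). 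Applying this with $\sF = \pi_n(\sO_\M[X])$ and identifying the associated comodule with $\MP_n(X)$ via Quillen's theorem (Theorem \ref{Maintheoremintrod}\ref{mtdeux}) yields the claim.

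So the steps, in order, are: (1) rewrite $\Gamma(\M; \sO_\M[X])$ via the $\MP$-descent presentation as the spectrum underlying the cosimplicial object $\MP^{\otimes\bullet+1}\o X_+$, and identify $\pi_n(\sO_\M[X])$ as the quasi-coherent sheaf on $\Mo$ associated to the $\MP_*\MP$-comodule $\pi_n(\MP^{\otimes\bullet+1}\o X_+)$ in cosimplicial degree $0$, i.e.\ to $\MP_*(X)$ in the relevant degrees; (2) identify $\mathcal M^{\heart,\mathrm{coord}}_\mathrm{FG} \to \Mo$ as an affine flat cover with $q_*\sO$ the "forget the comodule structure" sheaf; (3) apply the projection formula plus vanishing of higher cohomology over an affine to conclude $\Gamma(\Mo; \pi_n(\sO_\M[X]) \o_{\sO_{\Mo}} \sO_{\mathcal M^{\heart,\mathrm{coord}}_\mathrm{FG}}) \simeq \MP_n(X)$. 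The main obstacle I expect is bookkeeping in step (1): making precise that forming $\sO_\M[X]$ commutes with the colimit presentation and with $\pi_n$, and pinning down exactly which graded piece of $\pi_*(\MP\o X_+)$ appears — this is where the $2$-periodicity of $\MP$ and the grading conventions on $\omega_{\Mo}$ (implicit in how $\sO_\M$ restricts to $\Mo$) have to be handled carefully, since $\sO_\M$ is a $2$-periodic sheaf and its homotopy sheaves over $\Mo$ are the powers $\omega_{\Mo}^{\otimes t}$ as in \eqref{second page}. Step (2) is essentially classical (it is the statement that $\MP_*\MP$-comodules are quasi-coherent sheaves on $\Mo$, with $\Spec(\MP_*\MP) \to \Spec(\MP_*)$ the two projections), and step (3) is formal.
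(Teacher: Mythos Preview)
Your approach is correct and arrives at the same conclusion, but the paper gets there more directly. Rather than unwinding the full cosimplicial object $\MP^{\otimes\bullet+1}\o X_+$ and identifying $\pi_n(\sO_{\M}[X])$ as the sheaf corresponding to the comodule $\MP_n(X)$ via that resolution, the paper uses a single flat-pullback step: since the map $f:\Spec(\MP)\to\M$ is flat (Lemma \ref{Quillen}), Lemma \ref{any flat pullback} gives
\[
(f^\heart)^*\big(\pi_n(\sO_{\M}\o X)\big)\;\simeq\;\pi_n\big(f^*(\sO_{\M}\o X)\big)\;\simeq\;\pi_n(\MP\o X)\;=\;\MP_n(X).
\]
This is exactly Proposition \ref{sheaves F}, and it sidesteps the bookkeeping you flagged as the main obstacle in your step (1): there is no need to track the cosimplicial structure or invoke degeneration of any spectral sequence. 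Your steps (2) and (3) --- identifying $f^\heart:\Spec(L)\simeq\mathcal M^{\heart,\mathrm{coord}}_\mathrm{FG}\to\Mo$ as affine and then using the projection formula to rewrite the pullback as global sections of a tensor product --- coincide with the paper's passage from Proposition \ref{sheaves F} to the formula in Remark \ref{MP as a homology theory}. So the difference is entirely in step (1): the paper replaces your cosimplicial analysis by a single application of ``$\pi_n$ commutes with flat pullback'', which is cleaner and avoids any periodicity or grading headaches.
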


The situation becomes much subtler when trying to take an $\E$-structure into account. By \cite{Hahn-Yuan}, the spectrum $\MP$ (or even, $\mathbb E_2$-ring spectrum) admits at least two distinct non-equivalent $\E$-ring structures:
\begin{itemize}
\item The \textit{Thom spectum $\E$-ring} $\mathrm{MUP}$ is the Thom spectrum of the $J$-homomorphism $J :\mathbf Z\times\mathrm{BU}\to\Sp$, obtained by group completion from the symmetric monoidal functor $\mathrm{Vect}^\simeq_{\mathbf C}\to \Sp$, which sends a finite dimensional complex vector space $V$ to the suspension spectrum of its one-point compacification $S^V=V\cup\{\infty\}$.
\item The \textit{Snaith construction} $\MP_\mathrm{Snaith} := (S[\mathrm{BU}])[\beta^{-1}]$ is obtained from the suspension $\E$-ring $S[\mathrm{BU}]$ of the $\E$-space $\mathrm{BU}$, by inverting the canonical Bott element $\beta\in \pi_2(S[\mathrm{BU}])$, arising from the inclusion $S^2\simeq \mathbf {CP}^1\subseteq\mathbf{CP}^\i\simeq \mathrm{BU}(1)\subseteq \mathrm{BU}.$
\end{itemize}
By the main theorem of \cite{Hahn-Yuan}, these two are not equivalent as $\E$-rings. If we take $\MP$ to mean either of these, we obtain as discussed above a flat cover $\Spec(\MP)\to\M$. In light of it, we can explicitly describe the spectral algebro-geometric moduli problem  which periodic complex bordism corresponds to.

\begin{theoremm}[Theorem \ref{What MUP is} and Proposition \ref{What Snaith is}]
The affine non-connective spectral scheme $\Spec(\MP)$ parametrizes oriented formal groups $\w{\G}$ over $\E$-rings $A$, together with
\begin{itemize}
\item For  $\MP\simeq \mathrm{MUP}$:  a system of  trivializations $\omega^{\otimes V}_{\w{\G}}\simeq A$ of $A$-module spectra, functorial and symmetric monoidal in terms of finite-dimensional complex vector spaces $V$ and $\mathbf C$-linear isomorphisms.
\item {For  $\MP\simeq \MP_\mathrm{Snaith}$}: an $\E$-ring map $S[\mathrm{BU}]\to A$, which induces an equivalence of $A$-module spectra $\omega_{\w{\G}}\simeq A$.
\end{itemize}
\end{theoremm}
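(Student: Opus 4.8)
The plan is to compute the functor of points $B \mapsto \Map_{\CAlg}(\MP, B)$ separately for the two $\E$-ring models of $\MP$, in each case by unwinding the universal property that defines the model, and then to repackage the resulting mapping space in the language of oriented formal groups by means of the flat cover $\Spec(\MP) \to \M$ supplied by Theorem \ref{Maintheoremintrod}. Given any point of $\Spec(\MP)(A) = \Map_{\CAlg}(\MP, A)$, i.e.\ an $\E$-ring map $\MP \to A$, the composite $\Spec(A) \to \Spec(\MP) \to \M$ classifies an oriented formal group $\w{\G}$ over $A$ --- forcing $A$ to be complex periodic --- and since $\M(A)$ is then contractible (Proposition \ref{Paying proper credit}), this $\w{\G}$ is essentially canonical. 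So in both cases the real content is to identify the whole mapping space $\Map_{\CAlg}(\MP, A)$ with the claimed trivialization data layered on top of $\w{\G}$: the underlying oriented formal group contributes only a contractible, or empty, factor.

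For the Snaith model, the universal property of inverting the Bott class shows that $\Map_{\CAlg}(\MP_\mathrm{Snaith}, A) = \Map_{\CAlg}(S[\mathrm{BU}][\beta^{-1}], A)$ is the union of those connected components of $\Map_{\CAlg}(S[\mathrm{BU}], A)$ on which $\beta$ is carried to an element of $\pi_2(A)$ inducing an $A$-module equivalence $\Sigma^{-2}A \simeq A$. Since the underlying spectrum of $S[\mathrm{BU}][\beta^{-1}]$ is periodic complex bordism (Snaith's theorem), such a map exhibits $A$ as complex periodic, the oriented formal group $\w{\G}$ over $A$ is pulled back from the one over $\MP_\mathrm{Snaith}$, whose module of invariant differentials is $\Sigma^{-2}\MP_\mathrm{Snaith}$, and the induced map $\omega_{\w{\G}} \to A$ is, under this identification, multiplication by the image of $\beta$. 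The step requiring care is precisely this last assertion --- that under Lurie's construction of the formal group of a complex periodic $\E$-ring the orientation is carried by the Bott element, i.e.\ by the class of the composite $S^2 \simeq \mathbf{CP}^1 \hookrightarrow \mathbf{CP}^\infty = \mathrm{BU}(1) \hookrightarrow \mathrm{BU}$. Granting it, for a map $g \colon S[\mathrm{BU}] \to A$ the conditions that $g$ induce an equivalence $\omega_{\w{\G}} \simeq A$ and that $g$ carry $\beta$ to a unit coincide, and the two descriptions of the mapping space agree.

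For the Thom spectrum model, I would invoke the universal property of Thom $\E$-ring spectra (in the form due to Ando--Blumberg--Gepner--Hopkins--Rezk, and Lurie): writing $\mathrm{MUP} = \operatorname{Thom}(J)$ for the symmetric monoidal functor $J \colon \mathrm{Vect}^\simeq_{\mathbf C} \to \operatorname{Pic}(S)$, $V \mapsto \S S^V$, the space $\Map_{\CAlg}(\mathrm{MUP}, A)$ is the space of symmetric monoidal natural trivializations of the base-change $V \mapsto \S S^V \o_S A$ --- equivalently, of coherent families of $A$-module equivalences $\S S^V \o_S A \simeq A$, natural and symmetric monoidal in $V$ and in $\mathbf C$-linear isomorphisms. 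Restricting this data along $\mathbf{CP}^\infty = \mathrm{BU}(1) \hookrightarrow \bigsqcup_n \mathrm{BU}(n) = \mathrm{Vect}^\simeq_{\mathbf C}$ recovers a complex periodic structure on $A$, and hence the oriented formal group $\w{\G}$; the remaining symmetric monoidal family is then matched with symmetric monoidal trivializations $\omega^{\o V}_{\w{\G}} \simeq A$ through a canonical equivalence of symmetric monoidal functors $V \mapsto \S S^V \o_S \MP$ and $V \mapsto \omega^{\o V}_{\w{\G}}$ (up to duality, since $\S S^{\mathbf C} \o_S \MP \simeq \Sigma^2 \MP$ while $\omega_{\w{\G}} \simeq \Sigma^{-2}\MP$) on the universal example, again read off from Lurie's construction of the formal group of $\MP$.

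Thus the main obstacle, in both cases, is the same: translating the homotopy-theoretic bookkeeping ($\beta$, respectively the $\mathbf C$-linear $J$-homomorphism $V \mapsto \S S^V$) into the formal-group bookkeeping (the orientation, respectively the symmetric monoidal tower of invertible modules $\omega^{\o V}_{\w{\G}}$). Everything else --- the universal properties of localization and of Thom $\E$-rings, and the contractibility of $\M(A)$ for complex periodic $A$, already established in Theorem \ref{Maintheoremintrod} --- is either standard or in hand; the work lies in tracing through Lurie's identification of the formal group of a complex periodic $\E$-ring (via $\Spf$ of the appropriate function spectrum) carefully enough to locate $\omega_{\w{\G}}$ and its tensor powers inside the data of an $\E$-ring map out of $S[\mathrm{BU}]$, respectively $\mathrm{MUP}$.
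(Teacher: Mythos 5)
Your outline follows the paper's argument in both cases: for $\MP_\mathrm{Snaith}$ the universal property of Bott inversion reduces the claim to recognizing invertibility of $\beta$ as an orientation condition, and for $\mathrm{MUP}$ the ABGHR universal property of Thom $\E$-rings combined with group completion produces exactly the symmetric monoidal trivializations of $\omega_{\w{\G}}^{\otimes}$, the duality flip you flag being what the paper absorbs via the idempotent $(-)^{-1}$ on the group-like $\E$-space $(\Vect^\simeq_{\mathbf C})^{\mathrm{gp}}$. The one inaccuracy worth noting is that $\mathrm{MUP}$ is the Thom spectrum of $J$ on the group completion $\mathbf Z\times\mathrm{BU}$, not on $\Vect^\simeq_{\mathbf C}$ itself; this does not damage the argument, since $\mathcal P\mathrm{ic}^\dagger(A)$ is group-like and the resulting mapping space is insensitive to group completion, and indeed the paper's own passage from $(\Vect^\simeq_{\mathbf C})^{\mathrm{gp}}$ down to $\Vect^\simeq_{\mathbf C}$ rests on exactly this observation.
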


\begin{remmark}
Both of the universal properties for $\Spec(\MP)$ above have to do with oriented formal groups $\w{\G}$  over $\E$-rings $A$, equipped with an $A$-linear trivialization $\omega_{\w{\G}}\simeq A$ of the dualizing line $\omega_{\w{\G}}$, together with some built-in unitary group equivariance. The main difference in that in the case of the Thom spectrum $\E$-ring structure, the unitary group action ultimately stems from the usual action of $\mathrm U(n)$ on $\mathbf C^n$, while in the case of the Snaith construction, the trivial $\mathrm U$-action plays a role instead.
\end{remmark}

Finally we turn our attention to Quillen's Theorem itself, isolating its spectral algebro-geometric content as follows:

\begin{theoremm}[Theorem \ref{Quillen Rephrased}]
Quillen's Theorem is equivalent to assertion \ref{mtdeux} of Theorem \ref{Maintheoremintrod}, which is to say that there exists
an equivalence of ordinary stacks
$$
(\M)^\heart\simeq \mathcal M^\heart_\mathrm{FG}
$$
between the underlying ordinary stack $(\M)^\heart$ of the spectral moduli stack of oriented formal groups $\M,$ and the ordinary stack of formal groups $\mathcal M^\heart_\mathrm{FG}$.
\end{theoremm}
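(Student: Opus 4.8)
The plan is to unwind the putative equivalence $(\M)^\heart\simeq\Mo$ through the flat cover $\Spec(\MP)\to\M$ and Construction~\ref{build underlying}. Fix an $\E$-ring structure on $\MP$; by Theorem~\ref{Maintheoremintrod}\ref{mtun} the induced map $\Spec(\MP)\to\M$ is a flat cover with \v{C}ech nerve $\Spec(\MP^{\otimes\bullet+1})$, so applying $\pi_0$ termwise and invoking Construction~\ref{build underlying} identifies $(\M)^\heart$ with the colimit in ordinary stacks of the groupoid $\Spec(\pi_0(\MP\o\MP))\rightrightarrows\Spec(\pi_0(\MP))$. A complex orientation of $\MP$ produces the formal group law $\MP^0(\mathbf{CP}^\infty)$ over $\pi_0(\MP)$, hence a ring homomorphism $\phi\colon L\to\pi_0(\MP)$; pulled back along the two units $\MP\rightrightarrows\MP\o\MP$ it acquires the attendant universal-isomorphism datum, so $\phi$ refines to a morphism from the above groupoid to the one presenting $\Mo$ via the cover $\Spec(L)\to\Mo$, and thus to a comparison morphism $q\colon(\M)^\heart\to\Mo$ of ordinary stacks. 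The \emph{forward} implication is then immediate, and is already the content of the proof of Theorem~\ref{Maintheoremintrod}\ref{mtdeux}: if Quillen's Theorem holds, $\phi$ is an isomorphism carrying $\MP^0(\mathbf{CP}^\infty)$ to the universal formal group law, hence an isomorphism of groupoids, so $q$ is an equivalence.

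For the \emph{converse}, assume $(\M)^\heart\simeq\Mo$. Taking underlying ordinary stacks of the flat cover $\Spec(\MP)\to\M$ yields a flat cover $\Spec(\pi_0(\MP))\to(\M)^\heart\simeq\Mo$, and the goal is to upgrade this to the assertion that $\phi$ is an isomorphism matching $\MP^0(\mathbf{CP}^\infty)$ with the universal formal group law. By Lurie's analysis of oriented formal groups and their relation to complex orientations (Proposition~\ref{Paying proper credit} and the surrounding discussion), the underlying ordinary formal group of the universal oriented formal group over $\MP$ is the one attached to a complex orientation, so this cover classifies precisely that formal group; a complex orientation moreover coordinatizes it canonically, so the cover factors through $\mathcal M^{\heart,\mathrm{coord}}_\mathrm{FG}=\Spec(L)$ via $\Spec(\phi)$. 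It then suffices to see that $\Spec(\phi)$ is an isomorphism.

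This last step is the crux, and the expected main obstacle: an abstract equivalence $(\M)^\heart\simeq\Mo$ does not by itself pin down $q$, let alone force $\Spec(\phi)$ to be an isomorphism, since a flat affine cover of $\Mo$ need not be the coordinatized one. To close the gap I would feed in the universal property of the cover $\Spec(\MP)\to\M$ recorded in Theorem~\ref{What MUP is} (or, in the Snaith model, Proposition~\ref{What Snaith is}): it presents $\Spec(\MP)$ as the moduli of oriented formal groups decorated with a coordinatization-type structure, so that on underlying ordinary stacks the cover $\Spec(\pi_0(\MP))\to(\M)^\heart\simeq\Mo$ carries the very universal property that defines $\mathcal M^{\heart,\mathrm{coord}}_\mathrm{FG}\to\Mo$, namely ``a formal group equipped with a coordinate''. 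The Yoneda lemma then supplies the isomorphism $\pi_0(\MP)\cong L$ identifying $\MP^0(\mathbf{CP}^\infty)$ with the universal formal group law, which is Quillen's Theorem. I expect the forward direction to be entirely routine, with essentially all the work consisting in making this comparison of underlying-ordinary-stack universal properties precise.
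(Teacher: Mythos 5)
Your forward direction matches the paper, and you correctly identify the crux in the converse: an abstract equivalence $(\M)^\heart\simeq\Mo$ does not pin down what the cover $\Spec(\pi_0(\MP))\to\Mo$ looks like, so one must say why it is the coordinatization cover. The gap is in how you propose to close this.

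Your plan is to pass the universal property of Theorem~\ref{What MUP is} to underlying ordinary stacks, obtaining ``formal group with a coordinate.'' This does not work as stated, for two reasons. First, for non-connective stacks the underlying ordinary stack is \emph{not} restriction to discrete $\E$-rings (Remark~\ref{Heart is fickle}); it is computed via the colimit formula of Construction~\ref{build underlying}. A universal property of a non-connective spectral stack therefore does not transfer to its heart by simply evaluating on ordinary rings, and Theorem~\ref{What MUP is} says nothing directly about $\Spec(\pi_0(\mathrm{MUP}))$. Second, even setting that aside, the fiber of $\Spec(\mathrm{MUP})\to\M$ is $\mathrm{Triv}^\o_{\mathbf C}(\omega_{\w{\G}})$, which carries strictly more structure than a coordinate: it records a symmetric monoidal $\mathrm U(n)$-equivariant trivialization $\omega_{\w{\G}}^{\otimes n}\simeq A$ for every $n$, not merely the $\mathrm U(1)$-equivariant trivialization that a coordinate encodes. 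The paper constructs a forgetful map $\mathrm{Triv}^\o_{\mathbf C}(\omega_{\w{\G}})\to\mathrm{Coord}(\w{\G})$ in Subsection~\ref{Subsection 3.4. on coord} but emphatically does not claim it is an equivalence (Remark~\ref{Discussion MUP v Coord} isolates the $\mathrm U(1)^n\subseteq\mathrm U(n)$-extension as genuine extra structure). So the claim that on hearts one recovers precisely ``formal group with a coordinate'' is exactly what needs proving, and it cannot be read off from the universal property alone.

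The paper's actual argument avoids Theorem~\ref{What MUP is} entirely. It writes $\M\simeq\varinjlim_{A\in\mC}\Spec(A)$ over the full subcategory $\mC\subseteq\CAlg$ of complex-periodic $\E$-rings, uses universality of colimits in the topos and the affine-diagonal fact $\Spec(\MP)\times_{\M}\Spec(A)\simeq\Spec(\MP\otimes A)$ to get $\Spec(\MP)\simeq\varinjlim_{A\in\mC}\Spec(\MP\otimes A)$, passes $(-)^\heart$ through colimits to get $\Spec(\pi_0(\MP))\simeq\varinjlim_{A\in\mC}\Spec(\pi_0(\MP\otimes A))$, and likewise writes $\mathcal M^{\heart,\mathrm{coord}}_\mathrm{FG}\simeq\varinjlim_{A\in\mC}\mathrm{Coord}(\w{\G}{}^{\CMcal Q_0}_A)$ by pulling back the coordinate cover along the assumed identification $(\M)^\heart\simeq\Mo$. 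The two colimit diagrams are then matched term-by-term using the classical pre-Quillen computation $\Spec(\pi_0(\MP\otimes A))\simeq\mathrm{Coord}(\w{\G}{}^{\CMcal Q_0}_A)$ for complex-periodic $A$ (i.e.\ the $\MU$-homology of a complex oriented ring spectrum). That computation is the irreducible homotopy-theoretic input; your sketch replaces it with an appeal to universal properties that, as written, does not go through.
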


One striking aspect of this equivalent rephrasing of Quillen's Theorem, is that complex bordism makes no appearance in it. Indeed, our definition of the moduli stack of oriented formal groups $\M$ is completely independent of the spectrum $\MP$, whose role is instead only to provide a convenient flat cover of it. The proofs of most fundamental properties of the non-connective spectral stack $\M$ do still rely on complex bordism, so it and Quillen's Theorem are still essential, but perhaps their geometric role is clarified somewhat.

It would be interesting to have a direct algebro-geometric proof of the underlying ordinary stack result of the previous Theorem, which would therefore constitute a new proof of Quillen's Theorem. We hope to return to this problem in future work.
Another topic we will return to in a  follow-up \cite{ChromaticFiltration} to this paper, is the relationship  between chromatic localizations of spectra and the algebraic geometry of the moduli stack of oriented formal groups $\M$. There we will also clarify the connection between \cite{DevHop} and the present paper.  Let us point out, however, that our work is no way alone in expanding upon the ideas from \cite{Elliptic 2} for chromatic applications; see for instance \cite{Davies1}, \cite{Davies2}, and \cite{Sanath}.

\subsection*{Relationship to prior work}

The results of this paper constitute validation of an intuitive picture about how the field of chromatic homotopy theory should relate to spectral algebraic geometry. This picture is folklore in the field: it is implicit in \cite{COCTALOS}, indicated in \cite{Lurie Chromatic}, \cite[Section  2.4]{HAG2},  and made center-stage in \cite{PetersonBook}; but prior to this work, it had never been made rigorous in full generality. Perhaps the first explicit connection to spectral algebraic geometry, though from the perspective of sheaves of $\E$-rings, is in the work of Goerss-Hopkins-Miller on topological modular forms, as surveyed in \cite[Chapter 12]{TMF} or \cite{Goerss Bourbaki}.
The field of spectral algebraic geometry was largely set up, in the foundational monograph \cite{SAG} (and relying further on the  groundwork in $\i$-category theory and higher algebra \cite{HTT} and \cite{HA} respectively) to clarify this relationship. 
The precise connection to spectral algebraic geometry in the  deformation-theoretic and elliptic-curve settings is therefore due to the work of Lurie, as envisioned in the survey \cite{survey}, and carried out in  the series of papers \cite{Elliptic 1}, \cite{Elliptic 2}, \cite{Elliptic 3}. Others extended it in several directions:  Hill-Lawson to elliptic curves with level structure in \cite{Hill-Lawson},  Behrens-Lawson to certain higher-dimensional Shimura varieties \cite{TAF},  Davies to $p$-divisible groups in \cite{Davies1}, and more.

The role of this paper is to extend Lurie \textit{et.al.}'s work from the elliptic curve and formal deformation cases, to the entirety of formal groups (though certain remarks \cite[Example 9.3.1.8, Remark  9.3.1.9]{SAG} suggest this  might have been known to Lurie).
As discussed in \cite{Goerss Realizing} and \cite{Goerss Bourbaki}, the idea of extending the spectral-algebro-geometric story to the entirety of the stack of formal groups had been considered previously, but proved challenging to tackle using the contemporary obstruction-theoretic ringed-space approach. Other than building on the ideas of Lurie from \cite{Elliptic 2}, our technical innovation is to instead use the `functor of points' approach to non-connective spectral algebraic geometry (in a setting not developed in \cite{SAG}) to circumvent those difficulties.
The present work (as well as its sequel \cite{ChromaticFiltration}) therefore, by way of realizing classical chromatic results in spectral-algebro-geometric terms, provides  verification of a previously-elusive conjectural relationship between chromatic homotopy and spectral algebraic geometry.

\subsection*{Acknowledgments}
I am grateful to  David Ben-Zvi, John Berman, Andrew Blumberg, and Paul Goerss, for encouraging the work on, paying interest to, and giving valuable feedback on this project.  Equally  indispensable was the immense generosity of Sam Raskin, who lent his help with a number of technical details. Thanks to Anna Marie Bohman and Hiro Lee Tanaka for useful suggestions regarding certain non-technical aspects of the presentation, and to Niko Naumann for pointing out an mistake in a proof in a previous  version of this paper.
Special thanks go to John Berman, Adrian Clough, Desmond Coles, Tom Gannon, Saad Slaoui, and Jackson Van Dyke, for participating in the \textit{Chromatic Homotopy Theory Learning Seminar} at UT Austin in the Spring semester of 2021, without the stimulating atmosphere of which this paper would have never come about.

\section{Non-connective spectral algebraic geometry}\label{Chapter 1}
The first half of this paper is spent setting up sufficient foundations of non-connective spectral algebraic geometry, approached from the ``functor of points'' perspective, in order to be able to carry out the arguments we wish to in the second half.

\subsection{The fpqc topology}

Recall from \cite[Definition B.6.1.1]{SAG} that a map of (not necessarily connective) $\E$-rings $A\to B$ is \textit{faithfully flat} if
\begin{itemize}
\item the map of ordinary rings $\pi_0(A)\to\pi_0(B)$ is faithfully flat,
\item the map $\pi_*(A)\otimes_{\pi_0(A)}\pi_0(B)\to\pi_*(B)$ is an isomorphism of graded rings.
\end{itemize}
This allows us by \cite[Proposition B.6.1.3 and Variant B.6.1.7]{SAG} to equip the $\i$-categories $\CAlg^\mathrm{op},$ $(\CAlg^\mathrm{cn})^\mathrm{op},$ and $(\CAlg^\heart)^\mathrm{op}$ with the \textit{fpqc\footnote{This stands for \textit{fid\`elment-plate et quasi-compacte}, French for faithfully flat and quasi-compact.} topology}, giving rise to the corresponding  $\i$-topoi $\mathcal S\mathrm{hv}_\mathrm{fpqc}^\mathrm{nc} := \mathcal S\mathrm{hv}(\CAlg^\mathrm{op})$, $\mathcal S\mathrm{hv}_\mathrm{fpqc} :=\mathcal S\mathrm{hv}((\CAlg^\mathrm{cn})^\mathrm{op})$, and $\Shv^\heart_\mathrm{fpqc} :=\Shv((\CAlg^\heart)^\mathrm{op})$ respectively.

\begin{ex}
Following the standard convention in functor-of-points algebraic geometry, we call representables \textit{affine}. An \textit{affine non-connective spectral scheme} is therefore a functor $\Spec(A) : \mathrm{CAlg}\to\mS$, given by $R\mapsto\Map_{\CAlg}(A, R)$, for some $\E$-ring $A$. Similarly, an \textit{affine spectral scheme} is the functor $\Spec(A): \CAlg^\mathrm{cn}\to\mS$ given by $R\mapsto \Map_{\CAlg^\mathrm{cn}}(A,R)$ for a connective $\E$-ring $A$, and an \textit{affine ordinary scheme} if a functor $\Spec(A) : \CAlg^\heart\to\Set\hookrightarrow\mS$ given by $R\mapsto \Hom_{\CAlg^\heart}(A, R)$.
\end{ex}

\begin{remark}
The notation $\Shv^\heart_\mathrm{fpqc}$ is potentially misleading. It refers to the full subcategory of $\Fun(\CAlg^\heart, \mS)$, spanned by functors satisfying fpqc descent, and not as one might think the underlying ordinary topos $\tau_{\le 0}(\Shv_\mathrm{fpqc})\subseteq \Fun(\CAlg^\heart, \Set)$ of the $\i$-topos $\Shv_\mathrm{fpqc}.$ In the language of Simpson or Toën, see e.g. \cite{Toen}, we might say that $\mathrm{Shv}^\heart_\mathrm{fpqc}$ encodes higher (but not derived or spectral) stacks.
\end{remark}

 \begin{remark}\label{Warning: set-theory}
 There are some prickly point-set theoretic difficulties concerning the fpqc topology, making certain aspects of the theory, such as sheafification, require extra care to define correctly. We side-step these difficulties by choosing a Grothendieck universe and implicitly working inside its confines throughout. This kind of restriction would in general be unreasonable, as it is far too restrictive for a satisfactory general theory. Alas, we are in this paper ultimately only concerned with a small number of very specific examples of fpqc sheaves, all of which arise under small colimits from representables, making such stringent restrictions acceptable. We will mostly leave set-theoretic assumptions implicit, but see Remark \ref{Warning 2} for the one instance where some care is in fact required.
 \end{remark}

\begin{remark}\label{Why fpqc}
In light of the difficulties alluded to the previous remark, let us explain why we nonetheless use the fpqc topology. In Lemma \ref{Quillen} below, we will find ourselves (implicitly) faced with a ring map of the form $R\to R[t_1, t_2, \ldots]$, wishing to assert that it is a cover for a Grothendieck topology on commutative rings. Unfortunately, this map is neither an open immersion, \'etale, nor finitely presented, excluding from the list of candidates the Zariski, \'etale, and fppf topologies, or any in between. The map in question is faithfully flat however, therefore making the fpqc topology the only one applicable among the standard Grothendieck topologies on commutative rings. And while the fpqc topology may seem to be too big at first glance, its excellent descent properties make it a perfectly reasonable setting nonetheless.
\end{remark}
 
 \subsection{Connective covers and underlying ordinary stacks}
 The three $\i$-topoi $\Shv^\mathrm{nc}_\mathrm{fpqc}$, $\Shv_\mathrm{fpqc}$, and $\Shv^\heart_\mathrm{fpqc}$ are related to each other by a series of adjunctions:
 
 \begin{prop}\label{sheaf adjunction}
 There are canonical adjunctions
 $$
 \tau_{\ge 0}:\Shv_\mathrm{fpqc}^\mathrm{nc}\rightleftarrows \Shv_\mathrm{fpqc},\qquad \Shv^\heart_\mathrm{fpqc}\rightleftarrows \Shv_\mathrm{fpqc} : (-)^\heart
 $$
with colimit-preserving  right adjoints. The unlabeled arrows in both  are fully faithful.
 \end{prop}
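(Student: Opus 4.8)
The plan is to construct both adjunctions by the standard mechanism: exhibit a colimit-preserving functor between the sheaf $\infty$-topoi, note that it preserves all small colimits (being defined on presheaves by left Kan extension or by restriction along a functor of sites), and invoke the adjoint functor theorem for presentable $\infty$-categories. Concretely, the first adjunction comes from the inclusion $\iota:(\CAlg^{\mathrm{cn}})^{\mathrm{op}}\hookrightarrow\CAlg^{\mathrm{op}}$, which is continuous for the fpqc topologies since a faithfully flat map of connective $\E$-rings remains faithfully flat when viewed among all $\E$-rings (the two conditions in the definition of faithful flatness only reference $\pi_0$ and $\pi_*$, which are unchanged). Restriction along $\iota$ therefore carries fpqc sheaves on $\CAlg^{\mathrm{op}}$ to fpqc sheaves on $(\CAlg^{\mathrm{cn}})^{\mathrm{op}}$; this restriction functor is the right adjoint, and I claim it agrees with $\tau_{\ge 0}$ up to the naming in the statement, or rather that the \emph{left} adjoint is what deserves the name $\tau_{\ge 0}$ (left Kan extension followed by sheafification). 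Since $\iota$ is fully faithful, the left Kan extension along it is fully faithful on presheaves, and sheafification preserves this because $\iota$ sends covering sieves to covering sieves and the topology on the source is the one induced from the target; hence the unlabeled arrow $\Shv_{\mathrm{fpqc}}^{\mathrm{nc}}\leftarrow\Shv_{\mathrm{fpqc}}$ — wait, the direction in the display is $\tau_{\ge0}:\Shv^{\mathrm{nc}}_{\mathrm{fpqc}}\rightleftarrows\Shv_{\mathrm{fpqc}}$, so $\tau_{\ge 0}$ is the functor \emph{to} $\Shv_{\mathrm{fpqc}}$; I would realize it as sheafification of $X\mapsto (A\mapsto X(\tau_{\ge 0}A))$ wait that's restriction the other way. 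Let me instead realize $\tau_{\ge 0}$ as the right adjoint given by precomposition with $\tau_{\ge 0}:\CAlg\to\CAlg^{\mathrm{cn}}$, which is right adjoint to $\iota$; precomposition with a right adjoint continuous functor preserves sheaves and admits both adjoints, and its left adjoint (= left Kan extension along $\tau_{\ge0}$, sheafified) is the fully faithful unlabeled inclusion $\Shv_{\mathrm{fpqc}}\hookrightarrow\Shv^{\mathrm{nc}}_{\mathrm{fpqc}}$, full faithfulness following because $\tau_{\ge 0}\circ\iota\simeq\mathrm{id}$ on $\CAlg^{\mathrm{cn}}$.

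For the second adjunction I would proceed analogously but in the opposite variance to match Construction \ref{build underlying}: the inclusion $j:(\CAlg^\heart)^{\mathrm{op}}\hookrightarrow(\CAlg^{\mathrm{cn}})^{\mathrm{op}}$ is continuous for the fpqc topologies (faithful flatness for a map of ordinary rings is literally the $\pi_0$-condition, and the $\pi_*$-condition is automatic there), so precomposition with $\pi_0:\CAlg^{\mathrm{cn}}\to\CAlg^\heart$ — which is left adjoint to $j$ — carries sheaves to sheaves and has a further left adjoint; the right adjoint of ``precompose with $\pi_0$'' is what we call $(-)^\heart$, and its being colimit-preserving is precisely the content of Construction \ref{build underlying}, where $X^\heart$ is computed as a colimit of the $\Spec(\pi_0 A_i)$. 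The unlabeled fully faithful arrow $\Shv^\heart_{\mathrm{fpqc}}\hookrightarrow\Shv_{\mathrm{fpqc}}$ is then left Kan extension along $\pi_0$ followed by sheafification (equivalently, restriction along $j$ has a fully faithful left adjoint because $\pi_0\circ j\simeq\mathrm{id}$ on $\CAlg^\heart$). The key general principle I will cite is: if $f:\mathcal C\to\mathcal D$ is a continuous functor of sites with $\mathcal C,\mathcal D$ small (or small relative to the chosen universe as in Remark \ref{Warning: set-theory}), then $f^*=(-)\circ f:\Shv(\mathcal D)\to\Shv(\mathcal C)$ preserves limits and all colimits, hence has both a left adjoint $f_!$ and is itself a right adjoint; and $f_!$ is fully faithful whenever $f$ is fully faithful \emph{and} $f$ induces an equivalence of the relevant sheaf conditions — which here is guaranteed since all our topologies are defined compatibly via faithful flatness.

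The main obstacle I anticipate is not the formal adjoint-functor bookkeeping but verifying that the unlabeled arrows are genuinely \emph{fully faithful} as maps of sheaf $\infty$-topoi, not merely on presheaves. Left Kan extension along a fully faithful functor is fully faithful on presheaf categories, but sheafification can in principle collapse this; one needs that the ambient topology on the larger site restricts to exactly the given topology on the smaller site \emph{and} that a presheaf left-Kan-extended from the subcategory is ``already close to being a sheaf'' in a controlled way. The clean way to handle this is to instead present the fully faithful arrow as the \emph{right} adjoint to restriction (e.g., $\Shv^\heart_{\mathrm{fpqc}}\hookrightarrow\Shv_{\mathrm{fpqc}}$ is right adjoint to $(-)^\heart$ being left... no): more robustly, use that $\pi_0\circ j=\mathrm{id}$ and $\tau_{\ge0}\circ\iota=\mathrm{id}$ to get a retraction, so the composite (inclusion, then projection) is the identity on the subtopos, which forces the inclusion to be fully faithful by a formal triangle-identity argument — this avoids ever analyzing what sheafification does to a Kan extension. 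I would also flag, per Remark \ref{Warning: set-theory}, that all of this is carried out inside the fixed Grothendieck universe so that sheafification exists and the adjoint functor theorem applies without incident.
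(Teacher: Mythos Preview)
Your overall strategy matches the paper's: both adjunctions are induced from the ring-level adjunctions $i:\CAlg^{\mathrm{cn}}\rightleftarrows\CAlg:\tau_{\ge 0}$ and $\pi_0:\CAlg^{\mathrm{cn}}\rightleftarrows\CAlg^\heart:j$, and the key identification is $i_!\simeq(\tau_{\ge 0})^*$ (respectively $(\pi_0)_!\simeq j^*$), which the paper also uses. There are, however, two problems.

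First, you have the labels swapped in the first adjunction. Precomposition with $\tau_{\ge 0}:\CAlg\to\CAlg^{\mathrm{cn}}$ takes a functor on $\CAlg^{\mathrm{cn}}$ to one on $\CAlg$, hence goes $\Shv_{\mathrm{fpqc}}\to\Shv_{\mathrm{fpqc}}^{\mathrm{nc}}$; this is the fully faithful colimit-preserving \emph{right} adjoint (the unlabeled arrow), not the functor called $\tau_{\ge 0}$ in the statement. Its left adjoint, sheafified left Kan extension along $\tau_{\ge 0}$, goes $\Shv_{\mathrm{fpqc}}^{\mathrm{nc}}\to\Shv_{\mathrm{fpqc}}$ and is what the Proposition names $\tau_{\ge 0}$. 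Full faithfulness of the right adjoint does follow from your retraction $i^*i_!\simeq(\tau_{\ge 0}\circ i)^*\simeq\mathrm{id}$, exactly as in the paper; the bookkeeping just needs straightening out.

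Second, and more substantively, your retraction argument does \emph{not} bypass the sheafification analysis for the second adjunction. There the fully faithful arrow $\Shv^\heart_{\mathrm{fpqc}}\hookrightarrow\Shv_{\mathrm{fpqc}}$ is the \emph{left} adjoint, realized as $L\circ j_!$ with $L$ the fpqc sheafification. Showing it is fully faithful via the retraction means checking $j^*\circ L\circ j_!\simeq\mathrm{id}$ on $\Shv^\heart_{\mathrm{fpqc}}$; since $j^*j_!\simeq\mathrm{id}$ on presheaves, the entire content is that $j^*L(Y)\simeq L^\heart(j^*Y)$, i.e.\ that sheafifying on $\CAlg^{\mathrm{cn}}$ and then restricting to $\CAlg^\heart$ agrees with restricting first and then sheafifying. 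The paper proves this by inspecting the plus-construction: any faithfully flat map $A\to B$ with $A$ discrete forces $B$ and every $B^{\otimes_A n}$ to be discrete, so $(Y^+)|_{\CAlg^\heart}\simeq(Y|_{\CAlg^\heart})^+$, and one iterates. Your claim that the retraction ``avoids ever analyzing what sheafification does to a Kan extension'' is precisely where the gap lies; that analysis is unavoidable here because, unlike in the first adjunction, the fully faithful arrow is not given by a restriction functor that automatically lands in sheaves.
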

 
 \begin{proof}
The inclusion of connective $\E$-rings into all $\E$-rings and the formation of connective covers form an adjunction
$i:\CAlg^\mathrm{cn}\rightleftarrows \CAlg : \tau_{\ge 0}$. Restriction and left Kan extension along both of these functors induce adjunctions between presheaf $\i$-categories
$$
i_! :\Fun(\CAlg^\mathrm{cn}, \mS)\rightleftarrows \Fun(\CAlg, \mS) : i^*,\quad (\tau_{\ge 0})_!:\Fun(\CAlg, \mS)\rightleftarrows \Fun(\CAlg^\mathrm{cn}, \mS) : (\tau_{\ge 0})^*,
$$
and it follows from the adjunction between $i$ and $\tau_{\ge 0}$ by abstract nonsense that $i_!\simeq (\tau_{\ge 0})^*$.
Indeed, for any functor $X:\CAlg^\mathrm{cn}\to \mS$ and any $\E$-ring $A$, the colimit formula for Kan extension tells that
$
i_!(X)(A)\simeq \varinjlim_{B\in\CAlg^\mathrm{cn}\times_{\CAlg}\CAlg_{/A}}X(B).
$
Since the connective cover $\tau_{\ge 0}(A)\to A$ is the terminal object of the $\i$-category $\CAlg^\mathrm{cn}\times_{\CAlg}\CAlg_{/A}$, it follows that $i_!(X)(A)\simeq X(\tau_{\ge 0}(A))$ as desired.
From the equivalence $i_!\simeq (\tau_{\ge 0})^*$
it now follows that
$$i^*i_!\simeq i^*\tau_{\ge 0}^*\simeq (\tau_{\ge 0}\circ i)^!\simeq \mathrm{id},$$
showing that  $i_!$ is fully faithful.
 
On the other hand,  it is immediate from the definition of faithfully flat $\E$-ring maps that both of the functors $i$ and $\tau_{\ge 0}$ preserve them. Thus the restriction functors $i^*$ and $(\tau_{\ge 0})^* \simeq i_!$ both preserve the subcategories of sheaves, implying that the above presheaf adjunction restricts to the full subcategories of presheaves, giving rise to
$$
(\tau_{\ge 0})^*\simeq i_! :\Shv_\mathrm{fpqc}\rightleftarrows \Shv_\mathrm{fpqc}^\mathrm{nc} : i^*,
$$
whose left adjoint is still fully faithful. It also admits a further left adjoint in the form of $$\Shv_\mathrm{fpqc}^\mathrm{nc}\subseteq\Fun(\CAlg, \mS)\xrightarrow{(\tau_{\ge 0})_!}\Fun(\CAlg^\mathrm{cn}, \mS)\xrightarrow{L}\Shv_\mathrm{fpqc},
$$
where $L$ denotes sheafification. This is the functor $\tau_{\ge 0} : \Shv_\mathrm{fpqc}^\mathrm{nc}\to\Shv_\mathrm{fpqc}$ from the statement of the Proposition, and its right adjoint $(\tau_{\ge 0})^*\simeq i_!$ indeed preserves all colimits, and is fully faithful.

To obtain the other adjunction in the statement of the Proposition, we repeat the argument, starting with the adjunction $\pi_0:\CAlg^\mathrm{cn}\rightleftarrows \CAlg^\heart : j$. That gives rise to a colimit-preserving functor $(\pi_0)_!\simeq j^* :\Shv_\mathrm{fpqc}\to\Shv_\mathrm{fpqc}^\heart$ - this is the functor $(-)^\heart$ from the statement of the Proposition. It admits  a left adjoint given by
$$
\Shv_\mathrm{fpqc}^\heart\subseteq\Fun(\CAlg^\heart, \mS)\xrightarrow{j_!}\Fun(\CAlg^\mathrm{cn}, \mS)\xrightarrow{L}\Shv_\mathrm{fpqc},
$$
where $L$ is sheafification. It remains to show that this functor is fully faithful. That is, we must show that for any fpqc sheaf $X:\CAlg^\heart\to\mS$, the restriction along $j :\CAlg^\heart\subseteq\CAlg^\mathrm{cn}$ of the sheafification of the left Kan extension $j_!(X) :\CAlg^\mathrm{cn}\to\mS$ is equivalent to $X$ via the canonical map.

Recall from \cite[Subsection 6.5.3]{HTT} that for any functor $Y:\CAlg^\mathrm{cn}\to\mS$ (resp.\ $\CAlg^\heart\to\mS$), its sheafification may be obtained as a transfinite composite of the partial sheafification operation $Y\mapsto Y^+$, i.e. sequential colimit of the form
$$
L(Y)\simeq \varinjlim(Y\to Y^+ \to (Y^+)^+ \to ((Y^+)^+)^+ \to \cdots).
$$
Here the partial sheafification $Y^+$ of $Y$ is given for a connective $\E$-ring (resp.\ commutative ring) $A$ by
$$
Y^+(A)\simeq \varinjlim_{B\in \CAlg{}^\mathrm{ff}_A}\Tot(Y(B^{\otimes_A{\bullet+1}})),
$$
taking totalization of the evaluation of $Y$ on the \v{C}ech nerve of the covers, and
with the colimit indexed over all faithfully flat $\E$-ring (resp.\ comutative ring) maps $A\to B$. If $A$ is discrete, then it follows from the definition of flat maps of $\E$-rings that any faithfully flat cover will also be discrete, as will be all the relative smash products $B^{\otimes_A\bullet +1}$. Hence it follows from the above formulas that the canonical map $(Y\vert_{\CAlg^\heart})^+\to (Y^+)\vert_{\CAlg^\heart}$ is an equivalence, and by passage to transfinite composites, we conclude that $L(Y)\vert_{\CAlg^\heart}$ is precisely the fpqc sheafification of the functor $Y\vert_{\CAlg^\heart}$.

Returning to the case at hand, we are thus considering the sheafification of $j_!(X)\vert_{\CAlg^\heart}$. From the colimit formula for left Kan extensions, we see that for any $A\in\CAlg^\mathrm{cn}$
$$
(j_!(X))(A)\simeq \varinjlim_{B\in\CAlg^\heart\times_{\CAlg^\mathrm{cn}}\CAlg^\mathrm{cn}_{/A}}X(B).
$$
Of course when $A$ is  a discrete $\E$-ring, the overcategory $\CAlg^\heart\times_{\CAlg^\mathrm{cn}}\CAlg^\mathrm{cn}_{/A}$ has $A$ itself as a terminal object, showing that in that case $(j_!(X))(A)\simeq X(A)$ as desired.
 \end{proof}
 
 \begin{remark}\label{Warning 2}
 Proposition \ref{sheaf adjunction} is the one place in this paper where the issue of set-theoretic difficulties, discussed in Remark \ref{Warning: set-theory}, actually arise. We gave a naive treatment, ignoring set-theoretic difficulties. In reality, for fpqc sheafification to exist as we used it in the proof, we must allow ourselves to work in the not-necessarily-small fpqc topos, breaking our implicit smallness assumption established in Remark \ref{Warning: set-theory}. This is not much of an issue however, as we will only be concerned with fpqc sheaves which are obtained as small colimits of representables, in which case the fact that all the functors involved in the statement of Proposition \ref{sheaf adjunction} preserve colimits, ensures that the smallness requirement is in fact obeyed.
 \end{remark}
 
 In what follows, we will mostly suppress the fully faithful functors of Proposition \ref{sheaf adjunction}, and view  the $\i$-topos connective flat stacks (resp.~ordinary flat stacks) as a full subcategory $\Shv_\mathrm{fpqc}\subseteq \Shv_\mathrm{fpqc}^\mathrm{nc}$ (resp.\ $\Shv^\heart_\mathrm{fpqc}\subseteq \Shv_\mathrm{fpqc}$) of the $\i$-topos of non-connective flat stacks (resp.~connective flat stacks).
 We also  obtain a functor $\Shv_\mathrm{fpqc}^\mathrm{nc}\to\Shv_\mathrm{fpqc}^\heart$ given by $X\mapsto X^\heart := (\tau_{\ge 0}(X))^\heart$, whose restriction to the full subcategory $\Shv_\mathrm{fpqc}\subseteq\Shv_\mathrm{fpqc}^\mathrm{nc}$ recovers the eponymous functor $(-)^\heart$ appearing in Proposition \ref{sheaf adjunction}. We refer to $\tau_{\ge 0}(X)$ as the \textit{connective cover} of $X$, and to $X^\heart$ as its \textit{underlying ordinary stack}.

 \begin{remark}\label{connective cover through colimits} 
 We can read off from the proof of Proposition \ref{sheaf adjunction} that the functors $\tau_{\ge 0}:\Shv^\mathrm{nc}_\mathrm{fpqc}\to \Shv_\mathrm{fpqc}$ and $(-)^\heart :\Shv_\mathrm{fpqc}^\mathrm{nc}\to\Shv_\mathrm{fpqc}^\heart$ are given by left Kan extension (and sheafification), and may as such informally be written as
  $$\tau_{\ge 0}(\varinjlim_i \Spec(A_i))\simeq \varinjlim_i \Spec(\tau_{\ge 0}(A)),\qquad (\varinjlim_i \Spec(A_i))^\heart\simeq \varinjlim_i \Spec(\pi_0(A)).$$
In particular, the functors $\tau_{\ge 0}$ and $(-)^\heart$, as well as the inclusions $\Shv_{\mathrm{fpqc}}^\heart\subseteq\Shv_\mathrm{fpqc}\subseteq\Shv_\mathrm{fpqc}^\mathrm{nc}$, all preserve the respective notions of affine schemes.
\end{remark}

\begin{remark}
In contrast to the preceding example, not all ways to pass from non-connective to connective algebraic geometry preserve affines. For any $\E$-ring, the restriction $\Spec(A)\vert_{\CAlg^\mathrm{cn}}$ is an object of $\Shv_\mathrm{fpqc}$. Under the inclusion $\Shv_\mathrm{fpqc}\subseteq\Shv_\mathrm{fpqc}^\mathrm{nc}$, the functor $\CAlg\to \mS$ given by $R\mapsto\Map_{\CAlg}(A, \tau_{\ge 0}(R))$ is generally not affine unless $A$ itself had been connective to begin with. When $A$ is instead assumed to be coconnective, $\Spec(A)$ recovers the notion of \textit{coaffine stacks} of \cite[Section 4.4]{DAGVIII} (at least if we were working over a field of characteristic $0$), denoted there $\mathrm{cSpec}(A)$. These are closely related to \textit{affine stacks} of \cite{Champs Affines}, although the latter exist in a cosimplicial, rather than spectral setting. Note in particular that $\Spec(A)\not\simeq\mathrm{cSpec}(A)$ for a coconnective $\E$-ring $A$, i.e.\ coaffine stacks are not a special case of non-connective affine stacks.
\end{remark}

\begin{remark}\label{Undst for conct}
For a connective fpqc stack $X:\CAlg^\mathrm{cn}\to\mS$, its underlying ordinary fpqc stack $X^\heart$ admits another description as $X^\heart\simeq X\vert_{\CAlg^\heart}$. There is also in that case a canonical map $X^\heart\to X$ in $\Shv_\mathrm{fpqc}$, given by the unit of the relevant adjunction. Both of those fail for non-connective stacks; we will see in Remark \ref{Heart is fickle} an example of a non-connective fpqc stack $X$ for which $X^\heart$ is an interesting non-trivial stack, while $X\vert_{\CAlg^\heart}=\emptyset$ is the constant empty-set functor. Indeed, $X$ and $X^\heart$ are in general connected only through the cospan $X\to \tau_{\ge 0}(X)\leftarrow X^\heart$.
\end{remark}

\begin{exammple}
According to  \cite[Variant 1.1.2.9]{SAG},  non-connective spectral schemes, which embed fully faithfully into spectral stacks, may be described as follows. A non-connective spectral scheme $X$ consists of a $(|X|, \sO_X)$ of a topological space $|X|$ and a sheaf of $\E$-rings $\sO_X$ on it, satisfying a spectral analogue of the usual definition of a scheme in the locally ringed space approach. The underlying ordinary stack of $X$ then coincides with the usual scheme $(|X|, \pi_0(\sO_X))$. An analogous description works for spectral Deligne-Mumford stacks, if topological spaces are replaced with $\i$-topoi. The underlying ordinary stack construction should be thought of as a functor of points analogue of this.
\end{exammple}

\begin{remark}
The terminology of calling the $\tau_{\ge 0}(X)$ as the \textit{connective cover}  is taken to comply with the standard one for $\E$-rings, recovered for affine stacks. But note that the canonical map goes $X\to\tau_{\ge 0}(X)$, suggesting a name like ``connective quotient'' might be more appropriate.
\end{remark}

\subsection{Geometric stacks}
We introduce a convenient class of non-connective spectral stacks, a non-connective version of the geometric stacks of \cite[Definition 9.3.0.1]{SAG}. Just as Deligne-Mumford stacks are roughly those stacks which admit an \'etale cover, and Artin stacks are roughly those stacks which admit a smooth cover, so are geometric stacks roughly those stacks which admit a flat cover.

\begin{definition}\label{Def of geom stack}
A \textit{non-connective geometric stack} is a functor $X:\CAlg\to \mS$ which satisfies the following conditions:
\begin{enumerate}[label = (\alph*)]
\item The functor $X$ satisfies descent for the fpqc topology.
\item The diagonal map $X\to X\times X$ is affine.\label{Affine diagonal}
\item There exists an $\E$-ring $A\in\CAlg$ and a faithfully flat map $\Spec(A)\to X$.
\end{enumerate}
\end{definition}

\begin{remark}\label{Why affine diagonal?}
The affine diagonal condition \ref{Affine diagonal} above is as always  equivalent to the following statement (see \cite[Proposition  9.3.1.2]{SAG} for proof of the analogous result in the connective setting): for any pair of maps $\Spec(A)\to X$ and $\Spec(B)\to X$, the fiber product $\Spec(A)\times_X\Spec(B)$ is affine. This assumption is made largely to simplify various statements, and could be dropped in much of what follows, at the cost of demanding affine representability of various morphisms.
\end{remark}

\begin{variant}
By substituting $\CAlg\mapsto\CAlg^\mathrm{cn}$ in Definition \ref{Def of geom stack}, we recover the notion of a \textit{geometric stack} from \cite[Definition 9.3.0.1]{SAG}. It follows that a non-connective geometric stack that belongs to  $\Shv_\mathrm{fpqc}\subseteq\Shv_\mathrm{fpqc}^\mathrm{nc}$ is precisely a geometric stack. For another variant, we can substitute $\CAlg\mapsto\CAlg^\heart$ in Definition \ref{Def of geom stack} to obtain the notion of an \textit{ordinary geometric stack}.
\end{variant}

\begin{remark}
As discussed in  \cite[Subsection 9.1.6.]{SAG}, the inclusions $\Shv_\mathrm{fpqc}^\heart\to\Shv_\mathrm{fpqc},$ as well as its right adjoint $X\mapsto X^\heart$, both preserve the condition of a stack begin geometric.
\end{remark}

Geometric stacks in either of the three variants - non-connective, connective, and ordinary - admit presentations as quotients of flat affine groupoids in their respective setting:

\begin{prop}\label{Groupoid presentation}
A functor $X$ in $\mathcal S\mathrm{hv}^\mathrm{nc}_\mathrm{fpqc}$ is a non-connective geometric stack (resp.\ geometric stack, ordinary geometric stack) if and only if it can be written as a geometric realization $X\simeq |\Spec(A^\bullet)|$ of a groupoid object of the form $\Spec(A^\bullet)$ with $A^n\in\CAlg$ (resp.\ $\CAlg^\mathrm{cn}$, $\CAlg^\heart$) for all $n\ge 0$, and such that all of its  face maps are faithfully flat.
\end{prop}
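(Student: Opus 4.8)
\emph{Proof proposal.} The plan is to prove both implications by comparing $X$ with the \v{C}ech nerve of a flat affine cover, following the pattern of the connective case treated in \cite[Section 9.3]{SAG}, and then to observe that the connective and ordinary variants go through with no change once $\CAlg$ is replaced by $\CAlg^\mathrm{cn}$ or $\CAlg^\heart$.

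For the forward direction, suppose $X$ is a non-connective geometric stack and pick a faithfully flat map $p\colon\Spec(A)\to X$ as in condition (c) of Definition~\ref{Def of geom stack}. Since faithfully flat $\E$-ring maps generate the fpqc topology, $p$ is an effective epimorphism in the $\i$-topos $\Shv_\mathrm{fpqc}^\mathrm{nc}$, so $X\simeq|\check{C}(p)|$ is the geometric realization of its \v{C}ech nerve, a groupoid object with $\check{C}(p)_n\simeq\Spec(A)\times_X\cdots\times_X\Spec(A)$ ($n+1$ factors). An induction on $n$, writing $\check{C}(p)_n\simeq\check{C}(p)_{n-1}\times_X\Spec(A)$ and invoking the affine diagonal hypothesis (b) in the form recorded in Remark~\ref{Why affine diagonal?}, shows each $\check{C}(p)_n$ is affine, say $\check{C}(p)_\bullet\simeq\Spec(A^\bullet)$. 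Each face map $\check{C}(p)_n\to\check{C}(p)_{n-1}$ is the base change of $p$ along a map out of an affine, hence faithfully flat, which yields the desired presentation.

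For the reverse direction, let $X\simeq|\Spec(A^\bullet)|$ for a groupoid object whose face maps are faithfully flat. Condition (a) is automatic, as the realization is formed in $\Shv_\mathrm{fpqc}^\mathrm{nc}$. The crux is to show that the canonical map $p\colon\Spec(A^0)\to X$ is both affine and faithfully flat as a morphism of stacks: condition (c) is then immediate (take $A=A^0$), and condition (b) follows by base-changing a pair $\Spec(R),\Spec(S)\to X$ along $p$ applied to $\Spec(S)$, expressing the resulting fiber product over a faithfully flat base as a fiber product of affines over an affine, and descending affineness. To analyze $p$ itself I would use that groupoid objects in an $\i$-topos are effective, so $\Spec(A^\bullet)\simeq\check{C}(p)$ and in particular $\Spec(A^0)\times_X\Spec(A^0)\simeq\Spec(A^1)$, with the two projections the (affine, faithfully flat) face maps. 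Then for any $\eta\colon\Spec(R)\to X$, the pullback $\Spec(R)\times_X\Spec(A^0)\to\Spec(R)$ is again an effective epimorphism onto an affine, hence admits a section after some faithfully flat base change $\Spec(R')\to\Spec(R)$; over $\Spec(R')$ the map $\eta$ lifts through $p$, so $\Spec(R')\times_X\Spec(A^0)\simeq\Spec(R')\times_{\Spec(A^0)}\Spec(A^1)$ is affine and faithfully flat over $\Spec(R')$, and fpqc descent along $\Spec(R')\to\Spec(R)$ gives the claim.

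The main obstacle is exactly this bootstrapping step: a priori one knows only that the face maps of $\Spec(A^\bullet)$ are flat as maps of affine schemes, not that $p$ is a flat or affine morphism of stacks, and bridging the two requires combining the effectivity of groupoid objects in $\i$-topoi with two descent-type inputs — that an effective epimorphism onto an affine splits after a faithfully flat cover, and that affineness and faithful flatness of morphisms satisfy fpqc descent (the non-connective analogues of the corresponding facts in \cite{SAG}). The remaining \i-categorical bookkeeping with \v{C}ech nerves, and the passage to the connective and ordinary settings, I expect to be routine.
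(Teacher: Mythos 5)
Your proposal is correct and follows essentially the same approach as the paper's: the forward direction takes the \v{C}ech nerve of the flat cover and shows it is an effective epimorphism (your assertion that this follows ``since faithfully flat maps generate the topology'' is precisely the content of the paper's Lemma~\ref{covtriv}, which upgrades the fact for affine covers to faithfully flat maps onto a general stack), and the reverse direction reproduces the bootstrapping argument of \cite[Lemma 9.3.1.1 and Corollary 9.3.1.4]{SAG}, which the paper simply cites after noting that those proofs make no use of connectivity. You merely spell out in more detail the SAG argument that the paper invokes by reference.
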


\begin{proof}
For a non-connective geometric stack $X$, take a $\Spec(A^\bullet)$ to be the \v{C}ech nerve $\check{\mathrm{C}}^\bullet(\Spec(A)/X)$ of a faithfully flat map $\Spec(A)\to X$, whose existence is guaranteed by Definition \ref{Def of geom stack}. The induced map $|\Spec (A^\bullet)|\to X$ being an equivalence follows form the abstract nonsense of Lemma \ref{covtriv}.

For the converse direction, the proof of \cite[Corollary 9.3.1.4]{SAG} goes through in the non-connective and ordinary setting, since  the proof of crucial \cite[Lemma 9.3.1.1]{SAG} makes no use of the connectivity hypothesis.
\end{proof}

The following simple result could certainly be justified by invoking appropriate passages from \cite{HTT}, but we prefer to give a direct and straightforward proof instead.

\begin{lemma}\label{covtriv}
Let $X\to Y$ be a a map in a sheaf $\i$-topos $\mX = \mathcal S\mathrm{hv}(\mC)$, such that for every map $j(C)\to Y$ with $C\in \mC$, the fiber product $j(C)\times_X Y$ is representable by some $C'\in \mC$, and the induced morphism $C\to C'$ is a one-element cover in the topology on $\mC$. Then $X\to Y$ is an effective epimorphism, which is to say that the induced map $|\check{\mathrm C}^\bullet(X/Y)|\to Y$ is an equivalence in $\mX$.
\end{lemma}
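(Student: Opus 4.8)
The plan is to recognize the statement as the claim that $X\to Y$ is an effective epimorphism in the $\infty$-topos $\mathcal{X}=\mathcal{S}\mathrm{hv}(\mathcal{C})$, and to check effectivity by a local section-lifting argument driven by the hypothesis. (I read the fiber product in the statement as $j(C)\times_Y X$, the pullback of $X\to Y$ along $j(C)\to Y$.) Recall first from \cite[Section 6.2.3]{HTT} that for any morphism in an $\infty$-topos the geometric realization $Z:=|\check{\mathrm C}^\bullet(X/Y)|$ of its \v{C}ech nerve is the image of $X\to Y$; in particular the canonical map $Z\to Y$ is a monomorphism, and what must be shown is precisely that this monomorphism is an equivalence.

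Second, I would convert this into a concrete criterion. Since $Z\to Y$ is a monomorphism of sheaves, evaluation at each $C\in\mathcal{C}$ gives an inclusion of connected components $Z(C)\hookrightarrow Y(C)$, so $Z\to Y$ is an equivalence if and only if it is surjective on $\pi_0$ for every $C$. Now $Z$, being a colimit in $\mathcal{X}$, is the sheafification of the corresponding colimit of presheaves, which is computed objectwise and there yields the image of the map of spaces $X(C)\to Y(C)$; thus $Z$ is the fpqc sheafification of the subpresheaf $C\mapsto\operatorname{image}(\pi_0 X(C)\to\pi_0 Y(C))$ of $Y$. Feeding this through the iterated plus-construction model of sheafification recalled in the proof of Proposition \ref{sheaf adjunction}, and using that the monomorphism condition lets local lifts patch without higher coherence data, the claim reduces to the following: for every $C\in\mathcal{C}$ and every $y\colon j(C)\to Y$ there is a covering sieve $R$ on $C$ such that for each $(v\colon C''\to C)$ in $R$ the composite $j(C'')\to j(C)\xrightarrow{\,y\,}Y$ factors through $X\to Y$.

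Finally, the hypothesis supplies exactly such a sieve. Given $y\colon j(C)\to Y$, the fiber product $j(C)\times_Y X$ is representable, say by $j(C')$, and its projection to $j(C)$ is a one-element cover, so the single morphism $j(C')\to j(C)$ generates a covering sieve $R$ on $C$; the other projection gives a map $j(C')\to X$ over $Y$, whence $j(C')\to j(C)\xrightarrow{\,y\,}Y$ factors through $X\to Y$, and since every $v\in R$ factors through $j(C')\to j(C)$ the same holds for each member of $R$. This establishes the criterion, so $Z\to Y$ is an equivalence and the lemma follows. I expect the only genuinely delicate point to be the middle step --- rewriting ``monomorphism of sheaves onto $Y$'' as the hands-on local-surjectivity criterion via the plus construction, and keeping the bookkeeping of covering sieves versus single covering morphisms honest; everything else is formal $\infty$-topos theory.
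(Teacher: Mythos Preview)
Your argument is correct, but the paper takes a different and somewhat slicker route. Rather than passing through the image factorization and the plus-construction, the paper writes $Y\simeq\varinjlim_i j(C_i)$ as a colimit of representables, observes that by hypothesis each pullback $X\times_Y j(C_i)\simeq j(C'_i)$ with $j(C'_i)\to j(C_i)$ an effective epimorphism (directly by \cite[Proposition 6.2.3.20]{HTT}, since $C'_i\to C_i$ is a cover), and then computes
\[
|\check{\mathrm C}^\bullet(X/Y)|\;\simeq\;\varinjlim_i\,|\check{\mathrm C}^\bullet(X/Y)\times_Y j(C_i)|\;\simeq\;\varinjlim_i\,|\check{\mathrm C}^\bullet(j(C'_i)/j(C_i))|\;\simeq\;\varinjlim_i j(C_i)\;\simeq\;Y,
\]
using only universality of colimits in an $\infty$-topos and base-change for \v{C}ech nerves. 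Your approach trades these topos-theoretic manipulations for the more classical sheaf-theoretic picture (monomorphism plus local surjectivity), which is perfectly valid but carries the extra bookkeeping you flag in your ``middle step''---identifying the sheaf image with the sheafification of the presheaf image, and arguing that local lifts along a monomorphism glue. The paper's version sidesteps that entirely, at the cost of invoking universality of colimits; yours is closer to how one would argue in a $1$-topos.
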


\begin{proof}
Write $Y\simeq \varinjlim_i j(C_i)$. Then $X\times_Y j(C_i)\simeq j(C'_i),$ and the morphisms $C_i\to C'_i$ are covers in the topology on $\mC$ for every $i$. Thus $j(C_i)\to j(C'_i)$ is an effective epimorphism in $\mX$ essentially by the definition of $\i$-categorical sheaves, see \cite[Proposition 6.2.3.20]{HTT}, hence $|\check {\mathrm C}^\bullet(j(C_i')/j(C_i))|\to j(C_i)$ is an equivalence for every $i$. Furthermore, it follows from the definition of the \v{C}ech nerve that it satisfies base-change, so that the canonical map of simplicial objects
$$
\check{\mathrm C}^\bullet(X/Y)\times_Y j(C_i)\to \check{\mathrm C}^\bullet(j(C_i')/j(C_i))
$$
is an equivalence. Finally we use all of the discussed facts to exhibit the canonical map $|\check{\mathrm C}^\bullet(X/Y)|\to Y$ as a composition of equivalences in $\mX$
\begin{eqnarray*}
|\check{\mathrm C}^\bullet(X/Y)|&\simeq&|\check{\mathrm C}^\bullet(X/Y)|\times _Y \varinjlim_i j(C_i)\\
&\simeq &\varinjlim_i |\check {\mathrm C}^\bullet(X/Y)\times_Y j(C_i)| \\
&\simeq& \varinjlim_i |\check {\mathrm C}^\bullet(j(C'_i)/j(C_i))|\\
&\simeq & \varinjlim_i j(C_i)\simeq Y,
\end{eqnarray*}
where the one additional fact we used is that pullbacks commute with arbitrary colimits in an $\i$-topos.
\end{proof}

\begin{corollary}\label{Pres for trunc}
For any non-connective geometric stack $X$, the connective cover $\tau_{\ge 0}(X)$ is a geometric stack and the underlying ordinary stack $X^\heart$ is an ordinary geometric stack. Given a groupoid presentation  $X\simeq |\Spec (A^\bullet)|$ as in Proposition \ref{Groupoid presentation}, there are canonical equivalences $\tau_{\ge 0}(X) \simeq |\Spec(\tau_{\ge 0}(A^\bullet))|$ and $X^\heart\simeq |\Spec(\pi_0(A^\bullet))|$.
\end{corollary}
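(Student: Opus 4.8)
The plan is to reduce the statement to Proposition \ref{Groupoid presentation}, using the description of $\tau_{\ge 0}$ and $(-)^\heart$ recorded in Remark \ref{connective cover through colimits}. Both $\tau_{\ge 0}\colon\Shv^\mathrm{nc}_\mathrm{fpqc}\to\Shv_\mathrm{fpqc}$ and $(-)^\heart\colon\Shv^\mathrm{nc}_\mathrm{fpqc}\to\Shv^\heart_\mathrm{fpqc}$ preserve colimits --- they are left adjoints by Proposition \ref{sheaf adjunction} --- and carry affines to affines, with $\tau_{\ge 0}(\Spec(A))\simeq\Spec(\tau_{\ge 0}(A))$ and $\Spec(A)^\heart\simeq\Spec(\pi_0(A))$. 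Applying them to the geometric realization $X\simeq|\Spec(A^\bullet)|$ therefore produces the asserted canonical equivalences $\tau_{\ge 0}(X)\simeq|\Spec(\tau_{\ge 0}(A^\bullet))|$ and $X^\heart\simeq|\Spec(\pi_0(A^\bullet))|$, in which $\Spec(\tau_{\ge 0}(A^\bullet))$ and $\Spec(\pi_0(A^\bullet))$ denote the simplicial objects obtained by applying $\tau_{\ge 0}$ resp.\ $\pi_0$ levelwise. Geometricity of $\tau_{\ge 0}(X)$ and of $X^\heart$ then follows from the converse implication in Proposition \ref{Groupoid presentation}, provided these two simplicial objects are themselves groupoid objects all of whose face maps are faithfully flat. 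So the whole proof comes down to verifying that last point (checking the axioms of Definition \ref{Def of geom stack} for $\tau_{\ge 0}(X)$ and $X^\heart$ directly would also work but is less transparent, since maps into these stacks are awkward to control).

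The face-map condition is immediate. The coface ring maps of $A^\bullet$ are faithfully flat by the hypothesis on the presentation, and both $\tau_{\ge 0}\colon\CAlg\to\CAlg^\mathrm{cn}$ and $\pi_0\colon\CAlg^\mathrm{cn}\to\CAlg^\heart$ send faithfully flat maps to faithfully flat maps --- the former as already observed in the proof of Proposition \ref{sheaf adjunction}, the latter directly from the definition of faithful flatness. Hence every face map of $\Spec(\tau_{\ge 0}(A^\bullet))$ and of $\Spec(\pi_0(A^\bullet))$ is faithfully flat.

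The substantive step is that $\Spec(\tau_{\ge 0}(A^\bullet))$ and $\Spec(\pi_0(A^\bullet))$ are still groupoid objects. Being a groupoid object is detected by the pullback squares $U([n])\simeq U(S)\times_{U(S\cap S')}U(S')$ associated to decompositions $[n]=S\cup S'$ with $S\cap S'$ a singleton; since $\Spec$ is fully faithful and sends pushouts of $\E$-rings to pullbacks of affines, for $\Spec(A^\bullet)$ these squares amount to pushout presentations $A^{[n]}\simeq A^S\otimes_{A^{S\cap S'}}A^{S'}$. Every structure map occurring here, such as $A^{S\cap S'}\to A^S$, is induced by an order-embedding and hence a composite of coface maps, so it is faithfully flat; thus $A^S$ is flat over $A^{S\cap S'}$, and flat base change on homotopy gives $\pi_*(A^{[n]})\cong\pi_0(A^S)\otimes_{\pi_0(A^{S\cap S'})}\pi_*(A^{S'})$. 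Taking $\pi_0$ gives $\pi_0(A^{[n]})\cong\pi_0(A^S)\otimes_{\pi_0(A^{S\cap S'})}\pi_0(A^{S'})$, which is exactly the pushout presentation exhibiting $\Spec(\pi_0(A^\bullet))$ as a groupoid object. For $\tau_{\ge 0}$, I would compare homotopy groups: the connective $\E$-ring $\tau_{\ge 0}(A^S)\otimes_{\tau_{\ge 0}(A^{S\cap S'})}\tau_{\ge 0}(A^{S'})$ is flat over $\tau_{\ge 0}(A^{S\cap S'})$, so its homotopy agrees in nonnegative degrees with that of $A^{[n]}$, hence with that of $\tau_{\ge 0}(A^{[n]})$; consequently the canonical map $\tau_{\ge 0}(A^S)\otimes_{\tau_{\ge 0}(A^{S\cap S'})}\tau_{\ge 0}(A^{S'})\to\tau_{\ge 0}(A^{[n]})$ --- which exists because its source is connective --- is an equivalence. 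So $\Spec(\tau_{\ge 0}(A^\bullet))$ is a groupoid object as well, which finishes the argument.

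I expect the main obstacle to be organizational rather than conceptual: arranging the decompositions $[n]=S\cup S'$ and their induced structure maps cleanly, tracking which of those are (faithfully) flat, and making sure the flat base-change identity really does recover the full collection of groupoid-object pullback squares and not merely the Segal conditions. Everything else is formal, flowing from the adjunctions of Proposition \ref{sheaf adjunction} and the colimit-and-affine preservation properties of $\tau_{\ge 0}$ and $(-)^\heart$.
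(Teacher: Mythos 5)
Your proof is correct and takes essentially the same route as the paper, which simply cites Proposition \ref{Groupoid presentation} together with Remark \ref{connective cover through colimits} and leaves the verification implicit; your careful check that levelwise $\tau_{\ge 0}$ and $\pi_0$ carry a flat affine groupoid object to a flat affine groupoid object (via flat base change on homotopy groups) is precisely the content the paper elides.
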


\begin{proof}
This follows from  Proposition \ref{Groupoid presentation} and Remark \ref{connective cover through colimits}.
\end{proof}

\subsection{Quasi-coherent sheaves}
Next, we turn our attention to quasi-coherent sheaves in non-connective spectral algebraic geometry.

 \begin{definition}\label{Def of QCoh}
The functor of \textit{quasi-coherent sheaves} $\QCoh :(\mathrm{Shv}_\mathrm{fpqc}^\mathrm{nc})^\mathrm{op}\to \CAlg(\PrL)$ is defined by right Kan extension from the association $A\mapsto \Mod_A$. That is to say, it is given by
$$
\QCoh\big(\varinjlim_i \Spec(A_i)\big)\simeq \varprojlim_i \Mod_{A_i}.
$$
 \end{definition}

 \begin{remark}
The above definition  works just as well for functors $\CAlg\to\mS$ that fail to satisfy fpqc descent, but that gains no extra generality since it is invariant under sheafification. Indeed, the proof of the analogous claim in the connective context \cite[Proposition  6.2.3.1]{SAG} does not use the connectivity hypothesis.
 \end{remark}
 
 \begin{remark}
As noted in \cite[Remark 6.2.2.2]{SAG}, the restriction of the functor $\QCoh$ onto the subcategory $\Shv_\mathrm{fpqc}\subseteq \Shv_\mathrm{fpqc}^\mathrm{nc}$ is equivalent  to the definition of quasi-coherent sheaves on a functor in \cite[Definition 6.2.2.1]{SAG}. This may for instance be seen by noting that it sends $\Spec(A)\mapsto \Spec(A)$ for any connective $\E$-ring $A$, and commutes with colimits by Proposition \ref{sheaf adjunction}, hence the Kan extension definition of the functor $\mathrm{QCoh}$ in both the connective and non-connective context implies that it is preserved under the subcategory inclusion $\Shv_\mathrm{fpqc}\subseteq\Shv_\mathrm{fpqc}^\mathrm{nc}$.
 \end{remark}
 
  Given a morphism $f : X\to Y$ in $\Shv_\mathrm{fpqc}^\mathrm{nc}$, we obtain from the definition of quasi-coherent sheaves an adjunction
$$
f^*:\QCoh(Y)\rightleftarrows \QCoh(X) : f_*,
$$
whose left adjoint $f^*$ we call \textit{pullback along $f$}, and whose right adjoint $f_*$ we call \textit{pushforward along $f$}.

\begin{ex}\label{Standard notation}
Let $p:X\to\Spec(S)$ be the terminal map. Under the equivalence $\QCoh(\Spec(S))\simeq \Sp$, we use for any $\sF\in\QCoh(X)$ and $M\in \Sp$ traditional notation
$$
\sO_X\otimes M :=p^*(M), \qquad \Gamma(X;\sF):=p^*(\sF),
$$
and the terminology \textit{global sections} and \textit{constant quasi-coherent sheaf} respectively. For $M=S$, we obtain the \textit{structure sheaf} $\sO_X :=p^*(S)$. Because quasi-coherent pullback is symmetric monoidal by construction, $\sO_X$ is the monoidal unit for the canonical symmetric monoidal operation $\o_{\sO_X}$ on $\QCoh(X)$. Its global sections $\sO(X):=\Gamma(X;\sO_X)$ are \textit{$\E$-ring of functions on $X$}, and chasing through the definitions shows it may be computed as
\begin{equation}\label{Compute functions}
\sO\big(\varinjlim_i\Spec(A_i)\big)\simeq \varprojlim_i A_i.
\end{equation}
 \end{ex}

\begin{remark}
The global functions functor $\sO: (\Shv_\mathrm{fpqc}^\mathrm{nc})^\mathrm{op}\to \CAlg$, introduced in Example \ref{Standard notation}, is right adjoint to the fully faithful embedding $\Spec :\CAlg\to (\Shv_\mathrm{fpqc}^\mathrm{nc})^\mathrm{op}$ discussed in Remark \ref{connective cover through colimits}. The limit preservations of right adjoints gives rise to the formula \eqref{Compute functions}. Even for a connective or ordinary fpqc stack $X$, the $\E$-ring $\sO(X)$ might still be a non-connective $\E$-ring. Indeed, for an ordinary stack, e.g.\ an ordinary scheme, the homotopy groups $\pi_{-i}(\sO(X))$ agree with quasi-coherent sheaf cohomology $\mathrm H^{i}(X; \sO_X)$ for all $i\in\Z$, which is often non-zero for various values of $i> 0$.
\end{remark}
 
\begin{remark}
The notation $\sO_X\otimes M$, introduced in Example \ref{Standard notation}, is justified in the following way. Since $\QCoh(X)$ is a stable stable $\i$-category, it is automatically tensored over  the $\i$-category of spectra $\Sp$. Consequently we can form tensoring with a spectrum $M$ for any object $\sF\in\QCoh(X)$ to obtain $\sF\otimes M\in\QCoh(X)$, determined completely by demanding that the functor $M\mapsto \sF\o M$ commutes with colimits and that $\sF\o S\simeq \sF$. Because quasi-coherent pullback $p^*:\Sp\to\QCoh(X)$ preserves colimits, and satisfies $f^*(S)\simeq \sO_X$ on account of being symmetric monoidal, it follows that it is indeed of the form $f^*(M)\simeq \sO_X\o M$ in terms of the tensoring over $\Sp$.
\end{remark}

 \begin{definition}\label{homotopy sheaves def}
 Let $X$ be a non-connective geometric stack, let $\sF\in\QCoh(X)$ be a quasi-coherent sheaf on it, and let $n\in\Z$ be an integer. The \textit{$n$-th homotopy sheaf of $\sF$} is a quasi-coherent sheaf $\pi_n\sF\in \QCoh(X^\heart)$, defined by the following property:
 \begin{itemize}[label = ($*$)]
 \item For some (and consequently any) choice of 
  a faithfully flat cover $ f:\Spec(A)\to X$, there is an equivalence
 $$
 (f^\heart)^*(\pi_n(\sF))\simeq \pi_n(f^*(\sF))
 $$
 in $\Mod_{\pi_0(A)}$.
 \end{itemize}
 \end{definition}

 \begin{remark}
If we choose a groupoid presentation
$X\simeq |\Spec (A^\bullet)|$ as in Proposition \ref{Groupoid presentation}, and the quasi-coherent sheaf $\sF\in\QCoh(X)$ corresponds under the equivalence of $\i$-categories $\QCoh(X)\simeq \Tot(\Mod_{A^\bullet})$ to the system of modules  $(M^\bullet\in \Mod_{A^\bullet})$, then the homotopy sheaf $\pi_n(\sF)\in\QCoh(X^\heart)$ corresponds to the system $(\pi_n(M^\bullet)\in \Mod^\heart_{\pi_0(A^\bullet)})$ under the equivalence of categories $\QCoh(X^\heart)\simeq \Tot(\Mod_{\pi_0(A^\bullet)})$.
\end{remark}

\begin{remark}
Note that, in light of Proposition \ref{Pres for trunc} and Definition \ref{homotopy sheaves def}, we have $\pi_0(\sO_X)\simeq \sO_{X^\heart}$ for any non-connective geometric stack $X$.
\end{remark}

Next we show that 
the pullback property of homotopy sheaves, used to define them above, holds more generally than just for a faithfully flat cover. That in particular implies the claimed independence of the definition from the choice of the cover.

\begin{lemma}\label{any flat pullback}
Let $f: X\to Y$ be an affine flat morphism of non-connective geometric stacks. For any $\sF\in \QCoh(Y)$ and any $t\in \mathbf Z$, there is a canonical isomorphism $\pi_t(f^*(\sF))\simeq (f^\heart)^*(\pi_t(\sF))$ in the $\i$-category $\QCoh(X^\heart)$.
\end{lemma}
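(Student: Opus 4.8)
The plan is to reduce the assertion, one flat cover at a time, to the standard flat base-change isomorphism for homotopy groups of $\E$-rings. First I would fix a faithfully flat cover $g\colon\Spec(A)\to Y$ (which exists by Definition~\ref{Def of geom stack}) and let $\Spec(A^\bullet)=\check{\mathrm C}^\bullet(\Spec(A)/Y)$ be its \v{C}ech nerve, so that $Y\simeq|\Spec(A^\bullet)|$ with faithfully flat face maps by Proposition~\ref{Groupoid presentation}. Since $f$ is affine, each $\Spec(B^n):=X\times_Y\Spec(A^n)$ is again affine; and because colimits are universal in an $\i$-topos while \v{C}ech nerves satisfy base change (as recalled in the proof of Lemma~\ref{covtriv}), one gets $X\simeq|X\times_Y\Spec(A^\bullet)|=|\Spec(B^\bullet)|$ with $\Spec(B^\bullet)\simeq\check{\mathrm C}^\bullet(\Spec(B^0)/X)$, where $\Spec(B^0)=X\times_Y\Spec(A)\to X$ is the base change of $g$ and hence faithfully flat; in particular the face maps of $\Spec(B^\bullet)$ are faithfully flat. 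Moreover each $\E$-ring map $A^n\to B^n$, obtained by base-changing the affine flat morphism $f$ along $\Spec(A^n)\to Y$, is flat. By Corollary~\ref{Pres for trunc} and Remark~\ref{connective cover through colimits} these presentations pass to the underlying ordinary stacks, $Y^\heart\simeq|\Spec(\pi_0(A^\bullet))|$ and $X^\heart\simeq|\Spec(\pi_0(B^\bullet))|$ (again with faithfully flat face maps, since $\pi_0$ preserves faithfully flat maps), and $f^\heart$ is induced by the maps $\pi_0(A^n)\to\pi_0(B^n)$.

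Next I would unwind everything through the limit formula for $\QCoh$ of Definition~\ref{Def of QCoh}. Writing $\sF\in\QCoh(Y)\simeq\Tot(\Mod_{A^\bullet})$ as a cosimplicial module $(M^\bullet)$, the pullback $f^*(\sF)\in\QCoh(X)\simeq\Tot(\Mod_{B^\bullet})$ corresponds to $(B^\bullet\otimes_{A^\bullet}M^\bullet)$; by the remark following Definition~\ref{homotopy sheaves def}, $\pi_t(\sF)\in\QCoh(Y^\heart)$ corresponds to $(\pi_t(M^\bullet))$, so $(f^\heart)^*(\pi_t(\sF))\in\QCoh(X^\heart)\simeq\Tot(\Mod_{\pi_0(B^\bullet)})$ corresponds to $(\pi_0(B^\bullet)\otimes_{\pi_0(A^\bullet)}\pi_t(M^\bullet))$, whereas $\pi_t(f^*(\sF))$ corresponds to $(\pi_t(B^\bullet\otimes_{A^\bullet}M^\bullet))$. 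The canonical ring-level base-change maps $\pi_0(B^n)\otimes_{\pi_0(A^n)}\pi_t(M^n)\to\pi_t(B^n\otimes_{A^n}M^n)$ are natural in $n$, hence assemble into a map of cosimplicial objects and thus a canonical morphism $(f^\heart)^*(\pi_t(\sF))\to\pi_t(f^*(\sF))$ in $\QCoh(X^\heart)$; being a map of totalizations, it is an equivalence as soon as it is so levelwise. For the levelwise claim I would use flatness of $A^n\to B^n$: then $\pi_*(B^n)\simeq\pi_*(A^n)\otimes_{\pi_0(A^n)}\pi_0(B^n)$ is flat over $\pi_*(A^n)$ (since $\pi_0(A^n)\to\pi_0(B^n)$ is flat), so the base-change spectral sequence $\mathrm{Tor}^{\pi_*(A^n)}_{p,q}(\pi_*(B^n),\pi_*(M^n))\Rightarrow\pi_{p+q}(B^n\otimes_{A^n}M^n)$ collapses, giving $\pi_*(B^n\otimes_{A^n}M^n)\simeq\pi_0(B^n)\otimes_{\pi_0(A^n)}\pi_*(M^n)$, which is exactly the required equivalence.

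Each hypothesis of the lemma enters exactly once: affineness of $f$ is what allows a flat cover of $Y$ to be base-changed to a flat cover of $X$ with all terms remaining affine, so that $\QCoh$ and the homotopy-sheaf construction can both be computed through the resulting pair of compatible groupoid presentations, while flatness of $f$ is what forces the base-change spectral sequence to degenerate. I expect the main obstacle to be organizational rather than substantive — namely verifying that the groupoid presentations of $X$ and $Y$, their underlying ordinary counterparts, the pullback functors $f^*$ and $(f^\heart)^*$, and the homotopy-sheaf functor are all mutually compatible in the manner used above — since the genuine algebraic input is just the routine flat base change for homotopy groups. One should also observe that, because the comparison morphism is assembled from the canonical ring-level maps, it does not depend on the choice of cover $g$; equivalently, once it is constructed, its being an equivalence can be tested after the conservative pullback along $\Spec(\pi_0(B^0))\to X^\heart$, which returns the same levelwise computation.
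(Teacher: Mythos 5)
Your proof is correct, and the algebraic core — flat base change for $\E$-rings, $\pi_*(B\otimes_A M)\simeq\pi_0(B)\otimes_{\pi_0(A)}\pi_*(M)$ for $A\to B$ flat — is exactly what drives the paper's argument as well. The difference is purely organizational: the paper exploits the one-cover characterization of homotopy sheaves from Definition~\ref{homotopy sheaves def} (the object $\pi_t(\sF)$ is uniquely determined by the condition that pulling back along a \emph{single} flat affine cover $\Spec(\pi_0(A))\to Y^\heart$ recovers $\pi_t$ of the pullback along $\Spec(A)\to Y$), so it only needs to check the defining condition for $\pi_t(f^*\sF)$ against the candidate $(f^\heart)^*(\pi_t\sF)$ after pulling back to $\Spec(\pi_0(B))$, and never unwinds the full \v{C}ech nerve. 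You instead pass to the full groupoid/cosimplicial presentations of $X$, $Y$, and their underlying ordinary stacks, identify all four quasi-coherent objects as cosimplicial modules, build the comparison map levelwise, and invoke that equivalences of totalizations can be checked levelwise. Both approaches are valid; the paper's is shorter because it avoids threading the identification through the entire simplicial diagram, while yours has the modest advantage of making the comparison map visibly canonical and visibly natural in $\sF$ without appeal to the ``some (and hence any) cover'' clause.

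Two small remarks. First, you should state explicitly (as you in effect do via the collapsing Tor spectral sequence) that flatness of $\pi_0(A^n)\to\pi_0(B^n)$ ensures the underived and derived tensor products $\pi_0(B^\bullet)\otimes_{\pi_0(A^\bullet)}\pi_t(M^\bullet)$ agree, so the comparison really does take place in the heart $\QCoh(X^\heart)^\heart$. Second, your closing observation — that the comparison map can be tested after the conservative pullback to $\Spec(\pi_0(B^0))$ — is in fact exactly the mechanism the paper's proof uses from the start; recognizing that would have let you skip the cosimplicial bookkeeping entirely.
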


\begin{proof}
Choose a flat affine cover $i:\Spec (A)\to Y$. Since $f$ is affine, $j : \Spec(A)\times_Y X\simeq \Spec(B)\to X$ is a flat affine cover too, and by the flatness of $f$, the $\E$-ring map $A\to B$ is flat.

By definition, the homotopy sheaf $\pi_t(\sF)\in \QCoh(X^\heart)$ is determined by satisfying the condition $(i^\heart)^*(\pi_t(\sF))\simeq \pi_t(i^*(\sF))$.
To show that $(f^\heart)^*(\pi_t(\sF))$ satisfies the analogous defining property of $\pi_t(f^*(\sF)),$ we consider the chain of equivalences
\begin{eqnarray*}
(j^\heart)^*(f^\heart)^*(\pi_t(\sF))&\simeq& \pi_0(B)\otimes_{\pi_0(A)} (i^\heart)^*(\pi_t(\sF))\\
&\simeq &  \pi_0(B)\otimes_{\pi_0(A)} \pi_t(i^*(\sF))\\
&\simeq & \pi_t(A\otimes_B i^*(\sF))\\
&\simeq & \pi_t(f^*(\sF)),
\end{eqnarray*}
of which the first equivalence uses commutativity of the  square
$$
\begin{tikzcd}
\Spec(\pi_0(B))\ar{d}{j^\heart} \ar{r} &\Spec(\pi_0(A))\ar{d}{i^\heart}\\
 X^\heart \ar{r}{f^\heart}& Y^\heart,
\end{tikzcd}
$$
(which is a pullback square thanks to the flatness hypothesis on $f$), the second equivalence is the already-discussed defining property of homotopy sheaves, the third equivalence is due to the flatness of the $\E$-ring map $A\to B$, and the final equivalence uses  an analogous commuting square to the one we displayed above, but removing $\heart$ and $\pi_0$.
\end{proof}

\begin{prop}\label{DSS}
Let $X$ be a non-connective geometric stack. For any
 quasi-coherent sheaf $\sF\in\QCoh(X)$, there exists an Adams-graded spectral sequence
$$
E^{s, t}_2 = \check{\mathrm H}^s(X^\heart; \pi_t(\sF))\Rightarrow \pi_{t-s}(\Gamma(X; \sF)),
$$
called the \textit{descent spectral sequence}. The second page is \v{C}ech cohomology for ordinary quasi-coherent sheaves on $X^\heart$.
\end{prop}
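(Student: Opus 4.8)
The plan is to extract the spectral sequence from a groupoid presentation of $X$ and to recognize it as the homotopy spectral sequence of the associated cosimplicial spectrum of sections.

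First I would invoke Proposition \ref{Groupoid presentation} in the form supplied by its proof: choose a faithfully flat cover $f\colon\Spec(A)\to X$ and let $\Spec(A^\bullet)=\check{\mathrm C}^\bullet(\Spec(A)/X)$ be its \v{C}ech nerve, so that $X\simeq|\Spec(A^\bullet)|$ with all face maps faithfully flat. By Definition \ref{Def of QCoh}, restriction along $\Spec(A^\bullet)\to X$ gives an equivalence $\QCoh(X)\simeq\Tot(\Mod_{A^\bullet})$; write $(M^\bullet)$, with $M^n\simeq f_n^*(\sF)\in\Mod_{A^n}$ for the structure maps $f_n\colon\Spec(A^n)\to X$, for the cosimplicial system of modules corresponding to $\sF$ under this equivalence.

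Second, I would identify the global sections with a totalization. Descent for the equivalence $\QCoh(X)\simeq\Tot(\Mod_{A^\bullet})$ gives $\sF\simeq\varprojlim_{[n]\in\Delta}(f_n)_\ast f_n^*(\sF)$, and since pushforward functors preserve limits, applying the terminal map $p\colon X\to\Spec(S)$ yields
$$
\Gamma(X;\sF)\;\simeq\;\varprojlim_{[n]\in\Delta}(p\circ f_n)_\ast f_n^*(\sF)\;\simeq\;\varprojlim_{[n]\in\Delta}M^n\;=\;\Tot(M^\bullet),
$$
using the notation of Example \ref{Standard notation} together with the identification of $\Gamma(\Spec(A^n);-)$ with the underlying spectrum of a module. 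Feeding the cosimplicial spectrum $M^\bullet$ into the standard spectral sequence of its $\Tot$-tower produces an Adams-graded spectral sequence with $E_1^{s,t}=\pi_t(M^s)$, first differential the alternating sum of the coface maps, differentials $d_r\colon E_r^{s,t}\to E_r^{s+r,t+r-1}$, and abutment $\pi_{t-s}(\Tot(M^\bullet))\simeq\pi_{t-s}(\Gamma(X;\sF))$; hence $E_2^{s,t}\simeq H^s\big(\pi_t(M^\bullet)\big)$, the cohomology of the normalized cochain complex attached to the cosimplicial abelian group $[n]\mapsto\pi_t(M^n)$.

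Third, I would recognize this $E_2$-page as \v{C}ech cohomology on $X^\heart$. By Corollary \ref{Pres for trunc}, $X^\heart\simeq|\Spec(\pi_0(A^\bullet))|$, and $\Spec(\pi_0(A^\bullet))$ is the \v{C}ech nerve of the faithfully flat affine cover $f^\heart\colon\Spec(\pi_0(A))\to X^\heart$. By Lemma \ref{any flat pullback} (or directly from Definition \ref{homotopy sheaves def}), pulling the homotopy sheaf $\pi_t(\sF)\in\QCoh(X^\heart)$ back along this \v{C}ech nerve recovers exactly the cosimplicial abelian group $[n]\mapsto\pi_t(M^n)$; so the cochain complex computing $E_2^{s,t}$ is precisely the \v{C}ech complex of $\pi_t(\sF)$ for this cover, giving $E_2^{s,t}\simeq\check{\mathrm H}^s(X^\heart;\pi_t(\sF))$ as claimed. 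I expect the main obstacle to be convergence rather than the identification of the pages: the spectral sequence of a $\Tot$-tower is only conditionally convergent in Boardman's sense in general, so beyond exhibiting the stated $E_2$-page and abutment one must either settle for conditional convergence or invoke additional boundedness hypotheses — which is all the present statement, asserting existence of a spectral sequence of the stated shape, actually requires. A minor secondary point is independence of the chosen cover: the affine-diagonal hypothesis forces every term of the \v{C}ech nerve $\Spec(\pi_0(A^\bullet))$ to be affine, hence acyclic for quasi-coherent cohomology, so the \v{C}ech complex computes $\mathrm R\Gamma(X^\heart;\pi_t(\sF))$ and is therefore cover-independent, and a common-refinement comparison shows any two of the resulting spectral sequences agree from $E_2$ onward.
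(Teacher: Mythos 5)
Your proposal is correct and follows essentially the same route as the paper: choose a groupoid presentation $X\simeq|\Spec(A^\bullet)|$, identify $\Gamma(X;\sF)$ with the totalization of the cosimplicial spectrum of sections over the \v{C}ech nerve, take its Bousfield-Kan spectral sequence, and match the $E_2$-page with the \v{C}ech complex of $\pi_t(\sF)$ on $X^\heart$ via Corollary \ref{Pres for trunc} and Lemma \ref{any flat pullback}. The paper's own proof is just terser, outsourcing the page identification (and the convergence caveats you correctly flag) to \cite[Lemma 3.1]{DevHop}.
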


\begin{proof}
Choosing a presentation $X\simeq |\Spec(A^\bullet)|$ as in Proposition \ref{Groupoid presentation}, this is the Bousfield-Kan spectral sequence of the cosimplicial spectrum $\Gamma (\Spec(A^\bullet); \sF\vert_{\Spec (A^\bullet)})$. To identify the second and infinite page as in the statement,  repeat the proof of \cite[Lemma 3.1]{DevHop}.
\end{proof}

\subsection{Quasi-coherent sheaves in the connective setting}

The construction of the descent spectral sequence given above depends on the choice of a faithfully flat (hyper)cover. We wish to give an alternative construction of it that is manifestly independent of such a choice, but instead makes use of a $t$-structure (see Construction \ref{Cons of DSS}). Since the $\i$-category $\Mod_A$ of modules over a non-connective $\E$-ring does not carry a canonical $t$-structure, we can not expect one on $\QCoh(X)$ for a non-connective geometric stack either. There are no such issues in the connective setting however.

\begin{prop}
For any geometric stack $X$, this defines a right and left complete $t$-structure compatible with filtered colimits on the stable $\i$-category $\QCoh(X)$.
\end{prop}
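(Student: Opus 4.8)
The plan is to obtain the $t$-structure by formal means and then reduce each of its properties — being a $t$-structure, compatibility with filtered colimits, right completeness, left completeness — to the corresponding classical facts about $\Mod_A$ for $A$ a connective $\E$-ring, transported along a flat cover. The statement is essentially \cite[Proposition 9.1.3.1]{SAG} together with the completeness assertions proved there, so the main task is to recall the shape of the argument and isolate the one genuinely non-formal input.

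Fix a faithfully flat affine cover $f\colon\Spec(A)\to X$ with $A$ connective (provided by the definition of a geometric stack), with \v Cech nerve $\Spec(A^\bullet)$ (as in Proposition \ref{Groupoid presentation}), so that $\QCoh(X)\simeq\Tot(\Mod_{A^\bullet})$ by Definition \ref{Def of QCoh}. I would set $\QCoh(X)_{\ge0}:=(f^*)^{-1}\big((\Mod_A)_{\ge0}\big)$. By faithfully flat descent of connectivity — if $A\to B$ is faithfully flat then $\pi_*(M)\otimes_{\pi_0(A)}\pi_0(B)\simeq\pi_*(M\otimes_A B)$, so $M$ is connective iff $M\otimes_A B$ is — this subcategory does not depend on $f$ (equivalently, $\sF\in\QCoh(X)_{\ge0}$ iff $g^*\sF$ is connective for \emph{every} affine $g\colon\Spec(R)\to X$), and it is accessible (the preimage of an accessible subcategory under the accessible functor $f^*$) and closed under colimits, shifts up, and extensions. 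Hence the inclusion $\QCoh(X)_{\ge0}\hookrightarrow\QCoh(X)$ is a colimit-preserving functor of presentable $\infty$-categories, so it admits a right adjoint $\tau_{\ge0}$; declaring $\QCoh(X)_{\le-1}$ to be the right orthogonal of $\QCoh(X)_{\ge0}$ then yields an accessible $t$-structure. The point that is \emph{not} formal is that $f^*$ is moreover $t$-exact and that coconnectivity is detected by the cover too, i.e.\ $\QCoh(X)_{\le0}=(f^*)^{-1}\big((\Mod_A)_{\le0}\big)$: here one uses that all coface maps of $\Spec(A^\bullet)$ are flat base changes of $f$, while the codegeneracy maps, being sections of the affine diagonal, are \emph{not} flat — this is precisely the content of \cite[Proposition 9.1.3.1]{SAG}, whose proof uses no connectivity hypothesis.

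Granting that $f^*$ is $t$-exact, conservative, and colimit-preserving and detects both $\QCoh(X)_{\ge0}$ and $\QCoh(X)_{\le0}$, compatibility with filtered colimits and right completeness are immediate. For the former: $(\Mod_A)_{\le0}$ is closed under filtered colimits and $f^*$ preserves colimits and detects $\le0$, so $\QCoh(X)_{\le0}$ is closed under filtered colimits. For right completeness: $\sF\simeq\colim_n\tau_{\ge-n}\sF$ because this becomes an equivalence after the conservative, $t$-exact, colimit-preserving $f^*$ by right completeness of $\Mod_A$, and $\bigcap_n\QCoh(X)_{\ge n}=0$ since its $f^*$-image lies in $\bigcap_n(\Mod_A)_{\ge n}=0$.

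Left completeness is the step I expect to be the main obstacle, because $f^*$ is only a left adjoint and need not commute with the Postnikov limit $\lim_n\tau_{\le n}\sF$, so the cover argument does not apply directly. Instead I would argue from the totalization presentation: each $\Mod_{A^n}$ is left complete, and since the coface functors $\Mod_{A^n}\to\Mod_{A^{n+1}}$ are $t$-exact, one can identify $\QCoh(X)_{\le n}$ with a limit of the subcategories $(\Mod_{A^\bullet})_{\le n}$ along the coface maps, and then interchange the two towers of limits:
$$
\lim_n\QCoh(X)_{\le n}\;\simeq\;\lim_n\lim(\Mod_{A^\bullet})_{\le n}\;\simeq\;\lim\bigl(\lim_n(\Mod_{A^\bullet})_{\le n}\bigr)\;\simeq\;\lim\Mod_{A^\bullet}\;\simeq\;\QCoh(X),
$$
compatibly with the canonical comparison map. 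The two delicate points are that one must arrange to see only the $t$-exact coface functors when passing to the subcategories $(\Mod_{A^\bullet})_{\le n}$ (the codegeneracies being non-flat, hence not $t$-exact), and the justification of the Fubini-type interchange of limits; for the details of this last step I would appeal to the analysis in \cite[\S 9.1.3]{SAG}.
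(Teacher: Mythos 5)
Your proposal takes essentially the same approach as the paper: both define the $t$-structure via a faithfully flat affine cover and then appeal to the results of \cite[\S 9.1.3]{SAG} — the paper cites Remark 9.1.3.4 (to identify the $t$-structure) and Corollary 9.1.3.2 (for the completeness and compatibility assertions) directly, while you unpack their content, correctly isolating Proposition 9.1.3.1 (flat pullback is $t$-exact and jointly detects both halves) as the key non-formal input and deferring the delicate left-completeness step, as the paper does, to that section of \cite{SAG}.
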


\begin{proof}
Though we defined it  slightly differently, it follows from \cite[Remark 9.1.3.4]{SAG} that this $t$-structure on $\QCoh(X)$ coincides with the one studied in \cite[Subsection 9.1.3]{SAG}. Now the result we are after follows from \cite[Corollary 9.1.3.2]{SAG}.
\end{proof}

\begin{remark}
Picking a presentation $X\simeq |\Spec(A^\bullet)|$ as in Proposition \ref{Groupoid presentation}, the $t$-structure in question is explicitly given by $\QCoh(X)^{\le n}\simeq \Tot(\Mod_{A^\bullet}^{\le n})$. Note that this is sensible because all the degeneracy maps $A^n\to A^m$ are flat, and smash product along flat maps of connective $\E$-rings is left $t$-exact.
We thus see that the $t$-structure on quasi-coherent sheaves is induced via affines from the usual $t$-structure on module $\i$-categories over connective $\E$-rings of \cite[Proposition 7.1.1.13.]{HA}.
\end{remark}

 \begin{remark}
 Pullback along the map from the underlying ordinary stack $X^\heart\to X$ induces an equivalence on the heart of the $t$-structure $\QCoh(X)^\heart\simeq \QCoh(X^\heart)^\heart$. The latter may be thought as the ordinary abelian category of quasi-coherent sheaves on the underlying ordinary geometric stack $X^\heart$. In fact, even when $X$ is non-connective, it is still clear that $\pi_n(\sF)\in\QCoh(X^\heart)^\heart$ for every quasi-coherent sheaf $\sF\in\QCoh(X)$ and  $n\in \Z$.
 \end{remark}
 
 \begin{remark}
 Given any geometric stack $X\simeq |\Spec(A^\bullet)|$, presented as in Proposition \ref{Groupoid presentation}, we may define via descent
 $$
 \QCoh^\heart(|\Spec(A^\bullet|)\simeq \Tot(\Mod_{\pi_0(A^\bullet)}^\heart).
 $$
 This gives rise to an abelian category $\QCoh^\heart(X^\heart)$, equivalent to the heart $\QCoh(X^\heart)^\heart\simeq \QCoh(\tau_{\ge 0}(X))^\heart$ of the $t$-structure under discussion. In particular, we view $\QCoh^\heart(X^\heart)$ as the category of ordinary quasi-coherent sheaves on the ordinary geometric stack $X^\heart.$
 \end{remark}
 
To make use of the $t$-structure on quasi-coherent sheaves, we must pass from a non-connective geometric stack $X$ to its connective cover $\tau_{\ge 0}(X)$. That is to say, we consider pushforward $\QCoh(X)\to\QCoh(\tau_{\ge 0}(X))$ along the canonical map $X\to \tau_{\ge 0}(X)$. We will abuse notation and not notationally distinguish between a quasi-coherent sheaf on $X$ and its pushforward in $\tau_{\ge 0}(X)$. Indeed, said pushforward may be thought as a forgetful functor, as the following result shows.

\begin{prop}\label{QC and connective covers}
Let $X$ be a non-connective geometric stack. The map $X\to \tau_{\ge 0}(X)$ induces an equivalence of $\i$-categories
$$
\QCoh(X)\simeq \Mod_{\sO_X}(\QCoh(\tau_{\ge 0}(X))).
$$
\end{prop}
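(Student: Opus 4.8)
The plan is to reduce to the affine case by means of a groupoid presentation. Writing $g\colon X\to\tau_{\ge 0}(X)$ for the canonical map, I would first invoke Proposition \ref{Groupoid presentation} to present $X\simeq|\Spec(A^\bullet)|$ for a simplicial diagram with $A^n\in\CAlg$ and all face maps faithfully flat, so that Corollary \ref{Pres for trunc} gives $\tau_{\ge 0}(X)\simeq|\Spec(\tau_{\ge 0}(A^\bullet))|$. By Definition \ref{Def of QCoh} this produces equivalences of symmetric monoidal $\i$-categories
$$
\QCoh(X)\simeq\Tot(\Mod_{A^\bullet}),\qquad\QCoh(\tau_{\ge 0}(X))\simeq\Tot(\Mod_{\tau_{\ge 0}(A^\bullet)}),
$$
with both totalizations computed in $\CAlg(\PrL)$ and, in the second case, cosimplicial structure maps given by the (symmetric monoidal) extension-of-scalars functors along the flat maps $\tau_{\ge 0}(A^n)\to\tau_{\ge 0}(A^m)$. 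The next step is to identify the pushforward $g_*\sO_X$ -- which, per the convention fixed just above the statement, I continue to write $\sO_X$ -- with the cosimplicial $\E$-algebra object $(A^\bullet)$ of $\Tot(\Mod_{\tau_{\ge 0}(A^\bullet)})$, each $A^n$ being regarded as an algebra over $\tau_{\ge 0}(A^n)$ via the counit $\tau_{\ge 0}(A^n)\to A^n$.

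From here the argument runs on two inputs. First, forming module $\i$-categories over an $\E$-algebra object is compatible with limits of presentable symmetric monoidal $\i$-categories: for a cosimplicial diagram $\mathcal C^\bullet$ of such, equipped with a compatible cosimplicial algebra $B^\bullet$, there is a canonical equivalence $\Mod_{(B^\bullet)}(\Tot(\mathcal C^\bullet))\simeq\Tot(\Mod_{B^\bullet}(\mathcal C^\bullet))$. This is the $\i$-categorical fact underpinning the treatment of affine morphisms in \cite[Chapter 6]{SAG}, and its proof should apply here verbatim; applied with $\mathcal C^\bullet=\Mod_{\tau_{\ge 0}(A^\bullet)}$ and $B^\bullet=A^\bullet$ it yields $\Mod_{\sO_X}(\QCoh(\tau_{\ge 0}(X)))\simeq\Tot\big(\Mod_{A^\bullet}(\Mod_{\tau_{\ge 0}(A^\bullet)})\big)$. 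Second, the standard change-of-rings equivalence $\Mod_{R'}(\Mod_R)\simeq\Mod_{R'}$ \cite{HA}, natural in a map of $\E$-rings $R\to R'$, applied to the counits $\tau_{\ge 0}(A^n)\to A^n$, identifies $\Mod_{A^n}(\Mod_{\tau_{\ge 0}(A^n)})\simeq\Mod_{A^n}$ compatibly with the cosimplicial structure maps. Concatenating these gives $\Mod_{\sO_X}(\QCoh(\tau_{\ge 0}(X)))\simeq\Tot(\Mod_{A^\bullet})\simeq\QCoh(X)$; tracing through the identifications I would then check that this equivalence is inverse to the functor $\QCoh(X)\to\Mod_{\sO_X}(\QCoh(\tau_{\ge 0}(X)))$ induced by $g_*$ (landing in $\sO_X$-modules), which in particular makes the statement independent of the chosen presentation.

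The hard part will be the bookkeeping preceding the first input: one has to verify that the symmetric monoidal structure on $\Tot(\Mod_{\tau_{\ge 0}(A^\bullet)})$ computed in $\CAlg(\PrL)$ is genuinely the one borne by $\QCoh(\tau_{\ge 0}(X))$, and that the cosimplicial $\E$-algebra $(A^\bullet)$ really does present $g_*\sO_X$ -- the subtlety being that the cosimplicial structure maps of $g_*\sO_X$ factor through the canonical comparison maps $A^n\otimes_{\tau_{\ge 0}(A^n)}\tau_{\ge 0}(A^m)\to A^m$ rather than being strict base changes. Once this is arranged, the remainder is formal. An alternative, presentation-free route would be to check directly that $g_*$ is monadic with associated monad $(-)\otimes_{\sO_{\tau_{\ge 0}(X)}}\sO_X$ and then apply Barr--Beck; but establishing the requisite affine-locality for $g$ itself seems less transparent, so I would favor the groupoid-presentation argument above.
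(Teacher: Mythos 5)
Your proposal is correct, and it shares the paper's skeleton — the groupoid presentation $X\simeq|\Spec(A^\bullet)|$, the identification $\tau_{\ge 0}(X)\simeq|\Spec(\tau_{\ge 0}(A^\bullet))|$ from Corollary \ref{Pres for trunc}, and the degree-wise change-of-rings $\Mod_{A^n}(\Mod_{\tau_{\ge 0}(A^n)})\simeq\Mod_{A^n}$ — but the assembly step is genuinely different. You pass to the totalization by invoking that forming $\Mod_{(-)}(-)$ commutes with limits of symmetric monoidal presentable $\i$-categories with compatible algebra objects. The paper instead proves that the adjunction $c^*:\QCoh(\tau_{\ge 0}(X))\rightleftarrows\QCoh(X):c_*$ is monadic, and for this it establishes and applies a general lemma: a limit in $\Fun(\Delta^1,\PrL)$ of monadic adjunctions whose transition squares are left and right adjointable (Beck--Chevalley) is again monadic. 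In the paper's argument, flatness of the face maps $A^i\to A^j$ is exactly what makes the squares with $\tau_{\ge 0}(A^i)\to\tau_{\ge 0}(A^j)$ pushouts of $\E$-rings, hence adjointable on module $\i$-categories; in yours, the same flatness is what lets $(A^\bullet)$ define a Cartesian cosimplicial algebra in $\Tot(\Mod_{\tau_{\ge 0}(A^\bullet)})$, forcing the comparison maps $A^n\otimes_{\tau_{\ge 0}(A^n)}\tau_{\ge 0}(A^m)\to A^m$ to be equivalences — the ``bookkeeping'' you rightly flag as the real content. One small correction to your closing remark: the paper's route is not the ``presentation-free Barr--Beck'' alternative you describe, but a hybrid — it keeps the groupoid presentation precisely so that monadicity and Beck--Chevalley can be verified degree by degree, with no direct affine-locality analysis of $c$ itself.
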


\begin{proof}
Denoting the connective cover map by $c: X\to \tau_{\ge 0}(X)$, we must show that the adjunction induced on quasi-coherent sheaves
$$
c^* :\QCoh(\tau_{\ge 0}(X))\rightleftarrows \QCoh(X) : c_*
$$
is monadic. Choosing a presentation $X\simeq |\Spec (A^\bullet)|$ as in Proposition \ref{Groupoid presentation}, we get by Corollary \ref{Pres for trunc} that $X\simeq |\Spec (\tau_{\ge 0}(A^\bullet))|$. The adjunction above is induced on totalizations from the adjunction of cosemisimplicial functors
$$
A^\bullet\otimes_{\tau_{\ge 0}(A^\bullet)} -:\Mod_{\tau_{\ge 0}(A^{\bullet})}\rightleftarrows \Mod_{A^\bullet},
$$
induced upon totalizations. In each degree separately, these adjunctions are  monadic, as special case of the basic fact that
 any map of $\E$-rings $A\to B$ induces an equivalence of $\i$-categories
$
\Mod_B(\Mod_A)\simeq \Mod_B.
$
Because all the face maps $A^i\to A^j$ of the simplicial $\E$-ring $A^\bullet$ are flat, the commutative diagram 
$$
\begin{tikzcd}
\tau_{\ge 0}(A^i) \ar[r]\ar[d] & \tau_{\ge 0}(A^j)\ar[d]\\
A^i\ar[r] & A^j
\end{tikzcd}
$$
is in fact a pushout square of $\E$-rings. This implies that the induced maps on the $\i$-categories of modules are adjointable in the sense of \cite[Definition 4.7.4.13]{HA} (said differently: satisfy the Beck-Chevalley condition), reducing the proof of monadicity to the following general Lemma.
\end{proof}

\begin{lemma}
Consider a small diagram $\mI\to \Fun(\Delta^1,\Pr^\mathrm{L})$, i.e.\ a collection of adjunction of presentable $\infty$-categories  $F_i : \mC_i\rightleftarrows \mD_i:G_i$ for all $i\in \mI$, and morphisms $f_{ij}:\mC_i\to\mC_j$ and $g_{ij} :\mD_i\to\mD_j$ for any morphism $i\to j$ in $\mI,$ such that all the diagrams of $\infty$-categories
$$
\begin{tikzcd}
\mC_i\ar{r}{F_i}\ar{d}{f_{ij}} & \mD_i\ar{d}{g_{ij}}\\
\mC_j \ar{r}{F_j} & \mD_j
\end{tikzcd}\qquad\qquad
\begin{tikzcd}
\mC_i\ar{d}{f_{ij}} & \mD_i\ar{d}{g_{ij}}\ar{l}[swap]{G_i}\\
\mC_j & \mD_j\ar{l}[swap]{G_j}
\end{tikzcd}
$$
commute (i.e. the commutative squares are right and left adjointable respectively.). Let us denote the limit of the functor $\mI\to \Fun(\Delta^1,\Pr^\mathrm{L})$ by $F:\mC\rightleftarrows \mD:G$. Suppose that each adjunction $F_i : \mC_i\rightleftarrows \mD_i:G_i$ is monadic for every $i\in\mI$. Then the adjunction $F: \mC\leftrightarrows \mD:G$ is also monadic.
\end{lemma}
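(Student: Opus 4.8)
The plan is to verify the two hypotheses of the $\infty$-categorical Barr--Beck--Lurie monadicity theorem \cite[Theorem 4.7.3.5]{HA} for the limit adjunction $F\dashv G$: that $G$ is conservative, and that $G$ preserves geometric realizations of $G$-split simplicial objects. The existence of such geometric realizations in $\mathcal D$ is automatic, since $\mathcal D\simeq\varprojlim_i\mathcal D_i$ is a limit in $\PrL$ of presentable $\infty$-categories, hence again presentable.

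First I would pin down the limit adjunction. Let $p_i\colon\mathcal C\to\mathcal C_i$ and $q_i\colon\mathcal D\to\mathcal D_i$ denote the canonical projections out of $\mathcal C\simeq\varprojlim_i\mathcal C_i$ and $\mathcal D\simeq\varprojlim_i\mathcal D_i$; being structure maps of a limit cone in $\PrL$, they are colimit-preserving, and as projections from a limit of $\infty$-categories they are jointly conservative. By construction $F\simeq\varprojlim_iF_i$ satisfies $q_iF\simeq F_ip_i$ and is a morphism of $\PrL$, so it has a right adjoint $G$. The left-adjointability (Beck--Chevalley) hypothesis on the squares involving the $G_i$ is exactly what guarantees that the units $\id_{\mathcal C_i}\to G_iF_i$ and counits $F_iG_i\to\id_{\mathcal D_i}$ are compatible under the transition functors, so that they assemble into a unit and counit exhibiting $\varprojlim_iG_i$ as a right adjoint of $F$; by uniqueness of adjoints $G\simeq\varprojlim_iG_i$, and in particular $p_iG\simeq G_iq_i$ for every $i$. (Alternatively, this is a special case of the formalism of adjointable diagrams of \cite[\S4.7.4]{HA}.)

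Conservativity of $G$ is then formal: if $\alpha$ is a morphism of $\mathcal D$ with $G(\alpha)$ invertible, then for each $i$ the morphism $G_i(q_i\alpha)\simeq p_i(G\alpha)$ is invertible, hence $q_i\alpha$ is invertible since the monadic functor $G_i$ is conservative, and therefore $\alpha$ is invertible by joint conservativity of the $q_i$. For the second hypothesis, let $X_\bullet\colon\Delta^{\mathrm{op}}\to\mathcal D$ be $G$-split. Then each $q_iX_\bullet$ is $G_i$-split: indeed $G_iq_iX_\bullet\simeq p_iGX_\bullet$, and the functor $p_i$ carries any splitting of $GX_\bullet$ to a splitting of $p_iGX_\bullet$. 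Monadicity of $F_i\dashv G_i$ now yields that $G_i$ preserves the geometric realization $|q_iX_\bullet|$. Applying $p_i$ to the canonical comparison map $|GX_\bullet|\to G|X_\bullet|$, and using that $p_i$ and $q_i$ commute with geometric realizations together with the identity $p_iG\simeq G_iq_i$, one identifies it with the comparison map $|G_iq_iX_\bullet|\to G_i|q_iX_\bullet|$, which is an equivalence. Since this holds for all $i$ and the $p_i$ are jointly conservative, $|GX_\bullet|\to G|X_\bullet|$ is an equivalence, i.e.\ $G$ preserves $|X_\bullet|$. Barr--Beck--Lurie then gives that $F\dashv G$ is monadic.

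The one point I expect to require genuine care is the identification $G\simeq\varprojlim_iG_i$ (equivalently $p_iG\simeq G_iq_i$): this is the sole place where the adjointability hypotheses are used, and spelling it out cleanly means either invoking the machinery of mates in $\PrL$ or running an explicit, if routine, verification with units, counits, and the triangle identities. Once that is in hand, the rest follows mechanically from the two ``componentwise'' features of limits in $\PrL$ recorded in the second paragraph.
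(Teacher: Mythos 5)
Your proof is correct and takes essentially the same route as the paper's: both verify the two Barr--Beck--Lurie hypotheses by using that limits in $\Pr^{\mathrm{L}}$ are computed in $\widehat{\mathcal C\mathrm{at}}_\infty$, that the adjointability hypotheses identify $G$ with $\varprojlim_i G_i$ via the machinery of \cite[\S 4.7.4]{HA}, and that the projection functors out of a limit of $\infty$-categories are colimit-preserving and jointly detect equivalences. The only cosmetic difference is that the paper phrases the split-realization step via the equivalence of functor $\infty$-categories $\Fun((\bDelta_{-\infty})^{\mathrm{op}},\mC)\simeq\varprojlim_i\Fun((\bDelta_{-\infty})^{\mathrm{op}},\mC_i)$ and routes conservativity through the product $\prod_i\mC_i$, whereas you argue directly with the projections $p_i,q_i$; these are the same observations packaged slightly differently.
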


\begin{proof}
Let us first observe that, since the forgetful functor $\Pr^\mathrm{L}\subseteq\Cat$, of presentable $\infty$-categories (with colimit-preserving functors) into all $\infty$-categories, preserves limits, we have canonical equivalences of $\i$-categories $\mC \simeq \varprojlim_{i\in \mI} \mC_i$ and $\mD \simeq\varprojlim_{i\in \mI} \mD_i$. Thanks to the adjointability of the adjunctions between $F_i$ and $G_i$, the functor $G:\mD\to\mC$ is induced from the functors $G_i:\mD_i\to\mC_i$ by \cite[Proposition 4.7.4.19]{HA}.

To prove that the adjunction  $F:\mC\rightleftarrows \mD:G$ is monadic, we must by the Barr-Beck Theorem \cite[Theorem 4.7.3.5]{HA} show that the functor $G:\mD\to\mC$ is conservative, and preserves $G$-split totalizations.
Recalling the definition of split simplicial objects from \cite[Definition 4.7.2.2]{HA}, the fact that $\Fun((\bDelta_{-\infty})^\mathrm{op}, \mC)\simeq \varprojlim_{i\in\mI}\Fun((\bDelta_{-\infty})^\mathrm{op}, \mC_i)$ implies that a $G$-split simplicial object $X^\bullet$ in $\mC$ corresponds to a functorial collection of $G_i$-split simplicial objects $X_i^\bullet$ in $\mC_i$. Since each $G_i$ preserves the totalization of $X_i^\bullet$, it follows that $G$ preserves the totalization of $X^\bullet$. To show that $G$ is conservative, consider the commutative diagram of $\infty$-categories
$$
\begin{tikzcd}
\mD\ar{r}{G}\ar{d}{\simeq} & \mC\ar{d}{\simeq}\\
\varprojlim_{i\in\mI}\mD_i\ar{d}\ar{r}{\varprojlim_{i\in\mI}G_i} &\varprojlim_{i\in\mI}\mC_i\ar{d}\\
\prod_{i\in\mI}\mD_i\ar{r}{\prod_{i\in\mI}G_i} &\prod_{i\in\mI}\mC_i.
\end{tikzcd}
$$
Here the unlabeled arrows are the usual inclusions of limits into products, which is to say, the functor induced on limits by the inclusion $\mathrm{ob}(\mI)\subseteq\mI$ of the set of objects into the indexing $\i$-category $\mI$. These functors are always conservative, and since each $G_i$ are conservative for all $i\in\mI$, so is $\prod_{i\in\mI}G_i$. It therefore follows from the commutativity the above diagram that $G$ must also be conservative.
\end{proof}

It follows in particular from (the proof of) Proposition \ref{QC and connective covers} that the homotopy sheaves $\pi_t(\sF)$ agree regardless of whether we start with $\sF\in\QCoh(X)$, or if we take its pushforward to $\QCoh(\tau_{\ge 0}(X))$. With that, we may give a construction of the descent spectral sequence that makes no reference to a choice of a flat (hyper)cover:

\begin{cons}\label{Cons of DSS}
Let $X$ be a non-connective geometric stack, and let $\sF\in\QCoh(X)$. Viewing $\sF$ as a quasi-coherent sheaf on the geometric stack $\tau_{\ge 0}(X)$, the Postnikov tower for the $t$-structure on $\QCoh(\tau_{\ge 0}(X))$ gives rise to the filtered object
$$
\mathbf Z\ni n\mapsto\Gamma(\tau_{\ge 0}(X);\tau_{\ge -n}(\sF))\in\QCoh(\tau_{\ge 0}(X)).
$$
Its associated spectral sequence is of the form (in the homological grading)
$$
E^{p,q}_1 = \pi_{p+q}(\Gamma(\tau_{\ge 0}(X); \Sigma^p(\pi_{-p}(\sF))))\Rightarrow \pi_{p+q}(\Gamma(X;\sF)).
$$
By re-grade via
$s= -(2p+q)$, $t =-p$, and $r\mapsto r+1,$ the spectral sequence is brought into (an Adams-graded) form
$$
E_2^{s,t} =\pi_{-s}(\Gamma(\tau_{\ge 0}(X); \pi_t(\sF)))\Rightarrow \pi_{t-s}(\Gamma(X; \sF)).
$$
Note that the homotopy sheaves $\pi_t(\sF)$ actually belong to $\QCoh(X^\heart)$, and are being secretly pushed forward along $X^\heart\to\tau_{\ge 0}(X)$. Thus $\Gamma(\tau_{\ge 0}(X); \pi_t(\sF))\simeq \Gamma(X^\heart; \pi_t(\sF))$, and since $\pi_t(\sF)\in \QCoh(X^\heart)^\heart$, this is just the complex computing sheaf cohomology of $\pi_t(\sF)$ on the ordinary stack $X^\heart$. With that, the spectral sequence becomes
$$
E^{s,t}_2 = \mathrm H^s(X^\heart; \pi_t(\sF))\Rightarrow \pi_{t-s}(\Gamma(X; \sF)).
$$
Under some light assumptions on $X^\heart$, which ensure that \v{C}ech cohomology agrees with derived-functor cohomology for quasi-coherent sheaves, the spectral sequence thus obtained is equivalent to the descent spectral sequence of Proposition \ref{DSS}. For a proof of such a claim in a related setting, see  \cite{Antieau}. 
\end{cons}

\section{Moduli of formal groups and chromatic homotopy theory}\label{Section 2}

Having taken the time to set up the necessary basics of non-connective spectral algebraic geometry, we now apply it to study a non-connective stack of particular interest to chromatic homotopy theory.

\subsection{Formal groups in spectral algebraic geometry}
Before getting to that though, we  review the theory of formal groups over $\E$-rings, as developed in \cite[Chapter 1]{Elliptic 2}, and needed in the rest of this paper. See \textit{loc.\,cit.}~for details and a more complete discussion.

\begin{definition}[{\cite[Definition 1.2.4]{Elliptic 2}}]\label{Smooth coalgebras}
A \textit{smooth coalgebra of dimension $r$} over an $\E$-ring $A$ is a cocommutative coalgebra object in the $\i$-category of flat $A$-modules
$$C\in\mathrm{cCAlg}_A^\flat\simeq \CAlg((\Mod_A^\flat)^\mathrm{op})^\mathrm{op},$$
such that  there exists a projective $\pi_0(A)$-module $E$ of finite rank $r$, and an isomorphism $\pi_0(C)\simeq \Gamma^*_{\pi_0(A)}(E)$ of coalgebras over $\pi_0(A)$. Here $\Gamma^*_{\pi_0(A)}(E)$ denotes the free divided power coalgebra, see for instance \cite[Construction 1.1.11]{Elliptic 2}. Smooth coalgebras over $A$ form a full subcategory $\mathrm{cCAlg}^\mathrm{sm}_A\subseteq\mathrm{cCAlg}_A^\flat$.
\end{definition}

The $\i$-category of smooth coalgebras $\mathrm{cCAlg}^\mathrm{sm}_A$ may be viewed as \textit{formal hyperplanes over $A$}, i.e.\ an incarnation of smooth formal varieties. Formal groups should therefore be defined to be some sort of commutative algebra objects in this $\i$-category. But we must be careful about picking the correct sort.

\begin{definition}[{\cite[Definition 1.2.4]{Elliptic 1}}]\label{Abelian group objects}
Let $\mathrm{Lat}$ denote the category of lattices, i.e.\ the full subcategory of abelian groups spanned by $\{\Z^n\}_{n\ge 0}$. Let $\mC$ be any $\i$-category with finite products. An \textit{abelian group object in $\mC$} is any functor $\mathrm{Lat}^\mathrm{op}\to\mC$ that preserves finite products. Abelian group objects form the full subcategory $\mathrm{Ab}(\mC)\subseteq \Fun(\mathrm{Lat}^\mathrm{op}, \mC)$.
\end{definition}

\begin{remark}
Contrast the notion of abelian group objects with the more familiar one of commutative (i.e.\ $\E$-)monoid objects in an $\i$-category with finite products $\mC$. The latter are given by  product-preserving functors $\mathcal F\mathrm{in}\to\mC$ from the category of finite sets. By pre-composing with the map $\mathcal F\mathrm{in}\to\mathrm{Lat}^\mathrm{op}$, given by $I\mapsto \mathbf Z^{I}$, we obtain a ``forgetful functor'' $\mathrm{Ab}(\mC)\to\mathrm{CMon}(\mC)$.
 For $\mC$ the $\i$-category of spaces $\mS$, this recovers the inclusion of topological abelian groups into $\E$-spaces.
\end{remark}

Though the definition of formal groups in \cite[Definition 1.6.1]{Elliptic 2} exhibits them manifestly as objects of functor-of-points-style algebraic geometry, it will be more convenient for us to use an equivalent ``Hopf algebra'' definition instead.

\begin{definition}[{\cite[Remark  1.6.6]{Elliptic 2}}]\label{formal group def}
The $\i$-category of \textit{formal groups} over an $\E$-ring $A$ is defined to be the $\i$-category $\mathrm{FGrp}(A) :=\mathrm{Ab}(\mathrm{cCAlg}^\mathrm{sm}_A)$ of abelian group objects in smooth coalgebras over $A$. The full subcategory of all formal groups whose underlying smooth coalgebras have dimension $r$ is denoted $\mathrm{FGrp}_{\dim =r}(A)$. 
\end{definition}

\begin{remark}
In \cite[Variant 1.6.2]{Elliptic 2}, formal groups over an $\E$-ring $A$ are \textit{defined} as formal groups over its connective cover $\tau_{\ge 0}(A)$. But since the extension of scalars $\mathrm{cCAlg}^\mathrm{sm}_A\to\mathrm{cCAlg}^\mathrm{sm}_{\tau_{\ge 0}(A)}$ is an equivalence of $\i$-categories by \cite[Proposition 1.2.8]{Elliptic 2}, it follows that our definition is no less general. Instead, we find that the canonical  functor $\mathrm{FGrp}(\tau_{\ge 0}(A))\to \mathrm{FGrp}(A)$ is an equivalence of $\i$-categories for any $\E$-ring $A$ as a \textit{consequence}, rather than  as definition.
\end{remark}

\begin{ex} Our interest in this paper is restricted to two classes of formal groups:
\begin{itemize}
\item Let $A$ be an ordinary commutative ring. If we view it as a discrete $\E$-ring, then formal groups over it, in the sense of Definition \ref{formal group def}, coincide with formal groups over $A$ in the usual sense, e.g.\ see \cite[Subsection 2.6]{Smithling}, \cite[Section 2]{Goerss} or \cite[Lecture 11]{Lurie Chromatic}.

\item Let $A$ be a \textit{complex periodic} $\E$-ring, i.e.\ complex orientable and such that $\pi_2(A)$ is a locally free $\pi_0(A)$ module of rank $1$. The \textit{Quillen formal group of $A$}, denoted $\w{\G}{}^{\CMcal Q}_A$, is in terms of Definition \ref{formal group def} defined to be the smooth coalgebra $C_*(\mathbf{CP}^\infty; A)$ over $A$. The abelian group object structure is inherited from the topological abelian group structure on $\mathbf{CP}^\infty$;
see \cite[Subsection 4.1.3]{Elliptic 2} for a proof that this actually defines a formal group over $A$.
\end{itemize}
\end{ex}

\begin{definition}[{\cite[Definition 4.3.9]{Elliptic 2}}]\label{second def of or}
Let $\w{\G}$ be a $1$-dimensional formal group over an $\E$-ring $A$. Let $\omega_{\w{\G}}$ be the dualizing line of $\w{\G}$, in the sense of \cite[Definition 4.2.14]{Elliptic 2}.
An \textit{orientation} on $\w{\G}$ is an equivalence $\omega_{\w{\G}}\simeq \Sigma^{-2}(A)$ in the $\i$-category $\Mod_A$, coming via the linearization construction of \cite[Construction 4.2.9]{Elliptic 2} from the choice of  an element in $\pi_2(\w{\G}(\tau_{\ge 0}(A))$ (which is also part of the orientation data). Let $\mathrm{FGrp}^\mathrm{or}(A)$ denote the $\i$-category of oriented formal groups over $A$.
\end{definition}

\begin{remark}\label{orientation motivation}
The notion of an oriented formal group may be motivated as follows. Suppose that $X$ is a non-connective spectral stack, classifying formal groups with perhaps some additional structure over $\E$-rings. Assume that $X$  satisfies conditions \ref{mtun} - \ref{mtquatre} of Theorem \ref{Maintheoremintrod} from the Introduction. 
 In light of the isomorphism \eqref{second page}, the condition \ref{mtquatre} amounts to demanding the equality of quasi-coherent sheaf cohomology groups
$$
\mathrm H^s(X^\heart; \pi_{2t}(\sO_X))\simeq \mathrm H^s(\mathcal M^\heart_\mathrm{FG}; \omega_{\mathcal M^\heart_\mathrm{FG}}^{\o t})
$$
for all $s\ge 0$, $t\in\mathbf Z$. Since we have by \ref{mtdeux} an equivalence of ordinary stacks $X^\heart\simeq \mathcal M^\heart_\mathrm{FG},$ this will be satisfied if there are isomorphisms of (usual) quasi-coherent sheaves 
$$\pi_{2t}(\sO_X)\simeq \omega_{\mathcal M^\heart_\mathrm{FG}}^{\o t}$$
for all $t\in \mathbf Z$. Let $A$ be an $\E$-ring and $\w{\G}$ be a formal group over $A$, which is classified by $X$. The quasi-coherent sheaf isomorphisms discussed above then give rise $\pi_0(A)$-module isomorphisms
$$
\pi_0(\Sigma^{-2t}(A))\simeq \pi_{2t}(A)\simeq\pi_0(\omega_{\w{\G}})^{\o t}\simeq \pi_0(\omega_{\w{\G}}^{\o t})
$$
for all $t\in \mathbf Z$. Fixing $t$, and looking at the left- and right-most terms, we might hope that this $\pi_0(A)$-module isomorphism was a reflection of an equivalence $\Sigma^{-2t}(A)\simeq \omega_{\w{\G}}^{\o t}$ on the level of $A$-module spectra. If we had such an equivalence for $t=1$, we could obtain it for all $t\in \mathbf Z$ by smash powers, consequently it suffices to assume that $\Sigma^{-2}(A)\simeq \omega_{\w{\G}}$. But of course, that is almost the notion of an orientation on the formal group $\w{\G}$, in the sense of Definition \ref{second def of or} (or equivalently, Definition \ref{Def of or} from the Introduction), ignoring only the technical issue of the isomorphism needing to arise from a homotopy class on ${\w{\G}}$.
\end{remark}

\begin{prop}[{\cite[Proposition 4.3.23]{Elliptic 2}}]\label{Paying proper credit}
Let $\w{\G}$ be a $1$-dimensional formal group over an $\E$-ring $A$. Then $\w{\G}$ is oriented if and only if $A$ is complex-periodic and $\w{\G}\simeq \w{\G}{}^{\CMcal Q}_A$.
\end{prop}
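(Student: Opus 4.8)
The plan is to prove both implications by routing the argument through Lurie's preorientation/linearization machinery. Recall that a \emph{preorientation} of a $1$-dimensional formal group $\w{\G}$ over $A$ is simply an element $e\in\pi_2(\w{\G}(\tau_{\ge 0}(A)))$, and that the linearization construction \cite[Construction 4.2.9]{Elliptic 2} attaches to it an $A$-linear \emph{Bott map} $\beta_e\colon\omega_{\w{\G}}\to\Sigma^{-2}(A)$; by Definition \ref{second def of or}, an orientation is precisely a preorientation whose Bott map is an equivalence. Since the extension of scalars $\mathrm{FGrp}(\tau_{\ge 0}(A))\to\mathrm{FGrp}(A)$ is an equivalence, and likewise for preorientations, one reduces at the outset to the case that $A$ is connective.

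For the ``if'' direction, assume $A$ is complex periodic, so that $\w{\G}{}^{\mathcal Q}_A=C_*(\mathbf{CP}^\infty;A)$ is a genuine $1$-dimensional formal group --- it is exactly complex periodicity of $A$ that makes this coalgebra smooth of dimension $1$ --- and assume $\w{\G}\simeq\w{\G}{}^{\mathcal Q}_A$. First I would write down the \emph{canonical preorientation}: the topological abelian group structure on $\mathbf{CP}^\infty$ provides a tautological map of abelian group objects $\mathbf{CP}^\infty\to\w{\G}{}^{\mathcal Q}_A(\tau_{\ge 0}(A))$, and restriction along $S^2\simeq\mathbf{CP}^1\hookrightarrow\mathbf{CP}^\infty$ gives $e^{\mathcal Q}\in\pi_2(\w{\G}{}^{\mathcal Q}_A(\tau_{\ge 0}(A)))$. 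Then I would check that $\beta_{e^{\mathcal Q}}$ is an equivalence: choosing any complex orientation of $A$ (possible since $A$ is complex orientable) trivializes $\omega_{\w{\G}{}^{\mathcal Q}_A}$ as $\Sigma^{-2}(A)$, and under this trivialization $\beta_{e^{\mathcal Q}}$ unwinds to multiplication by a unit of $\pi_0(A)$, using that $\pi_2(A)$ is invertible. Hence $e^{\mathcal Q}$ is an orientation, and it is at this last step --- rather than in mere complex orientability --- that the full periodicity hypothesis genuinely enters.

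For the ``only if'' direction, suppose $\w{\G}$ over $A$ carries an orientation, with underlying preorientation $e$ and Bott equivalence $\omega_{\w{\G}}\DistTo\Sigma^{-2}(A)$; write $C_{\w{\G}}$ for the smooth $A$-coalgebra underlying $\w{\G}$. Regarding $C_*(\mathbf{CP}^\infty;A)$ as an abelian group object in (a priori merely flat, not necessarily smooth) $A$-coalgebras, the preorientation $e$ is classified by a comparison morphism $\phi\colon C_*(\mathbf{CP}^\infty;A)\to C_{\w{\G}}$ of such objects --- this is the universal feature of $\mathbf{CP}^\infty$ that the linearization construction exploits. The heart of the argument is then a rigidity statement: both source and target of $\phi$ are filtered colimits of nilpotent (indeed divided-power) thickenings of $A$, so a morphism between them that is an equivalence on cotangent lines at the identity section is necessarily an equivalence. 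By construction of the Bott map, the orientation hypothesis says exactly that $\phi$ induces such an equivalence on cotangent lines; hence $\phi$ is an equivalence and $C_{\w{\G}}\simeq C_*(\mathbf{CP}^\infty;A)$. In particular $C_*(\mathbf{CP}^\infty;A)$ is then a smooth coalgebra of dimension $1$, which by the structure of $A$-homology of $\mathbf{CP}^\infty$ forces $A$ to be complex periodic; and then $\w{\G}\simeq\w{\G}{}^{\mathcal Q}_A$ follows by unwinding the definitions.

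The hard part will be the rigidity step in the ``only if'' direction: pinning down precisely the comparison morphism $\phi$ and showing that an orientation genuinely rigidifies the formal group --- i.e.\ that a morphism known only to be an equivalence on cotangent lines at the identity is an equivalence of flat group-coalgebras --- which is where the formal-geometric input of \cite[Chapter 4]{Elliptic 2} is really used, notably the explicit analysis of $\omega_{\w{\G}{}^{\mathcal Q}_A}$ and the structure theory of divided-power coalgebras. A secondary, purely bookkeeping annoyance throughout is tracking the $\Sigma^{\pm 2}$ suspensions and making sure that the isomorphisms of $\pi_0(A)$-modules one produces are actually witnessed by equivalences of $A$-module spectra, not merely on homotopy groups.
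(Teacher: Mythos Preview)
Your proposal is correct and follows essentially the same strategy as the paper's proof sketch: both hinge on the identification of preorientations of $\w{\G}$ with maps $C_*(\mathbf{CP}^\infty;A)\to\w{\G}$ (via the freeness of $\mathbf{CP}^\infty$ on $S^2$), together with the fact that such a map is an equivalence of formal groups precisely when it is so on dualizing lines, which is exactly the orientation condition. The only cosmetic difference is that the paper packages both implications at once---the correspondence ``preorientation $\leftrightarrow$ map from $\w{\G}{}^{\CMcal Q}_A$'' upgrades directly to ``orientation $\leftrightarrow$ equivalence''---so your separate ``if'' direction could be shortened by observing that the canonical preorientation of $\w{\G}{}^{\CMcal Q}_A$ corresponds to the identity map, and the rigidity step you flag as hard is in the paper simply invoked as the general fact that a map of formal groups is an equivalence if and only if it is one on dualizing lines.
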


\begin{proof}[Proof sketch]
Consider  a \textit{preoriented} formal group $\w{\G}\in \mathrm{FGrp}(A)$, i.e.\ a formal group over $A$ equipped with a map $\beta: \omega_{\w{\G}}\to\Sigma^{-2}(A)$ in $\Mod_A$, arising via linearization as in \cite[Construction 4.2.9]{Elliptic 2} from an element in $\pi_2(\G(\tau_{\ge 0}(A)))$. Using the facts that that $\omega_{\w{\G}{}^{\CMcal Q}_A}\simeq C_\mathrm{red}^*(S^2; A)\simeq \Sigma^{-2}(A)$, and that $\mathbf{CP}^\infty$ is the free topological abelian group generated by the pointed space $\mathbf{CP}^1\simeq S^2$, the data of such a map $\beta$ may be seen to be equivalent to a map $\w{\G}{}^{\CMcal Q}_A\to\w{\G}$. Since a map of formal groups $\w{\G}\to\w{\G}{}'$ is an equivalence if and only if it induces an equivalence on dualizing lines $\omega_{\w{\G}{}'}\to\omega_{\w{\G}}$, it follows that an orientation on $\w{\G}$ is indeed equivalent to an equivalence of formal groups $\w{\G}{}^{\CMcal Q}_A\simeq \w{\G}$.
On the other hand, note that the Quillen formal group $\w{\G}{}^{\CMcal Q}_A$ is a $1$-dimensional formal group over $A$ if and only if  the $\E$-ring $A$ is complex-periodic.
\end{proof}

\begin{remark}\label{Why contractible}
For any formal group $\w{\G}$ over a complex-periodic $\E$-ring $A$, the space of  formal group maps $\w{\G}{}^{\CMcal Q}_A\to \w{\G}$ is by \cite[Proposition 4.3.21]{Elliptic 2} equivalent to the space of \textit{preorientations} on $\w{\G}$, i.e.\ maps $S^2\to\w{\G}(\tau_{\ge 0}(A))$ whose linearization $\beta:\omega_{\w{\G}}\to\Sigma^{-2}(A)$ is not  required to be an $A$-linear equivalence. Consequently, the Quillen formal group $\w{\G}{}^{\CMcal Q}_A$ of a complex-periodic $\E$-ring $A$ has no automorphisms \textit{as on oriented formal group over $A$}.
\end{remark}

\subsection{Descent for formal groups}

In this section we will show that various functors $\CAlg\to\Cat$ that we considered in the previous subsection satisfy faithfully flat descent.

\begin{prop}
\label{Descent for FG}
The functor $A\mapsto \mathrm{FGrp}(A)$ satisfies descent for the fpqc topology.
\end{prop}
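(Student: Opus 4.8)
The plan is to reduce the statement to faithfully flat descent for the constituent pieces out of which $\mathrm{FGrp}(A)$ is built. Recall that $\mathrm{FGrp}(A) = \mathrm{Ab}(\mathrm{cCAlg}^{\mathrm{sm}}_A)$, and that $\mathrm{Ab}(-)$ is a limit construction (product-preserving functors out of $\mathrm{Lat}^{\mathrm{op}}$), hence commutes with limits of $\i$-categories in the variable $\mathrm{cCAlg}^{\mathrm{sm}}_A$. So it suffices to show that $A \mapsto \mathrm{cCAlg}^{\mathrm{sm}}_A$ satisfies fpqc descent as a functor $\CAlg \to \Cat$. Note also that by \cite[Proposition 1.2.8]{Elliptic 2} (invoked in the excerpt), $\mathrm{cCAlg}^{\mathrm{sm}}_A \simeq \mathrm{cCAlg}^{\mathrm{sm}}_{\tau_{\ge 0}(A)}$, and any faithfully flat cover of a connective $\E$-ring is connective with connective Čech nerve, so it is harmless to work over connective $\E$-rings throughout; this means one may cite the connective descent statements from \cite{SAG} / \cite{Elliptic 2} directly.

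First I would recall that $\mathrm{cCAlg}^{\mathrm{sm}}_A$ sits inside $\mathrm{cCAlg}^\flat_A = \CAlg((\Mod_A^\flat)^{\mathrm{op}})^{\mathrm{op}}$, i.e. cocommutative coalgebras in flat $A$-modules. The key input is that flat modules satisfy fpqc descent: $A \mapsto \Mod_A^\flat$ is a sheaf of $\i$-categories for the fpqc topology — this is essentially \cite[Theorem D.6.3.1]{SAG} (flat descent) together with the fact that flatness is detected after faithfully flat base change. Taking cocommutative coalgebra objects is again a limit construction ($\CAlg(\mC^{\mathrm{op}})^{\mathrm{op}}$ is built from $\mC$ by a limit over a diagram of $\i$-categories, and $\Alg$-type constructions commute with limits of symmetric monoidal $\i$-categories along symmetric monoidal functors), so $A \mapsto \mathrm{cCAlg}^\flat_A$ inherits fpqc descent. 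This handles descent for the ``total'' category; what remains is to check that the full subcategory condition cutting out smooth coalgebras is local for the fpqc topology.

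The main obstacle, then, is the descent statement for the \emph{smoothness} condition: given a faithfully flat $A \to B$ with Čech nerve $B^{\otimes_A \bullet + 1}$, and a cocommutative flat $A$-coalgebra $C$ such that $C \otimes_A B$ is smooth of dimension $r$ over $B$, one must show $C$ itself is smooth of dimension $r$ over $A$. By the flat-descent identification, $C$ is already a flat $A$-coalgebra, so the content is the condition on $\pi_0$: one needs $\pi_0(C) \simeq \Gamma^*_{\pi_0(A)}(E)$ for a finite rank $r$ projective $\pi_0(A)$-module $E$. Since $A \to B$ is faithfully flat we have $\pi_0(A) \to \pi_0(B)$ faithfully flat, $\pi_0(C) \otimes_{\pi_0(A)} \pi_0(B) \simeq \pi_0(C \otimes_A B)$ (using flatness of $C$), and the latter is a divided-power coalgebra on a finite projective $\pi_0(B)$-module. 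Finite projectivity of modules descends along faithfully flat ring maps, and the property of a coalgebra being (isomorphic to) a free divided power coalgebra on its module of primitives is itself detected by faithfully flat descent — this is the algebraic core, and I would cite or adapt \cite[Proposition 1.5.15]{Elliptic 2} or the surrounding discussion in \cite[Chapter 1]{Elliptic 2} on descent for formal hyperplanes. Granting this, one assembles the pieces: $\mathrm{cCAlg}^{\mathrm{sm}}_A \simeq \Tot(\mathrm{cCAlg}^{\mathrm{sm}}_{B^{\otimes_A \bullet+1}})$ for every faithfully flat $A \to B$, hence $A \mapsto \mathrm{cCAlg}^{\mathrm{sm}}_A$ is an fpqc sheaf, and applying $\mathrm{Ab}(-)$ (which commutes with the totalization) yields the claim for $A \mapsto \mathrm{FGrp}(A)$.

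A cleaner alternative, if one prefers to avoid re-deriving the smoothness descent by hand, is simply to invoke Lurie's own results: \cite{Elliptic 2} establishes (e.g. around \cite[Remark 1.6.9]{Elliptic 2} or in the treatment of the moduli stack of formal groups) that formal groups and formal hyperplanes satisfy fppf — indeed fpqc — descent, and the proposition then follows by citation together with the observation that the fpqc topology on $\CAlg^{\mathrm{cn}}$ restricts compatibly. I would present the argument in the structured form above but lean on \cite[Chapter 1]{Elliptic 2} for the one genuinely algebraic descent lemma rather than spelling out the divided-power coalgebra bookkeeping.
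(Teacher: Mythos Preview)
There is a genuine gap in your reduction. You argue that since $\mathrm{Ab}(-)$ commutes with limits, it suffices to prove fpqc descent for $A\mapsto \mathrm{cCAlg}^{\mathrm{sm}}_A$, and then you sketch how to verify that the smoothness condition on flat coalgebras is fpqc-local. But this intermediate statement is \emph{false}: the functor $A\mapsto \mathrm{cCAlg}^{\mathrm{sm}}_A$ satisfies only \'etale descent, not fpqc descent. This is precisely the content of \cite[Warning 1.1.22]{Elliptic 2}, and the paper flags it in the remark immediately following the proof. The obstruction is that the criterion \cite[Lemma 1.1.20]{Elliptic 2} for recognizing a divided-power coalgebra requires the existence of a grouplike element, and a smooth coalgebra need not have one after faithfully flat base change is undone. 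Your appeal to ``\cite[Proposition 1.5.15]{Elliptic 2} or the surrounding discussion'' for descent of the divided-power condition therefore cannot succeed as stated, and your alternative route (citing fpqc descent for formal hyperplanes directly from \cite{Elliptic 2}) fails for the same reason.

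The paper's proof avoids this trap by \emph{not} separating $\mathrm{Ab}(-)$ from the smoothness condition. Instead it writes
\[
\mathrm{FGrp}(A)\;\simeq\;\mathrm{Ab}(\mathrm{cCAlg}^\flat_A)\times_{\mathrm{Ab}(\mathrm{cCAlg}^\flat_{\pi_0(A)})}\mathrm{Ab}(\mathrm{cCAlg}^{\mathrm{sm}}_{\pi_0(A)}),
\]
establishes fpqc descent for the first two factors exactly as you do, and then reduces the third factor to fpqc descent for \emph{classical} formal groups over ordinary rings. In that classical setting the objects are Hopf algebras, which always possess a grouplike element (the unit), so \cite[Lemma 1.1.20]{Elliptic 2} applies and the smoothness condition does descend. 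The abelian-group structure is thus not merely a bookkeeping layer to be stripped off at the start---it is what supplies the grouplike element needed to make the descent argument go through.
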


\begin{proof}
The functor $A\mapsto\mathrm{Mod}_A$ satisfies flat descent by \cite[Theorem D.6.3.5]{SAG}, and since flatness if local for the fpqc topology, it follows that $A\mapsto \mathrm{Mod}_A^\flat$ does as well. The same thus holds for $A\mapsto \mathrm{cCAlg}_A^\flat\simeq \mathrm{cCAlg}(\Mod_A^\flat)$, and, because the construction of abelian group objects $\mathcal C\mapsto \mathrm{Ab}(\mathcal C)$ from Definition \ref{Abelian group objects} preserves limits, also for $A\mapsto \mathrm{Ab}(\mathrm{cCAlg}_A^\flat)$.
Now recall from Definition \ref{Smooth coalgebras} that the $\i$-category of smooth $\E$-coalgebras factors as
$$
\mathrm{cCAlg}_A^\mathrm{sm}\simeq \mathrm{cCAlg}_A^\flat\times_{\mathrm{cCAlg}^\flat_{\pi_0(A)}}\mathrm{cCAlg}_{\pi_0(A)}^\mathrm{sm}.
$$
Thanks to the already-mentioned fact that $\mathcal C\mapsto\mathrm{Ab}(\mathcal C)$ commutes with limits,
and we may write $\mathrm{FGrp}(A)\simeq \mathrm{Ab}(\mathrm{cCAlg}^\mathrm{sm}_A)$ by  \cite[Remark 1.6.6]{Elliptic 2}, we find that
$$
\mathrm{FGrp}(A)\simeq \mathrm{Ab}(\mathrm{cCAlg}_A^\flat)\times_{\mathrm{Ab}(\mathrm{cCAlg}^\flat_{\pi_0(A)})}\mathrm{Ab}\big(\mathrm{cCAlg}_{\pi_0(A)}^\mathrm{sm}\big).
$$
It now suffices to show that $A\mapsto \mathrm{Ab}\big(\mathrm{cCAlg}_{\pi_0(A)}^\mathrm{sm}\big)$ satisfies fpqc descent. By design, the construction $ \mathrm{Ab}\big(\mathrm{cCAlg}_{R}^\mathrm{sm}\big)\simeq \mathrm{FGrp}(R)$ recovers the usual category of formal groups for an ordinary commutative ring $R$. This can be outsourced e.g.\ to \cite[Corollary 2.6.6]{Smithling}, or \cite[Theorem 2.30]{Goerss} (though the assumption there is that we are only considering $1$-dimensional formal groups, the argument works just as well for arbitrary finite-dimensional ones), or easily verified directly by using \cite[Lemma 1.1.20]{Elliptic 2} - applicable since any Hopf algebra has at least one grouplike element: its multiplicative unit.
\end{proof}

\begin{remark}
The functor $A\mapsto\mathrm{cCAlg}_A^\mathrm{sm}$  only satisfies \'etale descent, but not fpqc descent. The issue is addressed in
\cite[Warning 1.1.22]{Elliptic 2}, and boils down to the possibility of a coalgebra having no grouplike elements. As we observed in the proof above however, formal groups correspond to Hopf algebras, for which this issue does not arise. 
\end{remark}

\begin{corollary}\label{Descent for FG1}
The functor $A\mapsto\mathrm{FGrp}_{\dim =r}(A)$ satisfies descent for the fpqc topology for any $r\ge 0$.
\end{corollary}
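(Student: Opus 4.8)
The plan is to deduce the Corollary from Proposition \ref{Descent for FG} by exhibiting $A\mapsto\mathrm{FGrp}_{\dim=r}(A)$ as the subfunctor of $A\mapsto\mathrm{FGrp}(A)$ cut out, over each $\E$-ring, by a condition on objects which is simultaneously preserved by and detected by faithfully flat base change. Granting that, fpqc descent for $\mathrm{FGrp}_{\dim=r}$ is a formal consequence: a full subfunctor of a $\Cat$-valued fpqc sheaf determined by an fpqc-local property of objects is again an fpqc sheaf.

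First I would verify that base change preserves the dimension. If $\w{\G}\in\mathrm{FGrp}(A)$ has underlying smooth coalgebra $C$ with $\pi_0(C)\simeq\Gamma^*_{\pi_0(A)}(E)$ for a finitely generated projective $\pi_0(A)$-module $E$ of constant rank $r$, then for any map $A\to B$ the base-changed formal group has underlying coalgebra $C\otimes_A B$, and $\pi_0(C\otimes_A B)\simeq\Gamma^*_{\pi_0(B)}(E\otimes_{\pi_0(A)}\pi_0(B))$ — using that the formation of the free divided power coalgebra $\Gamma^*_{(-)}$ of a finite projective module is compatible with base change — with $E\otimes_{\pi_0(A)}\pi_0(B)$ again finitely generated projective of rank $r$. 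Hence $A\mapsto\mathrm{FGrp}_{\dim=r}(A)$ is a genuine subfunctor of $A\mapsto\mathrm{FGrp}(A)$, valued in full subcategories. The other, slightly less formal point is that a faithfully flat map $A\to B$ also detects the dimension: this reduces to the classical fact that the rank of a finitely generated projective module is local for the fpqc topology, since $\Spec(\pi_0(B))\to\Spec(\pi_0(A))$ is surjective, so $E$ has constant rank $r$ as soon as $E\otimes_{\pi_0(A)}\pi_0(B)$ does.

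With these in hand I would conclude as follows. Fix a faithfully flat cover $A\to B$ with \v{C}ech nerve $B^\bullet$. Proposition \ref{Descent for FG} supplies an equivalence $\mathrm{FGrp}(A)\xrightarrow{\ \sim\ }\Tot(\mathrm{FGrp}(B^\bullet))$, which by the preservation statement restricts to a functor $\mathrm{FGrp}_{\dim=r}(A)\to\Tot(\mathrm{FGrp}_{\dim=r}(B^\bullet))$; this is fully faithful since $\Tot(\mathrm{FGrp}_{\dim=r}(B^\bullet))$ is a full subcategory of $\Tot(\mathrm{FGrp}(B^\bullet))$, being a limit of full subcategory inclusions. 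For essential surjectivity, a descent datum $(X^\bullet)\in\Tot(\mathrm{FGrp}_{\dim=r}(B^\bullet))$ is in particular one for $\mathrm{FGrp}$, hence equivalent to $\w{\G}\vert_{B^\bullet}$ for some $\w{\G}\in\mathrm{FGrp}(A)$; since $\w{\G}\vert_B\simeq X^0$ has dimension $r$, the detection statement forces $\w{\G}\in\mathrm{FGrp}_{\dim=r}(A)$, and it maps to $(X^\bullet)$. The main obstacle is thus the detection step — the fpqc-locality of the rank of a projective module — together with the bookkeeping that $\Gamma^*_{(-)}$ is compatible with base change; everything else is routine manipulation of totalizations of cosimplicial $\i$-categories. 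One could equally well simply rerun the proof of Proposition \ref{Descent for FG} verbatim with ``smooth coalgebra'' replaced throughout by ``smooth coalgebra of dimension $r$'', invoking the dimension-$r$ version of classical fpqc descent for formal groups over ordinary rings and noting that the fiber-product decomposition $\mathrm{cCAlg}^\mathrm{sm}_A\simeq\mathrm{cCAlg}_A^\flat\times_{\mathrm{cCAlg}^\flat_{\pi_0(A)}}\mathrm{cCAlg}_{\pi_0(A)}^\mathrm{sm}$ restricts to the dimension-$r$ parts.
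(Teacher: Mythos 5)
Your argument is correct and follows essentially the same route as the paper's own proof, which simply invokes Proposition \ref{Descent for FG} together with fpqc descent for finitely generated projective modules over ordinary commutative rings (EGA IV, Prop.~2.5.2, or Stacks Project Tag 10.83) — precisely the detection/locality-of-rank input you identify as the crux — leaving the routine totalization bookkeeping implicit.
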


\begin{proof}
Follows from from Proposition \ref{Descent for FG} and fpqc descent for finitely generated projective modules over ordinary commutative rings \cite[Proposition 2.5.2]{EGA4} or \cite[\href{https://stacks.math.columbia.edu/tag/10.83}{Tag 10.83}]{stacks-project}.
\end{proof}

\begin{prop}\label{Descent for FGor}
The functor $A\mapsto \mathrm{FGrp}^\mathrm{or}(A)$ satisfies descent for the fpqc topology.
\end{prop}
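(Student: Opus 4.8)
The plan is to reduce the descent statement for oriented formal groups to the already-established descent statements for formal groups (Proposition \ref{Descent for FG}) and for module $\i$-categories (via the dualizing line construction). First I would recall that an oriented formal group over $A$ consists of a $1$-dimensional formal group $\w{\G}$, together with an orientation, which by Definition \ref{second def of or} is the data of an element $e\in\pi_2(\w{\G}(\tau_{\ge 0}(A)))$ whose linearization $\beta_e:\omega_{\w{\G}}\to\Sigma^{-2}(A)$ is an equivalence of $A$-module spectra. The key point is that the preorientation datum --- an element of $\pi_2(\w{\G}(\tau_{\ge 0}(A)))$, equivalently a pointed map $S^2\to\w{\G}(\tau_{\ge 0}(A))$, equivalently by \cite[Proposition 4.3.21]{Elliptic 2} a map of formal groups $\w{\G}{}^{\CMcal Q}_A\to\w{\G}$ when $A$ is complex periodic --- can be packaged functorially in $A$.

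The cleanest route is to exhibit $A\mapsto \mathrm{FGrp}^\mathrm{or}(A)$ as a limit of functors already known to satisfy fpqc descent. Concretely, I would write the $\i$-category of preoriented formal groups as a pullback
$$
\mathrm{FGrp}^\mathrm{pre}(A)\simeq \mathrm{FGrp}(A)\times_{\Mod_{\tau_{\ge 0}(A)}}\Mod_{\tau_{\ge 0}(A)}^{S^2/},
$$
or more honestly as the total space of a coCartesian family assigning to each formal group the space $\Map_{\mathcal S_*}(S^2,\w{\G}(\tau_{\ge 0}(A)))$ of preorientations; since both the formal-group-valued functor (Proposition \ref{Descent for FG}) and the underlying-space functor $\w{\G}\mapsto\w{\G}(\tau_{\ge 0}(-))$ satisfy fpqc descent (the latter because $A\mapsto\tau_{\ge 0}(A)$ and evaluation of a formal hyperplane are limit-preserving in the relevant sense, building on flat descent for $\Mod_A$ from \cite[Theorem D.6.3.5]{SAG}), the pullback $\mathrm{FGrp}^\mathrm{pre}$ does as well. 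Then $\mathrm{FGrp}^\mathrm{or}(A)\subseteq\mathrm{FGrp}^\mathrm{pre}(A)$ is the full subcategory on those preoriented formal groups for which the linearized map $\beta:\omega_{\w{\G}}\to\Sigma^{-2}(A)$ is an equivalence. I would then invoke that being an equivalence of $A$-module spectra is a condition local for the fpqc topology --- this follows from faithfully flat descent for $\Mod_A$, since $\beta$ is an equivalence iff $\pi_*(\beta)$ is an isomorphism of $\pi_*(A)$-modules, and faithful flatness of $\pi_0(A)\to\pi_0(B)$ detects this --- so the full subcategory inclusion is compatible with the descent data, giving fpqc descent for $\mathrm{FGrp}^\mathrm{or}$.

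The main obstacle I anticipate is the bookkeeping needed to make the "functorial family of preorientation spaces" precise: one must check that assigning to a descent datum of formal groups the compatible system of preorientation spaces is itself a sheaf, which amounts to knowing that $\w{\G}\mapsto \Map_{\mathcal S_*}(S^2,\w{\G}(\tau_{\ge 0}(A)))$ --- equivalently, by \cite[Proposition 4.3.21]{Elliptic 2} and the identification $\omega_{\w{\G}{}^{\CMcal Q}_A}\simeq\Sigma^{-2}(A)$, the space $\Omega^\infty(\omega_{\w{\G}}\otimes_A \Sigma^2 A)$ or something equivalent built from the dualizing line --- is an fpqc sheaf in $A$. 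This reduces to flat descent for the dualizing line construction $\w{\G}\mapsto\omega_{\w{\G}}$, which is controlled by \cite[Definition 4.2.14]{Elliptic 2} together with flat descent for $\Mod_A$; once that compatibility is in hand the rest is formal. An alternative, possibly slicker, approach would bypass preorientations entirely: use Proposition \ref{Paying proper credit}, which identifies $\mathrm{FGrp}^\mathrm{or}(A)$ with the subcategory of complex-periodic $A$ equipped with their Quillen formal group, and Remark \ref{Why contractible} that this space is either empty or contractible; then fpqc descent for $\mathrm{FGrp}^\mathrm{or}$ would reduce to showing that complex periodicity of an $\E$-ring is detected fpqc-locally, which in turn follows from faithfully flat descent for $\Mod_A$ applied to the conditions "$\pi_2(A)$ is an invertible $\pi_0(A)$-module" and "$A$ is complex orientable" (the latter via the even-periodicity/complex-orientability criterion). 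I would likely present the first argument as the main proof and mention the second as a remark.
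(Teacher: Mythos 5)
Your proposal takes a genuinely different route from the paper. The paper's proof is short and hinges on a single key input you don't invoke: Lurie's \emph{orientation classifier} $\mathfrak O_{\w{\G}}$ from \cite[Proposition 4.3.13]{Elliptic 2}. The paper considers the forgetful map $\mathrm{FGrp}^\mathrm{or}\to\mathrm{FGrp}_{\dim=1}$ and observes that its fiber over $\Spec(A)$ (with formal group $\w{\G}$) is $\Spec(\mathfrak O_{\w{\G}})$, hence the morphism is \emph{affine}. Descent for $\mathrm{FGrp}^\mathrm{or}$ then follows immediately from descent for $\mathrm{FGrp}_{\dim=1}$ (Corollary \ref{Descent for FG1}), since an affine morphism over an fpqc sheaf is automatically a sheaf. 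In effect, the orientation classifier packages \emph{both} the preorientation datum \emph{and} the invertibility condition into a single corepresentable gadget, so there is nothing further to verify.

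Your main argument instead tries to decompose orientation into preorientation plus an invertibility condition and show each layer descends separately. This can be made to work, but the preorientation layer is precisely the step you flag as "bookkeeping," and it is not a triviality: one needs that $A\mapsto \Omega^2\big(\w{\G}(\tau_{\ge 0}(A))\big)$ is an fpqc sheaf in the relevant sense, which ultimately rests on the (ind-)affine representability of formal hyperplanes/formal groups --- this is close in spirit to the existence of $\mathfrak O_{\w{\G}}$ but is not free. Your displayed pullback formula $\mathrm{FGrp}^\mathrm{pre}(A)\simeq \mathrm{FGrp}(A)\times_{\Mod_{\tau_{\ge 0}(A)}}\Mod_{\tau_{\ge 0}(A)}^{S^2/}$ is not literally meaningful as written (a preorientation is a pointed map of spaces $S^2\to\w{\G}(\tau_{\ge 0}(A))$, not a module map from $S^2$), though you correctly signal this is a stand-in for a coCartesian family. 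Once the preorientation layer is in hand, your use of the fact that "being an equivalence of $A$-modules" is fpqc-local is correct.

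Your alternative argument --- reduce to the assertion that complex periodicity of an $\E$-ring descends along faithfully flat maps --- is exactly the reformulation the paper records in Remark \ref{MorFG explicitly}. But there the paper explicitly cautions that while weak $2$-periodicity descends tautologically, the complex-orientability half is "less immediate." Your proposal waves at this as following from flat descent for $\Mod_A$, but that alone does not obviously suffice: complex orientability asks for a lift of a map in an $\mathrm{Ext}$-type obstruction problem, and the fpqc-locality is most easily deduced \emph{as a consequence} of the affineness argument, not proved directly. So while your alternative route is logically equivalent to the proposition, presenting it as an independent proof would risk circularity unless you supply the missing descent argument, at which point you have essentially reconstructed the orientation classifier proof.
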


\begin{proof}
Consider the forgetful functor $\mathrm{FGrp}^\mathrm{or}\to\mathrm{FGrp}_{\dim =1}$. 
A map $\Spec (A)\to \mathrm{FGrp}_{\dim =1}$ is equivalent to specifying a $1$-dimensional formal group $\w{\G}\in \mathrm{FGrp}(A)$, and the pullback $\Spec(A) \times_{\mathrm{FGrp}_\mathrm{\dim =1}}\mathrm{FGrp}^\mathrm{or}$, viewed as a functor $\CAlg_A\to\mS$,
is by definition given by $B\mapsto\mathrm{OrDat}(\w{\G}_B)$. By \cite[Proposition  4.3.13]{Elliptic 2} we have $\mathrm{OrDat}(\w{\G}_B)\simeq \Map_{\CAlg_A}(\mathfrak O_{\w{\G}}, B)$ for an $\mathbb E_\infty$-algebra $\mathfrak O_{\w{\G}}$ over $A$. Thus $\Spec(A) \times_{\mathrm{FGrp}_{\dim = 1}}\mathrm{FGrp}^\mathrm{or}\simeq \Spec(\mathfrak O_{\w{\G}})$ is (representable by) an affine spectral $A$-scheme, and so the map $\mathrm{FGrp}^\mathrm{or}\to\mathrm{FGrp}_{\dim =1}$ is affine. Thus  $\mathrm{FGrp}^\mathrm{or}$ satisfies fpqc descent by virtue of that holding for $\mathrm{FGrp}$ by Corollary \ref{Descent for FG1}.
\end{proof}

\begin{remark}\label{MorFG explicitly}
Since $\mathrm{FGrp}^\mathrm{or}(A)$ is either contractible if the $\E$-ring $A$ is complex-periodic (see Remark \ref{Why contractible}) or empty otherwise, Proposition \ref{Descent for FGor} is equivalent to the following assertion: if an $\E$-ring $A$ admits a faithfully flat $\E$-ring map $A\to B$ into a complex-periodic $\E$-ring $B$, then $A$ is complex-periodic. The analogous statement for weak $2$-periodicity is tautological from the definition of flatness for $\E$-rings, but complex orientability is less immediate.
\end{remark}

In light of the preceding results, we will refer to the functors $A\mapsto \mathrm{FGrp}_{\dim=1}(A)^\simeq$ and  $A\mapsto\mathrm{FGrp}^\mathrm{or}(A)\simeq \mathrm{FGrp}^\mathrm{or}(A)^\simeq$ as  the \textit{moduli stack of (resp.\ oriented) formal groups}, and denote them by $\mathcal M_\mathrm{FG},\mathcal M^\mathrm{or}_\mathrm{FG}\in \mathcal S\mathrm{hv}_{\mathrm{fpqc}}^\mathrm{nc}$.

\begin{remark}
With this terminology, we embrace the tradition in homotopy theory to drop the adjective ``$1$-dimensional'' from the term ``$1$-dimensional formal group'', since all the formal groups we will care about will be $1$-dimensional.
\end{remark}

\subsection{The stack of oriented formal groups}
This section is dedicated to proving the following result:

\begin{theorem}\label{M is geometric}
The moduli stack of oriented formal groups $\M$ is a non-connective geometric stack.
\end{theorem}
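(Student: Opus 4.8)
The plan is to verify directly the three defining conditions of a non-connective geometric stack from Definition \ref{Def of geom stack}. Condition (a), fpqc descent, is already established: it is precisely Proposition \ref{Descent for FGor}, since $\M=\mathrm{FGrp}^{\mathrm{or}}$ as functors $\CAlg\to\mS$. The observation that organizes the rest of the argument is that $\M$ is \emph{subterminal}, i.e.\ $(-1)$-truncated, as an object of $\Shv^{\mathrm{nc}}_{\mathrm{fpqc}}$: by Proposition \ref{Paying proper credit} together with Remark \ref{Why contractible} (summarized in Remark \ref{MorFG explicitly}), the space $\M(A)=\mathrm{FGrp}^{\mathrm{or}}(A)$ is contractible when $A$ is complex periodic and empty otherwise, so $\M(A)\times\M(A)\simeq\M(A)$ for every $A$. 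Since the diagonal of a $(-1)$-truncated object is an equivalence, and equivalences are affine, condition \ref{Affine diagonal} holds automatically. The entire content thus lies in exhibiting the faithfully flat cover of condition (c).

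First I would fix an $\E$-ring structure on the periodic complex bordism spectrum $\MP$ --- for instance the Thom spectrum model $\mathrm{MUP}$ or the Snaith model $\MP_{\mathrm{Snaith}}$ of the Introduction --- which makes $\MP$ a complex periodic $\E$-ring, hence by Proposition \ref{Paying proper credit} and Remark \ref{Why contractible} the carrier of an oriented Quillen formal group $\w{\G}{}^{\CMcal Q}_{\MP}$; this classifies a morphism $\Spec(\MP)\to\M$. To prove it is faithfully flat I would pull it back along an arbitrary map $\Spec(R)\to\M$, whose existence forces $R$ to be complex periodic. Here subterminality of $\M$ is used crucially: since both $\Spec(R)$ and $\Spec(\MP)$ lie over the subterminal object $\M$, the fibre product over $\M$ collapses to the absolute product, $\Spec(R)\times_\M\Spec(\MP)\simeq\Spec(R)\times\Spec(\MP)\simeq\Spec(R\o_S\MP)$, which is in particular affine --- so all the fibre products in sight are automatically representable. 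It remains only to check that the unit map $R\to R\o_S\MP$ is a faithfully flat map of $\E$-rings for every complex periodic $R$.

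This last step is where classical input is needed, and where I expect the real work to be. Since $R$ is complex orientable, $R\wedge\MU$ is a free $R$-module on generators in even degrees --- equivalently $R_*\MU\cong\pi_*(R)[b_1,b_2,\dots]$ is $\pi_*(R)$-free on even-degree classes, the standard computation going back to Quillen and Adams --- so $R\wedge\MP\simeq\bigoplus_{i\in\mathbf Z}\Sigma^{2i}(R\wedge\MU)$ is again $R$-free on even-degree generators, one of which (the image of the unit $1\in\Sigma^{0}\MU\subseteq\MP$) realizes $\pi_0(R)$ as a direct summand of $\pi_0(R\o_S\MP)$. Combining this with the weak $2$-periodicity of $R$, which identifies $\pi_{-2k}(R)$ with the invertible $\pi_0(R)$-module $\pi_2(R)^{\o(-k)}$, yields exactly the two conditions defining faithful flatness of an $\E$-ring map: the graded base-change map $\pi_*(R)\o_{\pi_0(R)}\pi_0(R\o_S\MP)\to\pi_*(R\o_S\MP)$ is an isomorphism, and $\pi_0(R)\to\pi_0(R\o_S\MP)$ is a split injection onto a direct summand with flat (indeed projective) target, hence faithfully flat. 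Since the pullback of $\Spec(\MP)\to\M$ along every affine over $\M$ is thus a one-element fpqc cover, Lemma \ref{covtriv} upgrades $\Spec(\MP)\to\M$ to an effective epimorphism, so it is a genuine faithfully flat cover and condition (c) holds. The only substantive obstacle, then, is this interplay between the classical structure of $\MU$-homology of a complex orientable spectrum, the decomposition $\MP=\bigoplus_{i\in\mathbf Z}\Sigma^{2i}\MU$, and the $2$-periodicity of $R$; conditions (a) and (b), and the representability of the relevant fibre products, are all formal consequences of $\M$ being subterminal.
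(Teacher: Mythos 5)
Your proof is correct and follows essentially the same route as the paper's. The structure — fpqc descent from Proposition \ref{Descent for FGor}, affine diagonal via the fact that $\M$ is $(-1)$-truncated so that fibre products over it collapse to absolute products, and finally the faithfully flat cover $\Spec(\MP)\to\M$ reduced to the assertion that $R\to R\otimes\MP$ is faithfully flat for every complex-periodic $R$ — is precisely what the paper does in Lemma \ref{affine diagonal lemma} and Lemma \ref{Quillen}. The only cosmetic difference is that the paper phrases the diagonal argument by directly computing $\Spec(A)\times_{\M}\Spec(B)\simeq\Spec(A\otimes B)$ and invoking Remark \ref{Why affine diagonal?}, whereas you observe the sharper fact that the diagonal is an equivalence (the two are of course equivalent for a subterminal object). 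For the faithful flatness of $R\to R\otimes\MP$, the paper cites Lurie's Theorem 5.3.13 of \cite{Elliptic 2} and the standard references for the $\MU$-homology of a complex-orientable ring spectrum, while you unfold that computation explicitly — $R\wedge\MU$ free on even generators, passing to $\MP$, the degree-$0$ part a direct sum of invertible $\pi_0(R)$-modules with $\pi_0(R)$ split off, weak $2$-periodicity giving the graded base-change isomorphism. That unwinding is accurate and is exactly what the cited references establish. The invocation of Lemma \ref{covtriv} at the end is not needed for the theorem itself (Definition \ref{Def of geom stack} only asks for existence of a faithfully flat map from an affine), though it is of course what one uses for the simplicial presentation in Corollary \ref{Main Thm}.
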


We already showed that $\M$ satisfies fpqc descent in Proposition \ref{Descent for FGor}. According to Definition \ref{Def of geom stack}, it remains to prove that its has affine diagonal, and that it admits a faithfully flat cover by an affine. The first of those is easy:

\begin{lemma}\label{affine diagonal lemma}
The diagonal morphism $\M\to \M\times \M$ is affine.
\end{lemma}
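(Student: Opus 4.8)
The plan is to reduce, via the reformulation of condition \ref{Affine diagonal} recorded in Remark \ref{Why affine diagonal?}, to checking that for every pair of maps $\Spec(A)\to\M$ and $\Spec(B)\to\M$ the fiber product $\Spec(A)\times_\M\Spec(B)$ is affine; in fact I will identify it with $\Spec(A\otimes B)$, the coproduct of $A$ and $B$ in $\CAlg$. The one nontrivial ingredient has already been put in place: by Proposition \ref{Paying proper credit} (see Remark \ref{MorFG explicitly}), for every $\E$-ring $R$ the space $\M(R)=\mathrm{FGrp}^\mathrm{or}(R)$ is contractible when $R$ is complex periodic and empty otherwise, the contractibility being precisely the content of Remark \ref{Why contractible}, namely that the oriented Quillen formal group has no nontrivial automorphisms.

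Thus $\M$ takes only $(-1)$-truncated values, and since it is moreover an fpqc sheaf (Proposition \ref{Descent for FGor}), it is a $(-1)$-truncated, i.e.\ subterminal, object of the $\i$-topos $\Shv^\mathrm{nc}_\mathrm{fpqc}$. For any subterminal object $U$ of an $\i$-topos one has $U\times U\simeq U$, with the diagonal $U\to U\times U$ identified with this equivalence. Applying this with $U=\M$, and combining it with the standard identity $\Spec(A)\times_\M\Spec(B)\simeq\bigl(\Spec(A)\times\Spec(B)\bigr)\times_{\M\times\M}\M$ together with $\Spec(A)\times\Spec(B)\simeq\Spec(A\otimes B)$, we conclude $\Spec(A)\times_\M\Spec(B)\simeq\Spec(A\otimes B)$, which is affine. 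Hence the diagonal is affine; indeed it is even an equivalence of stacks.

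There is essentially no obstacle here: the entire weight of the statement rests on Proposition \ref{Paying proper credit}, and the argument above is purely formal. The only two points deserving a word of justification are that objectwise $(-1)$-truncatedness together with the sheaf property yields a $(-1)$-truncated object of the sheaf $\i$-topos — because $\tau_{\le -1}$ of a sheaf which is already $(-1)$-truncated as a presheaf is that same sheaf — and that an equivalence of functors is automatically an affine morphism, its base change along any map from an affine being again an equivalence with affine source. Neither requires real work.
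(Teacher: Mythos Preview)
Your proof is correct and follows essentially the same approach as the paper's: both reduce via Remark \ref{Why affine diagonal?} to showing that $\Spec(A)\times_{\M}\Spec(B)\simeq\Spec(A\otimes B)$, using that $\M(R)$ is either empty or contractible. Your version is slightly more explicit in framing $\M$ as a subterminal object and hence the diagonal as an equivalence, but the substance is identical.
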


\begin{proof}
For any $\E$-ring $A$, there exists an essentially unique map $\Spec(\mathrm{A})\to \mathcal M^\mathrm{or}_\mathrm{FG}$ if and only if $A$ is complex-periodic. It follows from this that, for any pair of complex-periodic $\E$-rings $A$ and $B$, the fibered product $\Spec(A)\times_{\mathcal M_\mathrm{FG}^\mathrm{or}}\Spec(B)$ is equivalent to
$\Spec(A\otimes B)$. By Remark \ref{Why affine diagonal?}, this is what we needed to show.
\end{proof}

For the purpose of finding an affine atlas for $\M$, we introduce a distinguished class of $\E$-rings. Recall e.g.\ from \cite[Remark 2.2]{Hahn-Yuan} that the $2$-periodic spectrum $ S[\beta^{\pm 1}] :=\bigoplus_{n\in \mathbf Z}\Sigma^{2n}(S)$ carries a canonical $\mathbb E_2$-ring structure by \cite[Remark 2.2]{Hahn-Yuan}.

\begin{definition}
An $\E$-ring is  a \textit{form of periodic complex bordism} if it is equivalent to $\mathrm{MU}\otimes S[\beta^{\pm 1}]$ as an $\mathbb E_2$-ring.
\end{definition}

\begin{ex}\label{The MPs}
The following constructions all give rise to forms of periodic complex bordism, and by \cite[Theorem 1.3]{Hahn-Yuan}, at least the first two are not equivalent to each other as $\E$-rings.
\begin{itemize}
\item The Thom spectrum $$\mathrm{MUP}\simeq \varinjlim \big(\mathrm{BU}\times \mathbf Z\simeq (\mathrm{Vect}_{\mathbf C}^\simeq)^\mathrm{gp}\xrightarrow{J} \mathrm{Pic}(\Sp)\subseteq \Sp\big),$$
given by the colimit of the map induced upon group completion from the symmetric monoidal functor $\mathrm{Vect}^\simeq_{\mathbf C}\to \mathrm{Pic}(\Sp)$ given by $V\mapsto S^V$. This is equivalent to the $\E$-ring appearing under the name $\mathrm{MP}$ in \cite{Elliptic 2}.
\item Snaith's constuction $(S[\mathrm{BU}])[\beta^{-1}]$, obtained by inverting the Bott element $\beta \in \pi_2(S[\mathrm{BU}])$, induced from $S^2\simeq \mathbf{CP}^1\subseteq \mathbf{CP}^\infty\simeq \mathrm{BU}(1)\to \mathrm{BU}$, in the suspension spectrum $S[\mathrm{BU}]$, which inherits the structure of an $\E$-ring from the infinite loop space structure on $\mathrm{BU}\simeq \Omega^\infty (\Sigma^2(\mathrm{ku}))$.

\item The Tate spectrum $\mathrm{MU}^{tS^1}$, with its $\E$-ring structure induced via the lax symmetric monoidality of the Tate construction from the usual Thom spectrum $\mathbb E_\infty$-ring structure on the  complex bordism spectrum $\mathrm{MU}\simeq \varinjlim(\mathrm{BU}\xrightarrow{J}\Sp)$. According to \cite[Remark 1.13]{Hahn-Yuan}, this form of periodic complex bordism was suggested by Tyler Lawson.
\end{itemize}
\end{ex}

From now on, and throughout the rest of this note, we fix $\mathrm{MP}$ to be an arbitrary form of periodic complex bordism.

\begin{lemma}\label{Quillen}
There exists an essentially unique map $\Spec(\mathrm{MP})\to\mathcal{M}^\mathrm{or}_\mathrm{FG}$, and this map is  faithfully flat.
\end{lemma}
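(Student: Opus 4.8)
The plan is to first exhibit the map and prove its essential uniqueness by identifying $\MP$ as a complex-periodic $\E$-ring, and then to deduce faithful flatness from the affine diagonal together with a single homotopy-group computation.

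To begin, I would verify that $\MP$ is complex periodic. Writing $\MP\simeq\MU\o S[\beta^{\pm 1}]$ (an equivalence of $\mathbb{E}_2$-rings, hence in particular of homotopy rings) and using that $\pi_*(\MP)\cong\pi_*(\MU)[\beta^{\pm 1}]$ with $\beta\in\pi_2$ invertible, weak $2$-periodicity is immediate, while complex orientability follows because the ring map $\MU\to\MU\o S[\beta^{\pm 1}]\simeq\MP$ carries the complex orientation of $\MU$ to one on $\MP$. Thus, by Proposition \ref{Paying proper credit} (equivalently Remark \ref{Why contractible}), the space $\FGo(\MP)$ is contractible, its essentially unique point being the Quillen formal group $\w{\G}{}^{\mathrm{Q}}_{\MP}$ equipped with its canonical orientation. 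By the functor-of-points definition of $\M$, this datum is precisely an essentially unique map $\Spec(\MP)\to\M$.

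It then remains to check that this map is faithfully flat, i.e.\ (in the sense of Definition \ref{Def of geom stack}\,(c)) that for every map $\Spec(B)\to\M$ the fiber product $\Spec(\MP)\times_\M\Spec(B)$ is affine and represented by a faithfully flat $B$-algebra. Such a map exists, and is then essentially unique, exactly when $B$ is complex periodic, so fix such a $B$. By Lemma \ref{affine diagonal lemma} and (the computation in) its proof, the fiber product is identified with $\Spec(\MP\o B)$, so the whole statement reduces to showing that the unit map $B\to\MP\o B$ is a faithfully flat map of $\E$-rings in the sense of \cite[Definition B.6.1.1]{SAG}.

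The crux is then the homotopy of $\MP\o B$. Since $B$ is complex orientable, a standard computation — the Thom isomorphism supplied by the orientation, combined with $H_*(\mathrm{BU};\Z)$ being a polynomial ring — identifies $\pi_*(B\o\MU)$ with $B_*(\mathrm{BU})\cong\pi_*(B)[b_1,b_2,\ldots]$ with $|b_i|=2i$, and then smashing with $S[\beta^{\pm 1}]=\bigoplus_{n\in\Z}\Sigma^{2n}S$ gives $\pi_*(\MP\o B)\cong\pi_*(B)[b_1,b_2,\ldots][\beta^{\pm 1}]$. From this presentation both defining conditions of a faithfully flat map follow: after using the invertibility of the $\pi_0(B)$-module $\pi_2(B)$ to renormalize the generators, $\pi_0(\MP\o B)$ becomes a polynomial, then Laurent-polynomial, extension of $\pi_0(B)$ and is therefore faithfully flat; and the graded base-change map $\pi_*(B)\o_{\pi_0(B)}\pi_0(\MP\o B)\to\pi_*(\MP\o B)$ is an isomorphism because $\MP\o B$ is again weakly $2$-periodic, so each of its homotopy groups is recovered from $\pi_0$ (and $\pi_1$) by tensoring with the appropriate power of the invertible module $\pi_2$. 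I expect this final step — computing $\pi_*(\MP\o B)$ and then matching it against the precise definition of a faithfully flat $\E$-ring map, rather than merely faithful flatness of the underlying graded rings — to be the main obstacle; the rest is formal once $\M$ has been shown to have affine diagonal.
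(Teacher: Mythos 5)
Your proof is correct and follows essentially the same route as the paper's: establish complex periodicity of $\MP$ from $\MU\otimes S[\beta^{\pm1}]$ to get existence and uniqueness via Proposition \ref{Paying proper credit}, reduce faithful flatness to base change along an arbitrary $\Spec(B)\to\M$ using Lemma \ref{affine diagonal lemma}, and conclude with the standard computation of the $\MU$-homology of a complex oriented ring spectrum. The only difference is that the paper outsources that final computation to \cite[Theorem 5.3.13]{Elliptic 2} and classical references, while you sketch the Thom-isomorphism argument explicitly.
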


\begin{proof}
The existence of a map $\Spec(\MP)\to\M$ is equivalent to $\mathrm{MP}$ being complex-periodic, a condition which only depends on the underlying commutative ring spectrum. The complex bordism spectrum $\mathrm{MU}$ is the universal complex orientable spectrum, 
 and the spectrum $S[\beta^{\pm 1}]$ is $2$-periodic, implying that both of those hold for the smash product $\mathrm{MU}\otimes S[\beta^{\pm 1}]\simeq \MP$ as well.
 
 To show that the map in question is faithfully flat, it suffices to prove that after pullback along an arbitrary map $\Spec(A)\to \M$.  As we saw in the proof of Lemma \ref{affine diagonal lemma}, this means proving that assertion the map $A\to A\otimes\MP$ is faithfully flat for every complex-periodic $\E$-ring $A$. That is the content of \cite[Theorem 5.3.13]{Elliptic 2}, but it is really just the result of a standard computation in chromatic homotopy theory; that of the $\mathrm{MU}$-homology of a complex oriented ring spectrum  \cite[Lemma 4.1.7, Lemma 4.1.8, and Corollary 4.1.9]{Green Book}, \cite[Lecture 7]{Lurie Chromatic}, or \cite[Proposition 6.2]{COCTALOS},
\end{proof}

\begin{remark}
In \cite[Theorem 5.3.13]{Elliptic 2}, the claim we used in the proof above is only asserted for a specific choice of a periodic form of complex bordism $\mathrm{MP}\simeq\mathrm{MUP}$. But since being faithfully flat is a condition fully determined on the level of the underlying homotopy-commutative ring spectra, that restriction makes no difference.
\end{remark}

With that, Theorem \ref{M is geometric} is proven, and we may begin to reap the rewards.

\begin{corollary}\label{Main Thm}
There is a canonical equivalence
$$
\mathcal M^\mathrm{or}_\mathrm{FG}\simeq |\mathrm{\check C}^\bullet(\Spec(\mathrm{MP})/\Spec(S))|,
$$
with the geometric realization formed in  the $\i$-topos $\mathcal S\mathrm{hv}_{\mathrm{fpqc}}^\mathrm{nc}$.
\end{corollary}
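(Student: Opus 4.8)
The plan is to deduce this directly from Lemma~\ref{Quillen} and the geometric-stack machinery, the only genuine content being that the \v{C}ech nerve of $\Spec(\MP)$ over $\M$ agrees with its \v{C}ech nerve over the terminal object $\Spec(S)$.

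First I would invoke Lemma~\ref{Quillen} to obtain the essentially unique faithfully flat map $f\colon\Spec(\MP)\to\M$, and Theorem~\ref{M is geometric} to know that $\M$ is a non-connective geometric stack, in particular one with affine diagonal. Then, exactly as in the proof of Proposition~\ref{Groupoid presentation}: for any $\Spec(B)\to\M$ the pullback $\Spec(\MP)\times_\M\Spec(B)$ is affine (by the affine diagonal) and $f$ restricts over it to a faithfully flat map of affines, i.e.\ a one-element fpqc cover; so Lemma~\ref{covtriv} applies and shows that $f$ is an effective epimorphism in $\Shv^{\mathrm{nc}}_{\mathrm{fpqc}}$, whence $\M\simeq|\check{\mathrm C}^\bullet(\Spec(\MP)/\M)|$.

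Next I would identify $\check{\mathrm C}^\bullet(\Spec(\MP)/\M)$ with $\check{\mathrm C}^\bullet(\Spec(\MP)/\Spec(S))$. Since $\Spec(S)$ is the terminal object of $\Shv^{\mathrm{nc}}_{\mathrm{fpqc}}$, the latter has $n$-th term the absolute product $\Spec(\MP)^{\times(n+1)}\simeq\Spec(\MP^{\otimes(n+1)})$. For the former, the $n$-th term is the $(n{+}1)$-fold fibre product of $\Spec(\MP)$ over $\M$, which I would compute by iterating the identification made in the proof of Lemma~\ref{affine diagonal lemma}; this is legitimate because each $\MP^{\otimes k}$ is again complex periodic, being a faithfully flat $\MP$-algebra and hence complex periodic by Remark~\ref{MorFG explicitly}, so that computation applies at every stage and yields $\Spec(\MP^{\otimes(n+1)})$ again. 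The comparison is induced by functoriality of \v{C}ech nerves along the terminal map $\M\to\Spec(S)$, hence is automatically a map of simplicial objects; being a levelwise equivalence by the above, it is an equivalence, and passing to geometric realizations gives the claim.

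The step I expect to be the main obstacle — really the only thing requiring attention — is verifying that the levelwise identifications are compatible with \emph{all} the face and degeneracy maps, not merely with the two outer projections. Organising the argument around the functorial comparison map $\check{\mathrm C}^\bullet(\Spec(\MP)/\M)\to\check{\mathrm C}^\bullet(\Spec(\MP)/\Spec(S))$, so that simpliciality is automatic and only a degreewise check remains, is what makes this painless; everything else here is formal bookkeeping.
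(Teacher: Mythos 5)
Your argument is correct and takes essentially the same route as the paper: the paper's proof is the one-line "instance of Proposition~\ref{Groupoid presentation} / application of Lemma~\ref{covtriv}," and the identification of the relative with the absolute \v{C}ech nerve via Lemma~\ref{affine diagonal lemma} is exactly how the paper handles that point when it matters (it spells this out in the proof of Proposition~\ref{ANSS}). One small slip: to justify that $\MP^{\otimes k}$ is complex periodic you cite Remark~\ref{MorFG explicitly}, but that remark goes the \emph{other} way (it deduces complex periodicity of the source from that of the target of a faithfully flat map). The fact you actually need is in the easy direction — any $\E$-ring receiving a map from a complex-periodic one is complex periodic, since complex orientability is inherited along ring maps and weak $2$-periodicity is preserved under base change; equivalently, $\Spec(\MP^{\otimes k})$ visibly maps to $\M$ as the $k$-fold fiber product, which forces $\MP^{\otimes k}$ to be complex periodic. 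Replacing the citation with this observation closes the only loose thread.
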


\begin{proof}
This is an instance of Proposition \ref{Groupoid presentation}, or more specifically, an application of Lemma \ref{covtriv}.
\end{proof}

\begin{remark}
The \v{C}ech groupoid appearing in the statement of Corollary \ref{Main Thm} is the so-called Amitsur complex. It encodes the descent data along the terminal map of stacks $\Spec(\MP)\to\Spec(S)$. But since this map is not flat itself (i.e.\ $\MP$ is not a flat spectrum), descent along it does not return $\Spec(S)$ itself, but rather $\M$ as we saw.
\end{remark}

\begin{corollary}\label{Underlying ordinary stack is MFG}
The canonical map $\M\to\mathcal M_\mathrm{FG}$ induces an equivalence on underlying ordinary stacks. In particular, $(\M)^\heart\simeq \mathcal M_\mathrm{FG}^\heart$ is the ordinary moduli stack of formal groups.
\end{corollary}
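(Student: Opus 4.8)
The plan is to deduce the statement from the \v{C}ech presentation of $\M$ obtained in Corollary \ref{Main Thm}, together with Quillen's Theorem, by comparing groupoid presentations. Recall first that the canonical map $\M\to\mathcal M_\mathrm{FG}$ forgets the orientation. On the target side the passage to underlying ordinary stacks is harmless: since $\mathrm{FGrp}(A)\simeq\mathrm{FGrp}(\tau_{\ge 0}(A))$, the non-connective stack $\mathcal M_\mathrm{FG}$ is the left Kan extension along $\CAlg^\mathrm{cn}\hookrightarrow\CAlg$ of its restriction $\mathcal M_\mathrm{FG}\vert_{\CAlg^\mathrm{cn}}$ (which is an fpqc sheaf by Corollary \ref{Descent for FG1}), so it follows formally from Proposition \ref{sheaf adjunction} that $\tau_{\ge 0}(\mathcal M_\mathrm{FG})\simeq\mathcal M_\mathrm{FG}\vert_{\CAlg^\mathrm{cn}}$, and hence by Remark \ref{Undst for conct} that $(\mathcal M_\mathrm{FG})^\heart\simeq\mathcal M_\mathrm{FG}\vert_{\CAlg^\heart}$, which is precisely the ordinary moduli stack of formal groups $\Mo$. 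It therefore remains to identify $(\M)^\heart$ with $\Mo$ via the induced map.

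By Corollary \ref{Main Thm} and the faithfully flat cover of Lemma \ref{Quillen}, and using that $\Spec(\MP)\times_\M\Spec(\MP)\simeq\Spec(\MP\otimes\MP)$ as in the proof of Lemma \ref{affine diagonal lemma} (since $\MP\otimes\MP$ is again complex-periodic), we have $\M\simeq|\Spec(\MP^{\otimes\bullet+1})|$, the geometric realization of the Amitsur complex of $\MP$. Applying the underlying-ordinary-stack functor and invoking Corollary \ref{Pres for trunc} (equivalently, Remark \ref{connective cover through colimits}) gives an equivalence of ordinary geometric stacks
$$
(\M)^\heart\;\simeq\;|\Spec(\pi_0(\MP^{\otimes\bullet+1}))| ,
$$
whose face maps are faithfully flat, as $\pi_0$ of a faithfully flat $\E$-ring map is faithfully flat. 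Now Quillen's Theorem supplies an isomorphism $\pi_0(\MP)\simeq L$ carrying $\MP^0(\mathbf{CP}^\infty)$ to the universal formal group law, and the classical computation of $\MU_*\MU$ — in particular its flatness over $\MU_*$ — identifies the cosimplicial ring $\pi_0(\MP^{\otimes\bullet+1})$, compatibly with all of its structure maps, with the ring of functions on the \v{C}ech nerve $\check{\mathrm C}^\bullet(\Spec(L)/\Mo)$ of the standard atlas $\Spec(L)\to\Mo$ classifying the universal formal group law; this is exactly the classical reformulation of Quillen's Theorem recalled in the Introduction. Since every formal group is fpqc-locally coordinatizable, this atlas is a faithfully flat cover, so Lemma \ref{covtriv} yields $|\check{\mathrm C}^\bullet(\Spec(L)/\Mo)|\simeq\Mo$, and hence $(\M)^\heart\simeq\Mo$.

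Finally, I would check that the equivalence so produced is the one induced by the canonical map $\M\to\mathcal M_\mathrm{FG}$. Unwinding the definitions, the composite $\Spec(\MP)\to\M\to\mathcal M_\mathrm{FG}$ classifies the formal group underlying the Quillen formal group $\w{\G}{}^{\mathcal Q}_{\MP}$; passing to underlying ordinary stacks and using the second half of Quillen's Theorem (that $\MP^0(\mathbf{CP}^\infty)$ corresponds to the universal formal group law), this is exactly the atlas $\Spec(L)\to\Mo$ used above. Thus the comparison of groupoid presentations is compatible with the canonical maps and realizes to the asserted equivalence $(\M)^\heart\xrightarrow{\ \sim\ }\Mo$. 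I expect the only genuine work to lie in the middle step: matching the Hopf algebroid $(\pi_0(\MP),\pi_0(\MP\otimes\MP))$ — and the comparison map on it — with the classical data presenting $\Mo$. But every ingredient there is classical and rests on Quillen's Theorem, which we take as input here (and which Theorem \ref{Quillen Rephrased} will in turn show to be equivalent to the present statement, so no circularity is created).
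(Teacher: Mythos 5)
Your proof takes the same route as the paper: present $\M$ via the \v{C}ech nerve of $\Spec(\MP)\to\M$ from Corollary \ref{Main Thm}, pass to underlying ordinary stacks using Corollary \ref{Pres for trunc}, and invoke Quillen's Theorem to identify the resulting simplicial presentation with the standard $(L,W)$-Hopf-algebroid presentation of $\Mo$. You are in fact a bit more thorough than the paper's terse argument on two points it leaves implicit — using the left Kan extension formalism to identify $(\mathcal M_\mathrm{FG})^\heart$ with $\Mo$, and verifying that the equivalence so produced is the one induced by the forgetful map — so the extra care is welcome but the essential approach is identical.
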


\begin{proof}
It follows from Corollary \ref{Main Thm} and Corollary \ref{Pres for trunc} that the underlying ordinary stack of $\M$ is given by
$$
(\M)^\heart\simeq \left|\Spec \big(\pi_0\big(\mathrm{MP}^{\otimes (\bullet +1)}\big ) \big)\right|.
$$
By Quillen's Theorem, we have canonical isomorphisms $\pi_0(\mathrm{MP})\simeq L$ with  the Lazard ring, classifying formal group laws, and $\pi_0(\mathrm{MP}\otimes \mathrm{MP})\simeq W$ is the groupoid of (non-strict) isomorphisms of formal group laws. In particular, the standard groupoid presentation of the ordinary moduli stack of formal groups
$$
\mathcal M^\heart_\mathrm{FG}\simeq \varinjlim(\Spec(W)\rightrightarrows \Spec (L)),
$$
e.g.\ from \cite[Theorem 2.6.4]{Smithling} or \cite[Theorem 2.3.4]{Goerss}, gives rise to the desired identification $(\M)^\heart\simeq \mathcal M^\heart_\mathrm{FG}$ - see also \cite[Example  9.3.1.8]{SAG}.
\end{proof}

\begin{remark}\label{Heart is fickle}
The underlying ordinary stack $(\M)^\heart\simeq \mathcal M_\mathrm{FG}^\heart$ is the classical stack of formal group. As a functor $\mathcal M_\mathrm{FG}^\heart:\CAlg^\heart\to\mathcal G\mathrm{rpd}\simeq\tau_{\le 1}(\mS)\subseteq \mS$,  it is highly non-trivial. It is thus very far from the restriction we have $\M\vert_{\CAlg^\heart}$ onto the subcategory $\CAlg^\heart\subseteq \CAlg$ of discrete $\E$-rings, since $\M(A)=\emptyset$ for any $\E$-ring that is not complex-periodic, which includes every ordinary commutative ring.
\end{remark}

\subsection{Quasi-coherent sheaves on the  stack of oriented formal groups}

Let us begin this section with a straightforward computation, going back ostensibly to Bousfield \cite[Theorem 6.5]{Bousfield}, implied by the results of the previous one.
\begin{prop}\label{functions are S}
The initial map of $\E$-rings $S\to\mathcal O(\mathcal M_{\mathrm{FG}}^\mathrm{or})$ is an equivalence.
\end{prop}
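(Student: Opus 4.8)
The plan is to combine the explicit \v{C}ech presentation of $\M$ with a classical convergence theorem of Bousfield. First, Corollary \ref{Main Thm} identifies $\M$ with the geometric realization $|\check{\mathrm C}^\bullet(\Spec(\MP)/\Spec(S))|$, i.e.\ with the colimit over $\bDelta^\mathrm{op}$ of the affine non-connective spectral schemes $\Spec(\MP^{\otimes \bullet+1})$. Feeding this into the formula \eqref{Compute functions} for the $\E$-ring of functions on a colimit of affines turns the right adjoint $\sO(-)$ into a limit, so that
$$
\sO(\M)\;\simeq\;\varprojlim_{[n]\in\bDelta}\MP^{\otimes n+1}\;=\;\Tot\bigl(\MP^{\otimes\bullet+1}\bigr),
$$
the totalization of the cosimplicial $\E$-ring $[n]\mapsto\MP^{\otimes n+1}$ --- the Amitsur, or cobar, complex of $\MP$. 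Under this identification the initial map $S\to\sO(\M)$ becomes the canonical comparison map $S\to\Tot(\MP^{\otimes\bullet+1})$, and the Proposition becomes the assertion that this map is an equivalence.

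Second, I would invoke the standard identification of the totalization of the Amitsur complex of a homotopy-commutative ring spectrum $E$ with the $E$-nilpotent completion of the sphere, $\Tot(E^{\otimes\bullet+1})\simeq S^{\wedge}_E$ (see e.g.\ \cite[Section 2]{MNN}). It then remains to show that $S$ is $\MP$-nilpotent complete. Since $\MU$ is a retract of $\MP\simeq\MU\otimes S[\beta^{\pm1}]$ and $\MP$ is a $\MU$-module, the two spectra generate the same thick $\otimes$-ideal of $\Sp$; equivalently, the $\MP$-Adams and $\MU$-Adams towers of $S$ coincide (cf.\ Remark \ref{periodica} and \cite[Example 7.4]{Goerss on ANSS}). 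Hence $S^{\wedge}_{\MP}\simeq S^{\wedge}_{\MU}$.

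Finally, I would appeal to Bousfield's theorem \cite[Theorem 6.5]{Bousfield}: as $\MU$ is a connective ring spectrum with $\pi_0(\MU)\simeq\mathbf Z$, the unit map $S\to S^{\wedge}_{\MU}$ is an equivalence, i.e.\ the complex bordism Adams--Novikov spectral sequence for the sphere converges completely. Composing the equivalences displayed above yields $S\DistTo\sO(\M)$, as desired.

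The step I expect to be the main obstacle is not conceptual but a matter of bookkeeping: one must check that the cosimplicial object produced by the \v{C}ech nerve in $\Shv^{\mathrm{nc}}_\mathrm{fpqc}$ is genuinely the Amitsur complex computing $S^{\wedge}_{\MP}$ --- with the correct indexing and no spurious shift or re-grading --- and that the reduction from $\MP$ to $\MU$ is legitimate at the level of nilpotent completions rather than merely on associated graded pieces. Both points are covered by the cited results, but each deserves an explicit sentence in the writeup.
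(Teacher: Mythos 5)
Your proof is correct and follows the same overall strategy as the paper: pass through Corollary \ref{Main Thm} and the limit formula \eqref{Compute functions} to identify $\sO(\M)$ with the totalization of the Amitsur complex $\MP^{\otimes\bullet+1}$, i.e.\ with $S^\wedge_\MP$, and then invoke a classical convergence theorem for the $\MU$-based Adams tower. The one place you diverge from the paper's formal proof is the last step: the paper shows $S\DistTo S^\wedge_\MP$ by observing that $S\to\mathrm{MUP}$ is a Hopf--Galois extension (\cite[Remark 12.2.3, Proposition 12.1.8]{Rognes}), which packages the convergence into a single citation without needing to compare $\MP$ with $\MU$. You instead reduce $\MP$-nilpotent completion to $\MU$-nilpotent completion via the mutual nilpotence of $\MU$ and $\MP$ and then appeal directly to Bousfield \cite[Theorem 6.5]{Bousfield}. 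That route is exactly the one the paper's introduction advertises (it credits Bousfield there), so the two are best viewed as alternative ways to source the same classical input. Your change-of-rings step is legitimate --- $\MU$ is a retract of $\MP$ as an $\MU$-module and $\MP$ is $\MU$-nilpotent, so they generate the same thick $\otimes$-ideal, and nilpotent completion depends only on that ideal (cf.\ \cite{MNN}) --- but be slightly careful with the phrase ``the Adams towers coincide'': the towers are not literally equal, only pro-equivalent, which is what the thick-subcategory argument actually gives and all that is needed for the totalizations to agree.
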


\begin{proof} By applying the limit-preserving functor $\sO :(\Shv_\mathrm{fpqc}^\mathrm{nc})^\mathrm{op}\to\CAlg$ to the equivalence of Corollary \ref{Main Thm}, we obtain an equivalence of $\E$-rings
\begin{eqnarray*}
\sO(\mathcal M_\mathrm{FG}^\mathrm{or}) &\simeq&
\sO\big(|\check{\mathrm C}^\bullet(\Spec (\mathrm{MP})/\Spec(S))|\big)\\
&\simeq&
\Tot\big(\sO\big(\Spec (\mathrm {MP})^{\times (\bullet+1)}\big)\big)\\
&\simeq & \Tot\big(\mathrm{MP}^{\otimes(\bullet+1)}\big) =:  S^\wedge_{\mathrm{MP}}
\end{eqnarray*}
with the nilpotent completion of the sphere spectrum along $\mathrm{MP}$. We thus need to show that the canonical map of $\E$-rings $S\to S^\wedge_\mathrm{MP}$ is an equivalence.
Nilpotent completion depends only on the $\mathbb E_2$-structure on $\mathrm{MP}$, hence it suffices to choose any particular form of periodic complex bordism.
For the Thom spectrum $\mathrm{MUP}$, introduced in Example \ref{The MPs}, the unit map $S\to \mathrm{MUP}$ is a Hopf-Galois extension by \cite[Remark 12.2.3]{Rognes}, from which the desired completion claim follows from \cite[Proposition 12.1.8]{Rognes}.
\end{proof}

This  suggests a close connection between quasi-coherent sheaves on $\M$ and the $\i$-category of spectra. To make that precise with Theorem \ref{IndCoh is Sp}, we need to first single out a certain class of quasi-coherent sheaves.

\begin{definition}\label{Def of Coh}
For any fpqc stack $X$, let $\IndCoh(X)$ denote the ind-completion of  the thick subcategory of $\QCoh(X)$, spanned by the structure sheaf $\sO_X$, i.e.\ the smallest stable full subcategory of $\QCoh(X)$ that is contains $\sO_X$ and is closed under retracts.
\end{definition}

\begin{remark}
The preceding definition, partially inspired by \cite[Definition 5.39]{Bartel Heard Valenzuela} is quite fanciful. In particular, it will almost certainly fail to specify many of the good properties of its namesake from derived algebraic geometry, as discussed for instance in \cite{GaRo}. But in our context, with our very specific scope of interest, the above terminology is highly convenient and at least somewhat appropriate to indicate the difference of the $\i$-category in question from $\QCoh(X)$.
\end{remark}

\begin{theorem}\label{IndCoh is Sp} The functor $\Sp\to\QCoh(\M)$ given by  $X\mapsto \sO_{\M}\otimes X$ induces an equivalence of  $\i$-categories
$\IndCoh(\mathcal M^\mathrm{or}_\mathrm{FG})\simeq \Sp$.
\end{theorem}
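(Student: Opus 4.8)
The plan is to exhibit the functor $X\mapsto\sO_{\M}\otimes X$ as the ind-extension of an equivalence on compact objects, using Proposition \ref{functions are S} as the only input that is not formal. Write $p\colon\M\to\Spec(S)$ for the terminal morphism, so that under the standard identification $\QCoh(\Spec(S))\simeq\Sp$ the functor in question is the pullback $p^*\colon\Sp\to\QCoh(\M)$, $X\mapsto\sO_{\M}\otimes X$, with right adjoint $p_*=\Gamma(\M;-)$.

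First I would check that the unit $\eta\colon\id_{\Sp}\to p_*p^*$ is an equivalence on the subcategory $\Sp^\omega\subseteq\Sp$ of finite spectra. Being a left adjoint, $p^*$ preserves colimits, in particular finite ones, hence is exact; being a right adjoint, $p_*$ preserves limits, in particular finite ones, hence is also exact; so $p_*p^*$ is an exact endofunctor of $\Sp$. Therefore the full subcategory of $\Sp$ on which $\eta$ is an equivalence is thick (it is stable and closed under retracts, since exact functors and equivalences of arrows are preserved under shifts, (co)fibers and retracts). Now $\eta_S$ is the canonical map $S\to\Gamma(\M;\sO_{\M})=\sO(\M)$, which is an equivalence by Proposition \ref{functions are S}; since $\Sp^\omega=\Thick(S)$, the unit $\eta$ is an equivalence on all of $\Sp^\omega$. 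In particular $p^*|_{\Sp^\omega}$ is fully faithful.

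Next I would identify the essential image of $p^*|_{\Sp^\omega}$ with the thick subcategory $\mathcal C\subseteq\QCoh(\M)$ generated by $\sO_{\M}$, so that $\IndCoh(\M)=\Ind(\mathcal C)$ by Definition \ref{Def of Coh}. Since $p^*$ is exact and sends $S$ to $\sO_{\M}$, it carries $\Sp^\omega=\Thick(S)$ into $\Thick(\sO_{\M})=\mathcal C$; as $p^*|_{\Sp^\omega}$ is fully faithful and $\Sp^\omega$ is idempotent complete, its essential image is a stable subcategory of $\mathcal C$ closed under retracts (an idempotent of $p^*X$ descends to an idempotent of $X$, which splits) and containing the generator $\sO_{\M}$, hence equals $\mathcal C$. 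Thus $p^*$ restricts to an equivalence $\Sp^\omega\xrightarrow{\sim}\mathcal C$, and its inverse sends $p^*X$ to $X\simeq p_*p^*X=\Gamma(\M;p^*X)$, i.e.\ is $\Gamma(\M;-)$ on $\mathcal C$. Applying the equivalence-preserving functor $\Ind(-)$ then gives
$$\Sp\simeq\Ind(\Sp^\omega)\xrightarrow{\ \sim\ }\Ind(\mathcal C)=\IndCoh(\M),$$
and since this composite and $\sO_{\M}\otimes-$ both preserve filtered colimits and agree on $\Sp^\omega$, and $\Sp=\Ind(\Sp^\omega)$, they coincide.

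The only substantive ingredient is Proposition \ref{functions are S} (equivalently, that the sphere spectrum is its own $\MP$-nilpotent completion); everything else is bookkeeping with thick subcategories and ind-completions. The single point that warrants care is that $\IndCoh(\M)$ is, by Definition \ref{Def of Coh}, the \emph{abstract} ind-completion of $\mathcal C$ and not a priori a full subcategory of $\QCoh(\M)$ — so at no stage do we need $\sO_{\M}$ to be a compact object of $\QCoh(\M)$, which in general it is not; the category $\QCoh(\M)$ only enters at the very end, through the identification of the resulting equivalence with $X\mapsto\sO_{\M}\otimes X$.
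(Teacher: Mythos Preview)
Your proof is correct and follows essentially the same approach as the paper: the paper packages your first step (that $\eta$ is an equivalence on finite spectra) into a separate Lemma proved by the same thick-subcategory argument from Proposition \ref{functions are S}, and then concludes exactly as you do by identifying the essential image with $\Thick(\sO_{\M})$ and ind-completing. Your explicit justification that the essential image is closed under retracts (via idempotent completeness of $\Sp^\omega$) is a detail the paper leaves implicit, and your closing remark that compactness of $\sO_{\M}$ in $\QCoh(\M)$ is never needed is a worthwhile clarification.
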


\begin{proof}
 Recall from Example \ref{Standard notation} that quasi-coherent sheaves on $\M$ and spectra are  related by the adjunction
\begin{equation}\label{adjunction on QCoh}
\sO_{\M}\otimes - : \Sp \rightleftarrows \QCoh(\M) : \Gamma(\M; -),
\end{equation}
obtained by quasi-coherent pullback and pushforward along the terminal map of stacks $\M\to\Spec(S)$. We can identify the unit of this adjunction on the subcategory $\Sp^\mathrm{fin}\subseteq\Sp$ as the identity functor by Lemma \ref{also for compacts}. Therefore the restriction of the left adjoint to this subcategory is fully faithful. On the other hand, its essential image is precisely the thick subcategory in $\QCoh(X)$ spanned by $\sO_X$. In light of Definition \ref{Def of Coh}, the result follows by passing to the ind-completion on both sides.
\end{proof}

\begin{lemma}\label{also for compacts}
For any finite spectrum $M$, the canonical map $M\to \Gamma(\M; \sO_{\M}\otimes M)$ is an equivalence of spectra.
\end{lemma}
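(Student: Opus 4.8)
The plan is to recognize the functor $M\mapsto\Gamma(\M;\sO_{\M}\otimes M)$ as an exact endofunctor of $\Sp$ receiving a natural map from the identity, and then to reduce to the case of the sphere by a thick-subcategory argument. Concretely, write $p\colon\M\to\Spec(S)$ for the terminal map; then $\sO_{\M}\otimes M=p^{*}(M)$ and $\Gamma(\M;-)=p_{*}$, so the functor in question is $G:=p_{*}p^{*}\colon\Sp\to\Sp$, and the canonical map of the statement is the unit $\eta\colon\mathrm{id}_{\Sp}\Rightarrow G$ of the adjunction $p^{*}\dashv p_{*}$. First I would record that $p^{*}$, being a left adjoint, and $p_{*}$, being a right adjoint, are each exact functors between stable $\infty$-categories (a limit-preserving functor between stable $\infty$-categories preserves cofiber sequences, since there pullback and pushout squares coincide); hence $G$ is exact and $\eta$ is a natural transformation of exact endofunctors of $\Sp$.

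Next I would consider the full subcategory $\mathcal E\subseteq\Sp$ spanned by those $M$ for which $\eta_{M}$ is an equivalence. Since $\eta$ is natural and both $\mathrm{id}_{\Sp}$ and $G$ commute with shifts and send cofiber sequences to cofiber sequences, the two-out-of-three property for equivalences shows that $\mathcal E$ is closed under shifts and cofibers, and it is trivially closed under retracts; thus $\mathcal E$ is a thick subcategory of $\Sp$. As $\Sp^{\mathrm{fin}}$ is the thick subcategory of $\Sp$ generated by the sphere spectrum $S$, it then suffices to verify that $S\in\mathcal E$, i.e.\ that $\eta_{S}$ is an equivalence.

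For this base case, $\eta_{S}$ is exactly the initial map of $\E$-rings $S\to\Gamma(\M;\sO_{\M})=\sO(\M)$, which is an equivalence by Proposition~\ref{functions are S}; that finishes the argument. So there is essentially no obstacle internal to this lemma: all of the substance has been deferred to Proposition~\ref{functions are S} (resting in turn on the fact that the sphere is its own $\MP$-nilpotent completion), already established above. The single point worth a moment's care is that $p_{*}$ is only exact, not colimit-preserving, so the argument produces an equivalence for $M$ finite but not in general — which is exactly the claim here, and exactly what Theorem~\ref{IndCoh is Sp} needs in order to identify $\IndCoh(\M)$ with $\Sp$ upon passing to ind-completions.
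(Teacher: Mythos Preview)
Your proof is correct and follows essentially the same approach as the paper's: both use that $p^{*}$ and $p_{*}$ are exact between stable $\infty$-categories to reduce, via a thick-subcategory (equivalently, finite-colimit) argument, to the case $M=S$, which is Proposition~\ref{functions are S}. Your write-up is slightly more explicit about the thick-subcategory closure properties, but the content is the same.
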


\begin{proof}
The functors $\sF\mapsto \Gamma(\M; \sF)$ and $M\mapsto \sO_{\M}\otimes M$ form an adjunction, and therefore preserve colimits and limits respectively.
Since both $\Sp$ and $\QCoh(\M)$ are stable $\i$-categories, this implies that both functors preserve finite (co)limits. Recalling that the subcategory of finite spectra $\Sp^\mathrm{fin}\subseteq \Sp$ is spanned by the sphere spectrum $S$ under finite colimits, we have reduced to proving the claim for $M=S$, in which case it follows from Proposition \ref{functions are S}.
\end{proof}

\begin{remark}
The conclusion of Lemma \ref{also for compacts} may, in light of the proof of Proposition \ref{functions are S}, 
be restated as the claim that the canonical map $M\to M^\wedge_{\mathrm{MP}}$ is an equivalence of spectra for any $M\in\Sp^\mathrm{fin}$. That is to say, every finite spectrum is $\mathrm{MP}$-nilpotent complete. 
\end{remark}

\begin{remark}
Another way to view Theorem \ref{IndCoh is Sp} is as an instance of the  Schwede-Shipley Recognition Theorem \cite[Theorem  7.1.2.1]{HA} for compactly generated stable $\i$-categories. Indeed, the $\i$-category  $\IndCoh(\M)$ is stable and compactly generated by the structure sheaf $\sO_{\M}$ by construction, hence the aforementioned result identifies it with $\Mod_{\sO(\M)}$, which Lemma \ref{functions are S} shows to be $\Sp$.
\end{remark}

We should also mention the comodule-theoretic description of the $\i$-category of quasi-coherent sheaves on $\M$, analogous to the classical comodule-theoretic description of the abelian category of ordinary quasi-coherent sheaves on $\mathcal M^\heart_\mathrm{FG}$ that underlies most  approaches to the ANSS.

\begin{cons}\label{comodule cat const}
Let $A$ be an $\E$-ring. 
The forgetful functor $\Mod_A\to\Sp$ admits a left adjoint $M\mapsto M\o A$. This adjunction induces the comonad $T : \Mod_{A}\to\Mod_{A}$ given by
$$
T(M)\simeq M\o A\simeq M\o_A(A\o A).
$$
The comonad structure on $T$ is therefore equivalent to an $\mathbb E_1$-coalgebra structure on $A\o A$ over $A$ (see \cite{Torii} for a precise treatment thereof). The \textit{$\i$-category of comodules} over it, defined as
$$
\cMod_{A \o A}(\Mod_A):= \Mod_T(\Mod_{A}^\mathrm{op})^\mathrm{op},
$$
may be identified by the Beck-Chevalley theory of comonadic descent, in the form of \cite[Theorem 4.7.5.2]{HA}, with the totalization $\Tot(A^{\otimes(\bullet+1)})$ of the Amitsur complex of $A$. Indeed, the latter may be identified with the cobar construction of the coalgebra $A\o A$ over $A$.
\end{cons}

\begin{prop}\label{QCoh as comod}
Pullback along the cover $\Spec(\MP)\to\M$ induces an equivalence of $\i$-categories $\QCoh(\M)\simeq \cMod_{\MP\otimes\MP}(\Mod_{\MP}).$ By passing to $\pi_0$, this recovers the traditional equivalence of abelian categories $\QCoh^\heart(\mathcal M^\heart_\mathrm{FG})\simeq \cMod_{W}(\Mod_L^\heart)$.
\end{prop}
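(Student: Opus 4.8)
The plan is to combine the simplicial presentation of $\M$ from Corollary \ref{Main Thm} with the comonadic-descent description of comodule $\infty$-categories recorded in Construction \ref{comodule cat const}. First I would use Corollary \ref{Main Thm} to present $\M$ as the geometric realization of the simplicial non-connective spectral scheme with $n$-simplices $\Spec(\MP^{\otimes(n+1)})$ --- equivalently, by the proof of Lemma \ref{affine diagonal lemma}, the \v{C}ech nerve of the flat cover $f:\Spec(\MP)\to\M$, whose face maps are therefore faithfully flat. Since the functor $\QCoh$ of Definition \ref{Def of QCoh} is a right Kan extension, hence carries colimits of stacks to limits of $\infty$-categories, this yields a canonical equivalence
$$
\QCoh(\M)\;\simeq\;\varprojlim_{[n]\in\bDelta}\Mod_{\MP^{\otimes(n+1)}}\;\simeq\;\Tot\big(\Mod_{\MP^{\otimes(\bullet+1)}}\big),
$$
the totalization of the cosimplicial $\infty$-category of modules over the Amitsur complex of $\MP$, under which evaluation at $[0]\in\bDelta$ corresponds to pullback $f^*:\QCoh(\M)\to\Mod_{\MP}$ along the cover.

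Next I would identify this totalization using Construction \ref{comodule cat const}: the cosimplicial diagram $\Mod_{\MP^{\otimes(\bullet+1)}}$ is the cobar resolution of the $\mathbb E_1$-coalgebra $\MP\otimes\MP$ over $\MP$, so by comonadic descent (\cite[Theorem 4.7.5.2]{HA}, as in that construction) its totalization is $\cMod_{\MP\otimes\MP}(\Mod_{\MP})$, with the projection onto the $0$-th term corresponding to the forgetful functor. Here one uses that $\Spec(\MP)\times_{\M}\Spec(\MP)\simeq\Spec(\MP\otimes\MP)$, as in the proof of Lemma \ref{affine diagonal lemma}, so that the descent comonad $f^*f_*$ on $\Mod_{\MP}$ is indeed $M\mapsto M\otimes_{\MP}(\MP\otimes\MP)$. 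Combining the two displayed equivalences gives the asserted $\QCoh(\M)\simeq\cMod_{\MP\otimes\MP}(\Mod_{\MP})$, through which $f^*$ is identified with the forgetful functor.

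For the second assertion I would pass to the connective cover. By Corollary \ref{Pres for trunc} one has $\tau_{\ge 0}(\M)\simeq|\Spec(\tau_{\ge 0}(\MP^{\otimes(\bullet+1)}))|$, and since the structure maps of this simplicial $\E$-ring are flat, smash products along them are left $t$-exact, so the heart of the $t$-structure on $\QCoh(\tau_{\ge 0}(\M))$ is computed levelwise; combined with $(\M)^\heart\simeq\mathcal M^\heart_\mathrm{FG}$ from Corollary \ref{Underlying ordinary stack is MFG} this gives
$$
\QCoh^\heart(\mathcal M^\heart_\mathrm{FG})\;\simeq\;\QCoh(\tau_{\ge 0}(\M))^\heart\;\simeq\;\Tot\big(\Mod_{\pi_0(\MP^{\otimes(\bullet+1)})}^\heart\big).
$$
By Quillen's Theorem $\pi_0(\MP)\simeq L$ and $\pi_0(\MP\otimes\MP)\simeq W$, and since $\pi_*(\MP\otimes\MP)$ is flat over $\pi_*(\MP)$ one gets $\pi_0(\MP^{\otimes(n+1)})\simeq W^{\otimes_L n}$, so the cosimplicial abelian category above is the cobar complex of the flat Hopf algebroid $(L,W)$, whose totalization is by definition $\cMod_W(\Mod_L^\heart)$ --- the classical comodule presentation of $\QCoh^\heart(\mathcal M^\heart_\mathrm{FG})$, as in \cite{Goerss} or \cite{Smithling}.

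The main obstacle is the $\infty$-categorical comonadic-descent step identifying the totalization of the Amitsur diagram of module categories with the comodule $\infty$-category; this is exactly the content of Construction \ref{comodule cat const}, so the argument is in large part a matter of stringing together results already in place. The one point requiring genuine care is the levelwise computation of the heart in the second part, which --- as in the construction of the $t$-structure on quasi-coherent sheaves --- hinges on the flatness of the (co)face maps of the \v{C}ech nerve.
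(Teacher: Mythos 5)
Your proposal is correct and follows essentially the same route as the paper: use Corollary \ref{Main Thm} to get $\QCoh(\M)\simeq \Tot(\Mod_{\MP^{\otimes(\bullet+1)}})$, then invoke the comonadic-descent identification of this totalization with the comodule $\infty$-category from Construction \ref{comodule cat const}. The paper's proof is terser (it leaves the $\pi_0$ statement to the earlier remarks on $\QCoh^\heart$ and the $t$-structure being computed levelwise), but you have simply spelled out the same steps, including the flatness point needed for $\pi_0(\MP^{\otimes(n+1)})\simeq W^{\otimes_L n}$.
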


\begin{proof}
Follows from the Beck-Chevalley description of the comodule $\i$-category, since we already know by
Corollary \ref{Main Thm} that $\QCoh(\M)\simeq \Tot\big(\Mod_{\MP^{\otimes (\bullet +1)}}\big)$.
\end{proof}

\begin{remark}
In terms of the comodule description of quasi-coherent sheaves on $\M$, the $\i$-category $\IndCoh(\M)$ is a version of Hovey's stable category of comodules; see \cite{Bartel Heard Valenzuela} for a discussion. A modern treatment can be found in \cite[Definition 2.4]{Krause}, where this is called the \textit{compactly generated category of comodules}. Our Theorem \ref{IndCoh is Sp} may be viewed as a special case of Krause's \cite[Theorem 2.44]{Krause}; see in particular \cite[Example 2.45]{Krause} for a version with $\MU$ instead of $\MP$.
\end{remark}

\subsection{Landweber exactness}\label{LEFT}
Let $X$ be any fixed spectrum. The non-connective geometric stack $\M$ allows us to define a quasi-coherent sheaf
$$\sF_n(X) := \pi_n(\sO_{\M}\otimes X)\in\QCoh^\heart(\mathcal M^\heart_\mathrm{FG}).$$
It satisfies
$\sF(\Sigma^{-n} (X))\simeq \pi_n(\sO_{\M}\otimes X),$ hence it is encoding the $\QCoh^\heart(\mathcal M^\heart_\mathrm{FG})$-valued homology theory corepresented by the structure sheaf $\sO_{\M}.$

\begin{remark}
The collection of sheaves $\sF_n(X)$ for all spectra $X$ contain already for $n=0$ all the information encoded in those for other $n\in\Z$, since
$$
\sF_n(X)\simeq \pi_0(\Sigma^{-n}(\sO_{\M}\o X))\simeq \pi_0(\sO_{\M}\o \Sigma^{-n}(X))\simeq \sF_0(\Sigma^{-n}(X)).
$$
\end{remark}

\begin{remark}
Let $\omega_{\M}\in \QCoh(\M)$ be the dualizing line, i.e.\ the module of invariant differentials on the universal oriented formal group, pulled back from the analogous $\omega_{\mathcal M_\mathrm{FG}}$ along the canonical map $\M\to\mathcal M_\mathrm{FG}$. These quasi-coherent sheaves are flat by definition, and  in follows from Corollary \ref{Underlying ordinary stack is MFG} that $\pi_0(\omega_{\M})\simeq \omega_{\mathcal M_\mathrm{FG}^\heart}\in\QCoh^\heart(\mathcal M^\heart_\mathrm{FG})$ is the sheaf of invariant differentials on the universal ordinary formal group. On the other hand, we have by the definition of orientability for formal groups that $\omega_{\M}\simeq\Sigma^{-2}(\sO_{\M})$, and consequently $\omega_{\M}^{\o n}\simeq \Sigma^{-2n}(\sO_{\M})$ for every $n\in\Z$. For any spectrum $X$ and any $n\in \Z$, it follows that
\begin{eqnarray*}
\pi_{2n}(X)&\simeq& \pi_0(\Sigma^{-2n}(\sO_{\M})\o X) \\
&\simeq& \pi_0(\omega_{\M}^{\otimes n}\o X)\\
&\simeq &\omega^{\o n}_{\mathcal M^\heart_\mathrm{FG}}\otimes_{\sO_{\mathcal M^\heart_\mathrm{FG}}}\pi_0(\sO_{\M}\o X)\\
&\simeq &\omega^{\o n}_{\mathcal M^\heart_\mathrm{FG}}\otimes_{\sO_{\mathcal M^\heart_\mathrm{FG}}}\sF_0(X),
\end{eqnarray*}
and, either by an analogous consideration or by use of the last remark, similarly for odd-degree homotopy groups
$$
\pi_{2n+1}(X)\simeq \omega^{\o n}_{\mathcal M^\heart_\mathrm{FG}}\otimes_{\sO_{\mathcal M^\heart_\mathrm{FG}}}\sF_1(X).
$$
Thus already for any fixed spectrum $X$, the sheaves $\sF_0(X)$ and $\sF_1(X)$ contain the information of all the other $\sF_n(X)$.
\end{remark}

\begin{prop}\label{sheaves F}
Let $f^\heart:\Spec(L)\to\mathcal M_\mathrm{FG}^\heart$ be the cover of the ordinary stack of formal groups by formal group laws. For any spectrum $X$, there is a canonical isomorphism of $L$-modules $(f^\heart)^*(\sF_n(X))\simeq \MP_n(X)$.
\end{prop}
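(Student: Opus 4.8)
The plan is to deduce this directly from the defining property $(*)$ of homotopy sheaves in Definition~\ref{homotopy sheaves def}, applied to the faithfully flat cover $f:\Spec(\MP)\to\M$ furnished by Lemma~\ref{Quillen}. The first point to pin down is that the notation ``$f^\heart$'' in the statement is unambiguous: under the equivalence of ordinary stacks $(\M)^\heart\simeq\mathcal M^\heart_\mathrm{FG}$ of Corollary~\ref{Underlying ordinary stack is MFG}, the map induced on underlying ordinary stacks by $f$ is exactly the cover of $\mathcal M^\heart_\mathrm{FG}$ by formal group laws. This is precisely what the proof of Corollary~\ref{Underlying ordinary stack is MFG} records, by way of Quillen's Theorem: the identification $\pi_0(\MP)\simeq L$ is compatible with the universal formal group (law) structures, so the map $f^\heart$ in the sense of Remark~\ref{connective cover through colimits} is the cover $\Spec(L)\to\mathcal M^\heart_\mathrm{FG}$ appearing in the statement.

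Next I would compute the pullback $f^{*}(\sO_\M\otimes X)$. Since quasi-coherent pullback along any morphism is symmetric monoidal and colimit-preserving (Example~\ref{Standard notation}), and $\sO_\M\otimes X = p^{*}(X)$ for the terminal map $p:\M\to\Spec(S)$, functoriality of $\QCoh$ gives
$$
f^{*}(\sO_\M\otimes X)\;\simeq\;f^{*}p^{*}(X)\;\simeq\;(p\circ f)^{*}(X)\;\simeq\;\sO_{\Spec(\MP)}\otimes X,
$$
which under $\QCoh(\Spec(\MP))\simeq\Mod_{\MP}$ is the $\MP$-module $\MP\otimes X$. Taking $\pi_n$ then yields the $\MP$-homology $\pi_n(\MP\otimes X)=\MP_n(X)$, a module over $\pi_0(\MP)\simeq L$.

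Finally I would invoke property $(*)$ of Definition~\ref{homotopy sheaves def} for the cover $f$, with $\sF=\sO_\M\otimes X$ (so that $\pi_n(\sF)=\sF_n(X)$), obtaining a canonical isomorphism
$$
(f^\heart)^{*}\bigl(\pi_n(\sO_\M\otimes X)\bigr)\;\simeq\;\pi_n\bigl(f^{*}(\sO_\M\otimes X)\bigr)
$$
in $\Mod_{\pi_0(\MP)}$, and combining this with the previous paragraph and the identification $\pi_0(\MP)\simeq L$ gives $(f^\heart)^{*}(\sF_n(X))\simeq\MP_n(X)$ as $L$-modules. Naturality in the spectrum $X$ is automatic since every step is functorial. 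I do not expect a substantive obstacle here: the sole ingredient carrying genuine mathematical content is Quillen's Theorem, which is already available through Corollary~\ref{Underlying ordinary stack is MFG}, while everything else is formal bookkeeping with pullbacks and homotopy sheaves.
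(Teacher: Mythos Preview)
Your proof is correct and follows essentially the same approach as the paper. The only cosmetic differences are that the paper cites Lemma~\ref{any flat pullback} where you invoke the defining property~$(*)$ of Definition~\ref{homotopy sheaves def} directly (which is fine since $f$ is a faithfully flat cover), and the paper computes $f^*(\sO_{\M}\otimes X)$ by pulling $X$ out using colimit-preservation of pullback, whereas you use the identity $f^*p^* \simeq (p\circ f)^*$; both arrive at $\MP\otimes X$.
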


\begin{proof}
The cover in question is induced upon the underlying ordinary stacks by the cover of non-connective geometric stacks $f:\Spec(\MP)\to \M$. Using Lemma  \ref{any flat pullback}, and the fact that quasi-coherent pullback commutes with colimits, we obtain the series of natural equivalences
\begin{eqnarray*}
(f^\heart)^*(\sF_n(X))&\simeq & (f^\heart)^*\pi_n(\sO_{\M}\o X)\\
&\simeq& \pi_n(f^*(\sO_{\M}\otimes X))\\
&\simeq &\pi_n(f^*(\sO_{\M})\otimes X)\\
&\simeq& \pi_n(\MP\otimes X),
\end{eqnarray*}
where the final term is   the $n$-th $\MP$-homology of the spectrum $X$ by definition.
\end{proof}

\begin{remark}
It follows from Proposition \ref{sheaves F} that the sheaves $\sF_n(X)$ on $\mathcal M^\heart_\mathrm{FG}$ may equivalently be defined in terms of the usual coordinatized presentation of formal groups in terms of formal group laws, which is to say the groupoid presentation of the moduli stack $\mathcal M^\heart_\mathrm{FG}\simeq\colim (\Spec(W)\rightrightarrows \Spec(L)).$
In combination with the isomorphism of Hopf algebroids $(\pi_0(\MP), \pi_0(\MP\o \MP))\simeq (L, W)$ of Quillen's Theorem, the sheaf $\sF_n(X)$ therefore corresponds to the $L = \pi_0(\MP)$-module $\MP_n(X) = \pi_0(\MP\otimes X)$, equipped with its usual  of $W = \MP_0(\MP)\simeq \pi_0(\MP\otimes \MP)$ as a generalized Steenrod algebra. It is in this guise that the sheaves $\sF_n(X)$ appear as foundations for chromatic homotopy theory in \cite{Lurie Chromatic}. The advantage of our approach is that it does not require explicitly distinguishing the complex bordism spectrum $\MP$, but instead proceeded from the non-connective spectral stack $\M$.
\end{remark}

\begin{remark}\label{MP as a homology theory}
From another perspective, Proposition \ref{sheaves F} allows us to construct the homology theory $\MP_*$ corresponding to the periodic complex bordism spectrum solely in terms of the stack $\M$ and the classical stack $\mathcal M_\mathrm{FG}^{\heart, \mathrm{coord}}$ of coordinatized classical formal groups (equivalently: formal group laws - see Definition \ref{coordinatized def})
$$
\MP_n(X)\simeq (f^\heart)^*(\pi_n(\sO_{\M}\o X))\simeq \Gamma\big(\mathcal M^\heart_\mathrm{FG}; \pi_n(\sO_{\M}\otimes X)\otimes_{\sO_{\mathcal M^\heart_\mathrm{FG}}}
\sO_{\mathcal M_\mathrm{FG}^{\heart, \mathrm{coord}}}\big).
$$
This might not recover the $\E$-ring structure, but then we should not expect it would; after all, we are working with an arbitrary form of periodic complex bordism $\MP$.
\end{remark}

\begin{remark}
The construction of the preceding Remark hints at an attractive possibility of a path towards an alternative, perhaps more insightful, proof of Quillen's Theorem. Nevertheless, it relies crucially on the observation that the underlying ordinary stack of $\M$ is the ordinary stack of formal groups $\mathcal M^\heart_\mathrm{FG}$, our proof of which in Corollary \ref{Underlying ordinary stack is MFG} ultimately reduced to an application of Quillen's Theorem. If we were able to leverage the abstract spectral-algebro-geometric description of $\M$ to identify its underlying stack with  $\mathcal M^\heart_\mathrm{FG}$ though, we believe that Quillen's Theorem would follow. In that sense, while we have not succeeded to provide a more insightful proof of Quillen's Theorem, we have succeeded to reduce it to a purely spectral-algebro-geometric statement (that of Corollary \ref{Underlying ordinary stack is MFG}) which does not involve complex bordisms.
\end{remark}

The example of Remark \ref{MP as a homology theory} may be extended to show that all Landweber exact homology theories come directly from $\M$.

\begin{corollary}
Let $R$ be a commutative ring and $\w{\G}\in \mathrm{FGrp}_{\dim =1}(R)$ a Landweber-exact $1$-dimensional formal group law. That is to say, suppose that  its classifying map $\eta_{\w{\G}}:\Spec(R)\to\mathcal M_\mathrm{FG}^\heart$ is flat. If the associated even-periodic Landweber exact spectrum is denoted $E_R$, then its value as a homology theory $(E_R)_n :\Sp\to\mathrm{Ab}$ is given by
$$
(E_R)_n(X)\simeq \eta_{\w{\G}}^*(\sF_n(X))
$$
\end{corollary}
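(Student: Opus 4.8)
The plan is to deduce this from Proposition \ref{sheaves F} together with the even-periodic Landweber exact functor theorem, which I would treat as a black box. First I would exploit the coordinate on $\w{\G}$: since $\w{\G}$ is a formal group \emph{law}, it is classified by a ring map $L\to R$, so the classifying map $\eta_{\w{\G}}:\Spec(R)\to\mathcal M_\mathrm{FG}^\heart$ lifts along the cover $f^\heart:\Spec(L)\to\mathcal M_\mathrm{FG}^\heart$ to a map $\widetilde\eta:\Spec(R)\to\Spec(L)$ with $f^\heart\circ\widetilde\eta\simeq\eta_{\w{\G}}$; and under the identification $L\simeq\pi_0(\MP)$ of Quillen's Theorem used in Corollary \ref{Underlying ordinary stack is MFG}, the map $\widetilde\eta$ corresponds to the classifying ring map $\pi_0(\MP)\simeq L\to R$.

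Next I would compute the pullback directly. Functoriality of quasi-coherent pullback for $\eta_{\w{\G}}\simeq f^\heart\circ\widetilde\eta$, followed by the isomorphism $(f^\heart)^*(\sF_n(X))\simeq\MP_n(X)$ of Proposition \ref{sheaves F}, gives natural isomorphisms of $R$-modules
$$
\eta_{\w{\G}}^*(\sF_n(X))\simeq\widetilde\eta^*(f^\heart)^*(\sF_n(X))\simeq\widetilde\eta^*(\MP_n(X))\simeq\MP_n(X)\otimes_L R,
$$
where the last step is just the formula for pullback of quasi-coherent sheaves along the morphism of affine schemes $\widetilde\eta$, using the identification of the $L$-module structure on $\MP_n(X)$ from the previous step. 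Finally I would invoke the even-periodic form of the Landweber exact functor theorem: it is exactly the flatness of $\eta_{\w{\G}}$ that guarantees that $X\mapsto\MP_*(X)\otimes_L R$ is a homology theory on spectra, represented by the even-periodic Landweber exact spectrum $E_R$ (see \cite{Green Book} and \cite{Lurie Chromatic}). Combining the two halves yields $(E_R)_n(X)\simeq\MP_n(X)\otimes_L R\simeq\eta_{\w{\G}}^*(\sF_n(X))$, as claimed.

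The only genuine input is the Landweber exact functor theorem, so there is no real obstacle in the argument itself; the one point requiring a little care is the bookkeeping in the first step — namely that the cover $f^\heart$ and the isomorphism $\pi_0(\MP)\simeq L$ from Quillen's Theorem (as set up in Corollary \ref{Underlying ordinary stack is MFG}) match up, so that $\widetilde\eta^*$ applied to the $\pi_0(\MP)$-module $\MP_n(X)$ really does compute the base change $\MP_n(X)\otimes_L R$ along the map classifying $\w{\G}$. One could alternatively avoid choosing a coordinate and argue locally on $\Spec(R)$, but since the corollary is stated for a formal group law the lift $\widetilde\eta$ is available outright, which keeps the proof short.
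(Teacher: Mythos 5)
Your proof is correct and takes essentially the same route as the paper: both reduce the statement to Proposition \ref{sheaves F} together with the fact that the even-periodic Landweber exact homology theory associated to a flat $\eta_{\w{\G}}$ is \emph{by definition} $X\mapsto\MP_*(X)\otimes_L R$ (the paper simply cites this as the definition from Lurie's chromatic notes, while you spell out the base-change computation $\eta_{\w{\G}}^*(\sF_n(X))\simeq\widetilde\eta^*(f^\heart)^*(\sF_n(X))\simeq\MP_n(X)\otimes_L R$ explicitly via the lift $\widetilde\eta$ through $\Spec(L)$). The extra bookkeeping you do is accurate and makes the one-line citation more transparent, but it is the same argument.
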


\begin{proof}
In light of the characterization of the sheaves $\sF_n(X)$ given by Proposition \ref{sheaves F}, this is just how even-periodic Landweber exact homology theories are defined, see for instance \cite[Lecture 18, Proposition 6]{Lurie Chromatic}.
\end{proof}

With more hypothesis, we can exhibit $\E$-structures on Landweber spectra, coming from upgrading the underlying ordinary groups to sufficiently nice spectral formal groups. From another perspective, we relate Lurie's \textit{orientation classifier} construction $\mathfrak O_{\w{\G}}$ for a formal group $\w{\G}$  from \cite[Definition 4.3.14]{Elliptic 2}, with a Landweber exact spectrum under certain extra assumption.

Recall here that a $1$-dimensional formal group law $\w{\G}$ over an $\E$-ring $A$ is \textit{balanced}  in the sense of \cite[Definition 6.4.1]{Elliptic 2} if both of the following hold:
\begin{itemize}
\item The unit map $A\to\mathfrak O_{\w{\G}}$ induces an isomorphism of commutative rings $\pi_0(A)\simeq \mathfrak O_{\w{\G}}$.

\item The homotopy groups of $\mathfrak O_{\w{\G}}$ are concentrated in even degrees.
\end{itemize}

\begin{prop}\label{LEFTish}
Let $\w{\G}\in\mathrm{FGrp}_{\dim= 1}(A)$ be a balanced $1$-dimensional formal group over an $\E$-ring $A$, such that the underlying ordinary formal group $\w{\G}{}^0\in \mathrm{FGrp}_{\dim =1}(\pi_0(A))$ is Landweber exact. Then the orientation classifier $\mathfrak O_{\w{\G}}$ exhibits an $\E$-ring structure on the Landweber exact even-periodic spectrum $E_{\pi_0(A)}$ associated to $\w{\G}{}^0$.
\end{prop}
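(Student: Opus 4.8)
The plan is to identify the orientation classifier $\mathfrak O_{\w{\G}}$, as a spectrum, with the Landweber exact even-periodic spectrum $E_R$ associated to $\w{\G}{}^0$, where $R:=\pi_0(A)$; the $\E$-ring structure on $\mathfrak O_{\w{\G}}$ then furnishes the asserted one on $E_R = E_{\pi_0(A)}$. First I would set up the geometry. By \cite[Proposition 4.3.13]{Elliptic 2} (see also the proof of Proposition \ref{Descent for FGor}) the orientation classifier is characterized by $\Spec(\mathfrak O_{\w{\G}})\simeq \Spec(A)\times_{\mathcal M_\mathrm{FG}}\M$; let $f:\Spec(\mathfrak O_{\w{\G}})\to\M$ be the projection. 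Since $\M$ has affine diagonal (Theorem \ref{M is geometric}, Lemma \ref{affine diagonal lemma}), the morphism $f$ is affine. Its restriction to underlying ordinary stacks is, using $(\M)^\heart\simeq\Mo$ from Corollary \ref{Underlying ordinary stack is MFG} and the first clause of balancedness $\pi_0(\mathfrak O_{\w{\G}})\simeq\pi_0(A)=R$, the classifying map $f^\heart=\eta_{\w{\G}{}^0}:\Spec(R)\to\Mo$ of the underlying ordinary formal group. Finally, $\mathfrak O_{\w{\G}}$ carries the tautological orientation on $\w{\G}_{\mathfrak O_{\w{\G}}}$, so by Proposition \ref{Paying proper credit} it is complex periodic.

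Next I would prove that $f$ is flat. It suffices to check this after base change along the faithfully flat cover $g:\Spec(\MP)\to\M$ of Lemma \ref{Quillen}. By the description of fibre products over $\M$ in the proof of Lemma \ref{affine diagonal lemma}, this base change is $\Spec(\MP\o\mathfrak O_{\w{\G}})\to\Spec(\MP)$, so the point is that the $\E$-ring map $\MP\to\MP\o\mathfrak O_{\w{\G}}$ is flat. As $\mathfrak O_{\w{\G}}$ is complex orientable, $\MP\o\mathfrak O_{\w{\G}}$ is, as an $\mathfrak O_{\w{\G}}$-module, a sum of even suspensions of $\mathfrak O_{\w{\G}}$ (with homotopy $\pi_\ast(\mathfrak O_{\w{\G}})[b_1,b_2,\dots][\beta^{\pm 1}]$), so $\pi_\ast(\MP\o\mathfrak O_{\w{\G}})$ is concentrated in even degrees precisely because $\pi_\ast(\mathfrak O_{\w{\G}})$ is, which is the second clause of balancedness; combined with $\pi_\ast(\MP)$ being free over $\pi_0(\MP)=L$ this gives the graded-ring condition in the definition of a flat $\E$-ring map. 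For the remaining condition, $\pi_0(\MP\o\mathfrak O_{\w{\G}})$ is the function ring of $\Spec(L)\times_\Mo\Spec(R)$ (again by $(\M)^\heart\simeq\Mo$, Quillen's Theorem $\pi_0(\MP)\simeq L$, and $\pi_0(\mathfrak O_{\w{\G}})\simeq R$), hence flat over $L$ because $\eta_{\w{\G}{}^0}$ is flat --- this is exactly where the classical Landweber exactness of $\w{\G}{}^0$ enters. Thus $f$ is an affine flat morphism of non-connective geometric stacks.

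Finally I would invoke Lemma \ref{any flat pullback}. For an arbitrary spectrum $X$, apply it to $f$ and the quasi-coherent sheaf $\sO_\M\o X$: for every $n\in\mathbf Z$ there is a natural isomorphism
$$
\pi_n\big(f^*(\sO_\M\o X)\big)\;\simeq\;(f^\heart)^*\big(\pi_n(\sO_\M\o X)\big)\;=\;\eta_{\w{\G}{}^0}^*\big(\sF_n(X)\big).
$$
Since $f^*$ is symmetric monoidal and colimit-preserving, $f^*(\sO_\M\o X)\simeq\sO_{\Spec(\mathfrak O_{\w{\G}})}\o X$, whose underlying spectrum is $\mathfrak O_{\w{\G}}\o X$, and by the Corollary preceding this Proposition the right-hand side is $(E_R)_n(X)$. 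Hence $\pi_n(\mathfrak O_{\w{\G}}\o X)\simeq (E_R)_n(X)$, naturally in $X$ and compatibly with the boundary maps of cofibre sequences; that is, $\mathfrak O_{\w{\G}}$ represents the homology theory $(E_R)_\ast$, so by Brown representability $\mathfrak O_{\w{\G}}\simeq E_R = E_{\pi_0(A)}$ as spectra, and the $\E$-ring structure transports across this equivalence. (Taking $X=S$ recovers, along the way, the expected homotopy $\pi_{2n}(\mathfrak O_{\w{\G}})\simeq\omega_{\w{\G}{}^0}^{\o n}$, $\pi_{2n+1}=0$.)

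The main obstacle I anticipate is the flatness of $f$ in the second step: translating the spectral-algebro-geometric notion of a flat morphism into the classical Landweber exactness condition on $\w{\G}{}^0$, and keeping straight the two distinct roles of the balancedness hypothesis --- pinning down $\pi_0$ so that $f^\heart=\eta_{\w{\G}{}^0}$, and forcing the evenness that supplies the graded-ring clause. Once $f$ is known to be affine and flat, the rest is a formal application of the machinery of Section \ref{Chapter 1}.
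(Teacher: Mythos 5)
Your proof is correct and follows essentially the same approach as the paper: identify the underlying ordinary map of the orientation classifier with the Landweber classifying map via balancedness, show the map $\Spec(\mathfrak O_{\w{\G}})\to\M$ is affine and flat (Landweber exactness on $\pi_0$, evenness for the graded clause), and then apply Lemma \ref{any flat pullback} to compare $\pi_n(\mathfrak O_{\w{\G}}\otimes X)$ with $\eta_{\w{\G}{}^0}^*(\sF_n(X))$. The main difference is expository --- you spell out the flatness check after base change to $\Spec(\MP)$, computing $\pi_*(\MP\otimes\mathfrak O_{\w{\G}})$ directly, whereas the paper phrases the same conclusion more tersely via the built-in weak $2$-periodicity of $\QCoh(\M)$, and you add an explicit Brown-representability remark at the end that the paper leaves implicit.
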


\begin{proof}
By definition, the $\E$-ring $\mathfrak O_{\w{G}}$ is complex periodic. Under the assumption that $\mathfrak O_{\w{\G}}$ is balanced, it follows from passing to underlying stacks from the commutative diagram of non-connective fpqc stacks
$$
\begin{tikzcd}
\Spec(\mathfrak O_{\w{\G}})\ar{d}{u'} \ar{r}{f'} &\M\ar{d}{u}\\
\Spec(A) \ar{r}{f}& \mathcal M_\mathrm{FG},
\end{tikzcd}
$$
that $(f')^\heart :\Spec(\pi_0(\mathfrak O_{\w{\G}}))\to(\M)^\heart\simeq \mathcal M^\heart_\mathrm{FG}$ recovers the map $f^\heart:\Spec(\pi_0(A))\to \mathcal M^\heart_\mathrm{FG}$ classifying $\w{\G}{}^0$. Landweber exactness of the latter means that $f^\heart$ is flat. On the other hand, since balancedness also implies that $\pi_{2i+1}(\mathfrak O_{\w{\G}})=0$ for all $i$, and quasi-coherent sheaves on $\M$ are all weakly $2$-periodic by design, it follows that the morphism $f'$ is flat itself. Just as in the proof of Proposition \ref{sheaves F}, we now find that
\begin{eqnarray*}
(E_{\pi_0(A)})_n(X) &\simeq& (f^\heart)^*(\pi_n(\sO_{\M}\o X))\\
&\simeq& \pi_n((f')^*(\sO_{\M}\otimes X))\\
&\simeq &  \pi_n(f^*(\sO_{\M}\o X)\\
&\simeq & \pi_n(\mathfrak O_{\w{\G}}\o X),
\end{eqnarray*}
proving that the spectrum $\mathfrak O_{\w{\G}}$ indeed represents the homology theory $E_{\pi_0(A)}$.
\end{proof}

\begin{ex}\label{What came before}
The proof of Proposition \ref{LEFTish} shows that, when complex periodic $\E$-rings arise from the orientation classifier construction, they may be obtain by pullback along the forgetful map $u:\M\to\mathcal M_\mathrm{FG}.$  This happens in a number of cases:
\begin{enumerate}
\item 
By \cite[Corollary 4.3.27]{Elliptic 2} and Snaith's Theorem \cite[Theorem  6.5.1]{Elliptic 2},  there is a canonical equivalence $\mathrm{KU}\simeq \mathfrak O_{\w{\G}_m}$ between the complex $K$-theory spectrum, and the orientation classifier of the multiplicative formal group $\w{\G}_m$ over the sphere spectrum $S$. The latter is a balanced formal group by \cite[Proposition 6.5.2]{Elliptic 2} (from which Lurie is able to deduce Snaith's Theorem), and so
 there is a canonical pullback square of non-connective spectral stacks
$$
\begin{tikzcd}
\Spec(\mathrm{KU})\ar{d} \ar{r} &\M\ar{d}{u}\\
\Spec(S) \ar{r}{\eta_{\w{\G}_m}}& \mathcal M_\mathrm{FG}.
\end{tikzcd}
$$

\item For any perfect field $\kappa$ of characteristic $p>0$, and any formal group $\w{\G}_0$ of finite height over $\kappa$, there exists by  \cite[Theorem 3.0.11]{Elliptic 2} the spectral deformation $\E$-ring $R^\mathrm{un}_{\w{\G}_0}$, supporting a universal deformation $\w{\G}$ of $\w{\G}_0$. This is a spectral enhancement of the better-known Lubin-Tate deformation ring, which is recovered on $\pi_0$. By \cite[Theorem 6.4.7]{Elliptic 2}, the formal group $\w{\G}$ is balanced. It hence follows that there is a pullback square
$$
\begin{tikzcd}
\Spf(E(\kappa, \w{\G}_0))\ar{d} \ar{r} &\M\ar{d}{u}\\
\Spf(R_{\w{\G}_0}^\mathrm{un}) \ar{r}{\eta_{\w{\G}}}& \mathcal M_\mathrm{FG},
\end{tikzcd}
$$
exhibiting the relationship between Lubin-Tate spectrum $E(\kappa, \w{\G}_0)\simeq \mathfrak O_{\w{\G}}$ and the non-connective spectral stack $\M$.

\item Though not quite fitting into the paradigm of Proposition \ref{LEFTish}, the construction of the $\E$-ring of topological modular forms from \cite[Chapter 7]{Elliptic 2} is analogous to the previous two examples. It start with the moduli stack $\mathcal M_\mathrm{Ell}^s$ of strict elliptic curves, which are in \cite[Definition 2.0.2]{Elliptic 1} defined as abelian group objects in varieties over $\E$-ring; see \cite[Definition 1.1.1]{Elliptic 1} for the latter. As discussed in \cite[Section 7.1]{Elliptic 2}, we may extract from any strict elliptic curve $E$ over an $\E$-ring $A$ a formal group $\widehat E$ over $A$. That gives rise to a map of stacks $\mathcal M^s_\mathrm{Ell}\to\mathcal M_\mathrm{FG}$,  classifying the formal group of the universal elliptic curve. By \cite[Theorem 7.3.1]{Elliptic 2}, this formal group is balanced. This implies, just like Proposition \ref{LEFTish}, that the stack of oriented elliptic curves, defined in \cite[Definition 7.2.9]{Elliptic 2}, fits into a pullback square
$$
\begin{tikzcd}
\mathcal M^\mathrm{or}_\mathrm{Ell}\ar{d} \ar{r} &\M\ar{d}{u}\\
\mathcal M^s_\mathrm{Ell} \ar{r}& \mathcal M_\mathrm{FG}
\end{tikzcd}
$$
 in the $\i$-category of non-connective spectral stacks. Though the main idea is the same, this is not quite an instance of Proposition \ref{LEFTish}. Indeed, the non-connective  (by \cite[Proposition 7.2.10]{Elliptic 2} Deligne-Mumford) spectral stack $\mathcal M^\mathrm{or}_\mathrm{Ell}$ has as its underlying ordinary stack being the usual stack of elliptic curves $\mathcal M_\mathrm{Ell}^\heart$, and is thus not affine. Of course, the main interest in it is that, thanks to \cite[Theorem 7.3.1]{Elliptic 2}, its ring of functions
$
\mO(\mathcal M^\mathrm{or}_\mathrm{Ell})\simeq \mathrm{TMF}
$
provides an approach to the $\E$-ring of topological modular forms.
\end{enumerate}

\end{ex}

\subsection{The Adams-Novikov spectral sequence}
We finally consider the descent spectral sequence on the non-connective spectral stack $\M$, and recover the ANSS, returning to the womb of chromatic homotopy theory.

\begin{prop}\label{ANSS}
The descent spectral sequence
$$
E^{s, t}_2 =\mathrm H^s\big(\mathcal M_\mathrm{FG}^\heart;\pi_t(\sO_{\mathcal M_\mathrm{FG}^\mathrm{or}})\big)\Rightarrow \pi_{t-s}(\mathcal O(\mathcal M_{\mathrm{FG}}^\mathrm{or}))
$$
is isomorphic to the ANSS 
$$
E^{s,t}_2 = \mathrm{Ext}^{s,t}_{\pi_*(\mathrm{MU}\otimes \mathrm{MU})}(\pi_*(\mathrm{MU}), \pi_*(\mathrm{MU}))\Rightarrow \pi_{t-s}(S).
$$
\end{prop}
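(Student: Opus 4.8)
The plan is to read off both the $E_2$-page and the abutment of the descent spectral sequence from the groupoid presentation of $\M$, and then recognize the result as the usual cobar-complex model of the Adams--Novikov spectral sequence. First I would invoke Corollary \ref{Main Thm} to write $\M\simeq|\Spec(\MP^{\otimes(\bullet+1)})|$ and apply Proposition \ref{DSS} with $\sF=\sO_{\M}$. Since $\sO_{\M}$ restricts to the structure sheaf on each affine $\Spec(\MP^{\otimes(n+1)})$, whose spectrum of global sections is $\MP^{\otimes(n+1)}$ itself, the descent spectral sequence is the Bousfield--Kan spectral sequence of the cosimplicial spectrum $[n]\mapsto \MP^{\otimes(n+1)}$, the Amitsur (equivalently, cobar) complex of $\MP$. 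Its abutment is $\pi_{t-s}\big(\Tot(\MP^{\otimes(\bullet+1)})\big)=\pi_{t-s}(S^\wedge_{\MP})\simeq\pi_{t-s}(S)$ by Proposition \ref{functions are S}, matching the abutment of the ANSS.

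Next I would identify the $E_2$-page. By Corollary \ref{Underlying ordinary stack is MFG} the underlying ordinary stack of $\M$ is $\mathcal M^\heart_\mathrm{FG}$, and under Quillen's Theorem its groupoid presentation $\mathcal M^\heart_\mathrm{FG}\simeq|\Spec(\pi_0(\MP^{\otimes(\bullet+1)}))|$ corresponds to the Hopf algebroid $(\pi_0\MP,\pi_0(\MP\o\MP))\simeq(L,W)$; by Definition \ref{homotopy sheaves def} the homotopy sheaf $\pi_t(\sO_{\M})$ pulls back along $\Spec(L)\to\mathcal M^\heart_\mathrm{FG}$ to the $L$-module $\pi_t(\MP)$ with its natural $W$-comodule structure. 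Hence $\mathrm H^s(\mathcal M^\heart_\mathrm{FG};\pi_t(\sO_{\M}))$ is computed by the cobar complex of this Hopf algebroid, namely $\mathrm{Ext}^{s,t}_{\pi_*(\MP\o\MP)}(\pi_*\MP,\pi_*\MP)$ (concentrated in even $t$, since $\MP$ has homotopy only in even degrees). By \cite[Example 7.4]{Goerss on ANSS}, or directly since passing from $\MU$ to $\MP$ merely collapses the grading modulo $2$, these $\mathrm{Ext}$-groups agree with $\mathrm{Ext}^{s,t}_{\pi_*(\MU\o\MU)}(\pi_*\MU,\pi_*\MU)$, the $E_2$-page of the ANSS.

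Finally I would match the two spectral sequences degreewise. The standard construction of the Adams--Novikov spectral sequence, e.g.\ as recalled in \cite[Proposition 2.14]{MNN}, is precisely the spectral sequence of the cosimplicial spectrum $\MU^{\otimes(\bullet+1)}$ (equivalently $\MP^{\otimes(\bullet+1)}$), with the indexing described above; comparing this with the (re-)grading used in Proposition \ref{DSS} and Construction \ref{Cons of DSS} shows the two indexings coincide, so the descent spectral sequence on $\M$ is isomorphic to the ANSS from $E_2$ onward, compatibly with filtrations and abutments. The main obstacle is not conceptual but bookkeeping: one must carefully align the Bousfield--Kan indexing of the descent spectral sequence with the traditional $(s,t)$-indexing of the ANSS, and make the $\MP$-versus-$\MU$ comparison of $E_2$-pages precise. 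Both sequences are, by construction, the one attached to the cobar complex of complex bordism, so no genuine difficulty arises.
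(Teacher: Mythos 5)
Your proposal is correct and follows essentially the same route as the paper: use the presentation $\M\simeq|\Spec(\MP^{\otimes(\bullet+1)})|$ (Corollary \ref{Main Thm}) to identify the descent spectral sequence with the Bousfield--Kan spectral sequence of the $\MP$-Amitsur complex, identify the $E_2$-page via Corollary \ref{Underlying ordinary stack is MFG} and Quillen's Theorem, identify the abutment via Proposition \ref{functions are S}, and finish with the standard $\MP$-versus-$\MU$ comparison of $E_2$-pages. The only cosmetic difference is that the paper phrases the first step as the equivalence of \v{C}ech nerves $\check{\mathrm C}^\bullet(\Spec(\MP)/\M)\simeq\check{\mathrm C}^\bullet(\Spec(\MP)/\Spec(S))$, which is the same content as your appeal to Corollary \ref{Main Thm}.
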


\begin{proof}
It follows from the proof of Lemma \ref{Quillen} that fibered products of non-connective affines over $\mathcal M^\mathrm{or}_\mathrm{FG}$ is canonically equivalent to their categorical product, i.e.\ fibered products over the terminal object $\Spec(S)$. In particular,  the canonical map
\begin{equation}\label{cosimplicial spectra}
\check {\mathrm C}^\bullet (\Spec(\MP)/\mathcal M^\mathrm{or}_\mathrm{FG})\to \check {\mathrm C}^\bullet (\Spec(\MP)/\Spec(S))
\end{equation}
is an equivalence of simplicial objects.

By passing to global functions on the left-hand side of the equivalence \eqref{cosimplicial spectra}, we obtain the cosimplicial spectrum whose associated Bousfield-Kan spectral sequence is, according to the proof of Proposition \ref{DSS}, the descent spectral sequence.
Its second page is sheaf cohomology on the underlying ordinary stack of $\M$, which is $\mathcal M^\heart_\mathrm{FG}$ by Corollary \ref{Underlying ordinary stack is MFG}.

 By passing to global functions on the right-hand side of \eqref{cosimplicial spectra} though, we recover as
$$
\sO(\check {\mathrm C}^\bullet (\Spec(\MP)/\Spec(S)))\simeq \mathrm{MP}^{\otimes (\bullet+1)},
$$
the Amitsur complex (also known as the cobar complex) of $\mathrm{MP}$. That is precisely the cosimplicial spectrum whose Bousfield-Kan spectral sequence is the $\mathrm{MP}$-based Adams spectral sequence, see e.g.\ \cite[Section 2.2]{Green Book}, \cite[Section 5]{COCTALOS}, or \cite[Lecture 8]{Lurie Chromatic}.

It remains to show that the $\mathrm{MP}$-based and the $\mathrm{MU}$-based Adams spectral sequences agree. The $\mathbb E_2$-ring unit map $S\to S[\beta^{\pm 1}]$ induces a map of $\mathbb E_2$-rings $\mathrm{MU}\simeq \mathrm{MU}\o S\to \mathrm{MU}\o S[\beta^{\pm 1}]\simeq \mathrm{MP}$ induces a map on Amitsur complexes $\mathrm{MU}^{\o (\bullet +1)} \to\MP^{\o (\bullet +1)}$, which then gives rise to a map between the Adams spectral sequences. To prove that a map of spectral sequences is an equivalence, it suffices to show that this happens on the second page. That is a classical change-of-rings observation, see for instance \cite[Example 7.4]{Goerss on ANSS}, but it also follows from any other way of identifying the $E_2$-page of the Adams-Novikov (i.e.\ $\mathrm{MU}$-based Adams) spectral sequence with sheaf cohomology on the stack of formal groups $\mathcal M^\heart_\mathrm{FG}$, e.g.\ \cite[Lectures 10 \& 11]{Lurie Chromatic} or \cite[Remark 3.14]{Goerss}, since we already know the latter agree with the $E_2$-page of the $\mathrm{MP}$-based Adams spectral sequence thanks to the preceding discussion.
\end{proof}

\begin{remark}
In the proof of Proposition \ref{ANSS}, we used the fpqc cover $\Spec(\mathrm{MP})\to\M$ to obtain the descent spectral sequence. Thus it amounts to little more than a redressing of the usual connection between $\mathrm{MU}$ and formal group, stemming ultimately from Quillen's Theorem. But if we instead invoke Construction \ref{Cons of DSS}, we obtain the descent spectral sequence starting purely from the non-connective geometric stack $\M$. In particular, the complex bordism spectrum plays no distinguished role in setting up the ANSS from the perspective.
\end{remark}

\begin{remark}
A simple modification of the proof of Proposition \ref{ANSS} shows that for any spectrum $X$, the descent spectral sequence for the quasi-coherent sheaves $\sO_{\M}\otimes X$ on $\M$ gives rise to the ANSS for $X$. Using the conventions and results from Subsection \ref{LEFT}, its second page may be written as
$$
E^{s,t}_2 = \mathrm H^s(\mathcal M^\heart_\mathrm{FG}; \sF_t(X))
=
\mathrm H^s\big(\mathcal M^\heart_\mathrm{FG}; \omega_{\mathcal M^\heart_\mathrm{FG}}^{\otimes\left\lfloor \frac t2\right\rfloor}\otimes_{\sO_{\mathcal M^\heart_\mathrm{FG}}}\sF_{t(\mathrm{mod }2)}(X)\big).
$$
Specifying this back to the case of $X=S$, we find that the ANSS  may be rewritten in a way to highlight how the second page is determined purely by ordinary formal group data as
$$
E^{s,t}_2 = \mathrm H^s\big(\mathcal M^\heart_\mathrm{FG}; \omega^{\otimes t}_{\mathcal M^\heart_\mathrm{FG}}\big)\Rightarrow \pi_{2t-s}(S).
$$
\end{remark}

\section{Universal properties of periodic complex bordism}\label{Section 3}

So far, we have been using any form of periodic complex bordism $\MP$. In this subsection, we instead fix two specific $\E$-ring forms of $\MP$ that we discussed in Example \ref{The MPs}: the Thom spectrum $\mathrm{MUP}$ and the Snaith construction $\MP_\mathrm{Snaith} := (S[\mathrm{BU}])[\beta^{-1}]$. We give interpretations of the corresponding non-connective affine spectral schemes in terms of oriented formal groups. Finally we discuss a possible path towards using the ideas of this paper to prove Quillen's Theorem.

\subsection{Classical picture of coordinatized formal groups}

To motivative the discussion of universal properties of $\E$-forms of $\MP$, let us first recall how such an identification in terms of formal groups works on the level of $\pi_0$.

\begin{definition}[{\cite[Definition 5.3.5]{Elliptic 2}}]\label{coordinatized def}
Let $R$ be a commutative ring.
A  \textit{coordinatized formal group over $R$} consists of a pair $(\w{\G}, t)$ of a formal group $\w{\G}$ over $R$ and coordinate $t$ on it. Here a  \textit{coordinate on $\w{\G}$} means any  element $t\in\sO_{\w{\G}}(-e)$, whose image under the quotient map $\sO_{\w{\G}}(-e)\to\sO_{\w{\G}}/\sO_{\w{\G}}^2 =\omega_{\w{\G}}$ induces a trivialization of the dualizing line, i.e.\ an $R$-module isomorphism $R\simeq\omega_{\w{\G}}$. Coordinates on  $\w{\G}$ form a subset $\mathrm{Coord}_{\w{\G}}\subseteq \sO_{\w{\G}}(-e)$. 
\end{definition}

For any ring homomorphism $f : R\to R'$, the extension of scalars along $f$ preserves coordinates, and hence gives rise to a map $\mathrm{Coord}_{\w{\G}}\to \mathrm{Coord}_{f^*(\w{\G})}$. This defines a functor $\mathrm{Coord}(\w{\G}) :\CAlg^\heart_R\to\Set$, or equivalently, a map of ordinary stacks $\mathrm{Coord}(\w{\G})\to\Spec(R)$. From this perspective, Quillen's Theorem \cite[Theorem 5.3.10]{Elliptic 2} is the following assertion:

\begin{theorem}[Quillen]\label{Quillen's Theorem Statement}
 The commutative ring $L=\pi_0(\MP)$ corepresents the moduli of coordinatized formal groups. More precisely, for any formal group $\w{\G}$ over a commutative ring $R$, classified by a map of stacks
$\eta_{\w{\G}}:\Spec(R) \to \mathcal M_\mathrm{FG}^\heart$,
the fpqc cover $\Spec (L)\to\mathcal M_\mathrm{FG}^\heart$ participates in the pullback square of ordinary stacks
$$
\begin{tikzcd}
\mathrm{Coord}(\w{\G})\ar{d} \ar{r} &\Spec(L)\ar{d}\\
\Spec(R) \ar{r}{\eta_{\w{\G}}}& \mathcal M_\mathrm{FG}^\heart.
\end{tikzcd}
$$
\end{theorem}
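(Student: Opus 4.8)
The plan is to deduce the statement from the version of Quillen's Theorem recorded in the Introduction — the canonical isomorphism $\pi_0(\MP)\simeq L$ carrying $\MP^0(\mathbf{CP}^\infty)$ to the universal formal group law — together with a purely formal unwinding of the functor $\mathrm{Coord}(\w{\G})$ in terms of fiber products over $\mathcal M_\mathrm{FG}^\heart$. The first step is to observe that a coordinatized formal group over a commutative ring $R$ is exactly the datum of a formal group law over $R$: a coordinate $t$ on $\w{\G}$ identifies $\w{\G}$, as a pointed formal variety, with the standard one-dimensional formal hyperplane over $R$, along which the group law of $\w{\G}$ transports to a one-dimensional commutative formal group law; conversely a formal group law is precisely a group structure on the standard formal hyperplane equipped with its tautological coordinate. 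Hence the functor $\CAlg^\heart\to\Set$ sending $R$ to the set of coordinatized formal groups over $R$ coincides with the functor of formal group laws, which by the definition of the Lazard ring is corepresented by $L$. By Quillen's Theorem the canonical ring map $L\to\pi_0(\MP)$ is an isomorphism carrying the universal formal group law to $\MP^0(\mathbf{CP}^\infty)$; this already gives the first assertion of the Theorem, and is \cite[Theorem 5.3.10]{Elliptic 2}.

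Next I would identify the square. The cover $\Spec(L)\to\mathcal M_\mathrm{FG}^\heart$ is the natural transformation forgetting the coordinate, i.e.\ sending a formal group law to its underlying formal group; it is faithfully flat, as recalled in the Introduction. Fix $\w{\G}$ over $R$ with classifying map $\eta_{\w{\G}}$, and form $P:=\Spec(L)\times_{\mathcal M_\mathrm{FG}^\heart}\Spec(R)$ in ordinary stacks. I would compute $P$ pointwise: for $R'\in\CAlg_R^\heart$, the space $P(R')$ is the homotopy fiber of $\Spec(L)(R')\to\mathcal M_\mathrm{FG}^\heart(R')$ over the formal group $\w{\G}_{R'}$, which is the groupoid of pairs $(\psi,\varphi)$ consisting of a formal group law $\psi$ over $R'$ and an isomorphism of formal groups $\varphi\colon\w{\G}_\psi\xrightarrow{\ \sim\ }\w{\G}_{R'}$, where $\w{\G}_\psi$ denotes the formal group underlying $\psi$. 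Because $\mathcal M_\mathrm{FG}^\heart$ is $1$-truncated and $\Spec(L)(R')$ is a set, this homotopy fiber is automatically discrete; equivalently, a coordinatized formal group has no nontrivial automorphisms, an automorphism fixing the coordinate being the identity. So $P$ is set-valued and the fiber product taken in ordinary stacks agrees with the naive one.

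Finally I would match $P$ with $\mathrm{Coord}(\w{\G})$: transporting the tautological coordinate on $\w{\G}_\psi$ along $\varphi$ produces a coordinate on $\w{\G}_{R'}$, and conversely a coordinate $t\in\mathrm{Coord}_{\w{\G}_{R'}}$ yields an identification of $\w{\G}_{R'}$ with a standard formal hyperplane over $R'$, hence a formal group law $\psi_t$ together with an isomorphism $\varphi_t\colon\w{\G}_{\psi_t}\xrightarrow{\ \sim\ }\w{\G}_{R'}$; these two assignments are mutually inverse and compatible with base change along $R\to R'$. Thus $P\simeq\mathrm{Coord}(\w{\G})$ over $\Spec(R)$, which is the asserted pullback square. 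The only nonformal ingredient in all of this is Quillen's computation $\pi_0(\MP)\simeq L$ itself, which we assume; I expect the sole point requiring genuine care to be the identification ``formal group law plus isomorphism onto $\w{\G}_{R'}$'' $=$ ``coordinate on $\w{\G}_{R'}$'', whose essential content is that a coordinate rigidifies a formal group, so that the relevant homotopy fibers are discrete.
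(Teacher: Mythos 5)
The paper does not actually prove this statement: it records it as a known result, attributed to Quillen and cited from Lurie's \cite[Theorem 5.3.10]{Elliptic 2}, and the only in-text gloss on it is the remark immediately following, which notes (citing \cite[Remark 5.3.6]{Elliptic 2}) that a coordinatization is the same datum as an isomorphism of pointed formal schemes $\widehat{\mathbf A}^1_R\simeq\w{\G}$, i.e.\ a formal group law on the standard formal hyperplane. Your proposal is a correct and complete unwinding of the stack-theoretic pullback statement from the classical computational input $\pi_0(\MP)\simeq L$, and the three points you isolate are exactly the ones that need to be checked: (a) coordinatized formal groups are in bijection with formal group laws, so $\Spec(L)$ corepresents $\mathrm{Coord}$; (b) the 2-fiber of a map of stacks from a $0$-truncated object over a $1$-truncated one is discrete (equivalently, a coordinatized formal group is rigid), so the $\infty$-categorical pullback is computed naively; (c) the pair $(\psi,\varphi)$ in the fiber corresponds bijectively, functorially in $R'$, to the coordinate $\varphi_*(t_{\mathrm{can}})$ on $\w{\G}_{R'}$. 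All three steps are sound, and your observation about discreteness is precisely why the diagram really is a pullback of set-valued functors and not something higher. The one phrasing I would tighten is the justification of (b): it is not quite that a map from a set to a $1$-groupoid automatically has discrete homotopy fiber for truncation reasons alone, but rather that the source being a set forces every morphism in the fiber groupoid to be an identity — your alternative phrasing in terms of rigidity of coordinatized formal groups is the cleaner way to say it. Since the paper itself only cites the result, your argument could serve as the explicit proof; the nontrivial content is, as you note, entirely in Quillen's identification $\pi_0(\MP)\simeq L$ with the universal formal group law, which you correctly treat as the input.
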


\begin{remark}
The reader may be better accustomed to the claim that $\Spec(L)$ classifies formal group laws. As discussed in \cite[Remark 5.3.6]{Elliptic 2}, formal group laws are equivalent to coordinatized group laws up to equivalence. Indeed, the data of a coordinatization is equivalent to an isomorphism of pointed formal $R$-schemes $\widehat{\mathbf A}^1_R\simeq \w{\G}.$ We instead prefer the perspective of coordinatized formal groups given in Definition \ref{coordinatized def}, because it matches better with the characterizations of $\Spec(\MP)$ in Theorem \ref{What MUP is} and Proposition \ref{What Snaith is}.
\end{remark}

\begin{remark}
The notion a coordinatized formal group that we are using is the same as the one discussed in \cite[Section 2.3]{Goerss}. The prestack of coordinatized formal groups $\mathcal M_\mathrm{coord}$ from \cite[Definition 2.16]{Goerss}, however, disagrees with our meaning of the moduli of coordinatized formal groups. Indeed, while the objects of the groupoid $\mathcal M_\mathrm{coord}(R)$ are coordinatized formal groups, the morphisms are all isomorphisms of formal groups, not merely those which respect the chosen coordinatization.
\end{remark}

\subsection{Invertible and complex-exponentiable quasi-coherent sheaves}

To consider an analogue of the above story in spectral algebraic geometry, we must find an analogue of trivializing a line bundle. That necessitates a brief digression on invertible sheaves in spectral algebraic geometry.
In what follows, let $X$ be a non-connective spectral stack.

\begin{definition}[{\cite[Definition 2.9.5.1]{SAG}}]\label{invertible sheaves}
A quasi-coherent sheaf $\sL$ on $X$ is said to be \textit{invertible} if it is invertible in the symmetric monoidal $\i$-category $\QCoh(X)$. That is to say, if there exists a quasi-coherent sheaf $\mathscr L^{-1}$ such that $\mathscr L\otimes_{\sO_X}\mathscr L^{-1}\simeq \sO_X$. Invertible sheaves form a full subcategory $\mathcal P\mathrm{ic}^\dagger(X)\subseteq \QCoh(X)^\simeq$.
\end{definition}

\begin{remark}\label{line bundles in connective world}
When $X$ is connective, there exists another closely related notion: a \textit{line bundle on $X$} is such an invertible sheaf on $X$ for which both $\sL$ and $\sL^{-1}$ belong to the full subcategory $\QCoh(X)^\mathrm{cn}\subseteq\QCoh(X)$ of connective quasi-coherent sheaves. At least when $X$ is a spectral Deligne-Mumford stack, this is shown in \cite[Proposition 2.9.4.2]{SAG} to be equivalent to the quasi-coherent sheaf $\sL$ being locally free of rank $1$. In particular, if we denote by ${\mathcal P\mathrm{ic}}(X)\subseteq\mathcal P\mathrm{ic}^\dagger(X)$ the subspace of line bundles, then $\mathcal P\mathrm{ic}(X)$ recovers the usual Picard groupoid for an ordinary scheme $X$. The space $\mathcal P\mathrm{ic}^\dagger(X)$ is always larger however, as it contains invertible sheaves such as $\Sigma^n(\sO_X)$ for any $n\in \Z$.
\end{remark}

\begin{remark}
 Recall that the adjunction $\mS\rightleftarrows \Cat : (-)^\simeq$, between spaces viewed as $\infty$-groupoids and $\i$-categories, is Cartesian symmetric monoidal. It therefore induces an adjunction $\CMon(\mS)\rightleftarrows \CMon(\Cat) : (-)^\simeq$ between $\E$-spaces and symmetric monoidal $\i$-categories. In particular, the relative smash product symmetric monoidal structure on $\QCoh(X)$ equips the maximal underlying subspace $\QCoh(X)^\simeq$ with the structure of an $\E$-space. The subspace $\mathcal P\mathrm{ic}^\dagger(X)$ inherits a group-like $\E$-structure.
 \end{remark}

Let $\mathrm{Vect}^\simeq_{\mathbf C}$ denote the the topological category of finite-dimensional complex vector spaces and linear isomorphisms, viewed as an $\i$-category through an implicit application of the nerve construction of \cite[Definition 1.1.5.5]{HTT}. This is an $\i$-groupoid, explicitly $\Vect^\simeq_{\mathbf C}\simeq \coprod_{n\ge 0}\mathrm{BU}(n)$, and direct sum of vector spaces makes it into an $\E$-space.

\begin{definition}
Let $X$ be a non-connective spectral stack. A \textit{complex-exponentiable quasi-coherent sheaf on $X$} is a symmetric monoidal functor $ \mathrm{Vect}^\simeq_{\mathbf C}\to \QCoh(X)$. They form an $\i$-category $\QCoh^{\mathbf C}(X)$.
\end{definition}

\begin{remark}
Let  $\sF^{\otimes} : \mathrm{Vect}^\simeq_{\mathbf C}\to\QCoh(X)$ be a complex-exponentiable quasi-coherent sheaf. We will view the functor value $\sF=\sF^\o(\mathbf C)$ as the \textit{underlying quasi-coherent sheaf} of $\sF^{\otimes}$. By symmetric monoidality, this determines the functor object-wise as $\sF^{\otimes}(\mathbf C^n)\simeq \sF^{\otimes n}$. The complex-exponentiable structure amounts to specifying appropriately coherently compatible system of a $\mathrm U(n)$-action on $\sF^{\otimes n}$ for every $n\ge 0$. Said differently, a complex-exponentiable structure on an underlying quasi-coherent sheaf $\sF$ consists of a functorial system of powers $\sF^{\otimes V}\simeq \sF^{\otimes \dim_{\mathbf C}(V)}$ for finite dimensional complex vector spaces $V$.
\end{remark}

\begin{remark}
If the underlying quasi-coherent sheaf $\sF$ of a complex-exponentiable quasi-coherent sheaf is invertible, then there is a canonical symmetric monoidal factorization $\sF^{\otimes}:\Vect^\simeq_{\mathbf C}\to \mathcal P\mathrm{ic}^\dagger(X)\subseteq \QCoh(X)$. Because the $\E$-space $\mathcal P\mathrm{ic}^\dagger(X)$ is group-like, this further factors through the group completion $(\Vect_{\mathbf C}^\simeq)^\mathrm{gp}\simeq\Omega^\i(\mathrm{ku})$, see \cite[Section 6.5]{Elliptic 2}. That is to say, a complex-exponentiable invertible sheaf is equivalent to an $\E$-space map $\Omega^\i(\mathrm{ku})\to\mathcal P\mathrm{ic}^\dagger(X)$, or yet equivalently a map of connective spectra $\mathrm{ku}\to\mathfrak{pic}^\dagger(X)$.
\end{remark}

\subsection{Universal property of the Thom spectrum $\mathrm{MUP}$}
The notion of a complex-exponentiable quasi-coherent sheaf, introduced in the last section, will enable us to formulate the analogue of Theorem \ref{Quillen's Theorem Statement} for the Thom spectrum $\E$-ring $\mathrm{MUP}$.

\begin{prop}\label{Dualizing line has complex exponentials}
Let $\w{\G}$ be an oriented formal group over an $\E$-ring $A$. The dualizing line $\omega_{\w{\G}}$ admits a canonical enhancement to a complex-exponentiable quasi-coherent sheaf $\omega_{\w{\G}}^\otimes$ on $\Spec(A)$.
\end{prop}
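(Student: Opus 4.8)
The plan is to present $\omega_{\w{\G}}^\otimes$ as a composite of a handful of manifestly symmetric monoidal functors, isolating the one substantive point to a concrete $\mathbf C^\times$-equivariant description of the dualizing line. First I would invoke Proposition~\ref{Paying proper credit} to reduce to the universal situation: an oriented formal group over $A$ is canonically the Quillen formal group of a complex-periodic $\E$-ring $A$, the identification being canonical because by Remark~\ref{Why contractible} the Quillen formal group of such an $A$ carries no automorphisms as an oriented formal group, so the space of its orientations is contractible. Since $\QCoh(\Spec(A))\simeq\Mod_A$, it then suffices to construct, naturally in the complex-periodic $\E$-ring $A$, a symmetric monoidal functor $\mathrm{Vect}^\simeq_{\mathbf C}\to\Mod_A$ whose value at $\mathbf C$ recovers $\omega_{\w{\G}}$ compatibly with the orientation $\omega_{\w{\G}}\simeq\Sigma^{-2}(A)$.

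The crucial input is that, as an $A$-module equipped with a $\mathbf C^\times$-action, $\omega_{\w{\G}}$ is identified with $A\otimes_S(S^{\mathbf C})^\vee$, the $A$-linear dual of the suspension spectrum of the one-point compactification $S^{\mathbf C}=\mathbf C\cup\{\infty\}$ of the standard $1$-dimensional complex representation, which is a Spanier--Whitehead-dualizable spectrum. Indeed, by the proof of Proposition~\ref{Paying proper credit} there is a canonical equivalence $\omega_{\w{\G}}\simeq C_\mathrm{red}^*(S^2;A)$, and under the $\mathbf C^\times$-equivariant homeomorphism $\mathbf{CP}^1\simeq S^{\mathbf C}$ identifying $\mathbf{CP}^1$ with the Thom space of the standard representation, together with $\mathbf{CP}^1\simeq S^2$, this becomes $C_\mathrm{red}^*(S^{\mathbf C};A)\simeq A\otimes_S(S^{\mathbf C})^\vee$. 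Tracing through the construction of the dualizing line in \cite[Section~4.2]{Elliptic 2} confirms that this identification is natural in $A$ and $\mathbf C^\times$-equivariant, and that the canonical orientation on $\w{\G}$ is precisely the induced equivalence $\omega_{\w{\G}}\simeq\Sigma^{-2}(A)$.

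Granting this, I would set $\omega_{\w{\G}}^\otimes$ to be the composite
$$
\mathrm{Vect}^\simeq_{\mathbf C}\xrightarrow{J}\mathrm{Pic}(\Sp)\xrightarrow{(-)^\vee}\mathrm{Pic}(\Sp)\xrightarrow{A\otimes_S(-)}\mathcal P\mathrm{ic}^\dagger(\Spec(A))\subseteq\QCoh(\Spec(A)),
$$
where $J$ is the $J$-homomorphism $V\mapsto S^V$ of Example~\ref{The MPs}, which is symmetric monoidal since $S^{V\oplus W}\simeq S^V\wedge S^W$ and $S^0\simeq S$; the middle arrow is Spanier--Whitehead duality, symmetric monoidal as it is the inversion map of the grouplike $\E$-space $\mathrm{Pic}(\Sp)$; and the last arrow is extension of scalars, symmetric monoidal by construction of $\QCoh$. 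Evaluating at $\mathbf C$ and using the previous paragraph gives $\omega_{\w{\G}}^\otimes(\mathbf C)\simeq A\otimes_S(S^{\mathbf C})^\vee\simeq\omega_{\w{\G}}$, so $\omega_{\w{\G}}$ is the underlying quasi-coherent sheaf of $\omega_{\w{\G}}^\otimes$; since every step is canonical and natural in $A$, so is the resulting enhancement. (As $\omega_{\w{\G}}$ is invertible, this functor automatically lands in $\mathcal P\mathrm{ic}^\dagger(\Spec(A))$ and even factors through the group completion $(\mathrm{Vect}^\simeq_{\mathbf C})^\mathrm{gp}$, in accordance with the discussion following the statement.)

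The hard part will be the identification in the second paragraph: matching the dualizing line $\omega_{\w{\G}}$, which a priori is produced by the abstract cotangent-fiber formalism for smooth coalgebras, with the explicit spectrum $A\otimes_S(S^{\mathbf C})^\vee$ \emph{together with its $\mathbf C^\times$-action}. Everything else in the argument is formal bookkeeping with symmetric monoidal functors.
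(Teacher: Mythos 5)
Your composite functor
$$
\mathrm{Vect}^\simeq_{\mathbf C}\xrightarrow{J}\mathrm{Pic}(\Sp)\xrightarrow{(-)^\vee}\mathrm{Pic}(\Sp)\xrightarrow{A\otimes_S(-)}\mathcal P\mathrm{ic}^\dagger(\Spec(A))
$$
is precisely the one the paper uses (the paper writes $(-)^{-1}$ for the multiplicative inverse in the grouplike $\E$-space $\mathcal P\mathrm{ic}^\dagger(S)$, which agrees with Spanier--Whitehead duality on invertible spectra, so there is no real difference). The argument is correct. However, you have made the verification harder than it needs to be, and the part you flag as ``the hard part'' is a red herring.

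You aim to produce a $\mathbf C^\times$-equivariant identification $\omega_{\w{\G}}\simeq A\otimes_S(S^{\mathbf C})^\vee$, matching a pre-existing $\mathbf C^\times$-action on $\omega_{\w{\G}}$ against the one on the Thom spectrum, and you correctly anticipate that tracing this through the cotangent-fiber formalism would be delicate. But nothing of the sort is required. A complex-exponentiable sheaf is a symmetric monoidal functor $\mathrm{Vect}^\simeq_{\mathbf C}\to\QCoh(\Spec A)$, and its ``underlying quasi-coherent sheaf'' is simply the value at $\mathbf C$ as an object of $\QCoh(\Spec A)$ --- no equivariance is being matched, because the $\mathrm U(1)$-action is part of the structure being \emph{bestowed}, not a pre-existing structure to be reproduced. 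The paper therefore argues in two cleanly separated steps: (1) the composite you wrote down is manifestly a complex-exponentiable structure on $\sO_X\otimes(\Sigma^2 S)^{-1}\simeq\Sigma^{-2}(\sO_X)$, with no mention of $\w{\G}$; (2) the datum of an orientation on $\w{\G}$ is, by Definition~\ref{second def of or}, exactly an equivalence $\omega_{\w{\G}}\simeq\Sigma^{-2}(A)$ in $\Mod_A$, and transporting the complex-exponentiable structure along this single equivalence finishes the proof. Your reduction to the Quillen formal group via Proposition~\ref{Paying proper credit} is likewise unnecessary for the same reason: the orientation datum alone supplies the equivalence $\omega_{\w{\G}}\simeq\Sigma^{-2}(A)$, and that is all the construction consumes.
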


\begin{proof}
 Let us write $X\simeq \Spec(A)$. The functor $\sO_X\otimes -:\Sp\to\QCoh(X)$ from Example \ref{Standard notation} is symmetric monoidal, and as such induces an (also symmetric monoidal) functor $\sO_X\otimes -:\mathcal P\mathrm{ic}^\dagger(S)\to \mathcal P\mathrm{ic}^\dagger(X)$ between invertible objects.
 Consider the $J$-homomorphism, viewed as map of $\E$-spaces $J:\mathrm{Vect}^\simeq_{\mathbf C}\to\mathcal P\mathrm{ic}^\dagger(S)$, given by $V\mapsto S^V =\Sigma^V(S)$. Combining these two, and the multiplicative inverse self-equivalence $(-)^{-1}: \mathcal P\mathrm{ic}^\dagger(S)\to\mathcal P\mathrm{ic}^\dagger(S)$, whose existence we owe to the fact that $\mathcal{P}\mathrm{ic}^\dagger(S)$ is a grouplike $\E$-space, we obtain a symmetric monoidal composite functor
 $$
\mathrm{Vect}^\simeq_{\mathbf C}\xrightarrow{J} \mathcal P\mathrm{ic}^\dagger(S)\xrightarrow{(-)^{-1}} \mathcal P\mathrm{ic}^\dagger(S)\xrightarrow{\sO_X\o -}  \mathcal P\mathrm{ic}^\dagger(X)\subseteq\QCoh(X).
$$
This exhibits an complex-exponentiable structure on the quasi-coherent sheaf $\Sigma^{-2}(\sO_X)$ on $X$. Because $\w{\G}$ is an oriented formal group, this is equivalent to the dualizing line $\omega_{\w{\G}}$.
\end{proof}

\begin{remark}
The proof of Proposition \ref{Dualizing line has complex exponentials} shows that the quasi-coherent sheaf $\Sigma^{2}(\sO_X)$ always admits a complex-exponentiable structure for any non-connective spectral stack $X$. Indeed, in the universal case $X=\Spec (S)$, the $J$-homomorphism may be viewed as exhibiting $\Sigma^2(S)$ to be complex-exponentiable.
\end{remark}

\begin{cons}\label{Constriv}
For any oriented formal group $\w{\G}$ over an $\E$-ring $A$, there are thanks to Proposition \ref{Dualizing line has complex exponentials} two complex-exponentiable quasi-coherent sheaves on $\Spec(A)$: the dualizing line $\omega^\o_{\G}$ and the trivial functor $\sO_{\Spec(A)}^\o$. Given any $\E$-ring map $f : A\to B$, the base-change $f^*\w{\G}$ is a formal group on $B$, and all the other structures in sight respect base-change. Let therefore
$
\mathrm{Triv}^\o_{\mathbf C}(\omega_{\w{\G}})\in\Fun(\CAlg_A, \mS)
$
be the presheaf given by
$$
B\mapsto \Map_{\QCoh^{\mathbf C}(\Spec(B))}^\simeq \big(\sO_{\Spec(B)}^\otimes, \,\omega_{f^*\w{\G}}^{\otimes}\big).
$$
That is to say, $\mathrm{Triv}^\o_{\mathbf C}(\omega_{\w{\G}})$ classifies trivializations of the dualizing line $\omega_{\w{\G}}$ as a complex-exponentiable quasi-coherent sheaf.
\end{cons}

\begin{theorem}\label{What MUP is}
Let $\w{\G}$ be an oriented formal group over an $\E$-ring $A$, classified by a map  of non-connective spectral stacks  $\eta_{\w{\G}}:\Spec(A)\to\M$. The canonical morphism $\Spec(\mathrm{MUP})\to\M$, stemming from complex-orientability of the Thom spectrum $\mathrm{MUP}$, induces a pullback square of non-connective stacks
$$
\begin{tikzcd}
\mathrm{Triv}^\o_{\mathbf C}(\omega_{\w{\G}})
\ar{d} \ar{r} &\Spec(\mathrm{MUP})\ar{d}\\
\Spec(A) \ar{r}{\eta_{\w{\G}}}& \M.
\end{tikzcd}
$$
\end{theorem}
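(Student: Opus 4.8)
The plan is to check the claimed pullback square after evaluation on $\E$-rings, exploiting the fact that the geometry of $\M$ is almost entirely ``property rather than structure''. First I would note that both presheaves $\mathrm{Triv}^\o_{\mathbf C}(\omega_{\w{\G}})$ and $\Spec(A)\times_{\M}\Spec(\mathrm{MUP})$ live over $\Spec(A)$ and are supported on complex-periodic $\E$-rings: any map $g\colon A\to B$ pulls $\w{\G}$ back to an oriented formal group $g^{*}\w{\G}$ over $B$, so $B$ is complex-periodic by Proposition~\ref{Paying proper credit}, and dually a $B$-point of either corner entails one of $\M$. For such $B$ one has $\M(B)\simeq\ast$ (Proposition~\ref{Paying proper credit}), so the fibre product over $\M$ degenerates to a product and $(\Spec(A)\times_{\M}\Spec(\mathrm{MUP}))(B)\simeq\Map_{\CAlg}(A,B)\times\Map_{\CAlg}(\mathrm{MUP},B)$. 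Thus it will suffice to show that, for every such $B$ and every $g\colon A\to B$, the fibre of $\mathrm{Triv}^\o_{\mathbf C}(\omega_{\w{\G}})(B)\to\Map_{\CAlg}(A,B)$ over $g$ --- namely the space of trivializations of $\omega^{\otimes}_{g^{*}\w{\G}}$ in $\QCoh^{\mathbf C}(\Spec(B))$ --- is naturally equivalent to $\Map_{\CAlg}(\mathrm{MUP},B)$.

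Next I would unwind the left side via the construction underlying Proposition~\ref{Dualizing line has complex exponentials}. Since $g^{*}\colon\QCoh(\Spec(A))\to\QCoh(\Spec(B))$ is symmetric monoidal and carries $\sO_{\Spec(A)}\otimes M$ to $\sO_{\Spec(B)}\otimes M$, the pullback $\omega^{\otimes}_{g^{*}\w{\G}}$ is the complex-exponentiable invertible sheaf $V\mapsto\sO_{\Spec(B)}\otimes S^{-V}$, in particular independent of $g$. As its underlying sheaf is invertible, a trivialization of it in $\QCoh^{\mathbf C}(\Spec(B))$ --- an equivalence with the unit object $\sO^{\otimes}_{\Spec(B)}$ --- factors through $\mathcal P\mathrm{ic}^{\dagger}(\Spec(B))$, and by the spectrum-level description of complex-exponentiable invertible sheaves recalled above it amounts to a null-homotopy of the composite of connective spectra $\mathrm{ku}\xrightarrow{\,J\,}\mathfrak{pic}^{\dagger}(S)\to\mathfrak{pic}^{\dagger}(\Spec(B))$ (the self-equivalence $(-)^{-1}$ merely negates this map, leaving its space of null-homotopies unchanged).

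Finally I would identify the right side in the same terms: by construction (Example~\ref{The MPs}) $\mathrm{MUP}$ is the Thom $\E$-ring of $J$, so the universal property of Thom $\E$-rings, in the rigidified form of Ando--Blumberg--Gepner--Hopkins--Rezk, identifies $\Map_{\CAlg}(\mathrm{MUP},B)$ with the space of null-homotopies of that very composite $\mathrm{ku}\xrightarrow{\,J\,}\mathfrak{pic}^{\dagger}(S)\to\mathfrak{pic}^{\dagger}(\Spec(B))$. Comparing the two descriptions gives the required equivalence; it is manifestly a map over $\Map_{\CAlg}(A,B)$ via the structure map of Construction~\ref{Constriv}, and applying the universal property in the reverse direction to the underlying trivialization produces the accompanying map to $\Spec(\mathrm{MUP})$. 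Commutativity of the resulting square is automatic, since the two induced $B$-points of $\M$ are both the (automorphism-free) oriented Quillen formal group over $B$ and $\M(B)$ is contractible. Assembling these maps yields the equivalence $\mathrm{Triv}^{\o}_{\mathbf C}(\omega_{\w{\G}})\xrightarrow{\ \sim\ }\Spec(A)\times_{\M}\Spec(\mathrm{MUP})$, which is the asserted pullback square.

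The main obstacle is the dictionary drawn in the middle step: reconciling the paper's language of complex-exponentiable quasi-coherent sheaves on $\Spec(B)$, and of trivializations therein, with the classical orientation-theoretic universal property of the Thom spectrum $\mathrm{MUP}$. Once both spaces have been recognized as null-homotopies of the same map $\mathrm{ku}\to\mathfrak{pic}^{\dagger}(\Spec(B))$ of connective spectra, everything else --- naturality, the factorization over $\Spec(A)$, and commutativity of the square --- is formal, the last point being essentially free because $\M$ is contractible over complex-periodic $\E$-rings.
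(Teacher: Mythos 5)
Your argument is correct and is in substance the same as the paper's: both reduce the pullback statement to describing the functor of points of $\Spec(\mathrm{MUP})$ over complex-periodic $\E$-rings (using that $\M(B)$ is either a point or empty), invoke the Ando--Blumberg--Gepner--Hopkins--Rezk universal property of Thom $\E$-rings, and use the group-completion equivalence between symmetric monoidal maps out of $\mathrm{Vect}^\simeq_{\mathbf C}$ into a Picard groupoid and maps of connective spectra out of $\mathrm{ku}$. The only difference is one of packaging: you dualize to the $\mathrm{ku}\to\mathfrak{pic}^\dagger(\Spec(B))$ and null-homotopy language at the outset, whereas the paper runs the identification inside the $\i$-categories of symmetric monoidal functors $\Fun^\otimes((\Vect^\simeq_{\mathbf C})^\mathrm{gp},\Mod_A)$, makes the $(-)^{-1}$ pre-composition step explicit, and only passes to $\mathcal P\mathrm{ic}^\dagger$ and group completion near the end.
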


\begin{proof}
There is an essentially unique map $\Spec(A)\to\M$ if and only if the $\E$-ring $A$ is complex-orientable, therefore the pullback statement will follow immediately from a description of the functor of points $\Spec(\mathrm{MUP}):\CAlg\to\mS$. That is to say, it suffices to verify that the canonical maps induce the homotopy pullback equivalence
$$ 
\Spec(\mathrm{MUP})(A)\times_{\M (A)}\{\w{\G}\} \simeq \Map_{\QCoh^{\mathbf C}(\Spec(A))}^\simeq (\sO_{\Spec(A)}^\otimes, \,\omega_{\w{\G}}^{\otimes}).
$$
in the $\i$-category of spaces.

By definition, the $\E$-ring $\mathrm{MUP}$ is defined as the Thom spectrum of the symmetric monoidal functor
$J:(\mathrm{Vect}^\simeq_{\mathbf C})^\mathrm{gp}\to\Sp.$ By the $\i$-categorical perspective on Thom spectra, as developed in \cite{ABGHR} and \cite{ABG}, an $\E$-ring map $\mathrm{MUP}\to A$ is therefore equivalent to an equivalence on  the $\i$-category of symmetric monoidal functors $\mathrm{Fun}^\o ((\Vect_{\mathbf C}^\simeq)^\mathrm{gp}, \Mod_A)$ between the composite
$$
(\Vect^\simeq_{\mathbf C})^\mathrm{gp}\xrightarrow{J}\Sp\xrightarrow{A\otimes -}\Mod_A
$$
and the corresponding constant functor with value $A$.

Note that, for instance on the account of the the idempotence of the $\E$-space map $(-)^{-1} : (\Vect^\simeq_{\mathbf C})^\mathrm{gp}\to (\Vect^\simeq_{\mathbf C})^\mathrm{gp}$, pre-composition with $(-)^{-1}$ induces a self-equivalence on  the $\i$-category $\mathrm{Fun}^\o ((\Vect_{\mathbf C}^\simeq)^\mathrm{gp}, \Mod_A)$. The constant functor with value $A$ is invariant under this self-equivalence, allowing us to conclude that an $\E$-ring map $\mathrm{MUP}\to A$ is also equivalent to the data of an equivalence in  $\mathrm{Fun}^\o ((\Vect_{\mathbf C}^\simeq)^\mathrm{gp}, \Mod_A)$ between
$$
(\Vect^\simeq_{\mathbf C})^\mathrm{gp}\xrightarrow{(-)^{-1}}(\Vect^\simeq_{\mathbf C})^\mathrm{gp}\xrightarrow{J}\Sp\xrightarrow{A\otimes -}\Mod_A
$$
and the constant functor with value $A$. Since $J$ is a map of group-like $\E$-spaces, there is a canonical commutative square of $\E$-spaces
$$
\begin{tikzcd}
(\Vect^\simeq_{\mathbf C})^\mathrm{gp} \ar{r}{(-)^{-1}} \ar{d}{J} & (\Vect^\simeq_{\mathbf C})^\mathrm{gp}\ar{d}{J}\\
\Sp \ar{r}{(-)^{-1}} & \Sp,
\end{tikzcd}
$$
which we may use to re-write the first of the two functors in question as
$$
(\Vect^\simeq_{\mathbf C})^\mathrm{gp}\xrightarrow{J}\Sp\xrightarrow{(-)^{-1}}\Sp\xrightarrow{A\otimes -}\Mod_A.
$$

Both of the symmetric monoidal functors in question take values in the full symmetric monoidal subcategory $\mathcal P\mathrm{ic}^\dagger(A)\subseteq \Mod_A$. It is therefore equivalent to look for an equivalence between the two functors inside the full subcategory of symmetric monoidal functors $(\Vect_{\mathbf C}^\simeq)^\mathrm{gp}\to\mathcal P\mathrm{ic}^\dagger(A)$. The universal property of group completion garners a homotopy equivalence
$$
\Map_{\CMon^\mathrm{gp}(\mS)}((\Vect^\simeq_{\mathbf C})^\mathrm{gp}, \mathcal P\mathrm{ic}^\dagger(A))\simeq 
\Map_{\CMon(\mS)}(\Vect^\simeq_{\mathbf C}, \mathcal P\mathrm{ic}^\dagger(A)),
$$
and the right-hand side once again embeds fully faithfully into $\mathrm{Fun}^\o (\Vect^\simeq_{\mathbf C}, \Mod_A)$. In conclusion, a map of $\E$-rings $\mathrm{MUP}\to A$ equivalently corresponds to the data of  an equivalence in the $\i$-category $\mathrm{Fun}^\o (\Vect^\simeq_{\mathbf C}, \Mod_A)\simeq \QCoh^{\mathbf C}(\Spec(A))$ between the composite functor
$$
\Vect^\simeq_{\mathbf C}\xrightarrow{J}\Sp\xrightarrow{(-)^{-1}}\Sp\xrightarrow{A\otimes -}\Mod_A
$$
and the constant functor with value $A$. We may recognize the first of these functors as exhibiting the complex-exponentiable structure on the dualizing line $\omega_{\w{\G}}$ by the proof of Proposition \ref{Dualizing line has complex exponentials}. The constant symmetric monoidal functor with value $A$ similarly encodes the complex-exponentiable structure on the trivial bundle $\sO_{\Spec(A)}^{\otimes}$, leading to the conclusion of the Theorem.
\end{proof}

\begin{remark}\label{What is MUP _really though_?}
The conclusion of Theorem \ref{What MUP is} may be expressed as the assertion that the non-connective spectral scheme $\Spec(\mathrm{MUP})$ parametrizes the data of an oriented formal group $\w{\G}$, together with a trivialization of the dualizing line $\omega_{\w{\G}}$ as a complex-exponentiable quasi-coherent sheaf. Informally, an $\E$-ring map $\mathrm{MUP}\to A$ amounts to specifying an oriented formal group $\w{\G}$ over $A$, and a system of equivalences $\omega_{\w{\G}}^{\otimes V}\simeq A$ in the $\i$-category $\Mod_A,$ equivariant and symmetric monoidal in $V\in\Vect^\simeq_{\mathbf C}$. Yet more explicitly, the trivialization data consists of an $\mathrm U(n)$-equivariant $A$-module equivalence $\theta_n : \omega_{\w{\G}}^{\otimes n}\simeq A$ for every $n\ge 0$, satisfying $\theta_n\otimes \theta_m\simeq \theta_{n+m}$ and $\theta_0\simeq \mathrm{id}_A$.
\end{remark}

\begin{remark}
In light of the proof of Theorem \ref{What MUP is}, the final informal description from Remark \ref{What is MUP _really though_?} may be upgraded, using the homotopy equivalence
\begin{equation}\label{space level splitting}
(\Vect^\simeq_{\mathbf C})^\mathrm{gp}\simeq \mathbf Z\times \mathrm{BU}.
\end{equation}
The trivialization data may therefore be extended to $\mathrm U$-equivariant $A$-module equivalences $\theta_n : \omega^{\o n}_{\w{\G}}\simeq A$ for all $n\in\Z$, satisfying $\theta_n\otimes \theta_m\simeq \theta_{n+m}$ and $\theta_0\simeq \mathrm{id}_A$ as before. But since the splitting \eqref{space level splitting} does not hold on the level of $\E$-spaces, making the compatibility and coherence precise is less straightforward in this formulation than that of  Remark \ref{What is MUP _really though_?}.
\end{remark}

\subsection{Connection with coordinatized formal groups}\label{Subsection 3.4. on coord}
We would like to relate the structure $\mathrm{triv}_{\mathbf C}^\o(\w{\G})$, highlighted in Theorem \ref{What MUP is}, with the notion of coordinatized formal groups from Definition \ref{coordinatized def}. First we must extend it to the spectral setting.

\begin{definition}\label{space of coordinates}
\textit{The space of coordinates} on a formal group $\w{\G}$ over an $\E$-ring $A$ is
$$\mathrm{Coord}_{\w{\G}}
\simeq
\Omega^\infty( \sO_{\w{\G}}(-e) )\times_{\Omega^\infty(\omega_{\w{\G}})} \Map_{\Mod_A}^\simeq(A,\, \omega_{\w{\G}}).
$$
\end{definition}

\begin{remark}
Equivalently, $\mathrm{Coord}_{\w{\G}}$ is the union of those path-connected components of the space $\Omega^\infty( \sO_{\w{\G}}(-e) ),$ which correspond to coordinates on the underlying formal group $\w{\G}{}^0$ over $\pi_0(A)$. That is because $\omega_{\w{\G}}$ is a flat $A$-module, hence an $A$-module map $A\to \omega_{\w{\G}}$ is an equivalence if and only if it induces an equivalence on $\pi_0$, where it gives rise to a $\pi_0(A)$-linear map $\pi_0(A)\to \omega_{\w{\G}{}^0}$. As consequence, the space $\mathrm{Coord}_{\w{\G}}$ is discrete whenever the base ring $A$ is, recovering its meaning from Definition \ref{coordinatized def}. 
\end{remark}

Consider the subcategory $\Vect_{\mathbf C}^{\dim = 1}\subseteq\Vect_{\mathbf C}^\simeq$ spanned by 1-dimensional complex vector spaces, and restriction of previously-discussed functors to this subcategory.

\begin{prop}\label{higher coordinatization}
Let $\w{\G}$ be an oriented formal group  over an $\E$-ring $A.$ There is a canonical homotopy equivalence
$$
\mathrm{Coord}_{\w{\G}} \simeq \Map^\simeq_{\Fun(\Vect_{\mathbf C}^{\dim =1}, \Mod_A)}(A^\otimes,\, \omega_{\w{\G}}^\o).
$$
\end{prop}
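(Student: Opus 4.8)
The plan is to reduce the whole statement to a limit computation over $\mathbf{CP}^\infty$. Since $\w{\G}$ is oriented, Proposition \ref{Paying proper credit} lets me assume that $\w{\G}$ is the Quillen formal group of the complex periodic $\E$-ring $A$, so that its underlying smooth coalgebra is $C_*(\mathbf{CP}^\infty;A)$ and, by $A$-linear duality of that coalgebra, the augmentation ideal of functions is $\sO_{\w{\G}}(-e)\simeq F(\Sigma^\infty\mathbf{CP}^\infty,A)$ (I write $F$ for the function spectrum and $F_A$ for the internal hom of $\Mod_A$), the quotient map $\sO_{\w{\G}}(-e)\to\omega_{\w{\G}}$ corresponding to $F(\iota,A)\colon F(\Sigma^\infty\mathbf{CP}^\infty,A)\to F(\Sigma^\infty S^2,A)\simeq\Sigma^{-2}A$ for the bottom-cell inclusion $\iota\colon\Sigma^\infty S^2=\Sigma^\infty\mathbf{CP}^1\hookrightarrow\Sigma^\infty\mathbf{CP}^\infty$. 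Write $X=\Spec(A)$.

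First I would unwind the right-hand side. Since $\Vect_{\mathbf C}^\simeq\simeq\coprod_{n\ge 0}\mathrm{BU}(n)$, the $\i$-groupoid $\Vect_{\mathbf C}^{\dim=1}$ is $\mathrm{BU}(1)\simeq\mathbf{CP}^\infty$, so $\Fun(\Vect_{\mathbf C}^{\dim=1},\Mod_A)$ is the $\i$-category of local systems of $A$-modules on $\mathbf{CP}^\infty$; under this identification $A^\otimes$ restricts to the constant local system $\underline A$, and by the proof of Proposition \ref{Dualizing line has complex exponentials} the complex-exponentiable sheaf $\omega_{\w{\G}}^\otimes$ restricts to $\mathcal L\colon\ell\mapsto\sO_X\otimes(S^\ell)^{-1}$, the $\sO_X$-twist of the inverse $J$-homomorphism on lines. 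Because the constant-diagram functor is left adjoint to $\varprojlim$, one gets $\Map_{\Fun(\mathbf{CP}^\infty,\Mod_A)}(\underline A,\mathcal L)\simeq\Omega^\infty\bigl(\varprojlim_{\mathbf{CP}^\infty}\mathcal L\bigr)$, and since $\mathbf{CP}^\infty$ is connected a natural transformation $\underline A\to\mathcal L$ is an equivalence exactly when its value at the object $\mathbf C$ is; hence $\Map^\simeq_{\Fun(\mathbf{CP}^\infty,\Mod_A)}(\underline A,\mathcal L)$ is the subspace of those components lying over $\Map^\simeq_{\Mod_A}(A,\mathcal L(\mathbf C))$ along the basepoint-restriction $\varprojlim_{\mathbf{CP}^\infty}\mathcal L\to\mathcal L(\mathbf C)$. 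Comparing with Definition \ref{space of coordinates}, the proposition reduces to identifying the $A$-module $\varprojlim_{\mathbf{CP}^\infty}\mathcal L$ with $\sO_{\w{\G}}(-e)$ in a way carrying the basepoint-restriction to the quotient map $\sO_{\w{\G}}(-e)\to\omega_{\w{\G}}$.

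That last identification is the key step, and the main obstacle; I would carry it out by duality. Each $\sO_X\otimes S^\ell$ is invertible, so $\mathcal L(\ell)\simeq F_A(\sO_X\otimes S^\ell,\sO_X)$, and as $F_A(-,\sO_X)$ carries colimits to limits and $\mathbf{CP}^\infty$ is an $\i$-groupoid, $\varprojlim_{\mathbf{CP}^\infty}\mathcal L\simeq F_A\bigl(\varinjlim_{\mathbf{CP}^\infty}(\sO_X\otimes S^\bullet),\sO_X\bigr)$. The colimit here is $\sO_X\otimes\varinjlim_{\mathrm{BU}(1)}J\simeq\sO_X\otimes\mathrm{MU}(1)\simeq\sO_X\otimes\Sigma^\infty\mathbf{CP}^\infty$, using that $\sO_X\otimes-$ preserves colimits, the definition of the Thom spectrum $\mathrm{MU}(1)$ as the colimit of the $J$-homomorphism over $\mathrm{BU}(1)$ (as in Example \ref{The MPs}), and the classical equivalence $\mathrm{MU}(1)\simeq\Sigma^\infty\mathbf{CP}^\infty$ under which the fibre $S^{\mathbf C}=S^2$ over the basepoint is the bottom cell $\Sigma^\infty\mathbf{CP}^1$. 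Thus $\varprojlim_{\mathbf{CP}^\infty}\mathcal L\simeq F(\Sigma^\infty\mathbf{CP}^\infty,A)\simeq\sO_{\w{\G}}(-e)$, and dualizing the inclusion of the basepoint term into the colimit shows the basepoint-restriction $\varprojlim_{\mathbf{CP}^\infty}\mathcal L\to\mathcal L(\mathbf C)=\omega_{\w{\G}}$ is precisely $F(\iota,A)$, i.e.\ the quotient map. Feeding this into the previous paragraph makes $\Map^\simeq_{\Fun(\Vect_{\mathbf C}^{\dim=1},\Mod_A)}(A^\otimes,\omega_{\w{\G}}^\otimes)$ equal to the fibre product defining $\mathrm{Coord}_{\w{\G}}$ in Definition \ref{space of coordinates}, as desired.

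The part I expect to be fiddly is purely organizational: keeping straight which $\mathbf{CP}^\infty$-structure appears where (Thom class versus zero section, the tautological line versus its conjugate), confirming that the coalgebra-duality descriptions of $\sO_{\w{\G}}(-e)$ and of its quotient onto $\omega_{\w{\G}}$ match the conventions of \cite{Elliptic 2}, and checking the naturality of the duality equivalences carefully enough that the two maps into $\omega_{\w{\G}}$ are genuinely identified rather than merely abstractly equivalent. The homotopy-theoretic inputs themselves --- $\mathrm{MU}(1)\simeq\Sigma^\infty\mathbf{CP}^\infty$ and the formal manipulation of limits and colimits of local systems --- are standard.
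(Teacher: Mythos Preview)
Your argument is correct and lands on the same identification the paper uses, but the organization differs enough to be worth a comment. The paper's proof unpacks an equivalence $A^\otimes\simeq\omega_{\w{\G}}^\otimes$ over $\mathrm{BU}(1)$ somewhat informally into two pieces of data: the value $\theta\colon A\simeq\omega_{\w{\G}}$ at the basepoint, and a map of spectra $\tau\colon\Sigma^\infty\mathrm{BU}(1)\to A$ encoding the coherence with the $\mathrm{BU}(1)$-action; it then observes directly that $\tau$ is the same as an element of $\Omega^\infty C^*_\mathrm{red}(\mathbf{CP}^\infty;A)\simeq\Omega^\infty\sO_{\w{\G}}(-e)$, and that the compatibility between $\theta$ and $\tau$ is exactly the coordinate condition. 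You instead package everything into the single identification $\Map(\underline A,\mathcal L)\simeq\Omega^\infty\varprojlim_{\mathbf{CP}^\infty}\mathcal L$ and compute that limit by duality, invoking the Thom equivalence $\mathrm{MU}(1)\simeq\Sigma^\infty\mathbf{CP}^\infty$ explicitly. Your route is arguably more systematic and makes the step the paper leaves implicit (why the $\mathrm{BU}(1)$-coherence data is exactly a map out of $\Sigma^\infty\mathbf{CP}^\infty$) into a clean formal consequence of $\varinjlim_{\mathrm{BU}(1)}J\simeq\mathrm{MU}(1)$; the paper's route is shorter and keeps the two ingredients $(\theta,\tau)$ visibly separate, which feeds more directly into the later Remark comparing $\mathrm U(1)^n$- and $\mathrm U(n)$-equivariance. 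The bookkeeping worries you flag (matching conventions for $\sO_{\w{\G}}(-e)$, line versus conjugate line) are real but minor, and neither proof handles them more carefully than the other.
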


\begin{proof}
In light of the homotopy equivalence $\Vect_{\mathbf C}^{\dim = 1}\simeq\mathrm{BU}(1)$, an equivalence $A^\o \simeq\omega_{\w{\G}}^\o$ of functors $\Vect_{\mathbf C}^{\dim = 1}\to\Mod_A$ amounts to two things:
\begin{enumerate}[label = (\alph*)]
\item An $A$-module equivalence $\theta: A \simeq\omega_{\w{\G}}$.
\item A map of spectra $\tau:\Sigma^\infty(\mathrm{BU}(1))\to A$.
\end{enumerate}
These two are not unrelated, however. Restriction of $\tau$ along the inclusion
$$S^2\simeq \mathbf{CP}^1\subseteq\mathbf{CP}^\infty\simeq \mathrm{BU}(1)$$
gives rise to a map of $A$-modules $\Sigma^{2}(A)\to A$, which must, through the orientation equivalence $\Sigma^{-2}(A)\simeq\omega_{\w{\G}}$, induce the $A$-module isomorphism $\theta$. Conversely, since orientation of $\w{\G}$ implies that it is the Quillen formal group of the complex oriented $\E$-ring $A$, the identification $\omega_{\w{\G}}\simeq \Sigma^{-2}(A)$ extends to an equivalence $\sO_{\w{\G}}(-e)\simeq C^*_{\mathrm{red}}(\mathbf{CP}^\infty; A)$. Thus the map of spectra $\tau$ is equivalent to an $A$-linear map $t:A\to \sO_{\w{\G}}(-e)$.
In conclusion, an equivalence $A^\o \simeq\omega_{\w{\G}}^\o$ in the $\i$-category $\Fun(\Vect^{\dim =1}_{\mathbf C}, \Mod_A)$ is equivalent to:
\begin{enumerate}[label =($*$)]
\item An $A$-module map $t:A\to\sO_{\w{\G}}(-e)$, whose composite with the canonical map $\sO_{\w{\G}}(-e)\to \omega_{\w{\G}}$ induces a trivialization of the dualizing line of the formal group $\w{\G}$, which is to say, an $A$-module equivalence $A\simeq \omega_{\w{\G}}$.
\end{enumerate}
That is precisely the data of a coordinate on $\w{\G}$.
\end{proof}

As in the setting of ordinary algebraic geometry, the space of coordinates in the sense of Definition \ref{space of coordinates} is compatible with base-change along $\E$-ring maps $A\to B$, giving rise to a functor $\mathrm{Coord}(\w{\G}) : \CAlg_A\to\mS$. Under the canonical equivalence of $\i$-categories $\Fun(\CAlg_A, \mS)\simeq\Fun(\CAlg, \mS)_{/\Spec (A)},$ this may equivalently be seen as a map $\mathrm{Coord}(\w{\G})\to \Spec(A)$ of functors $\CAlg\to\mS$. This is analogous to the prestack $\mathrm{Triv}^\o_{\mathbf C}(\omega_{\w{\G}})$ from Construction \ref{Constriv}, and Proposition \ref{higher coordinatization} allows us to relate the two.

\begin{corollary}
Let $\w{\G}$ be an oriented formal group on an $\E$-ring $A$. There is a canonical map of non-connective spectral prestacks $\mathrm{Triv}_{\mathbf C}^\otimes(\omega_{\w{\G}})\to \mathrm{Coord}(\w{\G})$.
\end{corollary}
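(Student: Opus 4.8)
The plan is to build the map by restricting a complex-exponentiable trivialization of the dualizing line along the inclusion of one-dimensional vector spaces and forgetting its symmetric monoidal structure, and then recognizing the target via Proposition \ref{higher coordinatization}. In other words, the asserted map is the evident ``forget structure'' map, and the work is purely one of assembling it functorially.

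First, I would recall that for an $\E$-ring map $f:A\to B$, the space $\mathrm{Triv}^\o_{\mathbf C}(\omega_{\w{\G}})(B)$ is by Construction \ref{Constriv} the space of equivalences $\sO_{\Spec(B)}^\otimes\simeq\omega_{f^*\w{\G}}^\otimes$ in the $\i$-category $\QCoh^{\mathbf C}(\Spec(B))\simeq\Fun^\o(\Vect^\simeq_{\mathbf C},\Mod_B)$ of symmetric monoidal functors. Precomposition with the (non-monoidal) inclusion $\Vect_{\mathbf C}^{\dim=1}\subseteq\Vect_{\mathbf C}^\simeq$, together with the forgetful functor $\Fun^\o(\Vect^\simeq_{\mathbf C},\Mod_B)\to\Fun(\Vect^\simeq_{\mathbf C},\Mod_B)$ discarding the symmetric monoidal structure, yields a functor $\Fun^\o(\Vect^\simeq_{\mathbf C},\Mod_B)\to\Fun(\Vect_{\mathbf C}^{\dim=1},\Mod_B)$ that sends the trivial functor $\sO_{\Spec(B)}^\otimes$ to the constant functor $B^\otimes$ and $\omega_{f^*\w{\G}}^\otimes$ to the functor appearing in Proposition \ref{higher coordinatization}. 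Passing to maximal subgroupoids, this functor induces a map on spaces of equivalences, so combining with Proposition \ref{higher coordinatization} gives
$$
\mathrm{Triv}^\o_{\mathbf C}(\omega_{\w{\G}})(B)\longrightarrow \Map^\simeq_{\Fun(\Vect_{\mathbf C}^{\dim=1},\Mod_B)}\big(B^\otimes,\,\omega_{f^*\w{\G}}^\otimes\big)\simeq\mathrm{Coord}_{f^*\w{\G}}.
$$

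Second, I would check that this construction is natural in $B\in\CAlg_A$: the base change $f^*\w{\G}$, its dualizing line and the complex-exponentiable enhancement of Proposition \ref{Dualizing line has complex exponentials}, the restriction-and-forget functor above, and the identification of Proposition \ref{higher coordinatization}, are all compatible with extension of scalars. Assembling these produces a natural transformation of functors $\CAlg_A\to\mS$, which under the equivalence $\Fun(\CAlg_A,\mS)\simeq\Fun(\CAlg,\mS)_{/\Spec(A)}$ is the desired map of non-connective spectral prestacks $\mathrm{Triv}_{\mathbf C}^\otimes(\omega_{\w{\G}})\to\mathrm{Coord}(\w{\G})$ over $\Spec(A)$.

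The only genuinely delicate point is the coherence bookkeeping in the second step: making naturality in $B$ precise requires organizing the restriction-and-forget operation and the equivalence of Proposition \ref{higher coordinatization} as a map of $\CAlg_A$-indexed diagrams of spaces rather than objectwise. This is routine but tedious, and is best handled by observing that $\QCoh^{\mathbf C}(\Spec(-))$, $\Fun(\Vect_{\mathbf C}^{\dim=1},\Mod_{(-)})$, and all the functors between them arise functorially from the assignment $B\mapsto\Mod_B$ on $\CAlg_A$, so that the entire comparison lives over $\CAlg_A$ from the start.
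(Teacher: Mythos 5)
Your proposal matches the paper's proof exactly: the map is induced by the composite of the forgetful functor $\Fun^\otimes(\Vect_{\mathbf C}^\simeq,\Mod_A)\to\Fun(\Vect_{\mathbf C}^\simeq,\Mod_A)$ with restriction along $\Vect_{\mathbf C}^{\dim=1}\subseteq\Vect_{\mathbf C}^\simeq$, invoking Theorem~\ref{What MUP is} and Proposition~\ref{higher coordinatization} to identify source and target. Your additional discussion of naturality in $B$ makes explicit what the paper leaves implicit, but the argument is the same.
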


\begin{proof}
In light of Theorem \ref{What MUP is} and Proposition \ref{higher coordinatization}, this is induced by the composition 
$$
\QCoh_{\mathbf C}(\Spec(A))\simeq \Fun^\otimes(\Vect_{\mathbf C}^\simeq, \Mod_A)\to\Fun(\Vect_{\mathbf C}^\simeq, \Mod_A)\to\Fun(\Vect_{\mathbf C}^{\dim =1}, \Mod_A)
$$
of the forgetful functor from symmetric monoidal to non-symmetric-monoidal functors, with the functor restriction along the subcategory inclusion $\Vect_{\mathbf C}^{\dim = 1} \subseteq\Vect_{\mathbf C}^\simeq$.
\end{proof}

\begin{remark}\label{Discussion MUP v Coord}
A coordinate on an oriented formal group $\w{\G}$ over an $\E$-ring $A$ is by Proposition \ref{higher coordinatization} equivalent to an $\mathrm U(1)$-equivariant equivalence $\omega_{\w{\G}}\simeq A$. This gives rise to a symmetric monoidal system of $\mathrm U(1)^n$-equivariant equivalences $\omega_{\w{\G}}^{\otimes n}\simeq A$ for every $n\ge 0$. The additional structure encoded in a symmetric monoidal $\mathrm U(n)$-equivariant equivalences $\omega_{\w{\G}}^{\otimes n}\simeq A$ for every $n\ge 0$, equivalent according to the discussion of Remark \ref{What is MUP _really though_?} to an $\E$-map $\mathrm{MUP}\to A$ by Theorem \ref{What MUP is}, is therefore in extending the $\mathrm U(1)^n$-equivariance to an $\mathrm U(n)$-equivariance, along the diagonal matrix inclusion $\mathrm U(1)^n\subseteq \mathrm U(n)$.
\end{remark}

\subsection{Universal property of the Snaith construction for $\mathrm{MP}$} 

Another form of periodic complex bordism is the Snaith $\E$-ring $\MP_\mathrm{Snaith}:=(S[\mathrm{BU}])[\beta^{-1}]$. Here we use the series of inclusions
$$
S^2\simeq \mathbf{CP}^1\subseteq\mathbf{CP}^\infty\simeq \mathrm{BU}(1)\subseteq\mathrm{BU}
$$
inducing a map of spectra $\beta:\Sigma^2(S)\simeq \Sigma^\i(S^2)\to \Sigma^\infty(\mathrm{BU})\to S[\mathrm{BU}],$ which is equivalent to an element $\beta\in\pi_2(S[\mathrm{BU}])$. The Snaith $\E$-ring is not equivalent to the Thom spectrum $\mathrm{MUP}$ by \cite{Hahn-Yuan}. We can describe the map of non-connective spectral stacks $\Spec(\MP_\mathrm{Snaith})\to\M$ in a way somewhat analogous to the description of the map $\Spec(L)\to\mathcal M^\heart_\mathrm{FG}$ through Definition \ref{coordinatized def}. But compared to Theorem \ref{What MUP is}, the $\mathrm U$-action is incorporated in a somewhat \textit{ad hoc} manner.

\begin{cons}\label{Bott for Snaith}
Let $\w{\G}$ be an oriented formal group over an $\E$-ring $A$. Any fixed map of $\E$-rings $S[\mathrm{BU}]\to A$ induces an $A$-module map $\tau:A\to\omega_{\w{\G}}$ by
$$
A\xrightarrow{\Sigma^{-2}\beta}\Sigma^{-2}(A)\simeq \omega_{\w{\G}},
$$
where the unlabeled equivalence exhibits the orientation of $\w{\G}$. This gives rise to a canonical map of spaces $\Map_{\CAlg}(S[\mathrm{BU}], A)\to \Map_{\Mod_A}(A,\, \omega_{\w{\G}})$.
\end{cons}

\begin{prop}\label{What Snaith is}
Let $\w{\G}$ be an oriented formal group over an $\E$-ring $A$. The canonical map of non-connective spectral stacks $\Spec(\MP_\mathrm{Snaith})\to\M$, stemming from complex-orientability of the Thom spectrum $\mathrm{MUP}$, induces a homotopy pullback square
$$
\begin{tikzcd}
\Map_{\CAlg}(S[\mathrm{BU}],\, A)\times_{\Map_{\Mod_A}(A,\,\omega_{\w{\G}})}\Map^\simeq_{\Mod_A}(A, \, \omega_{\w{\G}})\ar{d} \ar{r} &\Spec(\MP_\mathrm{Snaith})(A)\ar{d}\\
\{\w{\G}\} \ar{r}& \M(A)
\end{tikzcd}
$$
in the $\i$-category of spaces.
\end{prop}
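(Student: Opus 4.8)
The plan is to reduce to the case where $A$ is complex periodic, and then to recognize both sides of the asserted fiber square as the same union of connected components of $\Map_{\CAlg}(S[\mathrm{BU}],A)$. Since $\w{\G}$ is oriented over $A$, Proposition \ref{Paying proper credit} forces $A$ to be complex periodic; hence $\M(A)$ is contractible and $\w{\G}$ is the Quillen formal group, which has no automorphisms as an oriented formal group (Remark \ref{Why contractible}, Remark \ref{MorFG explicitly}). Thus the bottom map $\{\w{\G}\}\to\M(A)$ of the square is an equivalence, and the square is a homotopy pullback precisely when the canonical map identifies $\Spec(\MP_\mathrm{Snaith})(A)=\Map_{\CAlg}(\MP_\mathrm{Snaith},A)$ with
$$
\Map_{\CAlg}(S[\mathrm{BU}],A)\times_{\Map_{\Mod_A}(A,\,\omega_{\w{\G}})}\Map^\simeq_{\Mod_A}(A,\,\omega_{\w{\G}}).
$$

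For the left-hand side, recall that by construction $\MP_\mathrm{Snaith}=(S[\mathrm{BU}])[\beta^{-1}]$ is the localization of $S[\mathrm{BU}]$ at the Bott element $\beta\in\pi_2(S[\mathrm{BU}])$. Since homotopy groups commute with the relevant filtered colimit, $\pi_*(\MP_\mathrm{Snaith})\simeq\pi_*(S[\mathrm{BU}])[\beta^{-1}]$, and the universal property of inverting a homogeneous element in the homotopy ring of an $\E$-ring (see \cite{HA}) shows that $\Map_{\CAlg}(\MP_\mathrm{Snaith},A)\to\Map_{\CAlg}(S[\mathrm{BU}],A)$ is $(-1)$-truncated, with image the union of those components consisting of $\E$-ring maps $\phi:S[\mathrm{BU}]\to A$ for which $\phi(\beta)\in\pi_2(A)$ is invertible in the graded ring $\pi_*(A)$; equivalently, since $A$ is weakly $2$-periodic, those $\phi$ for which $\phi(\beta)$ generates $\pi_2(A)$ over $\pi_0(A)$.

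For the right-hand side, the inclusion $\Map^\simeq_{\Mod_A}(A,\omega_{\w{\G}})\hookrightarrow\Map_{\Mod_A}(A,\omega_{\w{\G}})$ of the equivalences among all maps is again $(-1)$-truncated, so the displayed fiber product is the union of those components of $\Map_{\CAlg}(S[\mathrm{BU}],A)$ which the map of Construction \ref{Bott for Snaith} carries to an $A$-module equivalence $A\simeq\omega_{\w{\G}}$. Unwinding that construction, $\phi$ is sent to the composite $A\xrightarrow{\phi(\beta)}\Sigma^{-2}(A)\simeq\omega_{\w{\G}}$, the second arrow being the chosen orientation; this map carries $1\in\pi_0(A)$ to $\phi(\beta)$, so it is an $A$-module equivalence exactly when multiplication by $\phi(\beta)$ is an isomorphism $\pi_*(A)\to\pi_{*+2}(A)$, i.e.\ exactly when $\phi(\beta)$ is invertible in $\pi_*(A)$ — the same condition that cut out the left-hand side. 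Matching up the two $(-1)$-truncated maps into $\Map_{\CAlg}(S[\mathrm{BU}],A)$ then gives the desired equivalence, and hence the pullback square.

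The step I expect to require the most care is this last identification: one must check that the $A$-module map attached to $\phi$ in Construction \ref{Bott for Snaith} is, after the orientation identification $\omega_{\w{\G}}\simeq\Sigma^{-2}(A)$, literally (the $\Sigma^{-2}$-linearization of) multiplication by $\phi(\beta)$, so that the two subspaces of $\Map_{\CAlg}(S[\mathrm{BU}],A)$ agree on the nose rather than merely abstractly. This amounts to tracing the definition of the Bott element $\beta$ through the adjunction between $\Sigma^\infty_+$ and the multiplicative $\E$-space $\Omega^\infty$, together with the construction of the orientation via the Quillen formal group $C_*(\mathbf{CP}^\infty;A)$; it is pure diagram-chasing and introduces no new ingredients beyond Construction \ref{Bott for Snaith}, Proposition \ref{Paying proper credit}, and the universal property of localization.
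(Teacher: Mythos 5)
Your proof is correct and follows essentially the same route as the paper's: both reduce to describing the functor of points $\Spec(\MP_\mathrm{Snaith})(A)$ (since $\M(A)$ is contractible or empty), invoke the universal property of inverting $\beta$ to characterize $\E$-ring maps out of $\MP_\mathrm{Snaith}$, and conclude by observing that the factorization $\tau = (\text{orientation equivalence})\circ\phi(\beta)$ from Construction \ref{Bott for Snaith} makes the invertibility of $\beta$ in $A$ and the invertibility of $\tau$ manifestly equivalent. Your additional bookkeeping via $(-1)$-truncated maps into $\Map_{\CAlg}(S[\mathrm{BU}],A)$ makes explicit what the paper treats as immediate, but introduces no new ideas.
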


\begin{proof}
Once again, as there is an essentially unique map $\Spec(A)\to\M$ if and only if the $\E$-ring $A$ is complex-orientable, the pullback statement will follow immediately from a description of the functor of points $\Spec(\MP_\mathrm{Snaith}):\CAlg\to\mS$.

By the universal property of localization of $\E$-rings, e.g. \cite[Proposition 4.3.17]{Elliptic 2}, an $\E$-ring map $\MP_\mathrm{Snaith}=(S[\mathrm{BU}])[\beta^{-1}]\to A$ is equivalent to an $\E$-ring map $S[\mathrm{BU}]\to A$ for which the induced  $A$-module map $\beta:\Sigma^{-2}(A)\to A$ is an equivalence. In light of the definition of the map $\tau : A\to\omega_{\w{\G}}$ as
$$
\begin{tikzcd}
A\ar{rr}{\tau} \ar{dr}{\beta} & &  \omega_{\w{\G}}\\
& \Sigma^{-2}(A), \ar{ru}{\simeq}
\end{tikzcd}
$$
it is immediate that $\beta$ is invertible if and only if $\tau$ is. That proves the claim.
\end{proof}

\begin{remark}
Using the perspective of Subsection \ref{Subsection 3.4. on coord}, the universal property of the Snaith construction from Proposition \ref{What Snaith is} may be expressed in terms of coordinates. Indeed, a comparison between the  proof of Proposition \ref{higher coordinatization} and Construction \ref{Bott for Snaith} shows that  the map $\Map_{\CAlg}(S[\mathrm{BU}], A)\to \Map_{\Mod_A}(A, \omega_{\w{\G}})\simeq \Omega^\i(\omega_{\w{\G}})$ for $A$ a complex-oriented $\E$-ring with Quillen formal group $\w{\G}$, naturally factors through a map
$$
\Map_{\CAlg}(S[\mathrm{BU}],\, A)\to \Map_{\Sp}(\Sigma^\i(\mathbf{CP}^\i), A)\simeq \Omega^\i (\sO_{\w{\G}}(-e)).
$$
In light of this, the conclusion of Proposition \ref{What Snaith is} may now be expressed as
$$
\Map_{\CAlg}(\mathrm{MP}_\mathrm{Snaith}, A)\simeq \Map_{\CAlg}(S[\mathrm{BU}], A)\times_{\Omega^\i(\sO_{\w{\G}}(-e))} \mathrm{Coord}_{\w{\G}}.
$$
Similarly to Remark \ref{Discussion MUP v Coord} for the Thom cover $\Spec(\mathrm{MUP})\to \M$, the Snaith fpqc cover $\Spec(\MP_\mathrm{Snaith})\to \M$ thus extends the coordinate-discarding map $\mathrm{Coord}(\w{\G})\to \M$, for $\w{\G}$ the universal oriented formal group over $\M$, by incorporating an $\E$-action of the infinite unitary grop $\mathrm U$ that extends the $\mathrm U(1)$-action already present thanks to the orientation hypothesis.
\end{remark}

\begin{remark}
To encode the content of the previous Remark differently, consider  the functor from complex periodic $\E$-rings to spaces given by $A\mapsto\Omega^\i(\sO_{\w{\G}{}^\CMcal Q_A}(-e))$. Since the full subcategory of $\CAlg$ spanned by complex-periodic $\E$-rings is anti-equivalent to the $\i$-category of affine non-connective spectral stacks with a map to $\M$, this defines a non-connective spectral stack over $\M$. Indeed, if $\w{\G}$ is the universal complex oriented formal group, then the stack in question is the mapping stack $\underline{\mathcal M\mathrm{ap}}_{*}( \w{\G},\, \widehat{\mathbf A}^1 )$, which classifies pointed maps $\w{\G}\to\widehat{\mathbf A}^1_{\M}$ over $\M$. Then  Construction \ref{Bott for Snaith} may be seen as giving rise to a map of non-connective spectral stacks
$$\Spec( S[\mathrm{BU}])\times \M\to \Spec( \Sym^*(\Sigma^\infty(\mathbf{CP}^\i)))\times \M \simeq\underline{\mathcal M\mathrm{ap}}_{*}( \w{\G},\, \widehat{\mathbf A}^1 ),$$
and Proposition \ref{What Snaith is} expresses the universal property of the Snaith $\E$-ring as
$$
\Spec(\MP_\mathrm{Snaith})\simeq \big(\Spec( S[\mathrm{BU}])\times \M\big)\times_{\underline{\mathcal M\mathrm{ap}}_{*}( \w{\G},\, \widehat{\mathbf A}^1 )}\mathrm{Coord}(\w{\G}).
$$
\end{remark}

\subsection{Toward an algebro-geometric proof of Quillen's Theorem}
The key result we have used to prove the basic properties of the stack $\M$ is Quillen's Theorem, identifying $\pi_0(\MP)\simeq L$ with the Lazard ring, classifying coordinatized formal groups. It is through this theorem that complex bordisms really enter the discussion of either oriented formal groups in particular, or chromatic homotopy theory in general.

Recall that the standard proof of Quillen's Theorem, e.g.\ \cite[Part II]{Adams}, \cite[Section 4.1]{Green Book}, or \cite[Lectures 7 - 10]{Lurie Chromatic}, relies on combining two computations: Lazard's Theorem on $L$ and Milnor's Theorem on $\pi_0(\MP)=\pi_*(\MU)$, which identify both with polynomial rings in countably many variables. It is this ``matching of the two sides'' and the involved computations that go into proving these results that make Quillen's Theorem rather mysterious, even while it is an undeniable foundation of a large chunk of modern homotopy theory.

While our spectral algebro-geometric methods are sadly not capable of producing an alternative more insightful proof of Quillen's Theorem, we are able to isolate its geometric content. We show that it is equivalent to the following statement purely about the non-connective spectral stack $\M$, with no trace  of complex bordisms.

\begin{theorem}\label{Quillen Rephrased}
Quillen's Theorem (i.e.\ Theorem \ref{Quillen's Theorem Statement}) is equivalent to the assertion (of Corollary \ref{Underlying ordinary stack is MFG}) that the canonical map $\M\to\mathcal M_\mathrm{FG}$ induces an equivalence
$$
(\M)^\heart\simeq \mathcal M^\heart_\mathrm{FG}
$$
upon underlying ordinary stacks, exhibiting the ordinary stack of formal groups as the underlying ordinary stack of $\M$.
\end{theorem}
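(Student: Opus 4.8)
The plan is to show the two statements are equivalent by unwinding both through the groupoid presentations established earlier. The key observation is that Corollary \ref{Main Thm} already gives us $\M \simeq |\check{\mathrm C}^\bullet(\Spec(\MP)/\Spec(S))|$, and by Corollary \ref{Pres for trunc} its underlying ordinary stack is $(\M)^\heart \simeq |\Spec(\pi_0(\MP^{\otimes(\bullet+1)}))|$. On the other side, the ordinary stack $\mathcal M^\heart_\mathrm{FG}$ admits its own groupoid presentation $\mathcal M^\heart_\mathrm{FG} \simeq |\Spec(\pi_0(W^\bullet))|$ where $(L, W)$ is the Hopf algebroid of formal group laws and their isomorphisms, or more precisely by the \v{C}ech nerve of the cover $\Spec(L) \to \mathcal M^\heart_\mathrm{FG}$. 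So comparing the two stacks amounts, after choosing compatible presentations, to comparing the two simplicial affine schemes, and the point is that this comparison datum \emph{is} precisely what Quillen's Theorem provides.

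First I would make the forward implication precise: assuming Quillen's Theorem as stated in Theorem \ref{Quillen's Theorem Statement}, one gets a canonical identification of the cosimplicial rings $\pi_0(\MP^{\otimes(\bullet+1)})$ with the cobar complex of the Hopf algebroid $(L,W)$ — this is exactly the content already extracted in the proof of Corollary \ref{Underlying ordinary stack is MFG}, where $\pi_0(\MP) \simeq L$ and $\pi_0(\MP \otimes \MP) \simeq W$ compatibly with the groupoid structure maps. Passing to geometric realizations of the associated simplicial affine schemes then yields $(\M)^\heart \simeq \mathcal M^\heart_\mathrm{FG}$; one must check that the equivalence so produced is the canonical map induced by $\M \to \mathcal M_\mathrm{FG}$, which follows because both are induced by the map of covers $\Spec(\MP) \to \mathcal M_\mathrm{FG}$ classifying the Quillen formal group (equivalently, the formal group law $\MP^0(\mathbf{CP}^\infty)$ after choosing a coordinate).

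For the converse — the genuinely new direction — I would argue as follows. Suppose $(\M)^\heart \simeq \mathcal M^\heart_\mathrm{FG}$ via the canonical map. Choose the $\E$-form $\MP = \mathrm{MUP}$ (or any fixed form); Theorem \ref{What MUP is} identifies $\Spec(\mathrm{MUP})$ as the stack of oriented formal groups together with a trivialization of $\omega_{\w{\G}}$ as a complex-exponentiable sheaf, and passing to $\pi_0$ via Lemma \ref{any flat pullback} and the fact that on discrete rings the space of such trivializations reduces (by the argument surrounding Remark \ref{What is MUP _really though_?} together with Proposition \ref{higher coordinatization}) to the \emph{set} of coordinates on the underlying ordinary formal group. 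Concretely, pulling back the cover $\Spec(\mathrm{MUP}) \to \M$ along $\Spec(R) \to (\M)^\heart \simeq \mathcal M^\heart_\mathrm{FG}$ classifying an ordinary formal group $\w{\G}$ yields $\Spec(\pi_0(\mathrm{MUP}) \otimes_{?} R)$, and unwinding the universal property shows this is $\mathrm{Coord}(\w{\G})$. Taking $R = L$ and the universal formal group law, one reads off $\pi_0(\mathrm{MUP}) \simeq L$ together with the claim that the tautological formal group law is universal — which is exactly Theorem \ref{Quillen's Theorem Statement}. The $\E$-form-independence of $\pi_0(\MP)$ as a ring (all forms agree on the underlying homotopy-commutative ring spectrum, hence on $\pi_0$) lets one conclude regardless of which form was chosen.

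The main obstacle I anticipate is the converse direction's reduction of the complex-exponentiable trivialization data to plain coordinates on $\pi_0$: one must verify that, over a discrete ring $R$, the space $\mathrm{Triv}^\otimes_{\mathbf C}(\omega_{\w{\G}})(R)$ appearing in Theorem \ref{What MUP is} is discrete and agrees with $\mathrm{Coord}_{\w{\G}{}^0}$ in the sense of Definition \ref{coordinatized def} — i.e.\ that the higher $\mathrm U(n)$-equivariance data and the symmetric-monoidal coherence collapse to nothing when the base is an ordinary ring. This should follow from the flatness of $\omega_{\w{\G}}$ (so mapping spaces between invertible sheaves are discrete when the base is discrete, as in the remark after Definition \ref{space of coordinates}) combined with the contractibility of the relevant spaces of coherences, but making it airtight requires some care with the symmetric monoidal functor categories $\QCoh^{\mathbf C}$. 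Alternatively, and perhaps more cleanly, one can sidestep $\mathrm{MUP}$ entirely and use Corollary \ref{Main Thm} directly: the hypothesis $(\M)^\heart \simeq \mathcal M^\heart_\mathrm{FG}$ forces an equivalence of the realizations $|\Spec(\pi_0(\MP^{\otimes \bullet+1}))| \simeq |\Spec(W^\bullet)|$ compatible with the given covers $\Spec(\pi_0(\MP)) \to (\M)^\heart$ and $\Spec(L) \to \mathcal M^\heart_\mathrm{FG}$, and since both $\Spec(\pi_0(\MP))$ and $\Spec(L)$ are affine with faithfully flat maps to a common stack, taking the \v{C}ech nerve back recovers $\pi_0(\MP) \simeq L$ as algebras over the stack — the subtlety there is upgrading an equivalence of \emph{stacks} plus compatible affine covers to an \emph{isomorphism of the coordinatizing rings}, which is where one genuinely needs the covers to match up to the required automorphisms, i.e.\ precisely the statement about the universal coordinatized formal group.
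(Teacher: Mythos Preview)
Your forward direction is fine and matches the paper. The converse, however, has a genuine gap. In approach (a) you propose to pull back the cover $\Spec(\mathrm{MUP}) \to \M$ along a map $\Spec(R) \to (\M)^\heart$ for a discrete ring $R$ and then invoke Theorem \ref{What MUP is} to identify the fiber with $\mathrm{Coord}(\w{\G})$. But these maps have different targets: there is no morphism $(\M)^\heart \to \M$ along which to compose (Remark \ref{Undst for conct}), and indeed $\M\vert_{\CAlg^\heart}$ is empty (Remark \ref{Heart is fickle}), so no nonzero discrete $R$ maps to $\M$ at all. One must instead first apply $(-)^\heart$ to the cover, obtaining $\Spec(\pi_0(\mathrm{MUP})) \to (\M)^\heart$; but then Theorem \ref{What MUP is}, which computes fibers over $\M$ in terms of $\mathrm{Triv}^\otimes_{\mathbf C}$, no longer applies, and the left adjoint $(-)^\heart$ does not preserve pullbacks in general. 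Your approach (b) you correctly self-diagnose as circular: two affine fpqc covers of the same ordinary stack need not have isomorphic coordinate rings, and the extra compatibility you would need is precisely Quillen's statement.

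The paper supplies the missing ingredient. Writing $\M \simeq \varinjlim_{A \in \mC} \Spec(A)$ with $\mC \subseteq \CAlg$ the full subcategory of complex periodic $\E$-rings, universality of colimits in the $\infty$-topos gives $\Spec(\MP) \simeq \varinjlim_{A \in \mC} \Spec(\MP \otimes A)$, and since $(-)^\heart$ preserves colimits one obtains $\Spec(\pi_0(\MP)) \simeq \varinjlim_{A \in \mC} \Spec(\pi_0(\MP \otimes A))$. The hypothesis $(\M)^\heart\simeq\mathcal M^\heart_\mathrm{FG}$ similarly yields $\mathcal M^{\heart,\mathrm{coord}}_\mathrm{FG} \simeq \varinjlim_{A \in \mC} \mathrm{Coord}(\w{\G}{}^0_A)$, where $\w{\G}{}^0_A$ is the underlying ordinary Quillen formal group of $A$. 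The two colimits are then matched termwise using the classical, Quillen-\emph{independent} computation $\Spec(\pi_0(\MP \otimes A)) \simeq \mathrm{Coord}(\w{\G}{}^0_A)$ for complex periodic $A$ (see \cite[Lecture 7]{Lurie Chromatic}, \cite[Proposition 7.11]{Rezk}, \cite[Proposition 6.5]{COCTALOS}). That computation is exactly the input that breaks the circularity you noticed in (b), and it is what a corrected version of (a) would reduce to once one computes $(\mathrm{Triv}^\otimes_{\mathbf C}(\omega_{\w{\G}}))^\heart \simeq \Spec(\pi_0(\mathrm{MUP}\otimes A))$ rather than trying to evaluate $\mathrm{Triv}^\otimes_{\mathbf C}$ on discrete rings directly.
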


\begin{proof}
We have already seen above in the proof of Corollary \ref{Underlying ordinary stack is MFG} how the desired statement about underlying ordinary stacks follows from Quillen's Theorem.

To go the other way, let us assume that the map $\M\to\mathcal M_\mathrm{FG}$ induces an equivalence between underlying mapping stacks. Let the $\E$-ring $\MP$ be any form of periodic complex bordism. It is complex periodic, hence there is a canonical map of non-connective spectral stacks $\Spec(\MP)\to\M.$ Observe that we can write
\begin{equation}\label{colimit formula for M}
\M\simeq \varinjlim_{A\in \mC}\Spec(A),
\end{equation}
indexed over the full subcategory $\mC\subseteq\CAlg$ spanned by complex periodic $\E$-rings. Using this, we obtain a series of equivalences of non-connective spectral stacks
\begin{eqnarray*}
\Spec(\MP)
&\simeq&
\Spec(\MP)\times_{\M}\M\\
&\simeq&
\Spec(\MP)\times_{\M}\varinjlim_{A\in\mC}\Spec(A)\\
&\simeq&
\varinjlim_{A\in \mC}\Spec(\MP)\times_{\M}\Spec(A)\\
&\simeq &
\varinjlim_{A\in\mC}\Spec(\MP\o A),
\end{eqnarray*}
in which the third equivalence is due to the fact that $\Shv^\mathrm{nc}_\mathrm{fpqc}$ is an $\i$-topos and hence pullbacks in it are universal \cite[Theorem 6.1.0.6]{HTT}, while the fouth and final equivalence follows from the observation that fiber products of affines over $\M$ is given by the smash product, that we had originally made in the proof of Lemma \ref{affine diagonal lemma}. We may use this colimit formula for $\Spec(\MP)$, together with the fact that passage to the underlying ordinary stack by definition commutes with colimits, see Remark \ref{connective cover through colimits}, to conclude that
$$
\Spec(\pi_0(\MP))\simeq\varinjlim_{A\in\mC}\Spec(\pi_0(\MP\o A)).
$$
The colimit formula \eqref{colimit formula for M}, combined with the assumption on underlying ordinary stacks, similarly implies that
$$
\mathcal M_\mathrm{FG}^\heart\simeq (\M)^\heart\simeq\varinjlim_{A\in \mC}\Spec(\pi_0(A)).
$$
On the other hand, consider the ordinary moduli stack of coordinatized formal groups $\mathcal M_\mathrm{FG}^{\heart, \mathrm{coord}}$. It admits a canonical map $\mathcal M_\mathrm{FG}^{\heart, \mathrm{coord}}\to \mathcal M_\mathrm{FG}^{\heart}$ by discarding the choice of coordinate. The fiber of this map along any map of ordinary stacks $\Spec(R)\to \mathcal M_\mathrm{FG}^\heart,$ classifying a formal group $\w{\G}$ over the commutative ring $R$, is by definition
$$
\mathcal M_\mathrm{FG}^{\heart, \mathrm{coord}}\times_{\mathcal M_\mathrm{FG}^{\heart}} \Spec(R)\simeq \mathrm{Coord}(\w{\G}).
$$
Using the same reasoning as we did for $\Spec(\MP)$ above, we obtain
\begin{eqnarray*}
\mathcal M_\mathrm{FG}^{\heart, \mathrm{coord}}
&\simeq&
\mathcal M_\mathrm{FG}^{\heart, \mathrm{coord}}\times_{\mathcal M_\mathrm{FG}^\heart }\mathcal M_\mathrm{FG}^\heart \\
&\simeq&
\mathcal M_\mathrm{FG}^{\heart, \mathrm{coord}}\times_{\mathcal M_\mathrm{FG}^\heart }\varinjlim_{A\in\mC}\Spec(\pi_0(A))\\
&\simeq&
\varinjlim_{A\in \mC}\mathcal M_\mathrm{FG}^{\heart, \mathrm{coord}}\times_{\mathcal M_\mathrm{FG}^\heart }\Spec(\pi_0(A))\\
&\simeq &
\varinjlim_{A\in\mC}\mathrm{Coord}\big(\w{\G}{}^{\CMcal{Q}_0}_A\big).
\end{eqnarray*}
The one new ingredient, justifying the final equivalence, is the following observation. The map $\Spec (A)\to \M$ induces upon underlying ordinary stacks the map $\Spec(\pi_0(A))\to(\M)^\heart\simeq \mathcal M_\mathrm{FG}^\heart$, classifying the ordinary Quillen formal group $\w{\G}{}^{\CMcal{Q}_0}_A=\Spf(A^0(\mathbf{CP}^\i))$ of the complex periodic $\E$-ring $A$, see \cite[Notation 4.1.14]{Elliptic 2}.

The desired Theorem of Quillen, which is to say  that the  map $\Spec(\pi_0(\MP))\to\mathcal M_\mathrm{FG}^{\heart, \mathrm{coord}},$ induced by the canonical coordinate on $\w{\G}{}^{\CMcal{Q}_0}_{\MP}$, is an equivalence of ordinary stacks, now follows from the colimit formulas derived above, together with one classical homotopical computation. That is the fact \cite[Theorem 5.3.13]{Elliptic 2} that the canonical coordinate on $\w{\G}{}^{\CMcal{Q}_0}_{\MP}$ induces the equivalence of ordinary stacks
$$
\Spec(\pi_0(\MP\o A))\simeq \mathrm{Coord}\big(\w{\G}{}^{\CMcal{Q}_0}_A\big).
$$
for any complex periodic $\E$-ring (or even just commutative ring spectrum) $A$. For a proof, which does not require Quillen's Theorem, see  \cite[Proposition 7.11]{Rezk}, \cite[Lecture 7]{Lurie Chromatic}, or \cite[Proposition 6.5]{COCTALOS}.
\end{proof}

\begin{remark}
The idea behind Theorem \ref{Quillen Rephrased} is in some shape well-known to the experts. For instance, \cite{Peterson} is  (modulo the non-periodic setting) essentially a concise summary of the proof we have just given.
\end{remark}

Having a proof of Quillen's Theorem directly through Theorem \ref{Quillen Rephrased} would of course be wonderful, but it seems to be beyond our current reach. We sketch one possible approach below, reducing it to a concrete computation, collected in Claim \ref{claim}, yet one which we are completely incapable of tackling.

For this, we first recall that the canonical map $\M\to\mathcal M_\mathrm{FG}$ admits a factorization $\M\to\mathcal M^\mathrm{pre}_\mathrm{FG}\to\mathcal M_\mathrm{FG}$ through the stack of \textit{preoriented formal groups} $\mathcal M_\mathrm{FG}$. Here a
a \textit{preorientation} on a formal group $\w{\G}$ over an $\E$-ring $A$, in the sense of \cite[Definition 4.3.1]{Elliptic 2}, is a map of pointed spaces $S^2\to  \w{\G}(\tau_{\ge 0}(A))$. By a linearization procedure \cite[Construction 4.3.7]{Elliptic 2}, a preorientation gives rise to an $A$-module map $\omega_{\w{\G}}\to\Sigma^{-2}(A)$. Demanding that this map is an equivalence produces an orientation on $\w{\G}$, hence the map $\M\to\mathcal M^\mathrm{pre}_\mathrm{FG}$.  Quillen's Theorem in light of Theorem \ref{Quillen Rephrased} (i.e.\ the conclusion of Corollary \ref{Underlying ordinary stack is MFG}) now boils down to two facts:
\begin{enumerate}[label = (\roman*)]
\item The map $\mathcal M_\mathrm{FG}^\mathrm{pre}\to\mathcal M_\mathrm{FG}$ induces an equivalence upon underlying ordinary stacks.\label{First half of corollary}
\item The map  $\M\to\mathcal M^\mathrm{pre}_\mathrm{FG}$  induces an equivalence upon underlying ordinary stacks.\label{Second half of corollary}
\end{enumerate}
We offer two different proofs of the first one:

\begin{proof}[Proof of \ref{First half of corollary} (via abstract nonsense)]
Claim \ref{First half of corollary}, unlike \ref{Second half of corollary} concerns only connective spectral stacks, hence we may restrict to the connective setting for the length of discussing it.
By Remark \ref{Undst for conct}, the underlying ordinary stacks may thus be computed by restriction along the subcategory inclusion $\CAlg^\heart\subseteq\CAlg$. For any commutative ring $R$, the space $\mathcal M_\mathrm{FG}^\mathrm{pre}(R)$ consists of formal groups $\w{\G}\in \mathcal M_\mathrm{FG}(R)$, together with preorientation maps $S^2\to\w{\G}(R)$. But the space $\w{\G}(R)$ is discrete, hence $\pi_2(\w{\G}(R))=0$ and there is no non-trivial preorientation data.
\end{proof}

\begin{proof}[Proof of \ref{First half of corollary} (via explicit construction)]
Once again we restrict ourselves to the connective setting. Recall from \cite[Lemma 4.3.16]{Elliptic 2} that the stack of preoriented formal groups may be expressed in terms of the universal formal group $\w{\G}$ over $\mathcal M_\mathrm{FG}$ as the double based loop space of the zero-section $\mathcal M_\mathrm{FG}\to \w{\G}$ (viewed as a map of spectral stacks). That is to say, we have an equivalence of spectral stacks
$$
\mathcal M_\mathrm{FG}^\mathrm{pre}
\simeq
\mathcal M_\mathrm{FG}\times_{\mathcal M_\mathrm{FG}\times_{\w{\G}} \mathcal M_\mathrm{FG}}\mathcal M_\mathrm{FG}.
$$
On the other hand, since any formal group is formally affine, we have $\w{\G}\simeq \Spf_{\mathcal M_\mathrm{FG}}(\sO_{\w{\G}})$, hence the stack of preoriented formal groups may be written as
$$
\mathcal M_\mathrm{FG}^\mathrm{pre}\simeq \Spf_{\mathcal M_\mathrm{FG}}(\mathrm{HH}(\sO_{\mathcal M_\mathrm{FG}}/\sO_{\w{\G}}))
$$
 in terms of Hochschild homology $\mathrm{HH}(A/R)\simeq A\otimes_{A\otimes_R A}A$. Its underlying ordinary stack is therefore
$$
(\mathcal M_\mathrm{FG}^\mathrm{pre})^\heart\simeq \Spf_{\mathcal M^\heart_\mathrm{FG}}(\pi_0(\mathrm{HH}(\sO_{\mathcal M_\mathrm{FG}}/\sO_{\w{\G}}))).
$$
For any pair of connective $\E$-ring map $R\to A$ we have
$$\pi_0(\mathrm{HH}(A/R))\simeq \mathrm{HH}_0(\pi_0(A)/\pi_0(R))\simeq \pi_0(A),$$
hence the connectivity assumption implies that $\pi_0(\mathrm{HH}(\sO_{\mathcal M_\mathrm{FG}}/\sO_{\w{\G}}))\simeq\sO_{\mathcal M_\mathrm{FG}^\heart},$ proving assertion \ref{First half of corollary}.
\end{proof}

In order to discuss assertion \ref{Second half of corollary}, we must recall how to construct $\M$ from $\mathcal M_\mathrm{FG}^\mathrm{pre}$. Let $\w{\G}^\mathrm{pre}$ denote the universal preoriented formal group over $\mathcal M_\mathrm{FG}^\mathrm{pre}$. Its preorientation gives rise to the universal Bott map $\beta:\omega_{\w{\G}^\mathrm{pre}}\to\Sigma^{-2}(\sO_{\mathcal M^\mathrm{pre}_\mathrm{FG}})$, which must be inverted in order to make $\w{\G}^\mathrm{pre}$ oriented and hence pass to $\M$. That is to say, by \cite[Proposition 4.3.17]{Elliptic 2}, the non-connective stack of oriented formal groups is obtained from the stack of preoriented ones by
$$
\M\simeq \Spec_{\mathcal M_\mathrm{FG}^\mathrm{pre}}\Big(\varinjlim\big( \sO_{\mathcal M_\mathrm{FG}^\mathrm{pre}}\xrightarrow{\beta} \Sigma^{-2}(\omega_{\w{\G}^\mathrm{pre}}^{-1})
\xrightarrow{\beta}  \Sigma^{-4}(\omega_{\w{\G}^\mathrm{pre}}^{-1})^{\o 2} \xrightarrow{\beta} \Sigma^{-6}(\omega_{\w{\G}^\mathrm{pre}}^{-1})^{\o 3}\to\cdots \big)\Big).
$$
To find the underlying ordinary stack, we therefore need to determine $\pi_0$ of the quasi-coherent sheaf $\sO_{\M}$ on $\mathcal M^\mathrm{pre}_\mathrm{FG}$. The use of continuity of the functor $\pi_0$, flatness of the dualizing sheaf, and the explicit Hochschild formula for $\sO_{\mathcal M^\mathrm{pre}_{\mathrm{FG}}}$ from the second proof above, combine to show that  \ref{Second half of corollary} (and therefore Quillen's Theorem) is equivalent to:

\begin{claim}\label{claim}
Let $\w{\G}$ denote the universal formal group on the spectral stack $\mathcal M_\mathrm{FG}$. The canonical map of ordinary quasi-coherent sheaves on $\mathcal M^\heart_\mathrm{FG}$
$$
\sO_{\mathcal M_\mathrm{FG}^\heart}\to \varinjlim_{n\to\infty} \mathrm{HH}_{2n}(\sO_{\mathcal M_\mathrm{FG}}/\sO_{\w{\G}})\o_{\sO_{\mathcal M^\heart_\mathrm{FG}}} (\omega^{-1}_{\mathcal M_\mathrm{FG}^\heart})^{\o n}
$$
is an isomorphism.
\end{claim}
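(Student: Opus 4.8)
The plan is to reduce Claim \ref{claim} to a purely computational statement about Hochschild homology of the universal formal group, then invoke the classical computation of $\mathrm{MU}$-homology of $\mathrm{MU}$, or equivalently Milnor's and Lazard's theorems, in exactly the form already cited in the paper. First I would unwind the geometry: working locally on $\mathcal M_\mathrm{FG}$ along the faithfully flat cover $\Spec(L)\to\mathcal M_\mathrm{FG}^\heart$ by formal group laws, the question becomes one about the graded ring $\bigoplus_{n\ge 0}\mathrm{HH}_{2n}(L[\![x]\!]/L)\otimes_L \omega^{-n}$ — here I am abusing notation, identifying $\sO_{\w{\G}}$ over $\Spec(L)$ with the power series ring $L[\![x]\!]$ via the coordinate. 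The coordinate trivializes $\omega$, so after inverting the Bott class this colimit is simply $\bigoplus_{n\ge 0}\mathrm{HH}_{2n}(L[\![x]\!]/L)$ with the multiplicative structure induced by the shift-and-shear Bott maps, and the claim locally asserts that the map $L\to \varinjlim_n \mathrm{HH}_{2n}(L[\![x]\!]/L)\otimes\omega^{-n}$ is an isomorphism of $L$-modules.

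Second I would connect the left-hand side of this local computation with $\pi_0(\MP)$ and the colimit with $\pi_0(\MP\otimes\MP)$, i.e.\ with the Hopf algebroid $(L,W)$. The essential point is that $\mathcal M_\mathrm{FG}$, formed as an fpqc sheaf of formal groups over $\E$-rings, is \emph{not} affine, but $\mathcal M_\mathrm{FG}^\mathrm{pre}$ over it is obtained by the $\Spec$ of a relative Hochschild-homology $\E$-algebra, as recorded in the second proof of \ref{First half of corollary} above; and $\M$ is then the relative $\Spec$ of the Bott-inverted tower. Taking $\pi_0$ through these constructions — legitimate because $\pi_0$ is continuous and commutes with the colimit, and because $\omega$ is flat so tensoring with $\omega^{-n}$ is exact on homotopy sheaves — turns the identification of $(\M)^\heart$ with $\mathcal M_\mathrm{FG}^\heart$ precisely into Claim \ref{claim}. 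This is all bookkeeping I would present in the two or three sentences preceding the claim, as the paper already indicates.

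The actual content, then, is the computation of $\mathrm{HH}_*(\sO_{\w{\G}}/\sO_{\mathcal M_\mathrm{FG}})$ in even degrees and the behavior of the Bott maps on it. Here is where I would like to appeal to \cite[Theorem 5.3.13]{Elliptic 2}, or rather the underlying classical computation of $\MP_*(\MP)$ in \cite[Lemma 4.1.7, Lemma 4.1.8, Corollary 4.1.9]{Green Book}: that computation says exactly that the canonical coordinate on $\w{\G}{}^{\CMcal Q_0}_{\MP}$ exhibits $\pi_0(\MP\otimes\MP)\cong W$, which under the dictionary above is the assertion that $\varinjlim_n\mathrm{HH}_{2n}(\sO_{\w{\G}}/\sO_{\mathcal M_\mathrm{FG}})\otimes\omega^{-n}$, pulled back to $\Spec(L)$, is $W$; and the claim we want, that the same colimit over $\mathcal M_\mathrm{FG}^\heart$ itself is $\sO_{\mathcal M_\mathrm{FG}^\heart}$, follows by descent from the fact that the two legs $\Spec(W)\rightrightarrows\Spec(L)$ agree — i.e.\ that $W$ is a \emph{trivial} cover of $\Spec(L)$ relative to $\mathcal M_\mathrm{FG}^\heart$, which is the content of the groupoid presentation $\mathcal M_\mathrm{FG}^\heart\simeq|\Spec(W)\rightrightarrows\Spec(L)|$ used in Corollary \ref{Underlying ordinary stack is MFG}.

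The main obstacle, and the reason this is only a sketch, is that I do not see how to carry out the even-degree Hochschild computation $\mathrm{HH}_{2n}(\sO_{\w{\G}}/\sO_{\mathcal M_\mathrm{FG}})$ \emph{intrinsically}, without secretly re-deriving $\MU_*\MU$. Genuinely new input would be a direct algebro-geometric handle on $\mathrm{HH}_*$ of the structure sheaf of the universal formal group — for instance via the cotangent complex $L_{\sO_{\w{\G}}/\sO_{\mathcal M_\mathrm{FG}}}$ and an HKR-type filtration, identifying $\mathrm{HH}_*$ with symmetric powers of $\Omega^1_{\w{\G}}$ shifted appropriately, together with an understanding of how the Bott maps act on this filtration and why the colimit collapses to $\sO_{\mathcal M_\mathrm{FG}^\heart}$. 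Absent that, one can prove Claim \ref{claim} only by importing the classical $\MP_*\MP$-computation, which makes the argument a reformulation rather than a new proof of Quillen's Theorem — which is exactly the honest limitation acknowledged in the surrounding text. So my proposal is: establish the reduction cleanly (first two paragraphs), record that the remaining step is equivalent to the standard computation of $\MP$-homology of a complex-oriented ring spectrum, and flag the HKR-filtration approach as the place where a genuinely independent proof would have to do real work.
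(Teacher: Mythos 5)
The paper does not actually prove Claim \ref{claim}. It states the claim as the distilled algebro-geometric content of Quillen's Theorem, records the equivalence (via steps \ref{First half of corollary} and \ref{Second half of corollary}) between the claim and Theorem \ref{Quillen Rephrased}, and then explicitly declares a direct proof out of reach: ``Attempting to prove this claim directly would however require a greater degree of understanding of the spectral moduli stack of formal groups $\mathcal M_\mathrm{FG}$\ldots than we currently possess.'' So there is no proof in the paper to compare against; your concluding paragraph correctly mirrors the paper's own assessment that any derivation of the claim via the $\MP_*\MP$-computation (equivalently, via Quillen's Theorem) is a reformulation and not a new proof, and your HKR-filtration suggestion is a reasonable pointer to where genuinely independent work would need to happen.

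That said, there is a concrete error in your third paragraph. You assert that the colimit $\varinjlim_n \mathrm{HH}_{2n}\o\omega^{-n}$, ``pulled back to $\Spec(L)$, is $W$,'' and then invoke a descent argument off the groupoid $\Spec(W)\rightrightarrows\Spec(L)$. This cannot be right as stated: the claim asserts that the colimit is $\sO_{\mathcal M_\mathrm{FG}^\heart}$, so its pullback along the cover $\Spec(L)\to\mathcal M_\mathrm{FG}^\heart$ must be $L$, not $W$; you get $W$ when you pull back to the arrow space $\Spec(W)$ of the presentation groupoid, i.e.~along $\Spec(W)\to\mathcal M_\mathrm{FG}^\heart$ through either leg. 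You are conflating two different affines over the stack, and the descent argument you gesture at (``the two legs agree'') does not recover the claim from the $W$-identification. Separately, going from the classical computation $\Spec(\pi_0(\MP\o A))\simeq \mathrm{Coord}(\w{\G}^{\CMcal Q_0}_A)$ (which is Quillen-free) to the identification $\pi_0(\MP\o\MP)\cong W$ \emph{already} uses $\pi_0(\MP)\cong L$, i.e.~Quillen's Theorem; so your stated input is not as independent of Quillen as the phrasing suggests. Your first two paragraphs (the reduction) are in line with the paper's own reduction preceding the claim, modulo the noted direction-reversal in the $\mathrm{HH}$-notation, which you flagged as an abuse.
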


Attempting to prove this claim directly would however require a greater degree of understanding of the spectral moduli stack of formal groups $\mathcal M_\mathrm{FG}$, and the universal formal group $\w{\G}$ over it, than we currently possess.

\end{document}